\definecolor{ItalianApricot}{rgb}{1,0.7,0.5}
\def\thmt@refnamewithcomma #1#2#3,#4,#5\@nil{%
  \@xa\def\csname\thmt@envname #1utorefname\endcsname{#3}%
  \ifcsname #2refname\endcsname
    \csname #2refname\expandafter\endcsname\expandafter{\thmt@envname}{#3}{#4}%
  \fi
}
\declaretheorem[numberwithin=section]{theorem}
\declaretheorem[sibling=theorem]{proposition}
\declaretheorem[sibling=theorem,style=definition]{definition}
\declaretheorem[sibling=theorem,style=remark]{lemma}
\declaretheorem[sibling=theorem,style=remark]{remark}
\declaretheorem[sibling=theorem,style=remark]{notation}
\declaretheorem[sibling=theorem,style=remark,refname={Porism,Porisms}]{porism}
\declaretheorem[numberwithin=theorem,style=remark,refname={Claim,Claims},Refname={Claim,Claims}]{claim}
\newcommand{\seq}[1]{{\left\langle{#1}\right\rangle}}
\newcommand{\rest}[1]{\! \upharpoonright_{#1}} 
\newcommand{\tth}{{}^{\textup{th}}}
\newcommand{\conc}{\hat{\,\,}}
\newcommand{\Then}{\,\Longrightarrow\,}
\newcommand{\then}{\,\,\rightarrow\,\,}
\renewcommand{\setminus}{-}
\newcommand{\+}[1]{{\boldsymbol{#1}}}
\newcommand{\w}{\omega}
\newcommand{\s}{\sigma}
\renewcommand{\epsilon}{\varepsilon}
\renewcommand{\le}{\leqslant}
\renewcommand{\ge}{\geqslant}
\renewcommand{\leq}{\leqslant}
\renewcommand{\geq}{\geqslant}
\renewcommand{\preceq}{\preccurlyeq}
\renewcommand{\succeq}{\succcurlyeq}
\newcommand{\nle}{\nleqslant}
\newcommand{\boldface}[1]{\boldsymbol{#1}}
\newcommand{\Tur}{\textup{\scriptsize T}}
\newcommand{\ML}{\textup{ML}}
\newcommand{\MLR}{\textup{\texttt{MLR}}}
\newcommand{\leb}{\lambda}
\newcommand{\degd}{\mathbf{d}}
\newcommand{\wock}{\w_1^{\textup{ck}}}
\newcommand{\hT}{\wock{}\Tur}
\newcommand{\UU}{\mathbb U}
\newcommand{\U}{\mathcal{U}}
\newcommand{\QQ}{\mathbb{Q}}
\newcommand{\symdiff}{\!\!\vartriangle\!\!}
\newcommand{\Rat}{{\mathbb Q}}
\newcommand{\given}{\,\,|\,\,}
\newcommand{\EE}{\mathcal E}
\newcommand{\cost}{\+{c}}
\title{Continuous higher randomness}
\author{Laurent Bienvenu}
\address{LIAFA, Université Paris Diderot, Paris, France}
\email{laurent.bienvenu@computability.fr}
\urladdr{\url{http://www.liafa.jussieu.fr/~lbienven/}}
\author{Noam Greenberg} 
\address{School of Mathematics, Statistics and Operations Research \\ Victoria University of Wellington \\ Wellington, New Zealand}
\email{greenberg@msor.vuw.ac.nz}
\urladdr{\url{http://homepages.mcs.vuw.ac.nz/~greenberg/}}
\author{Benoit Monin}
\address{School of Mathematics, Statistics and Operations Research \\ Victoria University of Wellington \\ Wellington, New Zealand}
\email{benoit.monin@computability.fr}
\urladdr{\url{http://www.liafa.univ-paris-diderot.fr/~benoitm/}}
\thanks{Greenberg was supported by the Marsden Fund, by a Rutherford Discovery Fellowship from the Royal Society of New Zealand, and by a Turing research fellowship from the Templeton Foundation. Monin was partially supported by the Marsden Fund.}
\begin{document}

\begin{abstract}
We investigate the role of continuous reductions and continuous relativisation in the context of higher randomness. We define a higher analogue of Turing reducibility and show that it interacts well with higher randomness, for example with respect to van-Lambalgen's theorem and the Miller-Yu / Levin theorem. We study lowness for continuous relativization of randomness, and show the equivalence of the higher analogues of the different characterisations of lowness for Martin-L\"of randomness. We also characterise computing higher $K$-trivial sets by higher random sequences. We give a separation between higher notions of randomness, in particular between higher weak-2-randomness and $\Pi^1_1$-randomness. To do so we investigate classes of functions computable from Kleene's~$O$ based on strong forms of the higher limit lemma. 
\end{abstract}

\maketitle

\section{Introduction}

Algorithmic randomness uses the tools of computability theory to give a formal definition of the notion of a random infinite binary sequence, a sequence we would expect be the result of independent coin tosses. Many theorems of probability theory and analysis detail properties of real numbers which are shared by all elements of a set of measure~$1$. In other words a ``typical'' -- or ``random" real satisfies the property. For example, a monotone function is differentiable at almost every real. This fact though does not tell us what ``typical reals'' are; for every real~$x$ there is some monotone function which is not differentiable at~$x$. Restricting ourselves to a computable viewpoint allows us to consider only countably many properties of measure~$1$. For example we can characterise the collection of reals~$x$ at which every computable monotone function is differentiable \cite{BrattkaMillerNies2011}. 

Varying the computational strength of the tools involved we obtain in fact a hierarchy of randomness notions. Roughly, the stronger the tools we have the easier it is to detect irregular behaviour and so the harder it is to be considered random. Many of the resulting notions of randomness are robust. The best known notion, due to Martin-L\"of \cite{Martin-Lof1966}, can be defined by using computably enumerable betting strategies, by the incompressibility of initial segments, and by specifying a natural class of effectively presented, effectively null~$G_\delta$ sets. The resulting field studies these notions of randomness, investigates questions such as ``what does it mean for one sequence to be more random than another?'', measures the computational strength of random oracles, looks at connections to effective analysis, and much more (see \cite{Nies2009, DowneyH2010}). A particularly deep area of investigation concerns notions opposite to randomness, such as $K$-triviality, and relates them to computational weakness. 

While they in some sense formalise the intuitive notion of effective computation (albeit disregarding questions of time and space resources), computability-related notions do not satisfy natural closure properties. For example, the variation function of a computable function of bounded variation need not be computable. As result, even though every function of bounded variation is the difference of two monotone functions, a real number~$x$ can be random in the sense that every computable monotone function is differentiable at~$x$, but not in the sense that every computable function of bounded variation is differentiable at~$x$. This is related to the fact that the halting problem is not computable. To overcome similar problems, Martin-L\"of himself suggested that the ``pattern detection tools'' for defining randomness should be taken from a much larger collection. Such collections are given by the closely-related fields of effective descriptive set theory and so-called ``higher computability'' (see \cite{Sacks1990}). The collection of~$\Delta^1_1$ (or \emph{hyperarithmetic}) sets is the smallest one closed under taking the relativised halting problem and closing downward under Turing reducibility; alternatively, under taking infinite computable Boolean operations. Martin-L\"of defined a real to be \emph{$\Delta^1_1$-random} if it is an element of every~$\Delta^1_1$ set of measure~$1$. The closure properties of the hyperarithmetic sets result, for example, in the fact that a real~$x$ is~$\Delta^1_1$ random if and only if every~$\Delta^1_1$ monotone function is differentiable at~$x$ if and only if every~$\Delta^1_1$ function of bounded variation is differentiable at~$x$.

Beyond the desirable closure properties, working with~$\Delta^1_1$ and~$\Pi^1_1$ sets is particularly natural and appealing to computability theorists. This is because one can view these notions as analogues of the fundamental and familiar notions of ``computable'' and ``computably enumerable'', interpreted over an enlarged domain of computation. The theory of admissible computability generalises computability to \emph{admissible ordinals}. The smallest admissible ordinal is~$\wock$, the least ordinal which is not the order-type of a computable well-ordering of the natural numbers. The corresponding domain of computation is~$L_{\wock}$, the smallest admissible set, which is the initial segment of the constructible universe of height~$\wock$. A real is~$\Delta^1_1$ if and only if it is an element of~$L_{\wock}$. The Spector-Gandy theorem says that the~$\Pi^1_1$ sets are those which are defined by an existential quantifier ranging over the collection of hyperarithmetic sets. 
Via coding of structures by reals this shows that the~$\Pi^1_1$ sets are precisely those which are computably enumerable over the structure~$L_{\wock}$. Informally, these are the sets that can be enumerated effectively if the enumeration procedure takes~$\wock$ many steps. With this viewpoint in mind, many intuitive ideas from traditional ``countable'' computability (computability over~$\w$), for example reduction and separation theorems (or the fixed-point theorem) extend to the higher setting with precisely the same proofs.

An important advance in the theory of ``higher randomness'' was made by Hjorth and Nies in \cite{HjorthNies2007}. They examined the higher analogue of Martin-L\"of randomness and also isolated the new, stronger notion of~$\Pi^1_1$-randomness. They also looked at the higher analogues of the~$K$-trivial sets. The theory was then further developed by Chong, Nies and Yu \cite{ChongNiesYu2008} and by Chong and Yu \cite{ChongYu}. One of the projects they are concerned with is the separation of higher notions of randomness. One of the results in this paper is the separation between~$\Pi^1_1$-randomness and the higher analogue of weak 2-randomness. We also consider higher $K$-triviality.

\subsection{Randomness and continuity}

A main theme of this paper is the centrality of continuous reductions to the theory of randomness. The insight that randomness and traditional relative hyperarithmetic reducibility do not interact well goes back to Hjorth and Nies \cite{HjorthNies2007}.

As a first motivating example we consider the fact that strong randomness notions are downward closed in the Turing degrees of \ML-random sets. For example, Miller and Yu \cite{MillerY2008} showed that if~$X$ and~$Y$ are \ML-random,~$Y$ computes~$X$ and~$Y$ is in addition $\degd$-random (for some Turing degree~$\degd$) then~$X$ too is~$\degd$-random. Similarly, an \ML-random set~$X$ is weakly 2-random if and only if it forms a minimal pair with~$\emptyset'$ \cite[Theorem 5.3.15]{Nies2009}, a property clearly downward closed in the Turing degrees. Another example is difference randomness, which is equivalent to being \ML-random and not computing~$\emptyset'$ \cite{Franklin2011}. The argument of Miller and Yu's works for almost every randomness notion stronger than Martin-L\"of's: suppose that~$Y$ computes~$X$ and that~$X$ is random; say $\Phi(Y)=X$ where~$\Phi$ is some Turing functional. For a finite binary string~$\s$ let $\Phi^{-1}[\s]$ be the collection of oracles~$Z$ such that $\Phi(Z)\succeq \s$; we include oracles for which $\Phi(Z)$ is not total. Then $\s\mapsto \leb(\Phi^{-1}[\s])$ (here~$\leb$ denotes Lebesgue measure on Cantor space~$2^\w$) is a continuous c.e.\ semimeasure (multiplied by~$2^{|\s|}$ it is a c.e.\ supermartingale). Since~$X$ is \ML-random, $\leb (\Phi^{-1}[X\rest n])\le^\times 2^{-n}$. By withholding computations, we can massage the functional~$\Phi$ so that $\leb(\Phi^{-1}[\s])\le^\times 2^{-|\s|}$ for all~$\s$ (but still~$\Phi(Y)=X$). Using the massaged functional we can pull back any strong test~$\seq{U_n}$ which captures~$X$ (a difference test, a weak 2-test, a Demuth test, a $\degd$-\ML-test,...) and obtain a similar test which captures~$Y$. 

The key to this argument is the continuity of the map~$\Phi$ on~$2^\w$. The reducibility~$\le_h$ (relatively hyperarithmetic) is not given by partial continuous functions. And indeed some of the examples above fail in the higher setting. Hjorth and Nies \cite{HjorthNies2007} introduced the notion of $\Pi^1_1$-\ML-randomness, the higher analogue of \ML-randomness; Nies \cite[9.2.17]{Nies2009} introduced the notion of strong $\Pi^1_1$-\ML-randomness, the higher analogue of weak 2-randomness, studied later by Chong and Yu \cite{ChongYu}. There are however reals~$X$ and~$Y$ such that $X\le_h Y$, $Y$ is strongly $\Pi^1_1$-\ML-random, and~$X$ is $\Pi^1_1$-\ML-random but not strongly so.

Rather than use~$\le_h$, we need a continuous higher analogue of Turing reducibility. For preciseness, recall that a functional is simply a set of pairs~$(\tau,\s)$ of finite binary strings. Looking forward, note that we do not require that the functional be consistent; we discuss this shortly. If~$\Phi$ is a functional then for any $X\in 2^{\le \w}$ (finite or infinite) we let 
\[ \Phi(X) = \bigcup \left\{ \s  \,:\, (\tau,\s)\in \Phi\text{ for some }\tau\preceq X \right\} .\]
For $X,Y\in 2^\w$, $X\le_\Tur Y$ if and only if $\Phi(Y)=X$ for some c.e.\ functional~$\Phi$. This motivates the following definition:

\begin{definition} \label{def:higher_Turing}
	Let~$X,Y\in 2^\w$. $X$ is \emph{higher Turing reducible} to~$Y$ if $\Phi(Y)=X$ for some~$\Pi^1_1$ functional~$\Phi$. We write $X\le_{\hT} Y$. 
\end{definition}

With this notion some of the familiar theorems mentioned above generalise to the higher setting. For example, we will show:

\begin{theorem} \label{thm:downward_closure_Strong-ML-randomness}
	Let~$X,Y$ be $\Pi^1_1$-\ML-random. Suppose that $X\le_{\hT}Y$ and that~$Y$ is in fact strongly $\Pi^1_1$-\ML-random. Then~$X$ too is strongly $\Pi^1_1$-\ML-random. 
\end{theorem}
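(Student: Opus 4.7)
The plan is to port the Miller--Yu argument sketched just above the theorem statement to the higher setting, with the $\Pi^1_1$ functional afforded by $X\le_{\hT} Y$ playing the role of the Turing functional, and with the $\Pi^1_1$-\ML-randomness of $X$ invoked wherever the classical proof invokes \ML-randomness. Concretely, fix a $\Pi^1_1$ functional $\Phi$ with $\Phi(Y)=X$, and consider the map $\s\mapsto \leb(\Phi^{-1}[\s])$. As in the classical case this is a higher (i.e.\ $\Pi^1_1$) c.e.\ semimeasure on $2^{<\w}$, and the rescaling $\s\mapsto 2^{|\s|}\leb(\Phi^{-1}[\s])$ is a $\Pi^1_1$ supermartingale.

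The first substantive step is to deduce, from the $\Pi^1_1$-\ML-randomness of $X$, the bound $\leb(\Phi^{-1}[X\rest n])\le^\times 2^{-n}$; this is the higher analogue of the ample excess lemma, whose classical proof transfers once one has a universal $\Pi^1_1$ Martin-L\"of test. One then \emph{massages} $\Phi$ into a new $\Pi^1_1$ functional $\Phi'$ with $\Phi'(Y)=X$ and the \emph{uniform} bound $\leb(\Phi'^{-1}[\s])\le^\times 2^{-|\s|}$ for every $\s$: axioms $(\tau,\s)$ of $\Phi$ are held back whenever enumerating them would push the partial value of $\leb(\Phi'^{-1}[\s])$ above the quota, and the previous inequality ensures that along the path $\s\prec X$ no axiom used by $Y$ is permanently withheld. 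This withholding step is the one I expect to be most delicate: one must verify that the construction can be arranged so that $\Phi'$ is itself $\Pi^1_1$ --- in particular, that the $\wock$-length enumeration of $\Phi$ is compatible with the withholding rule --- rather than falling into some larger pointclass.

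Once $\Phi'$ is in hand, the remainder is routine. Let $\seq{U_n}$ be a strong $\Pi^1_1$-\ML-test with $X\in\bigcap_n U_n$; by replacing $U_n$ with $\bigcap_{k\le n} U_k$ we may assume the sequence is decreasing, so that $\leb(U_n)\to\leb(\bigcap_n U_n)=0$. Writing each $U_n$ as a $\Pi^1_1$-enumerable prefix-free union of cylinders and pulling back through $\Phi'$ yields a uniformly $\Sigma^1_1$ open sequence $V_n=\Phi'^{-1}[U_n]$ with $\leb(V_n)\le^\times \leb(U_n)$, hence $\leb(\bigcap_n V_n)=0$. Since $\Phi'(Y)=X\in U_n$ for every $n$, we have $Y\in\bigcap_n V_n$, so $\seq{V_n}$ is a strong $\Pi^1_1$-\ML-test capturing $Y$, contradicting the strong $\Pi^1_1$-\ML-randomness of $Y$.
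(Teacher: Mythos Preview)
Your argument has a genuine gap at the very first step. You write that $\s\mapsto \leb(\Phi^{-1}[\s])$ ``is a higher c.e.\ semimeasure on $2^{<\w}$'', but this uses consistency of~$\Phi$: one needs $\Phi^{-1}[\s\conc 0]$ and $\Phi^{-1}[\s\conc 1]$ to be disjoint to conclude that their measures sum to at most $\leb(\Phi^{-1}[\s])$. In the higher setting, $\Pi^1_1$ functionals need not be consistent, and as discussed in Section~1.3 there is in general no way to replace an arbitrary $\Pi^1_1$ functional by a consistent one without losing the reduction $\Phi(Y)=X$. So the map $\s\mapsto\leb(\Phi^{-1}[\s])$ is \emph{not} a semimeasure, the supermartingale you describe does not exist, and you cannot invoke the ample excess lemma to get $\leb(\Phi^{-1}[X\rest n])\le^\times 2^{-n}$. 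The delicacy you anticipate in the withholding step is real, but the problem arises earlier.

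The paper's fix (\cref{lem:massage}) is to give up on making~$\Phi$ consistent and instead to construct, from the given functional~$\Psi$, a new functional~$\Phi$ together with a genuine higher c.e.\ continuous semimeasure~$m$ satisfying $\leb(\Phi^{-1}[\tau])\le m(\tau)$ for all~$\tau$, while preserving $\Phi(Y)=X$. The idea is that when an axiom $(\s,\tau)$ enters~$\Psi$, one enumerates it into~$\Phi$ only on a clopen set which avoids \emph{most} of the oracles where this would create an inconsistency; the small overlap is charged to an explicit error term $q(\tau)$, and $m(\tau)=\leb(\Phi^{-1}[\tau])+\sum_{\rho\succeq\tau}q(\rho)$ is then verified to be a semimeasure. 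With~$m$ in hand one applies higher ML-randomness of~$X$ to~$m$ (not to $\leb(\Phi^{-1}[\cdot])$) to get $m(X\rest n)\le c\cdot 2^{-n}$, and only then performs the withholding step. A smaller point: you cannot in general write a higher effectively open set as a $\Pi^1_1$-enumerable \emph{prefix-free} union of cylinders; the paper sidesteps this by noting that the antichain of maximal intervals, while not effective, suffices for the measure inequality $\leb(\Phi^{-1}[U_n])\le c\cdot\leb(U_n)$.
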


We will also see, for example, that a $\Pi^1_1$-\ML-random set~$X$ is higher difference random if and only if $O\nle_{\hT} X$, where Kleene's~$O$ is the complete~$\Pi^1_1$ set of numbers. On the other hand, we will see that some results only partially generalise, or completely fail in the higher setting. For example, a Martin-L\"of random real is weak-2-random if and only it forms a minimal pair with $\emptyset'$, but we show in \cite{Pi11RandomnessPaper} that it is not the case that a $\Pi^1_1$-ML-random set is strongly $\Pi^1_1$-ML-random if and only if it forms a minimal pair with Kleene's~$O$ in the $\le_{\hT}$-degrees. 

\medskip

Continuity also matters when it comes to relativizing randomness notions. The two-step product theorem (see for example \cite{Jech2008} or \cite{Kunen2011}) says that if~$\mathbb P$ and~$\mathbb Q$ are notions of forcing then a filter $G\times H \subset \mathbb P\times \mathbb Q$ is $V$-generic if and only if the filter $G\subset \mathbb P$ is $V$-generic and $H\subset \mathbb Q$ is $V[G]$-generic. The theorem has effective analogues. For example, a join $G\oplus H$ is (Cohen) 1-generic if and only if~$G$ is 1-generic and~$H$ is 1-generic relative to~$G$ (see for example \cite{Yu2006}). van Lambalgen \cite{vanLambalgen1987} gave an analogous effectivisation for \ML-randomness. It fails in the higher setting: there are reals~$X$ and~$Y$ such that~$X\oplus Y$ is $\Pi^1_1$-\ML-random, but~$Y$ is not $\Pi^1_1(X)$-\ML-random. The reason for this failure is that the relativisation is not continuous: enumerating clopen subsets of a component of a $\Pi^1_1(X)$-\ML-test is not determined by only finitely many bits of~$X$. Similarly to Turing reducibility, we need to define a continuous higher analogue of being computably enumerable relative to an oracle. The treatment is similar. An \emph{enumeration functional} is a set of pairs $(\tau,m)$ consisting of a finite binary string and a natural number. If~$\Psi$ is an enumeration functional and $X\in 2^{\le \w}$ then we let
\[ \Psi^X =  \left\{ m  \,:\, (\tau ,m)\in \Psi \text{ for some }\tau\preceq X \right\}.\]
A set~$B$ is c.e.\ in~$X$ if and only if $B = \Psi^X$ for some c.e.\ enumeration functional~$\Psi$. 

\begin{definition} \label{def:higher_ce}
  Let $X\in 2^\w$. A set $B\subseteq \w$ is \emph{higher $X$-c.e.} if $B = \Psi^X$ for some~$\Pi^1_1$ enumeration functional~$\Psi$.\footnote{We remark that we can think of an enumeration functional as an open subset of~$2^\w\times \w$. If~$\Psi$ is such a set then $\Psi^X$ is the $X$-section of~$\Psi$.}
\end{definition}

Armed with this definition we can consider higher $X$-c.e.\ open sets (sets of the form $\bigcup_{\s\in B}[\s]$ where $B$ is a higher $X$-c.e.\ set of strings), and so higher $X$-\ML-tests and higher $X$-\ML-randomness. Thus $\Pi^1_1$-\ML-randomness is simply higher $\emptyset$-\ML-randomness, and so we call it ``higher \ML-randomness''. We will show that this continuous relativisation satisfies van Lambalgen's theorem.

\begin{theorem} \label{thm:van_Lambalgen}
	Let~$X,Y\in 2^\w$. Then $X\oplus Y$ is higher \ML-random if and only if~$X$ is higher \ML-random and~$Y$ is higher $X$-\ML-random.
\end{theorem}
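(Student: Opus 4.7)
The plan is to adapt van Lambalgen's classical proof, using Fubini's theorem and Markov's inequality to transfer tests between the joint space and the factor spaces. The main subtlety is ensuring that all constructed open sets remain higher c.e., which amounts to tracking that the relevant predicates stay $\Pi^1_1$.

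For the direction ``$X \oplus Y$ higher \ML-random implies both halves random'': first, $X$ is higher \ML-random, since any higher \ML-test $(U_n)$ capturing $X$ lifts to the higher c.e.\ open test $V_n := \{X' \oplus Y' : X' \in U_n\}$ with $\leb(V_n) = \leb(U_n) \leq 2^{-n}$, capturing $X \oplus Y$. For $Y$ higher $X$-\ML-random, suppose a higher $X$-\ML-test with enumeration functionals $(\Psi_n)$ captures $Y$. After a standard truncation making the sectional open-set measures $\leq 2^{-n}$ uniformly in the oracle, the set
\[ W_n := \bigcup \{ [\tau \oplus \sigma] : (\tau, \sigma) \in \Psi_n \} \]
is higher c.e.\ open, has $\leb(W_n) \leq 2^{-n}$ by Fubini, and contains $X \oplus Y$, contradicting higher \ML-randomness of $X \oplus Y$.

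For the converse, suppose $(U_n)$ captures $X \oplus Y$ with $U_n = \bigcup_{\rho \in S_n}[\rho]$ for $\Pi^1_1$ sets $S_n$ of strings. Define $V_n^{X'} := \{Y' : X' \oplus Y' \in U_n\}$; this is uniformly higher $X'$-c.e.\ open via the functional $\Psi_n := \{(\tau, \sigma) : \tau \oplus \sigma \in S_n\}$. Fubini gives $\int \leb(V_n^{X'}) \, dX' \leq 2^{-n}$, so Markov yields $\leb(A_n) \leq 2^{-n}$ for $A_n := \{X' : \leb(V_{2n}^{X'}) > 2^{-n}\}$. The critical step is that $A_n$ is higher c.e.\ open: membership $X' \in A_n$ is witnessed by an antichain $\sigma_1,\ldots,\sigma_k$ with $\sum 2^{-|\sigma_i|} > 2^{-n}$, each enumerated into $V_{2n}^{X'}$ via some $\tau_i \prec X'$; letting $\tau_* = \max_i \tau_i$, membership reduces to $\tau_* \prec X'$ together with the $\Pi^1_1$ datum that $(\tau_i, \sigma_i) \in \Psi_{2n}$ and $\tau_i \preceq \tau_*$ for each $i$, so $A_n$ is a union of cylinders over a $\Pi^1_1$ set of stems. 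The tail sums $B_n := \bigcup_{k \geq n} A_k$ form (after reindexing) a higher \ML-test, so higher \ML-randomness of $X$ provides $n_0$ with $\leb(V_{2n}^X) \leq 2^{-n}$ for all $n \geq n_0$. Truncating each $\Psi_{2n}$ to sections of measure $\leq 2^{-n}$ uniformly (which preserves $V_{2n}^X$ for $n \geq n_0$) and taking tail sums $U'_k := \bigcup_{n \geq k} \tilde{V}_{2n}$ gives, after a further reindex, a higher $X$-\ML-test capturing $Y$ (since $Y \in V_{2n}^X$ for all $n$, hence $Y \in \tilde{V}_{2n}^X$ for $n \geq n_0$), contradicting the higher $X$-\ML-randomness of $Y$.

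The main obstacle is the truncation step used in both directions: given a $\Pi^1_1$ enumeration functional $\Psi$ and a threshold $q$, one must produce $\Psi'$ whose sectional open-set measures are bounded by $q$ uniformly in the oracle, while coinciding with $\Psi$ on oracles where the original measure was already $\leq q$. Classically this is done by stage-by-stage withholding of axioms that would push some section over threshold; in the higher setting the analogous procedure must be carried out along the $\wock$-length approximation to $\Psi$, and one must verify that the truncated functional remains $\Pi^1_1$.
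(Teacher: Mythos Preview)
Your proposal rests on a truncation step that is precisely the step which \emph{fails} in the higher setting. You ask for a $\Pi^1_1$ enumeration functional $\Psi'$ with $\leb((\Psi')^Z)\le q$ for \emph{every} oracle~$Z$, agreeing with~$\Psi$ on oracles where the original section was already small. As the paper's introduction explains, this cannot be done: at a limit stage~$s$ of the $\wock$-enumeration, the set $C=\{Z:\leb([\s]\cup V_s^Z)>q\}$ is open but need not be clopen (it can even be dense), so there is no way to enumerate~$[\s]$ into $V^Z$ for $Z\notin C$ without overshooting the bound for some $Z\in C$. The paper cites the companion work \cite{BadOracles} for the stronger fact that there are oracles~$A$ admitting no universal higher $A$-ML-test at all, which is a direct obstruction to the uniform truncation you rely on.

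The paper's fix is a \emph{weaker} truncation (\cref{lem:oracle_open_set_trimming}): one produces $V\subseteq U$ with $V^X=U^X$ whenever $\leb(U^X)\le\epsilon$, but $\leb(V^Z)\le\epsilon$ only for all $Z$ \emph{outside a set of measure at most~$\epsilon$}. This is achieved by approximating the bad set~$C$ at each stage by a clopen set~$C_s$ with $\leb(C_s\cap C)\le \epsilon\cdot 2^{-p(s)}$, where~$p$ is the projectum function; the overshoot is then summable. For the hard direction of van~Lambalgen (if~$Y$ is not higher $X$-ML-random then $X\oplus Y$ is not higher ML-random), this suffices: after weak truncation, Fubini gives $\leb(U_n)\le (1-2^{-n})2^{-n}+2^{-n}$, which still tends to~$0$ computably.

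A secondary remark: in what you call the ``converse'' direction (your contrapositive starting from a test capturing $X\oplus Y$), the truncation you invoke is not needed. Once you know $\leb(V_{2n}^X)\le 2^{-n}$ for all $n\ge n_0$, simply take the shifted sequence $\seq{V_{2(n+n_0)}^X}$ as your higher $X$-ML-test; $n_0$ is a constant for this fixed~$X$, so uniformity is not an issue. This is the direction the paper calls ``identical to the lower proof''. The genuine difficulty, and the place where your argument breaks, is the other direction.
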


The issue of continuous relativisation is directly related to the study of anti-randomness and lowness for randomness. A celebrated result of Nies's (together with work by Hirschfeldt, Nies and Stephan \cite{Nies2005,HirschfeldtNiesStephan2007}) is the coincidence of a number of classes, each formalising a notion of distance from randomness or weakness as an oracle in detecting randomness: the $K$-trivial sets; the sets which are low for \ML-randomness; the sets which are low for~$K$; and the sets which are a base for \ML-randomness. Hjorth and Nies \cite{HjorthNies2007} showed that this result fails in the higher setting: while there are sets which are higher $K$-trivial but not hyperarithmetic, every set which is low for $\Pi^1_1$-\ML-randomness is hyperarithmetic. (Higher $K$-triviality is defined using a $\Pi^1_1$ analogue of prefix-free Kolmogorov complexity.) Again this uses the fact that the relativisation of $\Pi^1_1$-\ML-randomness used in the definition of lowness for this notion is not continuous. We will show that using continuous relativisation the coincidence does hold:

\begin{theorem} \label{thm:higher_K-trivial_coincidence}
	The following are equivalent for $A\in 2^\w$:
	\begin{enumerate}
		\item $A$ is higher $K$-trivial.
		\item Every higher \ML-random set is also higher $A$-\ML-random.
		\item There is some higher $A$-\ML-random set~$X$ such that $A\le_{\hT}X$. 
	\end{enumerate}
\end{theorem}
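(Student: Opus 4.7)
The plan is to mirror the classical proof that $K$-triviality, lowness for Martin-L\"of randomness, and being a base for \ML-randomness coincide, due to Nies and Hirschfeldt--Nies--Stephan. The coincidence was previously known to fail in the higher setting with the non-continuous relativisation (every set low for the traditional $\Pi^1_1$-\ML-randomness is hyperarithmetic); the point of the theorem is that the continuous relativisation via $\Pi^1_1$ enumeration functionals restores it. I will establish (1)$\Leftrightarrow$(2) and (1)$\Leftrightarrow$(3) separately.

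For (1)$\Rightarrow$(2), my first step is to show, via the higher-computability analogue of the ``golden run'' argument, that higher $K$-triviality of $A$ implies $A$ is \emph{low for higher $K$}: $K \le^+ K^A$, where $K^A$ denotes higher prefix-free complexity defined via $\Pi^1_1(A)$ machines. The golden-run construction builds a higher c.e.\ prefix-free machine $M$ stage-by-stage along the $\wock$-effective enumeration; higher $K$-triviality of $A$ bounds the total weight of $M$. Lowness for higher $K$ then yields (2) via the Levin--Schnorr characterisation of higher \ML-randomness. For (2)$\Rightarrow$(1), a direct counting argument applies: using that $\Omega$ remains higher $A$-\ML-random and that the measure of a $\Pi^1_1(A)$ open set varies continuously in $A$, one constructs a higher c.e.\ bounded request set giving short descriptions of $A\rest n$, establishing higher $K$-triviality.

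For (1)$\Rightarrow$(3), I adapt the classical argument that every $K$-trivial set is a base for \ML-randomness. Starting with higher $K$-trivial $A$, the construction builds simultaneously a higher $A$-\ML-random real $X$ and a $\Pi^1_1$ functional $\Phi$ with $\Phi(X)=A$, using a higher cost-function framework; higher $K$-triviality of $A$ bounds the total cost, ensuring a positive-measure class of suitable $X$'s survives. For (3)$\Rightarrow$(1), I apply the higher-setting analogue of the Hirschfeldt--Nies--Stephan \emph{decanter} argument: given a $\Pi^1_1$ functional $\Phi$ with $\Phi(X)=A$ and $X$ higher $A$-\ML-random, one organises the preimages $\Phi^{-1}[\sigma]$ into decanters indexed by initial segments of $A$, with mass flowing from upper to lower decanters as longer initial segments of $A$ become available; higher $A$-\ML-randomness of $X$ prevents excessive mass accumulation along $A$, which bounds $K(A\rest n)$.

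The principal obstacle is expected to be the decanter argument in (3)$\Rightarrow$(1). Classically this is a delicate combinatorial analysis of probability mass flow through nested decanters; in the higher setting the analysis must be carried out along a $\wock$-length enumeration where mass may be injected or withdrawn at limit stages with no finite analogue. The enabling feature is the continuity of $\Phi$ as a $\Pi^1_1$ functional, which ensures mass is attributed to finite binary strings rather than to hyperarithmetic objects, so the classical bookkeeping survives transfer to the higher setting. Van~Lambalgen's theorem for higher \ML-randomness (Theorem~\ref{thm:van_Lambalgen}) will be invoked throughout to split joint randomness into conditional form, cleanly separating randomness relative to $A$ from randomness of $A$.
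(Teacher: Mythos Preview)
Your broad strategy---translate the classical Nies/Hirschfeldt--Nies--Stephan arguments using the continuous relativisation---matches the paper's, and you correctly locate the difficulty in the $(3)\Rightarrow(1)$ direction. But there are structural and technical divergences worth flagging.

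\textbf{Cycle versus biconditionals.} The paper proves the cycle $(1)\Rightarrow(2)\Rightarrow(3)\Rightarrow(1)$, not the two biconditionals you propose. This is not just cosmetic: $(2)\Rightarrow(3)$ is immediate from the higher Ku\v{c}era--G\'acs theorem (take any higher ML-random $X\ge_{\hT}A$; by $(2)$ it is higher $A$-ML-random), so your separate proofs of $(2)\Rightarrow(1)$ and $(1)\Rightarrow(3)$ are unnecessary. Your sketched $(2)\Rightarrow(1)$ argument (``measure of a $\Pi^1_1(A)$ open set varies continuously in $A$'') is vague and does not correspond to any argument in the paper; your $(1)\Rightarrow(3)$ cost-function construction is nonstandard and also absent.

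\textbf{A missing ingredient.} For $(1)\Rightarrow(2)$ you go straight to the golden run and use $K^A$. But in the higher setting $K^A$ need not exist for arbitrary~$A$: there are oracles with no universal higher prefix-free machine. The paper first proves (Proposition~4.1) that every higher $K$-trivial which is not hyperarithmetic has a \emph{collapsing approximation}, and then (Lemma~4.2) that for such oracles $K^A$, the optimal discrete semimeasure, and Levin--Schnorr all exist and behave as expected. Only then does the golden run go through, and the paper notes that with a collapsing approximation the golden run itself presents no serious topological complications.

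\textbf{The $(3)\Rightarrow(1)$ direction.} What you call the ``decanter'' argument is the Hirschfeldt--Nies--Stephan \emph{hungry sets} construction (the decanter method proper is the golden run for $(1)\Rightarrow$ low for~$K$). The paper does identify this as the step where topological problems arise, but the fix is not a refined mass-flow analysis: rather, one allows the hungry sets $C^\alpha$ to overlap slightly, with overlaps controlled by the projectum function, and checks that the resulting semimeasure is still summable. The continuity of~$\Phi$ is indeed used, but more to the point is Lemma~3.2, which massages~$\Phi$ so that $\leb(\Phi^{-1}[\tau])$ is bounded by a higher c.e.\ continuous semimeasure.

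\textbf{Van Lambalgen.} The paper does not invoke van Lambalgen's theorem anywhere in the proof of this theorem; you can drop that remark.
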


We will also discuss lowness for~$K$.

\subsection{A general method for defining higher analogues}

The two examples we gave of higher analogues of basic concepts of computability (Turing reducibility and relative computable enumerability) follow a common method which is already implicit in the Chong-Yu work and which we will employ everywhere. We realise that the most fundamental concept of computability theory is computable enumerability. From it, all other notions can be derived: a partial computable function is one with c.e.\ graph, Turing reducibility is defined using c.e.\ functionals, etc. Recall again that a set of numbers is $\Pi^1_1$ if and only if it is~$\Sigma_1$-definable over~$L_{\wock}$; in the terminology of higher computability, it is \emph{$\wock$-c.e.} The method of obtaining higher analogues is to replace every instance of ``c.e.'' by ``$\wock$-c.e.''. As we observed, this means that ``higher \ML-randomness'' is the notion of $\Pi^1_1$-\ML-randomness defined by Hjorth and Nies; and ``higher weak 2-randomness'' is the notion of strong $\Pi^1_1$-\ML-randomness defined by Chong, Nies and Yu. It is only the basic notion of computable enumerability which is being modified; all other quantifiers range over the natural numbers (rather than~$\wock$), and unlike metarecursion theory, the objects studied are subsets of~$\w$ rather than subsets of~$\wock$. For example, a higher \ML-test is an $\w$-sequence of (uniformly) higher c.e.\ open sets, rather than a sequence of length~$\wock$. The fact though that the basic existential quantifier (the computable unbounded search) ranges over~$\wock$ motivates some of our notation (such as $\le_{\hT}$). 

\subsection{Continuity and its discontent}

Beyond the inherent interest in higher notions, the study of generalisations of computability sheds light on the familiar notions by separating concepts which ``accidentally'' coincide in usual computability. An example of such a phenomenon is directly related to the examples of the use of continuity that we discussed above. 

Consider the definition of higher Turing reducibility. The definition of Turing reducibility in terms of functionals usually imposes extra requirements of \emph{consistency} on the functional. Namely that if $(\tau,\s)$ and $(\tau',\s')$ are two ``axioms'' in the functional~$\Phi$ and $\tau$ and~$\tau'$ are compatible, then~$\s$ and~$\s'$ are compatible. Indeed, in \cite{HjorthNies2007} Hjorth and Nies introduce a continuous reducibility (which they denote by~$\le_{fin-h}$). Their definition is similar to \cref{def:higher_Turing} except that they require that the functional~$\Phi$ be the graph of an order-preserving function from strings to strings and moreover that its domain is closed under taking initial segments. In ``traditional'' (or ``countable'') computability this extra requirement creates no difficulty. Namely $X\le_{\Tur} Y$ if and only if $X=\Phi(Y)$ for some c.e.\  functional~$\Phi$ \footnote{If $\Phi$ is an inconsistent Turing functional and two inconsistent axioms in~$\Phi$ apply to an oracle~$Y$ then $\Phi(Y)\notin 2^\w$ and so~$Y$ does not compute anything with the functional~$\Phi$.} if and only if $X= \Phi(Y)$ for some consistent c.e.\ functional if and only if $X=\Phi(Y)$ for some c.e.\ functional satisfying the definition of Hjorth and Nies. We will show in \cite{BadOracles} that the higher analogues of the two first notions are distinct, while the second one coincide with the third one, but not uniformly. In this paper we will argue that among these two distinct reducibilities, the one given by \cref{def:higher_Turing} is the one which fits best with the general theory of higher randomness.

It may be instructive to see why the argument that traditionally these reducibilities are the same fails in the higher setting. To turn an arbitrary functional into a consistent one (without losing total computations), when an axiom $(\tau,\s)$ enters the functional at some stage~$s$, we consider all extensions of~$\tau$ of length~$s$, and map those among them to~$\s$ for which this does not introduce an inconsistency. This argument uses what we call a ``time trick'': the fact that the number of stages is the same as the length of the oracle, namely~$\w$. This equality fails in the higher setting, in which we still use oracles of length~$\w$ but effective constructions have~$\wock$ many stages. Thus any argument that relies on a time trick cannot be simply copied in the higher setting. In some cases the argument can be rectified (an example is the proof of the higher Kraft-Chaitin theorem by Hjorth and Nies). In other cases, such as the equivalence of the three definitions of Turing reducibility, in the higher setting the theorem fails.

To give evidence that \cref{def:higher_Turing} is more useful than other possible generalisations of Turing reducibility to the higher setting, consider for example one of the most basic properties of relative computability. The following is easily verified using arguments of general computability:

\begin{proposition} \label{prop:relative_c.e._and_relative_Turing}
	The following are equivalent for $X,Y\in 2^\w$:
	\begin{enumerate}
		\item $X\le_{\hT} Y$. 
		\item Both~$X$ and its complement are higher~$Y$-c.e.
	\end{enumerate}
\end{proposition}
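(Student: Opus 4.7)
The plan is to treat this as essentially a tautology once the notation is unpacked, with one small but genuine subtlety in the backward direction. I would prove both implications by direct construction of the required $\Pi^1_1$ objects.

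For the forward direction, assume $X\le_{\hT} Y$ via a $\Pi^1_1$ functional~$\Phi$ with $\Phi(Y)=X$. Define two enumeration functionals
\[ \Psi_i = \bigl\{(\tau,n) \,:\, \exists \sigma\bigl((\tau,\sigma)\in\Phi \andd n<|\sigma| \andd \sigma(n)=i\bigr)\bigr\} \quad (i=0,1). \]
These are $\Pi^1_1$ since $\Phi$ is, and the additional quantifier on~$\sigma$ can be taken to range over numbers (via a coding). Because $\Phi(Y)=X$ is total and consistent, $n\in\Psi_1^Y$ iff some axiom $(\tau,\sigma)\in\Phi$ with $\tau\preceq Y$ witnesses $\sigma(n)=1$, which happens iff $X(n)=1$; symmetrically for $\Psi_0^Y = \w\setminus X$.

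For the backward direction, assume $X=\Psi_1^Y$ and $\w\setminus X=\Psi_0^Y$ for $\Pi^1_1$ enumeration functionals $\Psi_0,\Psi_1$. The main obstacle I anticipate is a mild one: the enumeration functionals need not be monotone in~$\tau$, so axioms witnessing different bits of~$X$ may use different initial segments of~$Y$, and we cannot directly assemble them into a single pair $(\tau,\sigma)\in\Phi$. I handle this by first passing to the closures
\[ \widetilde{\Psi}_i = \bigl\{(\tau,n) \,:\, \exists \tau'\preceq \tau\, (\tau',n)\in\Psi_i\bigr\}, \]
which are again $\Pi^1_1$ (bounded number quantification over a $\Pi^1_1$ set) and satisfy $\widetilde{\Psi}_i^Y = \Psi_i^Y$ for every $Y\in 2^\w$. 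Now define
\[ \Phi = \bigl\{(\tau,\sigma) \,:\, \forall n<|\sigma|\,\, (\tau,n)\in\widetilde{\Psi}_{\sigma(n)} \bigr\}, \]
which is $\Pi^1_1$ since the outer quantifier is bounded and $\widetilde{\Psi}_0,\widetilde{\Psi}_1$ are $\Pi^1_1$.

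It remains to verify $\Phi(Y)=X$. If $\tau\preceq Y$ and $(\tau,\sigma)\in\Phi$, then for each $n<|\sigma|$ the fact that $(\tau,n)\in\widetilde{\Psi}_{\sigma(n)}$ forces $n\in\widetilde{\Psi}_{\sigma(n)}^Y$, hence $X(n)=\sigma(n)$; so every $\sigma$ contributed to $\Phi(Y)$ is an initial segment of~$X$, giving $\Phi(Y)\preceq X$. Conversely, for any~$n$ and each $k\le n$ pick $\tau_k\preceq Y$ witnessing $(\tau_k,k)\in\Psi_{X(k)}$; letting $\tau$ be the longest of $\tau_0,\ldots,\tau_n$, closure under initial segments in the first coordinate yields $(\tau,k)\in\widetilde{\Psi}_{X(k)}$ for every $k\le n$, so $(\tau, X\rest(n{+}1))\in\Phi$ and $X\rest(n{+}1)\in\Phi(Y)$. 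Hence $\Phi(Y)=X$ and $X\le_{\hT} Y$. No time trick is used, so the argument transfers transparently to the higher setting.
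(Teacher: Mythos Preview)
Your argument is correct and essentially the same as the paper's. The paper does not give a formal proof (it remarks that the proposition is ``easily verified using arguments of general computability''), but in the paragraph immediately following the statement it sketches exactly your construction for (2)$\Rightarrow$(1): enumerate $(\tau,\sigma)$ into~$\Phi$ once $\Psi_0^\tau \supseteq \{n:\sigma(n)=0\}$ and $\Psi_1^\tau \supseteq \{n:\sigma(n)=1\}$, which is precisely your $\Phi$ built from the closures~$\widetilde\Psi_i$. Your explicit closure step and the check that bounded quantification over a $\Pi^1_1$ set remains $\Pi^1_1$ are the right details to make this go through.
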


The proposition fails if we replace $\le_{\hT}$ by its stricter variant. The difficulty is in the direction (2)$\Then$(1): suppose that $\Psi_1^Y = X$ and $\Psi_0^Y = \w\setminus X$. We build a functional $\Phi$ with the aim that $\Phi(Y)=X$. When we see strings~$\tau$ and~$\s$ such that $\Psi_0^\tau \supseteq \{n\,:\, \s(n)=0\}$ and $\Psi_1^\tau \supseteq \{ n\,:\, \s(n)=1\}$ we enumerate the axiom $(\tau,\s)$ into~$\Phi$. It is possible that for other oracles~$Z$, $\Psi_1^Z$ and~$\Psi_0^Z$ do not enumerate a set and its complement. But before we see this fact, at earlier stages, computations corresponding to such oracles~$Z$ appear to give a set and its complement --- inconsistent with~$\s$ --- and enumerate into~$\Phi$ axioms (with use extending~$\tau$ but incomparable with~$Y$) which are inconsistent with $(\tau,\s)$. The current stage may be infinite (a stage $s\in [\w,\wock)$), and so such an event could have happened arbitrarily close to~$Y$ (i.e.\ extending longer and longer initial segments~$\tau$ of~$Y$). Thus even if we take~$\tau$ to be an arbitrarily long initial segment of~$Y$, enumerating $(\tau,\s)$ into~$\Phi$ makes~$\Phi$ inconsistent; of course $\Phi(Y)$ does not contain inconsistencies. This shows how the time trick can fail bitterly. In~\cite{BadOracles} we show how to turn this situation around to prove, for example, that the two generalisations of Turing reducibility are distinct: there exists some $X,Y$ such that $X \le_{\hT} Y$ but not via a functional which is consistent everywhere. 

\medskip

The utility of \cref{def:higher_Turing} with respect to randomness is witnessed in \cref{thm:higher_K-trivial_coincidence} (in the notion of a higher base for randomness) and also in the example of difference randomness. The following theorem is the correct generalisation of a theorem of Franklin and Ng's. We delay the proper definition of ``higher $\w$-computably approximable'' to the next subsection.

\begin{theorem} \label{thm:higher_difference_randomness}
	The following are equivalent for a higher ML-random set~$X$:
	\begin{enumerate}
		\item $O\nle_{\hT} X$ (where~$O$ is Kleene's complete $\Pi^1_1$ set).
		\item $X$ avoids all nested tests of the form~$\seq{U_n\cap P}$ where $\seq{U_n}$ are uniformly higher effectively open, $P$ is higher effectively closed (a closed~$\Sigma^1_1$ set of reals), and $\leb(U_n\cap P)\le 2^{-n}$. 
		\item $X$ avoids all nested tests of the form~$\seq{W_{f(n)}}$ where~$W_e$ (for $e<\w$) is the~$e\tth$ higher effectively open set; $\leb(W_{f(n)})\le 2^{-n}$; $f$ is higher~$\w$-computably approximable, witnessed by~$\seq{f_s}_{s<\wock}$; and if $f_{s}(n)\ne f_{t}(n)$ then $W_{f_{s}(n)}$ and $W_{f_{t}(n)}$ are disjoint. 
	\end{enumerate}
\end{theorem}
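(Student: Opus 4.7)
My plan is to establish the equivalences via the cycle $(3)\Rightarrow(2)\Rightarrow(1)\Rightarrow(3)$, each step a higher adaptation of the Franklin--Ng argument. I make use of $\le_{\hT}$ from \cref{def:higher_Turing}, higher $X$-c.e. sets from \cref{def:higher_ce}, the higher Kraft--Chaitin theorem, the higher limit lemma, and the measure-preserving ``massaging'' of $\Pi^1_1$ functionals sketched in the introduction.

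For $(3)\Rightarrow(2)$, given a form-(3) test $\seq{W_{f(n)}}$ capturing $X$, I would exhibit a form-(2) test as follows. Let $U_n = \bigcup_{s<\wock} W_{f_s(n)}$, which is uniformly higher effectively open. Let $D=\bigcup_{n,s:\, f_s(n)\neq f(n)} W_{f_s(n)}$; the predicate ``$f_s(n)\neq f(n)$'' is equivalent, using that $f$ is higher $\w$-computably approximable, to ``$\exists t>s:\ f_t(n)\neq f_s(n)$'', hence higher c.e.\ in $n,s$, so $D$ is higher effectively open. Set $P=2^\w\setminus D$. By the disjointness hypothesis, $U_n\cap P\subseteq W_{f(n)}$, transferring the measure bound; and $X\in P$ because $X\in W_{f(n)}$ is, by disjointness, excluded from every $W_e$ with $e\neq f(n)$ that ever appears as $f_s(n)$.

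For $(2)\Rightarrow(1)$, contrapositively: assume $\Phi$ is a $\Pi^1_1$ functional with $\Phi(X)=O$, and build a higher difference test trapping $X$. Let $P=\{Y:\Phi(Y)\subseteq O\}$, higher closed because ``$\Phi(Y)$ outputs some $n\in\w\setminus O$'' is higher open. After massaging $\Phi$ so that $\leb(\Phi^{-1}[\s])\le^\times 2^{-|\s|}$, let $U_n$ consist of oracles $Y$ for which some $\s\preceq\Phi(Y)$ has higher prefix-free Kolmogorov complexity less than $|\s|-n$. Summing $2^{-|\s|}$ along the path through $O$ via the higher Kraft--Chaitin inequality yields $\leb(U_n\cap P)\le 2^{-n}$; and $X\in\bigcap_n(U_n\cap P)$ because $O$ is not higher $K$-trivial.

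For $(1)\Rightarrow(3)$, contrapositively: given $X$ captured by a form-(3) test $\seq{W_{f(n)}}$, I would produce a higher functional $\Psi$ with $\Psi(X)=O$. Disjointness lets a captured oracle commit: once $\tau\preceq X$ satisfies $[\tau]\subseteq W_{f_s(n)}$, any later $t$ with $f_t(n)\neq f_s(n)$ must have $[\tau]\not\subseteq W_{f_t(n)}$, so $X$ pins down $f(n)=f_s(n)$ from such a $\tau$; this yields $f\le_{\hT} X$. To upgrade to $O\le_{\hT} X$, I would first replace the test by an equivalent one in which $O$ is coded into the stable values of $f$, possible because the approximation $\seq{f_s}$ is higher-computable from $O$ via the higher limit lemma. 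The functional $\Psi$ is assembled by enumerating, at each stage $s$, axioms $(\tau,\s)$ whenever $[\tau]\subseteq W_{f_s(n)}$ and $\s$ records the corresponding bits of $O$; the non-consistency permitted in \cref{def:higher_Turing} is essential, since abandoned axioms point to oracles $Y\in W_{f_s(n)}\setminus W_{f(n)}$ that are distinct from $X$. This is the main obstacle: aligning the decoding of $f$ from $X$ with an encoding of $O$ into the stable values of $f$, while navigating the failure of the time trick in the higher setting --- the disjointness in~(3) is precisely the structural feature that enables this higher Franklin--Ng reduction.
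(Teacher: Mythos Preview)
Your cycle has a directional error that leaves the equivalence open. In what you label $(3)\Rightarrow(2)$ you \emph{start} from a form-(3) test capturing~$X$ and \emph{produce} a form-(2) test capturing~$X$; this is $\neg(3)\Rightarrow\neg(2)$, i.e.\ $(2)\Rightarrow(3)$, the reverse of the claimed implication. Combined with your (correctly oriented) $(2)\Rightarrow(1)$ and $(1)\Rightarrow(3)$, you obtain only $(2)\Rightarrow(1)\Rightarrow(3)$ and the redundant $(2)\Rightarrow(3)$; nothing gets you back to~$(2)$. The missing step --- from a difference test $\seq{U_n\cap P}$ build a form-(3) test with disjoint versions and a higher $\omega$-c.a.\ index --- is a genuine construction you have not supplied.

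Your $(2)\Rightarrow(1)$ argument also fails at the definition of~$P$. You take $P=\{Y:\Phi(Y)\subseteq O\}$ and assert that its complement, ``$\Phi(Y)$ outputs some $n\in\omega\setminus O$'', is higher effectively open. But $n\notin O$ is $\Sigma^1_1$, not $\Pi^1_1$; the predicate $\exists n\,[\,n\notin O\wedge \Phi(\tau)(n)=1\,]$ is not $\Pi^1_1$ in~$\tau$, so this~$P$ is not higher effectively closed. The error that \emph{is} higher open is the opposite one ($\Phi(Y)(n)=0$ while $n\in O$), but with that choice $\Phi(Y)$ need no longer lie along~$O$, and your measure bound ``along the path through~$O$'' evaporates. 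For $(1)\Rightarrow(3)$ your idea of first recovering~$f$ and then ``replacing the test by one coding~$O$ into~$f$'' is vague; the paper's route is more direct: set $\Gamma(\tau)(n)=O_s(n)$ at the least~$s$ with $[\tau]\subseteq W_{f_s(n)}[s]$, and use the higher Solovay test $\{W_{f(n)}[s]:n\in O_{s+1}\setminus O_s\}$ to show $\Gamma(X)$ disagrees with~$O$ on only finitely many~$n$. The possible inconsistency of~$\Gamma$ on that Solovay test is exactly where \cref{def:higher_Turing}'s tolerance for inconsistent functionals is used.
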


The proof is the same as in \cite{Franklin2011}. We note where we use the fact that inconsistent functionals are allowed. In proving (1)$\Then$(3) the functional~$\Gamma$ which we build determines that $\Gamma(\tau)(n)= O_s(n)$ where~$s$ is the least such that $[\tau]\subseteq W_{f(n)}[s]$. On the elements of the Solovay test $\{ W_{f(n)}[s]\,:\, n\in O_{s+1}\setminus O_{s}\}$ this functional may be inconsistent. In fact, in~\cite{BadOracles} we show that there is a higher ML-random sequence which is higher Turing above~$O$, but is not fin-h above~$O$.

\medskip

Similarly, our definition of the relativisation of higher \ML-randomness runs into consistency problems when we try to construct a uniform universal test. Classically there is a sequence~$\seq{U_n}$ of enumeration operators such that for all~$Z$, $\seq{U_n^Z}$ is a universal $Z$-\ML-test. This fails in the higher setting. The point is that we cannot take a higher c.e.\ operator (a $\Pi^1_1$ enumeration functional)~$U$ and produce another such functional~$V$ such that $\leb(V^Z)\le \epsilon$ for all~$Z$ (for some fixed~$\epsilon$), and such that $U^Z = V^Z$ if $\leb(U^Z)\le \epsilon$. Again a time trick fails. In a sense it is a topological problem. In standard computability, at a finite stage~$s$ the collection of reals for which an axiom of~$U_s$ applies is clopen. When the axiom $(\tau,\s)$ enters~$U$ (indicating that $[\s]\subseteq U^Z$ for all $Z\in [\tau]$) we let $C = \{ Z\in 2^\w\,:\, \leb([\s]\cup V_s^Z)> \epsilon\}$; this set is clopen and so we can let~$V$ enumerate~$[\s]$ with oracles in the clopen set $[\tau]\setminus C$. In the higher setting, $C$ is open but may fail to be clopen, as~$s$ may be infinite. Indeed~$C$ could be dense. There may be no way to add~$[\s]$ to~$V^Z$ for reals~$Z$ outside~$C$ without making $\leb(V^Z)>\epsilon$ for some reals $Z\in C$. 

This is an issue we will need to monitor; in some cases we can find work-arounds to get analogues of ``lower'' results. In other cases this is impossible. In \cite{BadOracles} we not only show that there is no uniform universal oracle higher \ML-test; indeed we construct an oracle~$A$ for which there is no universal higher $A$-\ML-test. 

Similarly we can construct an oracle relative to which there is no optimal higher discrete c.e.\ semimeasure, and so an oracle relative to which higher prefix-free Kolmogorov complexity $K^A$ is not defined. Thus we need to modify the definition of ``higher low for~$K$'', to say that every higher discrete $A$-c.e.\ semimeasure is dominated by the optimal higher c.e.\ one. Of course if~$A$ is low for higher~$K$ then higher~$K^A$ exists, namely it is higher~$K$. We will show that this notion coincides with higher~$K$-triviality as well.

\subsection{The higher limit lemma} \label{section_higher_limit_lemma}

The analysis of functions approximable by hyperarithmetic functions corresponds to that given to~$\Delta^0_2$ sets and functions by Shoenfield's limit lemma. Here Kleene's~$O$ plays the role of the halting problem~$\emptyset'$. This analysis will help us separate notions of higher randomness.

Recall that a sequence $\seq{f_s}_{s<\wock}$ is $\wock$-computable if it is $\Sigma_1$-definable over $L_{\wock}$. Such a sequence is a \emph{$\wock$-approximation} of a function~$f\in \w^\w$ if for all~$n$ there is some $s<\wock$ such that $f_t(n) = f(n)$ for all $t\in [s,\wock)$). In \cref{subsec:higher_limit_lemma} we shall prove:

\begin{proposition} \label{prop:higher_limit_lemma}
	The following are equivalent for $f\in \w^\w$:
	\begin{enumerate}
		\item $f\le_{\hT} O$;
		\item $f\le_{\Tur} O$;
		\item $f$ has a $\wock$-computable approximation. 
	\end{enumerate}
\end{proposition}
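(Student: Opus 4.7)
The plan is to close the cycle $(1) \Rightarrow (2) \Rightarrow (3) \Rightarrow (2) \Rightarrow (1)$, so let me first dispose of $(2) \Rightarrow (1)$, which is free: every c.e.\ Turing functional is $\Pi^1_1$ as a set of pairs, so any witness of $f \le_{\Tur} O$ also witnesses $f \le_{\hT} O$.

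For $(1) \Rightarrow (2)$, suppose $\Phi(O) = f$ for a $\Pi^1_1$ functional $\Phi$. Given $O$ as an ordinary Turing oracle, we can decide $\chi_O$ pointwise and also decide membership in $\Phi$, since any $\Pi^1_1$ set of integers is $\le_\Tur O$ by completeness of Kleene's~$O$. To compute $f(n)$, enumerate pairs $(\tau,\sigma)$ of finite binary strings and halt at the first one for which $(\tau,\sigma) \in \Phi$, $\tau \prec \chi_O$, and $|\sigma| > n$; the search terminates because $\Phi(O) = f$ is total, and the output $\sigma(n)$ must equal $f(n)$ because otherwise $\Phi(O)$ would not even define a string.

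For $(2) \Rightarrow (3)$, fix a $\wock$-enumeration $\seq{O_s}_{s<\wock}$ of $O$ and let $\chi_s$ be the characteristic function of $O_s$, so $\seq{\chi_s}$ is a $\wock$-computable sequence converging pointwise to $\chi_O$ (from below). If $\Psi$ is a Turing functional with $\Psi^{\chi_O} = f$, set $f_s(n) = \Psi^{\chi_s}(n)$ whenever the right side is convergent and $f_s(n) = 0$ otherwise; continuity of $\Psi$ together with pointwise convergence of the $\chi_s$ yields $f_s(n) \to f(n)$ for each $n$, and the sequence is $\wock$-computable (suitably coded).

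The main obstacle is $(3) \Rightarrow (2)$, where one must extract a genuine ordinary Turing reduction from a sequence indexed by $\wock$. Given a $\wock$-approximation $\seq{f_s}$ of $f$, consider for each $n$ the stabilization predicate
\[
	S_n(s) \,\Iff\, \forall t \in [s,\wock),\ f_t(n) = f_s(n).
\]
Since each $f_s$ is a total integer-valued function, the relation $f_t(n) = f_s(n)$ is $\Delta_1$ over $L_{\wock}$, so $S_n$ is $\Pi_1^{L_{\wock}}$, and when we code ordinals by notations, $S_n$ becomes a $\Sigma^1_1$ set of notations (and hence $\le_\Tur O$). The crucial observation is that the minimal element $s_n$ of $S_n$ remains $\Delta^1_1$: the statement ``$\alpha$ is the least element of $S_n$'' combines $\alpha \in S_n$ (which is $\Sigma^1_1$) with $\forall \beta < \alpha, \beta \notin S_n$, and the latter bounded universal keeps us in $\Pi^1_1$. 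Thus $O$ can locate a notation for $s_n$ and then evaluate $f(n) = f_{s_n}(n)$ (the evaluation itself being $\Pi^1_1$), completing the reduction $f \le_\Tur O$.
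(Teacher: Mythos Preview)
Your cycle of implications is sound, and the arguments for $(2)\Rightarrow(1)$, $(1)\Rightarrow(2)$, and $(2)\Rightarrow(3)$ are correct. In fact your $(1)\Rightarrow(2)$ is more elementary than the paper's route: the paper invokes the fact that $O$ computes a copy of $(L_{\wock},\in)$, whereas you observe directly that a $\Pi^1_1$ functional is $O$-decidable as a set of axioms, so a simple search works.

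The $(3)\Rightarrow(2)$ argument has the right idea but the complexity bookkeeping is off. The set $\{a\in O: |a|_O\in S_n\}$ is \emph{not} $\Sigma^1_1$: the condition $a\in O$ alone is already $\Pi^1_1$-complete. What is true is that $\{a\in O: |a|_O\notin S_n\}$ is $\Pi^1_1$ (a conjunction of $a\in O$ with a $\Sigma_1^{L_{\wock}}$ condition), so the set you want is a difference of two $\Pi^1_1$ sets and hence $O$-decidable; that suffices. Similarly, ``the minimal element $s_n$ remains $\Delta^1_1$'' is not right as stated: the predicate ``$a$ codes $s_n$'' is not $\Delta^1_1$ (if it were, uniformly in~$n$, the map $n\mapsto s_n$ would be hyperarithmetic, which fails whenever $f\notin\Delta^1_1$). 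What you actually need, and what your analysis does give once phrased correctly, is that this predicate is $O$-decidable, since both conjuncts are Boolean combinations of $\Pi^1_1$ conditions on notations.

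The paper's proof of $(3)\Rightarrow(2)$ is a one-liner that sidesteps this bookkeeping: it observes that the graph of~$f$ is $\Delta_2$ over $L_{\wock}$, and since $\Sigma_2^{L_{\wock}}$ coincides with ``c.e.\ in~$O$'', this gives $f\le_\Tur O$ directly. Your constructive approach is a legitimate alternative and makes the $O$-algorithm explicit; it just needs the complexity claims stated more carefully.
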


Since a subset of $\w$ is c.e.\ in $O$ if and only if it is $\Sigma_2$ definable over $L_{\wock}$, the functions computable from Kleene's $O$ are the functions which are $\Delta_2$-definable over $L_{\wock}$. Thus we call such functions ``higher $\Delta^0_2$''. We will investigate subclasses of the collection of all higher $\Delta^0_2$ functions (such as the higher $\w$-c.a.\ functions which we define below). These classes are related to notions of randomness in two ways:

\smallskip

\noindent\textbf{A.} In the style of Demuth, we can use higher $\Delta^0_2$ functions to give indices for higher effectively open components of tests: tests of the form $\seq{W_{f(n)}}$ where~$W_e$ is the $e\tth$ \emph{higher} c.e.\ open set. The strongest such notion is higher $\MLR[O]$, for which we use all functions $f\le_{\Tur} O$. Unlike lower computability, this is strictly stronger than the higher version of weak 2-randomness; indeed strictly stronger than $\Pi^1_1$-randomness.

\smallskip

\noindent\textbf{B.} We can study $\Delta^0_2$ properties according to their approximability. In the lower setting this involves classes determined by bounding the number of mind changes. For example in \cite{FigueiraAll2012} Figueira et al.\ show that while there is a ML-random sequence with an approximation whose first $n$ bits change at most $2^n$ many times, no such random can be superlow. The general theme is that among random sequences, approximations with few changes correspond to computational strength. 

\smallskip

In the higher setting we identify a number of classes of functions lying between the higher $\w$-c.a.\ functions and all higher $\Delta^0_2$ functions. In some sense they too are described by conditions about how often the approximation changes. However these conditions are qualitative rather than quantitative. Thus these classes have no lower analogues. 

\begin{definition} \label{def:collapsing_approximation}
	Let $\seq{f_s}$ be a $\wock$-computable approximation of a higher $\Delta^0_2$ function $f$. For $n<\w$ let $s(n)$ be the least stage $s<\wock$ such that $f_s\rest n  = f\rest n$. The approximation $\seq{f_s}$ is \emph{collapsing} if $\sup_n s(n) = \wock$. Equivalently, $\seq{f_s}$ is collapsing if for all $s<\wock$, $f$ does not belong to the closure of the set $\{f_t\,:\, t<s\}$.
\end{definition}

Gandy's basis theorem implies that there is an $O$-computable $\Pi^1_1$-random sequence, and so a $\Pi^1_1$-random sequence with some $\wock$-computable approximation. However no such random sequence can have a collapsing approximation, since the sequence $\seq{s(n)}$ is $\Sigma_1$-definable over $(L_{\wock},f)$, and so if $f$ has a collapsing approximation then $\w_1^f > \wock$ ($f$ collapses $\wock$). Roughly, the intuition here is that an approximation of a $\Pi^1_1$-random sequence $X$ must change so much so that all initial segments of~$X$ appear long before the end of the approximation. We note though that there are sets $X\le_{\Tur} O$ which collapse $\wock$ but do not have a collapsing approximation. 

\medskip

Some of the classes we consider are defined by topological conditions. For example:

\begin{definition} \label{def:compact_approximation}
	A $\wock$-computable approximation $\seq{f_s}$ of a function~$f$ is \emph{compact} if the set $\{f_s\,:\, s<\wock\}\cup \{f\}$ is a compact subset of Baire space $\w^\w$.
\end{definition}

Of course if we approximate an element of Cantor space we may assume that all elements of the approximation are also elements of Cantor space. In that case an approximation is compact if and only if it is closed (for the usual topology). 

\begin{lemma} \label{lem:compact_implies_collapsing}
	Suppose that $\seq{f_s}$ is a compact approximation of a function $f\notin \Delta^1_1$. Then $\seq{f_s}$ is a collapsing approximation. 
\end{lemma}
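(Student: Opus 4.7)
Plan: I would prove the contrapositive. Assume $\seq{f_s}$ is compact and not collapsing; my goal is to show $f \in \Delta^1_1$. By the equivalent formulation of non-collapsing given in \cref{def:collapsing_approximation}, there exists $\alpha < \wock$ such that $f \in \overline{F_\alpha}$, where $F_\alpha := \{f_t : t < \alpha\}$. It will therefore suffice to show that every element of $\overline{F_\alpha}$ is hyperarithmetic.

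The argument rests on two observations about $\overline{F_\alpha}$. First, $\overline{F_\alpha}$ is \emph{countable}: by hypothesis $K := \{f_s : s < \wock\} \cup \{f\}$ is compact, hence closed, and also countable (the index set $\wock$ is a countable ordinal), so $\overline{F_\alpha} \subseteq K$ is countable. Second, $\overline{F_\alpha}$ is a \emph{closed hyperarithmetic set}: by $\Sigma_1$-replacement in the admissible set $L_\wock$, the initial segment $\seq{f_t}_{t < \alpha}$ of the $\Sigma_1$-definable approximation is itself an element of $L_\wock$, so the tree $T_\alpha := \{\sigma \in \w^{<\w} : \exists t < \alpha,\, \sigma \prec f_t\}$ lies in $L_\wock$, and $\overline{F_\alpha} = [T_\alpha]$.

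I would then invoke the effective Cantor--Bendixson / Mansfield basis theorem: any $\Sigma^1_1$ subset of Baire space containing no perfect subset consists entirely of hyperarithmetic reals. Since $\overline{F_\alpha}$ is countable it contains no perfect subset, so $f \in \overline{F_\alpha}$ is hyperarithmetic, contradicting the hypothesis that $f \notin \Delta^1_1$. The main obstacle is this last invocation, which is classical but nontrivial; if a direct argument is preferred for this setting, one can iterate the Cantor--Bendixson derivative of $\overline{F_\alpha}$ inside $L_\wock$ and use $\Sigma^1_1$-boundedness to show the derivation reaches $\emptyset$ at some $\gamma < \wock$, so that every element of $\overline{F_\alpha}$ is isolated in some hyperarithmetic derivative $[T_\alpha]^{(\beta)}$ and is thus the unique path through a hyperarithmetic subtree, hence hyperarithmetic.
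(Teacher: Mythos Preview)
Your proof is correct and takes essentially the same approach as the paper: both pick $\alpha<\wock$ with $f$ in the closure of $\{f_t:t<\alpha\}$, use compactness to see this closure is countable, use admissibility to see it is a hyperarithmetic closed set, and finish with an effective Cantor--Bendixson argument. The paper additionally notes that compactness makes the relevant tree finitely branching with a hyperarithmetic branching bound (so the derivative can be computed directly inside $L_{\wock}$), whereas you cite Mansfield's theorem; the substance is the same.
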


\begin{proof}
	Let $s(n)$ be defined as above. Suppose that $s(\w) =\sup_n s(n)$ is a computable ordinal. Consider the closure $A$ of the set $\{f_t\,:\, t<s(\w)\}$. The function $f$ is an element of $A$. However $A$ is countable, as it is contained in the compact set $\{f_t\,:\, t<\wock\}\cup \{f\}$. Further, $A$ is the set of paths of a finitely branching hyperarithmetic tree with a hyperarithmetic bound on its branching. Running the Cantor-Bendixon analysis of closed sets within $L_{\wock}$ we see that every element of~$A$ is hyperarithmetic, and so~$f$ is.
\end{proof}

We will show that no higher weakly 2-random set can have a closed approximation. Thus, to separate $\Pi^1_1$-randomness from higher weak 2-randomness we will need to find a class strictly between compact approximations and collapsing approximations. 

\medskip

Narrowing our classes further we return to the idea of counting the number of changes. Finite-change approximations have been implicitly used by Yu \cite{Yu2011}. 

\begin{definition} \label{def:finite_change_approximations}
	A $\wock$-computable approximation $\seq{f_s}$ is a \emph{finite-change} approximation if for no $n$ is there an increasing infinite sequence $\seq{t(i)}_{i<\w}$ of stages such that $f_{t(i+1)}(n)\ne f_{t(i)}(n)$ for all $i<\w$. 
\end{definition}

Note that it is not enough to require that there are only finitely many stages~$s$ such that $f_{s+1}(n)\ne f_s(n)$. For it is possible that there are limit stages~$s$ at which a new value is given. On the other hand, if $\seq{f_s}$ changes only finitely often then for all limit $s$, $\lim_{t\to s} f_t$ exists. Since this limit is $\wock$-computable from~$s$, we may assume that for all limit $s$, $f_s = \lim_{t\to s} f_t$. In this case, we can indeed define the \emph{number of changes} on~$n$ to be the number of stages~$s$ such that $f_{s+1}(n)\ne f_s(n)$. Without this assumption we can define the number of changes to be the longest length of any  increasing sequence $\seq{t(i)}$ of stages such that $f_{t(i+1)}(n)\ne f_{t(i)}(n)$. To pay a debt, we mention the definition of higher $\w$-c.a.\ functions.
\begin{definition} \label{def:omega_c.a.}
	A \emph{higher $\w$-computable approximation} is a finite-change $\wock$-computable approximation $\seq{f_s}$ for which the number of changes is bounded by a hyperarithmetic function.
\end{definition}
Like its lower analogue, a function has a higher $\w$-computable approximation if and only if it is higher truth-table reducible to~$O$.

\medskip

Suppose that $\seq{f_s}$ is a finite-change approximation which has been modified so that $f_s = \lim_{t\to s} f_t$ for all limit ordinals~$s$. Then the set $\{f_s\,:\, s<\wock\}\cup\{f\}$ is a closed subset of Baire space. Further, because this is a finite-change approximation, it is contained in the set of paths of a finitely branching subtree of $\w^{<\w}$, which is compact. Hence:

\begin{lemma} \label{lem:finite-change_implies_compact}
 If~$f$ has a finite-change approximation then it has a compact approximation.
\end{lemma}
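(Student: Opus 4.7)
The plan is to start with the given finite-change $\wock$-computable approximation $\seq{f_s}$ of $f$, modify it at limit stages so that it becomes ``left-continuous'' in the sense of matching its left-limits column by column, and then realise the resulting set $\{f_s:s<\wock\}\cup\{f\}$ as the continuous image of a compact ordinal interval.

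First, I would carry out the modification. The finite-change hypothesis implies that for each $n$ and each limit $s<\wock$ the value $f_t(n)$ is eventually constant as $t\nearrow s$; as observed in the paragraph preceding the lemma, one redefines $f_s(n)$ to equal this eventual value, leaving successor stages untouched. The modified sequence remains $\wock$-computable, still approximates $f$, and still has the finite-change property: any infinite change-sequence in the modified approximation would pull back to one in the original, by replacing each limit stage used by a slightly earlier successor stage with the same column value.

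Next, I would define $\phi\colon[0,\wock]\to\w^\w$ by $\phi(s)=f_s$ for $s<\wock$ and $\phi(\wock)=f$, equip $[0,\wock]$ with the order topology, and verify that $\phi$ is continuous. At $0$ and at successor ordinals nothing needs to be checked, as those points are isolated in $[0,\wock]$. At a limit $s<\wock$ the basic neighbourhoods are of the form $(a,s]$ with $a<s$, so continuity at $s$ amounts to finding, for each $n$, some $a<s$ with $f_t\rest n=f_s\rest n$ whenever $t\in(a,s)$; this holds precisely because of the modification, since for each $j<n$ the finite-change property makes $f_t(j)$ stabilise at some stage $a_j<s$, and the modified value $f_s(j)$ equals this stable value, so one takes $a=\max_{j<n}a_j$. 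Continuity at $\wock$ is the statement that $\seq{f_s}$ approximates $f$, which is given by hypothesis: for each $n$ pick $a<\wock$ above the stabilisation stages of the finitely many columns $j<n$.

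Finally, since $[0,\wock]$ is compact in the order topology, its continuous image $K=\{f_s:s<\wock\}\cup\{f\}$ is a compact subset of $\w^\w$, so the modified $\seq{f_s}$ is a compact approximation of $f$ in the sense of \cref{def:compact_approximation}. The main obstacle is verifying continuity of $\phi$ at limit ordinals $s<\wock$: without the modification, $f_s$ could disagree with the left-limit of $\seq{f_t}_{t<s}$ on some column, and $K$ could then fail to be closed; the finite-change hypothesis is exactly what licenses the modification that makes $\phi$ left-continuous at each limit ordinal below $\wock$.
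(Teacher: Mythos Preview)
Your proof is correct. The paper's argument also begins with the same modification at limit stages, but then proceeds slightly differently: rather than realising $\{f_s:s\le\wock\}$ as the continuous image of the compact ordinal interval $[0,\wock]$, the paper observes that (i) the modification makes the set closed, and (ii) since each column $f_s(n)$ takes only finitely many values (by the finite-change hypothesis), the set is contained in the path-space of a finitely branching subtree of $\w^{<\w}$, which is compact; a closed subset of a compact set is compact. Your route via ``continuous image of compact'' is more direct and avoids the finitely-branching-tree observation altogether; the paper's route, on the other hand, makes explicit why the set sits inside a compact subset of Baire space. The two arguments are essentially equivalent, and indeed the paper's closedness claim is most cleanly justified via the continuity of $s\mapsto f_s$ that you establish.
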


The simplest finite-change approximation is an $\wock$-enumeration of a $\Pi^1_1$ set, or a monotone approximation of a higher left-c.e.\ (left-$\Pi^1_1$) real. Chong and Yu showed \cite{ChongYu} that a higher left-c.e.\ sequence cannot be higher weak 2-random. Their proof used the Lebesgue density theorem. \Cref{lem:finite-change_implies_compact,prop:w2r_closed_approx} give a new proof of their result. They also answer Yu's question whether the two halves of higher $\Omega$ are $\Pi^1_1$-random or not. Since they both have a finite-change approximation, they are not even higher weakly 2-random. Indeed this gives us a separation of higher weak 2-randomness from higher difference randomness, since the two halves of higher $\Omega$ do not higher compute each other and so are $\le_{\hT}$-incomplete.

\section{Extremes of higher Turing and higher c.e.}

Before we discuss randomness we investigate the notions of higher relative computability and enumeration, in particular when they coincide with familiar notions. With very strong oracles they collapse to the familiar notions of Turing reducibility and relative computable enumerability. With weak oracles they coincide with relative~$\Delta^1_1$ and~$\Pi^1_1$.

\subsection{Higher computability and strong oracles}

\begin{proposition} \label{prop:O_collapses_higher_computability}
	A set is higher $O$-c.e.\ if and only if it is $O$-c.e.; and so a set is higher $O$-computable if and only if it is $O$-computable. Furthermore, these equivalences hold when~$O$ is replaced by any oracle $Y\ge_{\Tur} O$. 
\end{proposition}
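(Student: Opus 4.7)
The plan is to handle the c.e.\ equivalence first and then derive the computability equivalence from \cref{prop:relative_c.e._and_relative_Turing}. One direction is trivial in either setting: every c.e.\ enumeration functional is in particular $\Pi^1_1$, so every set which is $Y$-c.e.\ is automatically higher $Y$-c.e., with no assumption on~$Y$.

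For the substantive direction, fix an oracle $Y\ge_\Tur O$ and suppose $B=\Psi^Y$ for some $\Pi^1_1$ enumeration functional~$\Psi$. The key observation is that, when coded as a subset of~$\w$, the set~$\Psi$ is a~$\Pi^1_1$ subset of~$\w$, hence Turing reducible to Kleene's~$O$, and therefore computable from~$Y$. Consequently, the standard description
\[
m\in B \iff (\exists \tau\preceq Y)\,\,(\tau,m)\in \Psi
\]
is a $\Sigma^0_1(Y)$ statement: given~$Y$ as an oracle, we enumerate initial segments $\tau\preceq Y$ and, for each such~$\tau$ and each~$m$, use the oracle~$Y$ (via its computation of the~$\Pi^1_1$ set~$\Psi$) to decide whether $(\tau,m)\in \Psi$; if so, we enumerate~$m$ into~$B$. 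This exhibits~$B$ as~$Y$-c.e.

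From the c.e.\ equivalence the computability equivalence is immediate. If~$X$ is higher $Y$-computable, then by \cref{prop:relative_c.e._and_relative_Turing} both~$X$ and its complement are higher $Y$-c.e., hence by what we just proved both are $Y$-c.e., hence~$X$ is $Y$-computable. The converse is again trivial since any c.e.\ functional is $\Pi^1_1$.

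No genuine obstacle is expected; the whole point is that having $Y\ge_\Tur O$ lets us internalise the $\Pi^1_1$ quantifier hidden inside ``higher c.e.'' into the oracle itself, so that the higher notion collapses to the classical one. The only point to keep an eye on is coding of the pairs $(\tau,m)$ as natural numbers so that the reduction of~$\Psi$ to~$O$ (and hence to~$Y$) is literally a Turing reduction, but this is routine.
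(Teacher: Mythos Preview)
Your argument is correct and in fact more elementary than the paper's. You observe directly that a $\Pi^1_1$ enumeration functional~$\Psi$, coded as a set of natural numbers, is many-one reducible to~$O$ and hence Turing reducible to any $Y\ge_\Tur O$; the defining condition $m\in B \Leftrightarrow (\exists\tau\preceq Y)\,(\tau,m)\in\Psi$ is then visibly $\Sigma^0_1(Y)$. The paper instead argues structurally: $O$ computes a presentation $(\w,E)\cong(L_{\wock},\in)$ together with the restriction of the isomorphism to~$\w$, so any $\Sigma_1(L_{\wock},O)$ predicate translates to a $\Sigma_1$ predicate over the $O$-computable structure $(\w,E)$, hence is $O$-c.e. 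Your route is shorter and uses only the $\Pi^1_1$-completeness of~$O$; the paper's route makes explicit the underlying reason the collapse occurs (namely that~$O$ sees all of~$L_{\wock}$), which is what the paper then exploits to conclude, uniformly in indices, that the higher jump of any $Y\ge_\Tur O$ is recursively isomorphic to the ordinary Turing jump~$Y'$. Your proof is equally uniform, so nothing is lost; the two arguments simply emphasise different aspects of the same fact.
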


\begin{proof}
	The point is that~$O$ computes a bijection between~$\w$ and~$\wock$, and so relative to~$O$, quantifiers ranging over~$\wock$ can be transformed to quantifiers ranging over~$\w$. Formally, there is an $O$-computable binary relation $E\subset \w^2$ such that $(\w,E)\cong (L_{\wock},\in)$, and further, such that $f\rest{\w}$ is $O$-computable, where $f\colon L_{\wock}\to (\w,E)$ is the unique isomorphism. Every set which is higher $O$-c.e.\ is $\Sigma_1((L_{\wock},\in), O)$-definable, and so, if~$X$ is higher $O$-c.e.\ then $X = f^{-1}Z$ where~$Z$ is $\Sigma_1$-definable in the structure $(\w,E)$ and so~$Z$ (and so~$X$) is $O$-c.e.
\end{proof}

As mentioned in the introduction, there is an effective higher enumeration of all~$\Pi^1_1$ sets, and so we can define an effective higher enumeration of all higher enumeration functionals. We will use the familiar notation $\seq{W_e}$ to denote such an enumeration. We will never use both c.e.\ sets and $\Pi^1_1$ sets in the same context so no confusion should arise. The enumeration gives rise to a higher jump operator $Y\mapsto J^{Y} = \bigoplus_{e<\w} W_e^{Y}$, for which we easily verify $Y <_{\hT} J^{Y}$ for every~$Y$. Since~\cref{prop:O_collapses_higher_computability} is uniform in the indices for~$O$-c.e.\ and higher~$O$-c.e.\ sets, we see that in the particular case where $Y\ge_{\Tur} O$,  the higher jump $J^{Y}$ and the standard Turing jump~$Y'$ are recursively isomorphic. On the other hand, $O$ is recursively isomorphic to~$J^{\emptyset}$.  

\

Reals which have collapsing approximations (\cref{def:collapsing_approximation}) are computationally strong in that they compute a copy of $\wock$. Recall that $X\le_{fin-h}Y$ if $X\le_{\hT}Y$ via a strongly consistent functional: one whose graph is a monotone function from strings to strings, whose domain is closed under taking initial segments. 

\begin{proposition} \label{prop:collapsing_and_fin-h}
	Suppose that $Y\in 2^\w$ has a collapsing approximation. Then for every higher $Y$-computable set~$X$ we actually have $X\le_{fin-h} Y$. 
\end{proposition}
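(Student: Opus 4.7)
My plan is to promote the given $\Pi^1_1$ witness of higher Turing reducibility to a strongly consistent one, using the collapsing approximation of~$Y$ to tame the $\wock$-enumeration. By \cref{prop:relative_c.e._and_relative_Turing} I fix a $\Pi^1_1$ functional $\Phi$ with $\Phi(Y)=X$, enumerated as $\Phi=\bigcup_{s<\wock}\Phi_s$, and let $\seq{Y_s}_{s<\wock}$ be the collapsing approximation of~$Y$. Let $D$ be the $\Pi^1_1$ set of strings $\tau$ such that $Y_s\succeq\tau$ for some~$s$, and for $\tau\in D$ let $s^*(\tau)$ be the least such $s$; then $D$ is closed under prefixes and contains every initial segment of~$Y$. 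For any $\tau$ and stage $s$ let $\hat\Phi_s(\tau)$ be the longest string~$\sigma$ such that, for every axiom $(\tau',\rho)\in\Phi_s$ with $\tau'\preceq\tau$, $\sigma$ and $\rho$ are $\preceq$-comparable; this is the ``maximal coherent output of $\Phi_s$ at $\tau$''. I then define, for $\tau\in D$,
\[
  \phi(\tau) \;=\; \text{the $\preceq$-longest element of}\;\bigl\{\hat\Phi_{s^*(\tau')}(\tau') : \tau'\preceq\tau,\ \tau'\in D\bigr\}.
\]

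To see that $\phi$ witnesses $X\le_{fin-h}Y$, I must verify: (a) the defining set is $\preceq$-linearly ordered, so its longest element makes sense; (b) $\phi$ is $\preceq$-monotone on~$D$; (c) $\phi$ is $\Pi^1_1$; and (d) $\phi(Y)=X$. Points (a) and (b) both follow from one monotonicity observation: if $\tau_1\preceq\tau_2$ lie in $D$ then $s^*(\tau_1)\le s^*(\tau_2)$ (as $Y_{s^*(\tau_2)}\succeq\tau_2\succeq\tau_1$), so $\Phi_{s^*(\tau_1)}(\tau_1)\subseteq \Phi_{s^*(\tau_2)}(\tau_2)$, and passing to the maximal coherent output of a larger axiom set yields a string that is either an extension or a prefix of the output on the smaller set. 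Applied pairwise this gives (a); since the chain defining $\phi(\tau_2)$ contains the one defining $\phi(\tau_1)$, its $\preceq$-maximum can only grow, yielding (b). Point (c) is immediate from the $\wock$-computability of each ingredient.

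For (d), if $\tau\prec Y$ then every prefix $\tau'\preceq\tau$ is also $\prec Y$, so every axiom $(\tau',\rho)\in \Phi$ outputs a prefix $\rho\preceq X$; hence $\hat\Phi_s(\tau')\preceq X$ for every $s$, and $\phi(\tau)\preceq X$. Conversely, given any $n$, pick an axiom $(\tau^\ast,\sigma^\ast)\in\Phi$ with $\tau^\ast\prec Y$ and $|\sigma^\ast|\ge n$, witnessed at some stage~$s^{**}$. This is where the collapsing hypothesis enters: $s^*(Y\rest m)\to\wock$ as $m\to\w$, so for $m$ large enough $s^*(Y\rest m)\ge s^{**}$, and then $(\tau^\ast,\sigma^\ast)$ lies in $\Phi_{s^*(Y\rest m)}$ and applies to $Y\rest m$, forcing $\phi(Y\rest m)\succeq\hat\Phi_{s^*(Y\rest m)}(Y\rest m)\succeq\sigma^\ast$; hence $\phi(Y)=X$.

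The hard part will be the monotonicity fact behind (a)/(b): carefully analyzing how the maximal-coherent-output operator $\hat\Phi_s$ behaves when axioms are added. A newly introduced axiom in the larger set can only disagree with the smaller-set output at some fresh bit, yet the bits that survive still agree with the smaller-set output, so comparability is always preserved. It is worth noting that the collapsing property plays no role in (a), (b), or (c); it is used only in (d), to guarantee that the first-appearance stages $s^*(Y\rest m)$ are cofinal in $\wock$, so that every axiom of $\Phi$ eventually gets a chance to act on a sufficiently long prefix of~$Y$.
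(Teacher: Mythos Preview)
Your approach is close in spirit to the paper's: both exploit the first-appearance stages of initial segments of~$Y$ in the collapsing approximation to freeze consistent values of~$\Phi$, and both invoke the collapsing hypothesis exactly where you say, to make those stages cofinal in~$\wock$ so that totality on~$Y$ is guaranteed. The paper's construction is a stage-by-stage enumeration (at stage~$s$, add the axiom $(Y_s\rest n,\Phi_s(Y_s\rest n))$ whenever $Y_s\rest n$ is fresh and $\Phi_s(Y_s\rest n)$ is consistent), whereas you give a closed-form definition via a maximum over prefixes; the two amount to essentially the same idea.

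There is, however, a genuine flaw in your definition of $\hat\Phi_s(\tau)$. If the outputs of the axioms applying to~$\tau$ at stage~$s$ are mutually consistent---say they are $\{0,01\}$---then every string extending $01$ is comparable with each of them, and there is no longest such~$\sigma$; your ``maximal coherent output'' does not exist. The simple repair is to set $\hat\Phi_s(\tau)=\Phi_s(\tau)$ (the union of the applicable outputs) when this union is a string, and $\hat\Phi_s(\tau)=\emptystring$ (or the longest common prefix of the outputs) otherwise. With this fix your monotonicity claim for (a)/(b) goes through: passing from a set of outputs to a superset either extends the union (if consistency persists) or collapses it to a prefix (if consistency fails), so the results are always $\preceq$-comparable. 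Along prefixes of~$Y$ all applicable axiom outputs are initial segments of~$X$, hence consistent, so the inconsistent branch never affects part~(d). (Incidentally, the appeal to \cref{prop:relative_c.e._and_relative_Turing} is unnecessary; the hypothesis $X\le_{\hT}Y$ already hands you a $\Pi^1_1$ functional~$\Phi$ with $\Phi(Y)=X$.)
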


\begin{proof}
	Let $\Phi$ be a higher Turing functional such that $\Phi(Y)=X$, and let $\seq{Y_s}$ be a collapsing approximation for~$Y$. We may assume that for all $\s\in 2^{<\w}$, $|\Phi(\s)|\le |\s|$. We define a fin-h functional $\Psi$ by recursion, by selectively copying~$\Phi$-computations. At stage~$s$ let $\Psi_{s}$ consist of all the axioms already enumerated into~$\Psi$ by stage~$s$. For every $n<\w$, if:
	\begin{itemize}
		\item $Y_s\rest n$ is not in the domain of $\Psi_s$; and
		\item $\Phi_s(Y_s\rest n)$ is consistent,
	\end{itemize}
	then we enumerate an axiom mapping $Y_s\rest n$ to $\Phi_s(Y_s\rest{n})$ into $\Psi_{s+1}$. Then~$\Psi$ is a fin-h functional. It suffices to show that $\Psi(Y)$ is total. Let $n<\w$ and let $s(n)$ be the least $s$ such that $Y_s\rest n = Y\rest n$.  Since the approximation is collapsing, there is some $k\ge n$ such that $\Phi_{s(k)}(Y\rest n) = \Phi(Y\rest n)$. Also, $Y\rest{k}$ is not in the domain of $\Psi_{s(k)}$, and $\Phi(Y\rest{k})[s(k)]$ is consistent and extends $\Phi(Y\rest n)$. It follows that $\Psi_{s(k)+1}(Y)\succeq \Phi(Y\rest n)$. 
\end{proof}

On the other hand we know that there are $O$-computable sets $X$ and $Y$ such that $X\le_{\hT}Y$ but $X\nle_{fin-h} Y$, so some assumption on the nature of the approximations is necessary.

\subsection{Higher computability and relative $\Pi^1_1$} 

\begin{proposition} \label{prop:higher_Turing_when_wock_is_preserved}
	Suppose that~$Y$ preserves $\wock$ (that is, $\w_1^Y = \wock$). Then for all~$X$, $X\le_{\hT}Y$ if and only if $X\le_{\Tur} Y\oplus H$ for some hyperarithmetic set~$H$. 
\end{proposition}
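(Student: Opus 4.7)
We prove the two directions separately.

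The direction $(\Leftarrow)$ is straightforward. Given a classical c.e.\ Turing functional $\Phi$ and a hyperarithmetic set $H$ with $\Phi(Y\oplus H) = X$, I would define a $\Pi^1_1$ functional $\Psi$ by setting $(\tau,\sigma)\in \Psi$ iff $(\tau\oplus (H\rest |\tau|), \sigma)\in \Phi$. Since $H$ is $\Delta^1_1$ and $\Phi$ is $\Sigma^0_1$, the set $\Psi$ is $\Delta^1_1$, hence $\Pi^1_1$; and plainly $\Psi(Y) = \Phi(Y\oplus H) = X$, so $X\le_{\hT} Y$.

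For $(\Rightarrow)$, let $\Phi$ be a $\Pi^1_1$ functional with $\Phi(Y) = X$, viewed as $\Sigma_1$-definable over $L_{\wock}$ with $\wock$-enumeration $\seq{\Phi_s}_{s<\wock}$. For each $n$ define
\[ s_n \;=\; \min \bigl\{ s < \wock \,:\, \exists (\tau,\sigma)\in \Phi_s,\ \tau\preceq Y \text{ and } |\sigma|>n \bigr\}. \]
This is well-defined since $\Phi(Y) = X$ has length $\w$, and the graph of $n\mapsto s_n$ is $\Sigma_1$-definable over the structure $(L_{\wock}, Y)$.

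The key step is a boundedness argument. Since $Y$ preserves $\wock$, the ordinal $\wock$ is $Y$-admissible, so $\Sigma_1$-replacement applies to the map $n\mapsto s_n$ and its image is bounded below $\wock$. Fix any $\alpha < \wock$ with $s_n < \alpha$ for all $n$, and let $H = \Phi_\alpha$, the set of axioms of $\Phi$ enumerated before stage $\alpha$. Because $\alpha$ is a computable ordinal and $\Phi\in \Pi^1_1$, $H$ lies in $L_{\wock}$ and is hence hyperarithmetic. Now from $Y\oplus H$ one computes $X$ by a classical Turing reduction: given $n$, search $H$ for any axiom $(\tau,\sigma)$ with $\tau\preceq Y$ and $|\sigma|>n$, and output $\sigma(n)$. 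Such an axiom exists by choice of $\alpha$, and since all axioms in $\Phi$ whose use extends $Y$ contribute to $\Phi(Y)=X$, any such $\sigma$ is an initial segment of $X$, so $\sigma(n) = X(n)$.

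The main obstacle is the boundedness step: one must justify applying $\Sigma_1$-replacement to the function $n\mapsto s_n$. This is precisely where the hypothesis $\w_1^Y = \wock$ is used essentially; without it, $\sup_n s_n$ could equal $\wock$ and no hyperarithmetic $H$ capturing enough of $\Phi$ need exist. The remainder of the argument is a routine translation between $\Pi^1_1$ axioms and classical Turing axioms, together with the observation that consistency problems do not arise on oracles along which $\Phi$ is already known to be total and well-defined.
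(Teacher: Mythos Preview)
Your proof is correct and follows essentially the same approach as the paper's: in the nontrivial direction you both enumerate the $\Pi^1_1$ functional in $\wock$ stages, define a $\Sigma_1(L_{\wock},Y)$ function $n\mapsto s_n$ recording when the computation reaches length~$n$, invoke $Y$-admissibility of $\wock$ to bound its range by some $\alpha<\wock$, and take $H=\Phi_\alpha$. One small technical wrinkle in your $(\Leftarrow)$ direction: as written, $\Psi$ only catches axioms of~$\Phi$ whose use has the exact form $\tau\oplus(H\rest|\tau|)$, so axioms with odd-length use could be missed; this is trivially repaired by first closing~$\Phi$ upward under extension of the use, or by defining $(\tau,\sigma)\in\Psi$ iff some $\rho\preceq \tau\oplus(H\rest|\tau|)$ has $(\rho,\sigma)\in\Phi$.
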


\begin{proof}
	If $H$ is hyperarithmetic and $X\le_{\Tur} Y \oplus H$ then we can easily devise a hyperarithmetic functional $\Phi$ such that $\Phi(Y) = X$, and so $X\le_{\hT}Y$.
	
	In the other direction, suppose that $\Phi$ is a $\Pi^1_1$ functional, $\Phi(Y)=X$ and $\w_1^Y = \wock$. Let $\seq{\Phi_s}_{s<\wock}$ be an effective enumeration of~$\Phi$. Define $f\colon \w\to \wock$ by letting $f(n)$ be the least stage $s<\wock$  such that $\Phi_s(Y)$ extends $X\rest n$. The function~$f$ is $\Delta_1$-definable over $L_{\wock}(Y)$; since~$Y$ preserves $\wock$, $f$ is bounded below~$\wock$. Let $s<\wock$ bound the range of~$f$. Then $\Phi_s(Y)=X$ and so $X\le_{\Tur} Y\oplus \Phi_s$; and $\Phi_s$ is hyperarithmetic.
\end{proof}

For $Y\in 2^\w$ we let $\Delta^1_1\oplus Y$ be the class of sets Turing reducible to $H\oplus Y$ for some hyperarithmetic set~$H$. Thus \cref{prop:higher_Turing_when_wock_is_preserved} says that if~$Y$ preserves~$\wock$ then $\Delta^1_1\oplus Y$ is the class of sets higher Turing reducible to~$Y$. Unfortunately the proposition cannot be reversed. This can be seen by considering the Borel rank of the set of oracles for which $\Delta^1_1\oplus Y$ equals the collection of sets higher Turing reducible to~$Y$, which is fairly low, whereas the Borel rank of the reals which collapse $\wock$ is high (precisely $\boldsymbol{\Sigma^0_{\wock+2}}$ \cite{Steel1978}). Alternatively we can observe that if $Y\ge_{\Tur} O$, then $\Delta^1_1\oplus Y$ is of course the collection of~$Y$-computable sets, which by \cref{prop:O_collapses_higher_computability} equals the collection of sets higher Turing reducible to~$Y$.

\begin{remark} \label{rmk:higher_Omega_and_O}
	Let~$\Omega$ denote the higher version of Chaitin's left-c.e.\ random number. A standard argument shows that $\Omega \equiv_{\hT} O$, indeed the equivalence is higher weak-truth-table. However since higher~$\Omega$ is~$\Delta^1_1$-random it does not (Turing) compute any noncomputable hyperarithmetic set, let alone Kleene's~$O$, nor is~$O$ Turing reducible to $\Omega\oplus H$ for any hyperarithmetic set~$H$. This shows that the conclusion of \cref{prop:higher_Turing_when_wock_is_preserved} fails for the oracle~$\Omega$.
\end{remark}

It is well-known that for sufficiently Cohen generic, sufficiently random and sufficiently Sacks generic (with respect to forcing with hyperarithmetic perfect sets) sets~$Y$, $\Delta^1_1(Y) = \Delta^1_1\oplus Y$; we discuss this shortly. Note that this equality does imply that $Y$ preserves $\wock$; if $\w_1^Y>\wock$ then $Y^{(\wock)}$ is not in $\Delta^1_1\oplus Y$.  Thus, if $\Delta^1_1(Y) = \Delta^1_1\oplus Y$ then for all $X$, $X\le_h Y$ if and only if $X\le_{\hT} Y$. 

\medskip

We require a notion of uniformity for this equality. First we settle some notation.

\begin{notation} \label{not:notations_for_ordinals}
	We sometimes blur the distinction between notations for ordinals and the ordinals they denote: if $\alpha\in O$ then we let $\alpha$ denote also the ordinal $|\alpha|_O$; we let $\alpha+1$ be the notation for the successor of $\alpha$, and so on. For $\alpha\in O^Y$, we let $Y^{(\alpha)} = H_\alpha^Y$ be the iteration of the Turing jump along~$\alpha$.  
\end{notation}

\begin{definition} \label{def:uniform_continuity_of_Delta11}
	Let $Y\in 2^\w$. We say that $\Delta^1_1(Y) = \Delta^1_1\oplus Y$ \emph{uniformly in~$Y$} if there is a Turing functional $\Psi$ and a higher $Y$-partial computable function~$g$ (a function whose graph is higher $Y$-c.e.) such that for all $\alpha\in O^Y$, $g(\alpha)\in O$ and $Y^{(\alpha)} = \Psi(Y,\emptyset^{(g(\alpha))},\alpha)$. 
\end{definition}

Recall that a $Y$-hyperarithmetic index for a set $A\in \Delta^1_1(Y)$ is a pair $(e,\alpha)$ where $\alpha\in O^Y$ and $A = \Phi_e(Y^{(\alpha)})$ (where here $\Phi_e$ is the $e\tth$ (\emph{lower}) Turing functional). Similarly, a $\Delta^1_1\oplus Y$-index for a set~$A$ is a pair $(e,a)$ where~$a$ is a hyperarithmetic index for a set~$H\in \Delta^1_1$ and $A = \Phi_e(H,Y)$. Then $\Delta^1_1(Y) = \Delta^1_1\oplus Y$ uniformly in~$Y$ if there is a higher $Y$-partial computable method of transforming a $Y$-hyperarithmetic index for a set $A\in \Delta^1_1(Y)$ to a $\Delta^1_1\oplus Y$-index for the same set. (The reverse direction is uniform for all oracles.)

\begin{proposition} \label{prop:when_higher_ce_is_Pi11}
The following are equivalent for $Y\in 2^\w$:
\begin{enumerate}
	\item A set is higher $Y$-c.e.\ if and only if it is $\Pi^1_1(Y)$.
	\item $\Delta^1_1(Y) = \Delta^1_1\oplus Y$ uniformly in~$Y$. 
\end{enumerate}
\end{proposition}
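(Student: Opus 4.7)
The plan is to prove \cref{prop:when_higher_ce_is_Pi11} by first showing both conditions force $\w_1^Y = \wock$, then combining \cref{prop:relative_c.e._and_relative_Turing,prop:higher_Turing_when_wock_is_preserved}. For every $Y$ the inclusions $\Delta^1_1 \oplus Y \subseteq \{X : X \le_{\hT} Y\} \subseteq \Delta^1_1(Y)$ hold; \cref{prop:relative_c.e._and_relative_Turing} identifies the middle class with the $X$'s for which both $X$ and $\w \setminus X$ are higher $Y$-c.e., and \cref{prop:higher_Turing_when_wock_is_preserved} promotes the leftmost inclusion to equality precisely when $\w_1^Y = \wock$. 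If instead $\w_1^Y > \wock$, pick $\alpha \in O^Y$ with $|\alpha|_{O^Y} = \wock$; the set $Y^{(\alpha)} \in \Delta^1_1(Y)$ encodes the whole $Y$-hyperarithmetic hierarchy $\{Y^{(\beta)} : \beta < \wock\}$, whose depth exceeds that of any hyperarithmetic $H$ (bounded by some $\emptyset^{(\gamma)}$ with $\gamma < \wock$). So $Y^{(\alpha)} \notin \Delta^1_1 \oplus Y$, contradicting (2); and via \cref{prop:relative_c.e._and_relative_Turing} this also exhibits an element of $\Delta^1_1(Y)$ which is not both higher $Y$-c.e.\ and higher $Y$-co-c.e., contradicting (1).

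For (2) $\Rightarrow$ (1), take $A \in \Pi^1_1(Y)$ and write $A = \{n : T_n^Y \text{ well-founded}\}$ for a uniformly $Y$-computable sequence of trees. By $Y$-admissibility of $\wock$, $n \in A \Iff \exists \alpha \in O\, [\operatorname{rank}(T_n^Y) \le \alpha]$. The rank inequality is uniformly $\Delta^1_1(Y)$ in $(n,\alpha)$, so by the uniformity in (2) there is a higher $Y$-partial computable $g^* \colon O \to O$ and a fixed Turing functional $\Phi_e$ such that the inequality holds iff $\Phi_e(Y, \emptyset^{(g^*(\alpha))}, n, \alpha) = 1$. Unrolling, the resulting formula for $n \in A$ is $\Sigma_1$-definable over $L_{\wock}$ using only initial segments of $Y$ (via the higher c.e.\ set $O$, the higher $Y$-c.e.\ graph of $g^*$, and $\Sigma_1(L_{\wock})$-access to $\emptyset^{(\beta)}$ for $\beta \in O$); so $A$ is higher $Y$-c.e.

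For (1) $\Rightarrow$ (2), intersecting (1) with complements and using \cref{prop:relative_c.e._and_relative_Turing} gives $\Delta^1_1(Y) \subseteq \{X : X \le_{\hT} Y\}$; combined with the preliminary inclusions and \cref{prop:higher_Turing_when_wock_is_preserved}, this yields (2) non-uniformly. For uniformity, apply (1) to the canonical $\Pi^1_1(Y)$-index of $Y^{(\alpha)}$ and its complement, uniformly in $\alpha \in O^Y$, getting a $\Pi^1_1$ functional $\Phi_\alpha$ with $\Phi_\alpha(Y) = Y^{(\alpha)}$. The proof of \cref{prop:higher_Turing_when_wock_is_preserved} shows the least stabilization stage $s_\alpha < \wock$ is $\Sigma_1$-definable over $L_{\wock}(Y)$; so $\alpha \mapsto g(\alpha)$ (with $g(\alpha) \in O$ an $O$-notation for $s_\alpha$) has higher $Y$-c.e.\ graph, and the fixed Turing functional $\Psi$ recovers $\Phi_{\alpha, s_\alpha}$ from $\emptyset^{(g(\alpha))}$ and applies it to $Y$. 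The main obstacle will be exactly this uniformity step: class equality in (1) does not automatically give an effective index translation, so we must use both the canonical effective indexing of $Y^{(\alpha)}$ in $\alpha$ and the $Y$-admissibility of $\wock$ to pin down $g$.
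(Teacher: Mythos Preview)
Your direction (2)$\Rightarrow$(1) is correct and essentially matches the paper's argument: the paper shows directly that $O^Y$ is higher $Y$-c.e.\ (which suffices by completeness), while you do the same computation for an arbitrary $\Pi^1_1(Y)$ set --- a cosmetic difference.

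In direction (1)$\Rightarrow$(2), however, there is a real gap. You write that ``class equality in (1) does not automatically give an effective index translation'', and then propose to work around this by ``applying (1) to the canonical $\Pi^1_1(Y)$-index of $Y^{(\alpha)}$ \dots\ uniformly in $\alpha$''. But these two sentences contradict each other: if the index translation is not effective, then you cannot pass uniformly from the $\Pi^1_1(Y)$-index of $Y^{(\alpha)}$ to a $\Pi^1_1$ functional $\Phi_\alpha$ with $\Phi_\alpha(Y)=Y^{(\alpha)}$, and then $s_\alpha$, $g$, and $\Psi$ are all undefined. Your final sentence acknowledges this obstacle without actually overcoming it; ``canonical indexing'' and ``$Y$-admissibility'' alone do not supply the missing translation.

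The point you are missing, and which the paper uses, is that class equality \emph{does} give effective index translation, by a standard universal-set argument. Both $\Pi^1_1(Y)$ and the higher $Y$-c.e.\ sets have universal sets and satisfy s-m-n. If the classes coincide, the universal $\Pi^1_1(Y)$ set~$P$ is higher $Y$-c.e., say $P = W_d^Y$ for some fixed~$d$; then $e\mapsto h(e)$ with $W_{h(e)}^Y = \{n:(e,n)\in W_d^Y\}$ is a computable translation from $\Pi^1_1(Y)$-indices to higher $Y$-c.e.\ indices. With this in hand, the rest of your outline (using \cref{prop:relative_c.e._and_relative_Turing} uniformly to get~$\Phi$, then defining $g(\alpha)$ as a notation for the least stage at which $\Phi_s(Y)$ is total, observing this is $\Pi^1_1(Y)$-definable, and applying (1) once more to make $g$ higher $Y$-partial computable) is exactly what the paper does.
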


\begin{proof}
	Assume~(1). Note that since there are universal $\Pi^1_1(Y)$ and higher $Y$-c.e.\ sets, the equivalence is uniform: there are computable functions translating between $\Pi^1_1(Y)$-indices and higher $Y$-c.e.\ indices. Given this, we see that the proof of \cref{prop:higher_Turing_when_wock_is_preserved} can be performed effectively in~$Y$, as follows. Given $\alpha\in O^Y$ we obtain indices for higher enumeration functionals which with oracle~$Y$ enumerate $A=Y^{(\alpha)}$ and its complement. As a result we obtain an index for a higher Turing functional $\Phi$ such that $A = \Phi(Y)$ (\cref{prop:relative_c.e._and_relative_Turing} is uniform). The relation ``$\Phi_s(Y)$ is total'' is $\Delta_1$-definable over $L_{\wock}(Y)$ (uniformly in $\Phi$ and $s<\wock$); the argument of \cref{prop:higher_Turing_when_wock_is_preserved} gives us a function~$g$ satisfying $Y^{(\alpha)} = \Psi(Y,\emptyset^{g(\alpha)},\alpha)$ which is $\Pi^1_1(Y)$-definable. Applying~(1) again, we see that~$g$ is higher $Y$-partial computable. 
	
	\
	
	Assume~(2), and let~$g$ witness the uniformity. We recall that we can view~$O$ as a subset of~$O^Y$ (as the set of notations in $O^Y$ which hereditarily do not look at the oracle~$Y$ when computing increasing sequences of notations). Uniformly in $\alpha\in O$ we can get a $\Delta^1_1(Y)$-index for 
	\[ O_\alpha^Y = \left\{ \beta\in O^Y  \,:\, \beta<\alpha \right\};\]
and the point is that $O^Y = \bigcup_{\alpha\in O} O_\alpha^Y$, as~$Y$ preserves~$\wock$. Using~$g$ and varying over $\alpha\in O$ we see how to enumerate $O^Y$ in a higher $Y$-c.e.\ fashion.
\end{proof}

\begin{porism} \label{porism:slightly_less_uniform}
	In \cref{prop:when_higher_ce_is_Pi11} we may replace the definition of uniformity of $\Delta^1_1(Y) = \Delta^1_1\oplus Y$ by the apparently weaker condition that $Y^{(\alpha)} = \Psi(Y,\emptyset^{g(\alpha)},\alpha)$ for all $\alpha\in O$ (rather than all $\alpha\in O^Y$). Spector showed (see \cite[II2.4]{Sacks1990}) that there is a Turing functional $\Gamma$ such that for all $\alpha\in O^Y$, $O^Y_\alpha = \Gamma(Y^{(\alpha+1)})$. If $\alpha\in O$ then $\alpha+1\in O$. In the proof of (2)$\Then$(1) we apply~$g$ to $\alpha+1$. 
\end{porism}

\subsection{The behaviour of generics for various forcing notions} 

We discuss \cref{prop:when_higher_ce_is_Pi11} in the context of Cohen genericity, randomness and Sacks genericity. The ideas here are certainly not new, but some are hard to find in print in the form below. 

\subsubsection{Cohen generics}

It is well-known, via the analysis of Cohen forcing, that if~$G$ is Cohen generic then $\Delta^1_1(G) = \Delta^1_1\oplus G$ uniformly. We will employ the following direct definition of the class of $\Sigma^0_\alpha$ sets; see for example \cite{AshKnight2000}.

\begin{definition} \label{def:alternative_hyp_hierarchy}
	For $\alpha\in O$ we define the class of $\Sigma^0_\alpha$ sets (of numbers and of reals) and indices for these sets. For $\alpha=1$, the $\Sigma^0_1$ sets are the c.e.\ sets (and c.e.\ open sets of reals), with $(e,1)$ being the index of the~$e\tth$ such set in some effective listing. 
	
 Let $\alpha>1$. A set is $\Sigma^0_{<\alpha}$ if it is $\Sigma^0_\beta$ for some $\beta<_{O} \alpha$. A $\Sigma^0_{<\alpha}$-index for such a set is a $\Sigma^0_\beta$-index for some $\beta<_O \alpha$. A set~$A$ is $\Sigma^0_\alpha$ if it is the effective union of $\Pi^0_{<\alpha}$ sets. That is, if there is a c.e.\ set~$W$ such that $A$ is the union of the complements of the sets whose $\Sigma^0_{<\alpha}$-indices are in~$W$. The $\Sigma^0_\alpha$-index for this union is $(e,\alpha)$, where~$W$ is the $e\tth$ c.e.\ set. Note that we do not require that all elements of~$W$ are $\Sigma^0_{<\alpha}$ indices, so $(e,\alpha)$ is a $\Sigma^0_\alpha$ code for all~$e<\w$. 
\end{definition}

Let $Y\in 2^\w$. Note that $Y^{(\omega)}$ is not a $\Sigma^0_\omega(Y)$-complete set, as it is only $\Delta^0_\omega(Y)$ (a uniform disjoint union of $\Sigma^0_n(Y)$ sets for $n$ unbounded). For this reason, we use alternative notation (used in \cite{GreenbergMontalbanSlaman2013}, following ideas from \cite{AshKnight2000}) denoting $\Sigma^0_\alpha(Y)$-complete sets. For $n<\w$ let $Y_{(n)} = Y^{(n)}$. For infinite $\alpha \in O$ let $Y_{(\alpha)} = Y^{(\alpha+1)}$. Also if $\alpha$ is limit let $Y_{(\alpha-1)} = Y^{(\alpha)}$, whereas for $\alpha$ successor $Y_{(\alpha-1)}$ keeps its obvious meaning. For all $\alpha \ge 1$, a subset of~$\w$ is $\Sigma^0_\alpha(Y)$ if and only if it is c.e.\ in $Y_{(\alpha-1)}$. For all $\alpha\ge 1$, $Y_{(\alpha)}$ is recursively isomorphic to the set of numbers~$e$ such that~$Y$ belongs to the~$e\tth$ $\Sigma^0_\alpha$ set of reals. The isomorphism is uniform in~$\alpha$.

When discussing open and closed sets we run into an annoying fact: there is an open set~$U$ which is a $\Sigma^0_2$ set of reals, but for which the predicate $[\s]\subseteq U$ is not~$\Sigma^0_2$. 
The fact that such a set is~$\Sigma^0_2$ will not be too helpful for us. For this reason we call an open set \emph{$\Sigma^0_\alpha$-open} if the set of cylinders contained in it is a $\Sigma^0_\alpha$ set of numbers; equivalently, if it is $\Sigma^0_1(\emptyset_{(\alpha-1)})$. The complement of such a set is called \emph{$\Pi^0_\alpha$-closed}. 

\medskip

The following is the effective version of the fact that all Borel sets have the property of Baire. Recall that for an open set~$V$ we let $\partial V = \bar V\setminus V$, the boundary of~$V$, be the set-theoretic difference between the closure of~$V$ and~$V$ itself. 

\begin{proposition} \label{prop:effective_Baire_property}
	Suppose that~$A$ is a $\Sigma^0_\alpha$ set of reals. Then there is a $\Sigma^0_\alpha$-open set~$U$ such that the symmetric difference $A\symdiff U$ is contained in the union $\bigcup \partial V_n$ where each $V_n$ is a $\Sigma^0_{<\alpha}$-open set. Indices for~$U$ and each~$V_n$ can be obtained effectively from an index for~$A$. 
\end{proposition}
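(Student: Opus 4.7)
The plan is to proceed by effective transfinite induction on $\alpha \in O$, following the recursive structure of \cref{def:alternative_hyp_hierarchy}.

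\textbf{Base case} ($\alpha = 1$): $A$ is itself a $\Sigma^0_1$-open set; take $U = A$ and use no $V_n$'s. \textbf{Inductive step} ($\alpha > 1$): From the $\Sigma^0_\alpha$-index of $A$, effectively decompose $A = \bigcup_i C_i$ with each $C_i$ being $\Pi^0_{\gamma_i}$ for some $\gamma_i < \alpha$. Apply the inductive hypothesis uniformly to each $C_i^c \in \Sigma^0_{\gamma_i}$ to obtain a $\Sigma^0_{\gamma_i}$-open set $U_i$ and $\Sigma^0_{<\gamma_i}$-open sets $(V^i_m)_m$ with $C_i^c \symdiff U_i \subseteq \bigcup_m \partial V^i_m$. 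Let $U = \bigcup\{[\tau] : \exists i,\ [\tau] \cap U_i = \emptyset\}$; as a set of reals, $U = \bigcup_i \overline{U_i}^c$. (For $\supseteq$: every point outside $\overline{U_i}$ lies in a cylinder avoiding $U_i$. For $\subseteq$: if $[\tau]$ is clopen and $[\tau] \cap U_i = \emptyset$, then $[\tau] \cap \overline{U_i} = \overline{[\tau] \cap U_i} = \emptyset$.)

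The symmetric difference bound is then routine. Using $(\bigcup_i X_i) \symdiff (\bigcup_i Y_i) \subseteq \bigcup_i (X_i \symdiff Y_i)$, $X \symdiff Y = X^c \symdiff Y^c$, and the triangle inequality for $\symdiff$ with $U_i$ as the intermediate set, we obtain $A \symdiff U \subseteq \bigcup_i (C_i^c \symdiff \overline{U_i}) \subseteq \bigcup_i \bigl((C_i^c \symdiff U_i) \cup \partial U_i\bigr) \subseteq \bigcup_{i,m} \partial V^i_m \cup \bigcup_i \partial U_i$, a countable union of $\partial W$'s for $\Sigma^0_{<\alpha}$-open sets $W$ (each $U_i$ is $\Sigma^0_{\gamma_i}$-open and each $V^i_m$ is $\Sigma^0_{<\gamma_i}$-open, hence both are $\Sigma^0_{<\alpha}$-open).

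The key step, and the main obstacle, is verifying that $U$ is $\Sigma^0_\alpha$-open in the strict sense that its full cylinder-code set $\{\tau : [\tau] \subseteq U\}$ is $\Sigma^0_\alpha$, rather than merely that $U$ is an effective union of cylinders. The predicate ``$[\tau] \cap U_i = \emptyset$'' negates ``$\exists \s$ comparable with $\tau$ with $[\s] \subseteq U_i$'', hence is $\Pi^0_{\gamma_i} \subseteq \Pi^0_{<\alpha}$ uniformly in $i$, so $T' = \{\tau : \exists i,\ [\tau] \cap U_i = \emptyset\}$ is $\Sigma^0_\alpha$ by \cref{def:alternative_hyp_hierarchy}. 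Since $T'$ is upward closed under extension (if $\tau \in T'$ with witness $i$ and $\tau' \succeq \tau$, then $[\tau'] \cap U_i \subseteq [\tau] \cap U_i = \emptyset$), and cylinders in $2^\w$ are compact, we have $[\tau] \subseteq U$ if and only if there exists $n \ge |\tau|$ such that every length-$n$ extension of $\tau$ belongs to $T'$. This predicate is $\Sigma^0_\alpha$ since $\Sigma^0_\alpha$ is closed under finite intersection and countable union, and an index is obtained effectively. Uniformity of indices throughout the recursion is guaranteed by effective transfinite recursion on $O$.
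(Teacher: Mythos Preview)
Your proof is correct and follows essentially the same approach as the paper's: induction on~$\alpha$, passing from a $\Sigma^0_{\gamma_i}$-open approximant $U_i$ for $C_i^c$ to $\overline{U_i}^c$ for $C_i$ (adding $\partial U_i$ to the boundary list), and then taking the union over~$i$. The paper presents this as a two-step induction (a ``$\Pi$ step'' producing $W = \overline{U}^c$, followed by the union for the ``$\Sigma$ step''), while you fold both steps into one, but the content is identical. Your explicit verification that the full cylinder-code set $\{\tau : [\tau]\subseteq U\}$ is $\Sigma^0_\alpha$ via the compactness argument is a detail the paper leaves implicit.
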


\begin{proof}
	If the proposition holds for~$\alpha$ then for every $\Pi^0_\alpha$ set~$B$ there is a $\Sigma^0_{\alpha+1}$-open set~$W$ such that the symmetric difference $B\symdiff W$ is cointained in the union $\bigcup \partial V_n$ where each~$V_n$ is $\Sigma^0_\alpha$-open; if $U$ is the open set given for the complement of~$B$ then $W$ is the complement of the closure of~$U$, and to the list of sets~$V_n$ we add the set~$U$. Once this is known, the proposition follows by induction on $\alpha$, using the fact that $(\bigcup A_n) \symdiff (\bigcup V_n) \subseteq \bigcup (A_n\symdiff V_n)$. 
\end{proof}


	Let $\alpha\in O$. A real~$G\in 2^\w$ is called ${<\alpha}$-Cohen generic if it does not lie on the boundary of any~$\Sigma^0_{<\alpha}$-open set. For example $n$-genericity is $<(n+1)$-genericity and arithmetical genericity is $<\w$-genericity.	\Cref{prop:effective_Baire_property} implies that if~$G$ is ${<\alpha}$-Cohen generic then $G_{(\alpha)}$ is c.e.\ in~$G \oplus \emptyset_{(\alpha-1)}$. This implies that $G_{(\alpha-1)}$ is computable in $G\oplus\emptyset_{(\alpha-1)}$. Unravelling the notation, this means:
\begin{itemize}
	\item If $n<\w$ and~$G$ is $n$-generic, then $G^{(n)}\equiv_\Tur G \oplus \emptyset^{(n)}$;
	\item If $\alpha\ge \w$ and~$G$ is ${<\alpha}$-generic, then $G^{(\alpha)}\equiv_\Tur G \oplus \emptyset^{(\alpha)}$. 
\end{itemize}

The equivalence is uniform in~$\alpha$. 

In \cite{Pi11RandomnessPaper} we show that a $\Delta^1_1$-Cohen generic set preserves~$\wock$ if and only if it is $\Sigma^1_1$-generic. \Cref{porism:slightly_less_uniform} implies that if~$G$ is $\Sigma^1_1$-generic then a set is $\Pi^1_1(G)$ if and only if it is higher~$G$-c.e.

\subsubsection{Random reals}

It is well-known that if~$Z$ is 2-random then~$Z$ is generalised low: $Z' \equiv_{\Tur} Z\oplus\emptyset'$.

The following is the effective version of the fact that all Borel sets are Lebesgue measurable. It is treated in the theses of Kurtz and Kautz (for the arithmetic hierarchy); see \cite[Thm 6.8.3]{DowneyH2010}. 

\begin{proposition} \label{prop:effective_Lebesgue_measurability}
	Let $\alpha\in O$. For any $\Sigma^0_\alpha$ set of reals~$A$ and positive $q\in \Rat$ there are:
	\begin{itemize}
		\item a $\Sigma^0_\alpha$-open set $U\supseteq A$ such that $\leb(U\setminus A) \le q$; and
		\item a $\Pi^0_{<\alpha}$-closed set $F\subseteq A$ such that $\leb(A\setminus F) \le q$.
	\end{itemize}
	An index for~$U$ can be obtained effectively from an index for~$A$ and from~$q$, using the oracle $\emptyset_{(\alpha-1)}$. An index for~$F$ can be obtained effectively from an index for~$A$ and from~$q$, using the oracle $\emptyset_{(\alpha)}$. All calculations are uniform in~$\alpha$. 
\end{proposition}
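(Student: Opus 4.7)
The plan is to proceed by effective transfinite induction on $\alpha\in O$, establishing both clauses of the proposition simultaneously. For the base case $\alpha=1$, where $A$ is a c.e.\ open set, take $U=A$, which gives the $\Sigma^0_1$-open outer approximation for free; for the inner approximation, $\lambda(A)$ is left-c.e., so using the oracle $\emptyset'$ we effectively find $N$ such that the finite union $F$ of the first $N$ cylinders enumerated into $A$ satisfies $\lambda(A\setminus F)\le q$, and this $F$ is clopen (the natural base case for ``$\Pi^0_{<\alpha}$-closed'').

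For the inductive step $\alpha>1$, write $A=\bigcup_n C_n$ as an effective union of $\Pi^0_{<\alpha}$-closed sets, where each $C_n$ is the complement of a $\Sigma^0_{\beta_n}$-open set $V_n$ with $\beta_n<_O\alpha$; indices for the $V_n$ are obtained effectively from the given $\Sigma^0_\alpha$-index for $A$. To construct the outer approximation, apply the inductive hypothesis (inner approximation clause) to each $V_n$ with tolerance $q/2^{n+1}$, obtaining $\Pi^0_{<\alpha}$-closed $F_n\subseteq V_n$ with $\lambda(V_n\setminus F_n)\le q/2^{n+1}$, and set $U=\bigcup_n F_n^c$. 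Each $F_n^c$ is $\Sigma^0_{<\alpha}$-open, so $U$ is $\Sigma^0_\alpha$-open with an index obtainable effectively from $\emptyset_{(\alpha-1)}$; since $F_n^c\supseteq C_n$, we have $U\supseteq A$, and $U\setminus A\subseteq\bigcup_n(V_n\setminus F_n)$, so $\lambda(U\setminus A)\le q$.

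For the inner approximation, apply the inductive hypothesis (outer clause) to each $V_n$ with tolerance $q/2^{n+3}$, obtaining $\Sigma^0_{<\alpha}$-open $U_n\supseteq V_n$ with $\lambda(U_n\setminus V_n)\le q/2^{n+3}$; then $U_n^c\subseteq C_n$ is $\Pi^0_{<\alpha}$-closed. Take $F=\bigcup_{n\le N}U_n^c$ for a suitable $N$, which is a finite union of $\Pi^0_{<\alpha}$-closed sets and hence $\Pi^0_{<\alpha}$-closed. A short calculation gives
\[\lambda\bigl(A\setminus\textstyle\bigcup_{n\le N}U_n^c\bigr)\le \lambda\bigl(A\setminus\textstyle\bigcup_{n\le N}C_n\bigr)+\textstyle\sum_{n\le N}\lambda(U_n\setminus V_n),\]
so with the second term bounded by $q/4$ it remains to find $N$ with $\lambda(A)-\mu_N\le 3q/4$, where $\mu_N:=\lambda(\bigcup_{n\le N}C_n)$.

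This is the main obstacle: the $\mu_N$ increase to $\lambda(A)$, but each $C_n$ is closed, so $\mu_N$ is approximable from above rather than below (it is right-c.e.\ in $\emptyset_{(\alpha-1)}$), and no \emph{a priori} upper bound for $\lambda(A)$ is at hand. The remedy is to first invoke the outer clause just established to obtain a $\Sigma^0_\alpha$-open $U^*\supseteq A$ with $\lambda(U^*\setminus A)\le q/8$, so that $\lambda(U^*)-q/8\le\lambda(A)\le\lambda(U^*)$; then $\lambda(U^*)$ is left-c.e.\ in $\emptyset_{(\alpha-1)}$, so both $\lambda(U^*)$ and $\mu_N$ are computable from $\emptyset_{(\alpha)}$ to any desired precision. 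Searching effectively in $\emptyset_{(\alpha)}$ we find $N$ with $\lambda(U^*)-\mu_N\le 5q/8$; this search terminates since $\mu_N\to\lambda(A)\ge\lambda(U^*)-q/8$, and the resulting $N$ satisfies $\lambda(A)-\mu_N\le 5q/8\le 3q/4$, completing the construction. Uniformity in $\alpha$ is handled by applying the effective transfinite recursion theorem, with the oracle requirements $\emptyset_{(\alpha-1)}$ for $U$ and $\emptyset_{(\alpha)}$ for $F$ propagating through the induction.
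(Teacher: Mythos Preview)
The paper does not actually prove this proposition; it merely attributes the result to Kurtz and Kautz and points to \cite[Thm~6.8.3]{DowneyH2010}. Your argument is a correct, self-contained proof by effective transfinite recursion on~$\alpha$, and it tracks the oracle requirements carefully enough to recover the stated uniformities.

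One small simplification is available in your inner-approximation step. You worry that $\mu_N = \leb(\bigcup_{n\le N} C_n)$ is only right-c.e.\ in $\emptyset_{(\alpha-1)}$, which prompts the detour through~$U^*$ to pin down $\leb(A)$. In fact $\mu_N$ is \emph{computable} from $\emptyset_{(\alpha-1)}$: the complement $\bigcap_{n\le N} V_n$ is $\Sigma^0_\gamma$-open for some $\gamma<\alpha$, its measure is left-c.e.\ in $\emptyset_{(\gamma-1)}$ and hence computable from $\emptyset_{(\gamma)}\le_\Tur \emptyset_{(\alpha-1)}$. Consequently $\leb(A)=\sup_N \mu_N$ is left-c.e.\ in $\emptyset_{(\alpha-1)}$ directly, and one can simply compute $\leb(A)$ to precision $q/4$ using $\emptyset_{(\alpha)}$ and then search for~$N$ with $\mu_N$ within $q/2$ of that approximation. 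Your route via $U^*$ is correct but adds an unnecessary layer.
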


A real is called $\alpha$-random if it avoids all nested tests $\seq{A_n}$ where $A_n$ are uniformly $\Sigma^0_\alpha$ sets (not necessarily open). We require that $\leb(A_n)\le 2^{-n}$. \Cref{prop:effective_Lebesgue_measurability} implies that a real is $\alpha$-random if and only if it is ML-random relative to $\emptyset_{(\alpha-1)}$. Uniformly in~$\alpha$ we have a universal ML-test $\seq{U^\alpha_n}$ relative to $\emptyset_{(\alpha-1)}$. An \emph{$\alpha$-randomness deficiency} of an $\alpha$-random real~$Z$ is some~$n$ such that $Z\notin U^\alpha_n$. If $\seq{V_n}$ is any ML-test relative to $\emptyset_{(\alpha-1)}$ (so the sets $V_n$ are uniformly $\Sigma^0_\alpha$-open) then from an~$\alpha$-randomness deficiency of an~$\alpha$-random real~$Z$ and an index for the sequence $\seq{V_n}$ we can effectively find some~$m$ such that $Z\notin V_m$. If $\beta<_O \alpha$ and $Z$ is $\alpha$-random then of course it is also $\beta$-random, and a $\beta$-randomness deficiency of~$Z$ can be effectively found from an $\alpha$-randomness deficiency of~$Z$.

Chong and Yu \cite{ChongYu} observed that $\Delta^1_1(Z) = \Delta^1_1\oplus Z$ uniformly for any $\Delta^1_1$-random real~$Z$ which preserves $\wock$. We prove a more precise version of this result. 

\begin{proposition} \label{prop:generalised_lowness_for_randoms}
	Let $\alpha\ge 2$. If $Z$ is $\alpha$-random then $Z_{(\alpha-1)}\le_\Tur Z\oplus \emptyset_{(\alpha-1)}$. An index for the reduction can be found effectively from an $\alpha$-randomness deficiency of~$Z$. This is uniform in~$\alpha$. 
\end{proposition}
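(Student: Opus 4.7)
The plan is to adapt the classical argument that every $2$-random~$Z$ satisfies $Z'\le_{\Tur} Z\oplus \emptyset'$, using the effective Baire category / Lebesgue measurability machinery assembled just before the proposition. Set $A_n = \{Y\in 2^\w : n\in Y_{(\alpha-1)}\}$. The first point is that both $A_n$ and its complement are uniformly $\Sigma^0_\alpha$: since $Y_{(\alpha-1)}$ is $\Sigma^0_\alpha(Y)$-complete in a uniform fashion, the predicate ``$n\in Y_{(\alpha-1)}$'' is $\Sigma^0_\alpha(Y)$ uniformly in~$n$, and the same is true of ``$n\notin Y_{(\alpha-1)}$'' because $Y_{(\alpha-1)}$ is decidable from itself.

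Next I would invoke \cref{prop:effective_Lebesgue_measurability} twice. Uniformly in $n$ and $k$, using the oracle $\emptyset_{(\alpha-1)}$, it produces indices for $\Sigma^0_\alpha$-open sets $U_{n,k}\supseteq A_n$ with $\leb(U_{n,k}\setminus A_n)\le 2^{-k}$ \emph{and} $V_{n,k}\supseteq A_n^c$ with $\leb(V_{n,k}\setminus A_n^c)\le 2^{-k}$. Since $U_{n,k}\cup V_{n,k} = 2^\w$, a direct inclusion--exclusion gives $\leb(U_{n,k}\cap V_{n,k}) \le 2^{-k+1}$. Define
\[ T_k = \bigcup_{n<\w} \bigl(U_{n,k+n+1}\cap V_{n,k+n+1}\bigr), \]
which is uniformly $\Sigma^0_\alpha$-open (from $\emptyset_{(\alpha-1)}$) and satisfies $\leb(T_k)\le 2^{-k+1}$. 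After a harmless shift of indices, $\seq{T_k}$ is a Martin-L\"of test relative to $\emptyset_{(\alpha-1)}$, that is, an $\alpha$-test.

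Because $Z$ is $\alpha$-random, the machinery recalled just before the proposition lets us effectively find, from an $\alpha$-randomness deficiency of $Z$ together with the $\alpha$-test index of $\seq{T_k}$, some $k^*$ with $Z\notin T_{k^*}$. Then for every~$n$, $Z$ misses $U_{n,k^*+n+1}\cap V_{n,k^*+n+1}$ but still lies in $U_{n,k^*+n+1}\cup V_{n,k^*+n+1}=2^\w$, so it lies in exactly one of the two. To compute $Z_{(\alpha-1)}(n)$ from $Z\oplus \emptyset_{(\alpha-1)}$, run the two semi-decisions ``$Z\in U_{n,k^*+n+1}$'' and ``$Z\in V_{n,k^*+n+1}$'' in parallel; each is $\Sigma^0_1(Z\oplus\emptyset_{(\alpha-1)})$, and by the dichotomy exactly one halts. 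Output $1$ if the $U$-side halts first and $0$ if the $V$-side does. An index for this reduction is produced effectively from $k^*$, and hence effectively from any $\alpha$-randomness deficiency of $Z$; all steps are uniform in~$\alpha$.

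The main obstacle I anticipate is the natural temptation to approximate $A_n$ from \emph{inside} by $\Pi^0_{<\alpha}$-closed sets, which by \cref{prop:effective_Lebesgue_measurability} would cost the stronger oracle $\emptyset_{(\alpha)}$ and break the statement. The trick that avoids this is precisely the remark in the first paragraph: $A_n^c$ is itself $\Sigma^0_\alpha$ (not merely $\Pi^0_\alpha$), so \cref{prop:effective_Lebesgue_measurability} supplies outer $\Sigma^0_\alpha$-open approximations of both $A_n$ and $A_n^c$ from $\emptyset_{(\alpha-1)}$. Once this observation is in hand, the remainder is bookkeeping, and everything goes through uniformly in~$\alpha$.
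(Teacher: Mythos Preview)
Your proof is correct, but one step in the first paragraph is misstated: $Y_{(\alpha-1)}$ is \emph{not} $\Sigma^0_\alpha(Y)$-complete (that is $Y_{(\alpha)}$). What you actually need, and what the paper records, is that a set is $\Sigma^0_\alpha(Y)$ iff it is c.e.\ in $Y_{(\alpha-1)}$; since both $Y_{(\alpha-1)}$ and its complement are computable in $Y_{(\alpha-1)}$, both are $\Sigma^0_\alpha(Y)$ uniformly. So your ``decidable from itself'' comment is the right idea once read against the correct background fact.

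Your route genuinely differs from the paper's. The paper treats successor $\alpha=\beta+1$ by sandwiching each $\Sigma^0_\beta$ set~$A$ between a $\Pi^0_{<\beta}$-closed $F_n\subseteq A$ and a $\Sigma^0_\beta$-open $U_n\supseteq A$, uses $\alpha$-randomness to find~$n$ with $Z\notin U_n\setminus F_n$, reduces ``$Z\in A$?'' to ``$Z\in F_n$?'', and then runs a second round of clopen approximation to decide the latter; limit~$\alpha$ is handled separately via the decomposition $Z^{(\alpha)}=\bigoplus_{\gamma<_O\alpha}Z^{(\gamma)}$. You instead take outer $\Sigma^0_\alpha$-open approximations of \emph{both} $A_n$ and $A_n^c$, build a single test from the overlaps, and decide $Z_{(\alpha-1)}(n)$ directly by a pair of $\Sigma^0_1(Z\oplus\emptyset_{(\alpha-1)})$ searches. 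This avoids both the inner closed approximation (which by \cref{prop:effective_Lebesgue_measurability} would cost $\emptyset_{(\alpha)}$) and the successor/limit case split; the price is having to observe that $A_n^c$ is itself $\Sigma^0_\alpha$, exactly the point you flag in your final paragraph.
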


In short, for all $\alpha\ge 1$, if $Z$ is ML-random relative to $\emptyset^{(\alpha)}$ then $Z^{(\alpha)}\equiv_\Tur Z \oplus \emptyset^{(\alpha)}$. Note the difference at infinite levels compared with Cohen genericity. For example, if~$G$ is arithmetically Cohen generic then $G^{(\omega)}\equiv_{\Tur}G\oplus \emptyset^{(\omega)}$. In contrast, by forcing with arithmetical sets with positive measure one obtains an arithmetically random set~$Z$ for which the equation fails. 

\begin{proof}
We show this in two steps. First we consider successor ordinals~$\alpha$. Suppose that $\alpha = \beta+1$. We need to show that if~$Z$ is $\beta+1$-random (ML random relative to $\emptyset_{(\beta)}$) then $Z_{(\beta)}\le_\Tur Z\oplus\emptyset_{(\beta)}$. Given a $\Sigma^0_\beta$ set of reals~$A$ we want to decide whether $Z\in A$ or not. Using $\emptyset_{(\beta)}$ we find sequences $\seq{U_n}$ and $\seq{F_n}$ such that $U_n$ is $\Sigma^0_\beta$-open, $F_n$ is $\Pi^0_{<\beta}$-closed, $F_n \subseteq A \subseteq U_n$ and $\leb (U_n\setminus F_n)\le 2^{-n}$. The sequence $\seq{U_n\setminus F_n}$ is a $\emptyset_{(\beta)}$-ML test, and so we can find some~$n$ such that $Z\notin (U_n\setminus F_n)$. Thus $Z\in A$ if and only if $Z\in F_n$. To determine whether $Z\in F_n$ we employ a similar process. $F_n$ is $\Pi^0_\gamma$-closed for some $\gamma<\beta$. Relativising the case $\alpha=1$ to $\emptyset_{(\gamma)}$ we obtain a $\emptyset_{(\gamma)}$-computable sequence $\seq{C_m}$ of clopen supersets of $F_n$ such that $\leb(C_m\setminus F_n)\le 2^{-m}$. Again this is a $\emptyset_{(\gamma)}$-test and so we can find some $m$ such that $Z\notin (C_m\setminus F_n)$. We conclude that $Z\in A$ if and only if $Z\in C_m$, and this can of course be checked directly with the oracle~$Z$. 

Next we consider limit ordinals~$\alpha$. If $Z$ is $\alpha$-random (ML-random relative to~$\emptyset^{(\alpha)}$) then uniformly in $\gamma<_O \alpha$ it is $\gamma$-random (by this we mean that we can, uniformly in $\gamma$, compute an upper bound on the $\gamma$-randomness deficiency of~$Z$). As $Z_{(\alpha-1)} = Z^{(\alpha)}$ is the effective join $\bigoplus_{\gamma<_O \alpha} Z^{(\gamma)}$, to compute $Z^{(\alpha)}$ it suffices to compute each $Z^{(\gamma)}$, and we may restrict ourselves to successor ordinals~$\gamma$. However with oracle $\emptyset^{(\alpha)}$ we uniformly obtain $\emptyset^{(\gamma)}$ and we have already shown that $Z^{(\gamma)}\le_{\Tur} Z\oplus \emptyset^{(\gamma)}$ uniformly.
\end{proof}

\begin{remark} \label{rmk:weakly_not_enough}
	The components of the $\emptyset_{(\beta)}$-ML tests described in the proof of \cref{prop:generalised_lowness_for_randoms} are all $\Sigma^0_{\beta}$ rather than $\Sigma^0_{\beta+1}$. These are equivalent to weak $\beta$-tests (generalized $\emptyset_{(\beta-1)}$-ML tests). It would seem that we could relax the randomness requirement. However the key is the uniformity in~$A$: for each~$A$ we have a different test, and the full $\beta+1$-randomness deficiency of~$Z$ is used to find components of these tests that~$Z$ avoids. Indeed, Lewis, Montalb\'an and Nies \cite{LewisMontalbanNies2007} showed that there is a weakly 2-random set which is not generalized low. 	
\end{remark}

Stern \cite{Stern1975} and independently Chong, Nies and Yu~\cite{ChongNiesYu2008} showed that a $\Delta^1_1$-random real is $\Pi^1_1$-random if and only if it preserves~$\wock$. Suppose that~$Z$ is $\Pi^1_1$-random. Then it is $\Pi^1_1$-ML random (higher ML-random). From a hyperarithmetic index for a ML-test relative to some hyperarithmetic oracle we can effectively find an index for this test as a sequence of uniformly~$\Pi^1_1$ open sets. Hence from a randomness deficiency for~$Z$ as a higher ML-random real we can uniformly in $\alpha\in O$ find an $\alpha$-randomness deficiency for~$Z$. Consequently, $\Delta^1_1(Z) = \Delta^1_1\oplus Z$ uniformly. Hence, if~$Z$ is $\Pi^1_1$-random, then a set is $\Pi^1_1(Z)$ if and only if it is higher~$Z$-c.e.

\begin{remark} \label{rmk:Chong_Yu_and_Demuth}
	Chong and Yu~\cite{ChongYu} proved an analogue of Demuth's theorem: If~$X$ is $\Pi^1_1$-random, $Y\le_h X$ and $Y$ is not hyperarithmetic, then $\deg_h(Y)$ contains a $\Pi^1_1$-random sequence. The structure of their argument follows that of Demuth's theorem; this can be further clarified using Higher Turing reducibility. In the first step we already know that $Y \le_\Tur X\oplus H$ for some hyperarithmetic set~$H$. Further, being $\Pi^1_1$-random, $X$ is $\Delta^1_1$-dominated: every~$\Delta^1_1(X)$ function is bounded by a hyperarithmetic one. Applying this to the use of the reduction, we see that~$Y$ is higher truth-table reducible to~$X$. This implies that~$Y$ is higher ML-random for the image measure, and since it is not hyperarithmetic, it is not an atom of this measure. The second step of the proof is now identical to the classical one: if~$Y$ is higher ML-random for some hyperarithmetic measure and is not an atom of this measure, then the higher Turing degree of~$Y$ contains a higher ML-random sequence. Being~$\Pi^1_1$-random, $X$ preserves~$\wock$, and so~$Y$ preserves~$\wock$ as well, which implies that any higher ML-random sequence in $\deg_{\hT}(Y)$ is in fact $\Pi^1_1$-random.
\end{remark}

\subsubsection{Sacks generics}

We consider sets which are generic for forcing with perfect hyperarithmetic closed sets. Sacks (see \cite[IV.5]{Sacks1990}) showed that if~$G$ is sufficiently generic for this notion of forcing then~$G$ preserves $\wock$ and has minimal hyperdegree. The proof shows that $\Delta^1_1(G) = \Delta^1_1\oplus G$. However, this is not uniform. We thank Adam Day for pointing this out. 

\begin{proposition} \label{prop:Sacks_generics}
	If~$G$ is sufficiently generic for hyperarithmetic Sacks forcing, then $O^G$ is not higher $G$-c.e.
\end{proposition}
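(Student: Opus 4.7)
The plan is to argue by contradiction, using \cref{prop:when_higher_ce_is_Pi11} to translate the assumption into a uniformity statement which is then refuted by a Sacks-forcing density argument whose main ingredient is hyperarithmetic cone avoidance. Suppose $G$ is sufficiently Sacks generic (so that $G$ preserves $\wock$ and has minimal hyperdegree) and assume, for contradiction, that $O^G$ is higher $G$-c.e. Since $O^G$ is $\Pi^1_1(G)$-complete via a $G$-independent computable many-one reduction, every $\Pi^1_1(G)$ set is then higher $G$-c.e.\ with indices translated computably, so by \cref{prop:when_higher_ce_is_Pi11} there are a Turing functional $\Phi$ and a higher $G$-partial computable function $g$ with $O^G \subseteq \dom(g)$, $g(\alpha) \in O$, and $G^{(\alpha)} = \Phi(G, \emptyset^{(g(\alpha))}, \alpha)$ for every $\alpha \in O^G$. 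By \cref{prop:higher_Turing_when_wock_is_preserved} applied to $g$, there is a hyperarithmetic $H$ with $g \le_\Tur G \oplus H$; restricting to $O \subseteq O^G$ yields a $(G \oplus H)$-computable function $g : O \to O$ witnessing the uniform reduction.

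Fix the triple $(\Phi, e, H)$ encoding this data, with $e$ the Turing-index computing $g$ from $G \oplus H$. I would show that the set of hyperarithmetic perfect trees forcing the failure of this specific uniform reduction is dense in hyperarithmetic Sacks forcing; since such triples range over a countable parameter set, genericity then produces the contradiction. Given a hyperarithmetic perfect tree $T$ and a notation $\gamma \in O$, fuse over the $\Delta^1_1$ set $O_\gamma = \{\alpha \in O : \alpha <_O \gamma\}$ to obtain a hyperarithmetic perfect subtree $T'_\gamma \subseteq T$ on which the $(X \oplus H)$-computable map $X \mapsto g(X)(\alpha)$ is constant for every $\alpha \in O_\gamma$, with value $\beta_\gamma(\alpha) \in O$. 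The image $\beta_\gamma[O_\gamma]$ is $\Sigma^1_1$ (image of a $\Delta^1_1$ set under a hyperarithmetic function) and hence bounded in $O$ by $\Sigma^1_1$-bounding by some $\delta(\gamma) < \wock$. The function $\delta$ is $\Delta^1_1$ in the fixed parameters $(T, H, \Phi, e)$, so $\delta[O]$ is $\Sigma^1_1$; if $\delta(\gamma) \ge |\gamma|_O$ held for every $\gamma \in O$, then $\delta[O]$ would be an unbounded $\Sigma^1_1$ subset of $O$, contradicting $\Sigma^1_1$-bounding. Hence some $\gamma \in O$ has $\delta(\gamma) < |\gamma|_O$, and picking $\alpha \in O_\gamma$ with $|\alpha|_O > \delta(\gamma)$ yields $|\beta_\gamma(\alpha)|_O < |\alpha|_O$. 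For this $\alpha$, the identity $X^{(\alpha)} = \Phi(X, \emptyset^{(\beta_\gamma(\alpha))}, \alpha)$ forced on $[T'_\gamma]$ entails $\emptyset^{(\alpha)} \le_\Tur X \oplus \emptyset^{(\beta_\gamma(\alpha))}$ for every $X \in [T'_\gamma]$. By the hyperarithmetic cone-avoidance property of Sacks forcing, a further subtree $T'' \subseteq T'_\gamma$ forces this reduction to fail, as required.

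The principal obstacle is the verification of hyperarithmetic cone avoidance for hyperarithmetic Sacks forcing: for sufficiently Sacks-generic $G$ and hyperarithmetic $A, B$ with $A \not\le_\Tur B$, one has $A \not\le_\Tur G \oplus B$. This is a natural analogue of classical minimal-degree / cone-avoidance arguments for Sacks forcing and is implicit in Sacks's minimal-hyperdegree analysis \cite[IV.5]{Sacks1990}, but requires a direct verification in the hyperarithmetic setting; this is the content of the observation credited above to Adam Day.
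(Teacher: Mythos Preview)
Your approach has two genuine gaps, and the paper's proof is entirely different and much simpler.

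\textbf{The $\Sigma^1_1$-bounding step fails.} You claim that since~$\delta$ is $\Delta^1_1$, the image $\delta[O]$ is $\Sigma^1_1$, and then invoke $\Sigma^1_1$-bounding. But $y\in \delta[O]$ iff $\exists \gamma\,(\gamma\in O \andd \delta(\gamma)=y)$; here $\gamma\in O$ is $\Pi^1_1$, the second conjunct is hyperarithmetic, and $\Pi^1_1$ is closed under number quantification, so $\delta[O]$ is $\Pi^1_1$, not $\Sigma^1_1$. Unbounded $\Pi^1_1$ subsets of~$O$ abound (e.g.\ $O$ itself), so no bounding principle applies. Even earlier, the claim that~$\delta$ itself is $\Delta^1_1$ is suspect: obtaining, from a $\Delta^1_1$ subset of~$O$, an explicit bound in~$O$ is not a hyperarithmetic operation uniform in the index.

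\textbf{The cone-avoidance you need is false.} You ask that for hyperarithmetic~$A,B$ with $A\nle_\Tur B$, every hyperarithmetic perfect tree has a hyperarithmetic perfect subtree forcing $A\nle_\Tur G\oplus B$. But take $A=\emptyset'$, $B=\emptyset$, and let~$T$ be the hyperarithmetic perfect tree $\{\tau: \tau(2i)=\emptyset'(i)\text{ for all }2i<|\tau|\}$; every path of~$T$ computes~$\emptyset'$, so no subtree of~$T$ forces $\emptyset'\nle_\Tur G$. In your specific situation the obstruction is the same: the fusion tree~$T'_\gamma$ may already have complexity well above~$\emptyset^{(\alpha)}$, and nothing prevents all of its paths from computing~$\emptyset^{(\alpha)}$ from~$\emptyset^{(\beta_\gamma(\alpha))}$. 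Sacks's analysis yields minimal \emph{hyper}degree; it says nothing about avoiding Turing cones below individual hyperarithmetic sets.

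\textbf{The paper's argument.} The paper bypasses \cref{prop:when_higher_ce_is_Pi11} entirely and gives a one-paragraph direct density argument against each enumeration functional~$\Gamma$. Given a condition~$T$, build a hyperarithmetic open set~$D\subseteq T$ which is dense in~$T$ but with $T\setminus D$ still perfect. As ``$X$ has a prefix in~$D$'' is a hyperarithmetic (hence $\Sigma^1_1$) predicate of~$X$, there is a single~$n$ with $\{X: n\notin O^X\} = \{X: X\text{ has a prefix in }D\}$. If no path of~$T$ puts~$n$ into~$\Gamma$, then the condition $T\setminus D$ forces $n\in O^G\setminus \Gamma^G$. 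Otherwise some~$\s\in T$ (and, by density of~$D$, some $\s\in D$) has $n\in\Gamma^\s$, and then the cone~$T_\s$ forces $n\in\Gamma^G\setminus O^G$. No bounding, no fusion, no cone avoidance is required.
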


\begin{proof}
In fact we prove more: we prove that, given a countable collection of enumeration functionals $\seq{\Gamma_i}$ (with no assumption on their effectivity), if~$G$ is generic enough, then $\Gamma_i^G \ne O^G$ for all~$i$. 
Consider a given perfect hyperarithmetic closed set, represented by a perfect tree~$T$ and an enumeration functional~$\Gamma$. It is easy to construct a hyperarithmetic set of nodes~$D\subseteq T$, open in~$T$, which is dense in~$T$ but such that the (hyperarithmetic) tree~$T\setminus D$ is perfect. 
Since~$D$ is hyperarithmetic, there exists an~$n$ such that for every real~$X$, $X$ has a prefix in~$D$ if and only if $n \notin O^X$.  If there are no paths~$X$ in~$T$ such that $n \in \Gamma^X$, then the tree~$T\setminus D$, which refines $T$, forces that $\Gamma^G \ne O^G$. Otherwise there is some $\s\in D$ such that $n\in \Gamma^\s$. Then the ``full subtree'' $T_\s$ of nodes in~$T$ comparable with~$\s$ forces that $\Gamma^G \ne O^G$. 
\end{proof}

\section{Continuity and Randomness}

As discussed in the introduction, when trying to establish analogues of familiar theorems of algorithmic randomness, we sometimes need to work around the usage of time tricks. As a first example we consider van-Lambalgen's theorem. The proof of one direction: if $X$ is higher ML-random, and $Y$ is higher $X$-ML-random, then $X\oplus Y$ is higher ML-random --- is identical to the analogous ``lower'' proof. The other direction usually uses a uniform universal ML test, and as discussed in the inttroduction, no such uniform universal test exists in the higher setting. Given an enumeration operator~$U$, we cannot transform every~$U^X$ to an open set with some fixed measure bound. But we show that we can do this for \emph{most} oracles~$X$, and then argue that this suffices. 

In the following lemma and below we think of operators enumrating open sets given oracles as open subsets of the plane; if $U\subset (2^\w)^2$ is open then $U^X$ is the $X$-section of~$U$.

\begin{lemma} \label{lem:oracle_open_set_trimming}
Let~$U\subseteq (2^\w)^2$ be higher effectively open. For every $\epsilon>0$ there is a higher effectively open set $V\subseteq (2^\w)^2$ such that:

\begin{enumerate}
\item If $\leb(U^X) \leq \epsilon$ then $U^X = V^X$; and
\item For all but a set of measure $\epsilon$-many oracles~$X$, $\leb(V^X)\le \epsilon$.
\end{enumerate}
An index for~$V$ can be obtained uniformly from~$\epsilon$ and an index for~$U$. 
\end{lemma}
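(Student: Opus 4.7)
The plan is to exhibit $V$ in the form $V = U \cap (E \times 2^\w)$, where $E \subseteq 2^\w$ is a higher effectively open set chosen so that $C := \{X : \leb(U^X)\le\epsilon\} \subseteq E$ and $\mu(E\setminus C) \le \epsilon$. Since the intersection of two higher effectively open sets is higher effectively open, $V$ is higher effectively open. Property~(1) is immediate from $E \supseteq C$: for $X\in C$ we have $V^X = U^X$. Property~(2) follows from the other bound: writing $B = C^c$, the bad oracles $\{X:\leb(V^X)>\epsilon\}$ are contained in $E \cap B = E\setminus C$, which has measure at most~$\epsilon$.

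The first step is to check that $B = \{X:\leb(U^X)>\epsilon\}$ is itself higher effectively open. For any $\tau \in 2^{<\w}$, write $U^{[\tau]} = \bigcup\{[\sigma] : (\rho,\sigma) \in U,\ \rho\preceq\tau\}$. The condition $\leb(U^{[\tau]}) > \epsilon$ is $\Sigma_1$-over-$L_{\wock}$ in~$\tau$, since it expresses the existence, within $L_{\wock}$, of a finite subset of $U$'s axioms whose $\sigma$-components have total measure exceeding~$\epsilon$. Thus the set of such $\tau$ is $\Pi^1_1$, and $B = \bigcup\{[\tau] : \leb(U^{[\tau]}) > \epsilon\}$ is higher effectively open. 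The lemma then reduces to an effective outer-regularity statement: given the higher effectively open~$B$, find a higher effectively open $E \supseteq B^c$ with $\mu(E \cap B)\le\epsilon$.

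To build $E$, enumerate $B$ as $\bigcup_{s<\wock}[\rho_s]$ via its $\Pi^1_1$-enumeration, and let $B_s = \bigcup_{t\le s}[\rho_t]$. The plan is to select, inside a $\wock$-recursion, an ordinal $s^* < \wock$ and a finite length $n^*$ such that the clopen set
\[
E \;=\; \{X : [X\restriction n^*] \not\subseteq B_{s^*}\}
\]
satisfies $\mu(B\setminus B_{s^*}) \le \epsilon/2$ (so that $B_{s^*}$ has essentially caught up to $B$ in measure) and $\mu(\{X\in B_{s^*} : [X\restriction n^*] \not\subseteq B_{s^*}\}) \le \epsilon/2$ (so that $E$ does not overshoot much into $B_{s^*}$). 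The set $E$ is clopen, contains~$C$ (since $X\in C$ implies $X\notin B_{s^*}$, hence $[X\restriction n^*]$ is not entirely contained in $B_{s^*}$), and $\mu(E\cap B) \le \mu(E\cap B_{s^*}) + \mu(B\setminus B_{s^*}) \le \epsilon$.

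The main obstacle is the effective choice of~$s^*$: since $\mu(B)$ is only lower-semicomputable as $\sup_s \mu(B_s)$, one cannot directly test ``$\mu(B\setminus B_{s^*}) \le \epsilon/2$''. The plan is to characterize $s^*$ by the $\Pi^1_1$ Cauchy-type condition ``$\mu(B_{s'}) - \mu(B_{s^*}) \le \epsilon/2$ for every $s' \in (s^*,\wock)$'', which, by monotone convergence of $\mu(B_s)$, is satisfied by some $s^* < \wock$. This $s^*$ is $\Pi^1_1$-definable and hence usable inside the $\wock$-recursion; once $s^*$ is fixed, the choice of $n^*$ reduces to finding the least $n$ with $\mu(\{X\in B_{s^*} : [X\restriction n] \not\subseteq B_{s^*}\}) \le \epsilon/2$, which is a routine computation at stage~$s^*$. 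The resulting index of~$E$, and hence of~$V$, depends uniformly on $\epsilon$ and on an index for~$U$.
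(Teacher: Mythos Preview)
Your approach has a genuine gap at the step where you attempt to find $s^*$ uniformly. The Cauchy-type condition you propose --- that $\mu(B_{s'}) - \mu(B_{s^*}) \le \epsilon/2$ for every $s' \in (s^*,\wock)$ --- carries a universal quantifier over all of~$\wock$, so it is a $\Pi_1(L_{\wock})$ condition on~$s^*$, not a $\Sigma_1(L_{\wock})$ one. In the paper's framework, ``$\Pi^1_1$'' (on~$\w$) means $\wock$-c.e., i.e.\ $\Sigma_1$ over~$L_{\wock}$; the dual $\Pi_1(L_{\wock})$ predicates correspond to $\Sigma^1_1$ and are \emph{not} conditions one can search for inside a $\wock$-recursion. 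So the sentence ``$s^*$ is $\Pi^1_1$-definable and hence usable inside the $\wock$-recursion'' is where the argument breaks. Concretely: $\mu(B)$ is only a higher left-c.e.\ real, and there is no effective way to detect that $\mu(B_s)$ has come within $\epsilon/2$ of its limit. This is precisely the failure of the ``wait for convergence'' time trick that the introduction warns about. (Non-uniformly your~$V$ does exist and is higher effectively open, since for fixed~$U$ the ordinal~$s^*$ lies in~$L_{\wock}$; but the lemma's uniformity clause, which is what the application to van Lambalgen's theorem needs, is lost.)

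The paper's proof sidesteps the problem by never waiting for anything to settle. It enumerates~$V$ dynamically: when an axiom $[\s,\tau]$ enters~$U$ at stage~$s+1$, it adds~$[\tau]$ to~$V^X$ only for~$X$ in a clopen set~$C_s\subseteq[\s]$ that approximates from outside the set of oracles for which this would not push $\leb(V^X)$ past~$\epsilon$. The overshoot $\leb(C_s\cap P_s)$ at each stage is bounded by $\epsilon\cdot 2^{-p(s)}$, where~$p\colon\wock\to\w$ is the projectum function; summing over all~$s<\wock$ gives total overshoot at most~$\epsilon$. The projectum function is what stands in for the missing time trick, allowing a finite error budget to be spread across~$\wock$ many stages.
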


For the proof we use the \emph{projectum function} $p\colon \wock\to \w$: this is a $\wock$-computable injective function.

\begin{proof}
We enumerate~$V$. For $s<\wock$ we let~$V_s$ be the open set enuemrated by stage~$s$. Suppose that we see the cylinder~$[\s,\tau]$ enumerated into~$U_{s+1}$. Let~$P_s$ be the set of~$X\in [\s]$ such that $\leb(V_s^X\cup [\tau]) > \epsilon$. We find a clopen set~$C_s\subseteq [\s]$ which is close to the complement $[\s]\setminus P_s$ of~$P_s$ inside~$[\s]$: 
\begin{itemize}
	\item $C_s\cup P_s = [\s]$; and
	\item $\leb(C_s\cap P_s) \le \epsilon \cdot 2^{-p(s)}$. 
\end{itemize}
We then let $V_{s+1} = V_s \cup (C_s\times [\tau])$.

We have $V\subseteq U$, and the desired property~(1) holds. To see~(2), let $B = \left\{ X\in 2^\w \,:\,  \leb(V^X)> \epsilon \right\}$. We claim that $B\subseteq \bigcup_{s<\wock} (C_s\cap P_s)$.  Let~$X\in B$. For limit ordinals $s\le \wock$, $V_s = \bigcup_{t<s}V_t$ (here we let $V_{\wock} = V$) and so there is some $s<\wock$ such that $\leb(V_s^X)\le \epsilon$ but $\leb(V_{s+1}^X)> \epsilon$. But then $X\in C_s\cap P_s$. 
Now 
\[
	\leb(B) \le \leb\left( \bigcup P_s\cap C_s \right) \le \epsilon \cdot \sum_{s<\wock} 2^{-p(s)} \le \epsilon
\]	
as~$p$ is injective. 
\end{proof}

\begin{remark} \label{rmk:the_index_wock}
	We apply the notational convention used in the previous proof throughout this paper. If~$X$ is any object which is approximated or enumerated in~$\wock$ many steps then we let $X_{\wock} = X$. For example if~$U$ is a c.e.\ open set and $\seq{U_s}_{s<\wock}$ is a $\wock$-effective enumeration of $U$ then we write $U_{\wock}$ for~$U$; if $\seq{f_s}$ is a $\wock$-computable approximation of a function~$f$ then we let $f_{\wock} = f$. 
\end{remark}

We can now prove a the higher version of van Lambalgen's theorem.

\begin{proof}[Proof of \cref{thm:van_Lambalgen}]
As disucssed above, the proof of one direction has no new ingredients, and so we omit it. In the other direction we are given a pair $(X,Y)$ and assume that~$Y$ is not higher~$X$-ML random; and need to show that the pair~$(X,Y)$ is not higher ML-random.

Let $\seq{U_n^X}$ be a higher $X$-ML-test which captures~$Y$. By~\cref{lem:oracle_open_set_trimming} we may assume that for all~$n$, the measure of $B_n = \left\{ Z\in 2^\w \,:\,  \leb(U_n^Z) > 2^{-n} \right\}$ is at most $2^{-n}$; $X$ is not in any~$B_n$ and~$Y$ is captured by the~$X$-test after applying the transformation of that lemma. So $(X,Y)\in \bigcap_n U_n$. A calculation (essentially Fubini's theorem) shows that $\leb(U_n) \le (1-2^{-n})\cdot 2^{-n} + 2^{-n}$ which converges to~$0$ (computably). 

\end{proof}

\subsection{Pulling back strong tests} \label{subsec:pulling_back}

The argument of Miller and Yu's, sketched in the introdution, relies on the consistency of the given functional. Recall that a \emph{continuous semi-measure} is a function~$m$ which assigns to every finite binary string a non-negative real number, such that for all $\s\in 2^{<\w}$, $m(\s\conc 0) + m(\s\conc 1)\le m(\s)$. A continuous semi-measure is \emph{higher c.e.} if the real~$m(\s)$ is higher left-c.e., uniformly in~$\s$.  If~$\Phi$ is a consistent functional then the function $\s\mapsto \leb\left(\Phi^{-1}[\s]\right)$ is a continuous semi-measure. In the higher setting not all functionals can be made continuous. However as above, given a functional~$\Psi$ and some $\epsilon>0$ we can transform~$\Psi$ to a functional~$\Phi$ such that $\Phi(X) = \Psi(X)$ if $\Psi(X)$ is consistent, and such that $\Phi(X)$ is inconsistent for at most~$\epsilon$-many (in the sense of measure) oracles. In fact we can combine all the $\epsilon$-modifications in one to get the following. 

\begin{lemma} \label{lem:massage}
	For every higher Turing functional~$\Psi$ there is a higher Turing functional~$\Phi$ such that:
	\begin{enumerate}
		\item for all~$X$ for which $\Psi(X)$ is consistent, $\Phi(X)= \Psi(X)$; and
		\item the function $\tau\mapsto \leb(\Phi^{-1}[\tau])$ is bounded by a higher c.e.\ continuous semi-measure.
	\end{enumerate}
\end{lemma}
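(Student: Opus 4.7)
The plan is to mimic \cref{lem:oracle_open_set_trimming}: enumerate $\Phi$ from $\Psi$ at $\wock$ many stages, refusing axioms whose addition would create too much new inconsistency, and then define a higher c.e.\ semi-measure $m$ that bounds $\tau \mapsto \leb(\Phi^{-1}[\tau])$. To keep the combinatorics manageable I first preprocess $\Psi$ by replacing each axiom $(\rho, \tau)$ with the chain $(\rho, \tau\rest{1}), \ldots, (\rho, \tau)$, so that each enumeration step adds exactly one pair $(i, b)$ to $\Phi(X)$; this does not affect $\Psi(X)$.

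At stage $s+1$, when a single-pair axiom $(\rho, (i, b))$ enters, let
\[ B_s = \{X \in [\rho] : (i, 1-b) \in \Phi_s(X)\}, \]
a higher effectively open set; these are exactly the oracles where adding the pair would create new inconsistency at position $i$. Writing $P_s(X) = \{j < \w : (j, 0), (j, 1) \in \Phi_s(X)\}$, I would use a weighted analogue of \cref{lem:oracle_open_set_trimming} to find a clopen $C_s \subseteq [\rho]$ with $C_s \cup B_s = [\rho]$ and
\[ \int_{C_s \cap B_s} 2^{|P_s(X)|}\, d\leb(X) \le 2^{-p(s)}, \]
and enumerate $(\rho', (i, b))$ into $\Phi$ for the cylinders $[\rho']$ in a decomposition of $C_s$. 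As in \cref{lem:oracle_open_set_trimming}, $\Phi(X) \subseteq \Psi(X)$ always (all axioms are inherited from $\Psi$), and if $\Psi(X)$ is consistent then $\Phi_s(X) \subseteq \Psi(X)$ is compatible with $(i, b)$, so $X \notin B_s$; hence $X \in C_s$ and the pair is added, giving $\Phi(X) = \Psi(X)$.

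Each stage raises $\int 2^{|P_s(X)|} d\leb$ by at most $2^{-p(s)}$: only oracles in $C_s \cap B_s$ that did not already have $(i, b) \in \Phi_s(X)$ have $|P|$ increase, and each such oracle's $|P|$ grows by exactly one, doubling $2^{|P|}$. Summing, $\int 2^{|P(X)|} d\leb \le 3$. Since the tree $T(X) = \{\sigma : \sigma \subseteq \Phi(X)\}$ has $2^{|P(X)|} - 1$ branching nodes, setting $\textup{inc}(\tau) = \leb(\Phi^{-1}[\tau 0] \cap \Phi^{-1}[\tau 1])$ and summing over $\tau$ yields $\sum_\tau \textup{inc}(\tau) = \int (2^{|P|} - 1)\, d\leb \le 2$. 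Defining
\[ m(\tau) = \leb(\Phi^{-1}[\tau]) + \sum_{\tau' \succeq \tau} \textup{inc}(\tau'), \]
inclusion--exclusion applied to $\Phi^{-1}[\tau 0]$ and $\Phi^{-1}[\tau 1]$ inside $\Phi^{-1}[\tau]$ gives $m(\tau 0) + m(\tau 1) \le m(\tau)$; also $m(\emptyset) \le 3$; and $m$ is higher left-c.e.\ uniformly, so it is a higher c.e.\ continuous semi-measure bounding $\tau \mapsto \leb(\Phi^{-1}[\tau])$.

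The main obstacle is the weighted trimming step: the budget is on an integral weighted by the unbounded function $2^{|P_s(X)|}$, so it is more delicate than in \cref{lem:oracle_open_set_trimming}. This is feasible because $2^{|P_s(X)|}$ is itself higher left-c.e.\ uniformly in $s$ and $X$, so the weighted measure of a higher effectively open set can be approximated from within by clopens. The weighting is precisely what prevents a single oracle with many inconsistent positions from contributing to $\textup{inc}(\tau)$ at exponentially many levels $\tau$, and is therefore what ultimately keeps $m(\emptyset)$ finite.
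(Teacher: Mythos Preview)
Your argument is correct and reaches the same destination as the paper, but by a genuinely different route.

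The paper's construction is more elementary. It does \emph{not} preprocess to single-bit axioms and does \emph{not} use a weighted measure. Instead it fixes in advance a summable, monotone string-weight $q\colon 2^{<\omega}\to\mathbb{Q}^+$ (say $q(\tau)=2^{-3|\tau|}$), and when the axiom $(\sigma_s,\tau_s)$ appears it performs an ordinary Lebesgue-measure trimming with budget $2^{-p(s)}q(\tau_s)$: it lets $P_s=\{X\in[\sigma_s]:\Phi_s(X)\perp\tau_s\}$ and takes clopen $C_s$ with $C_s\cup P_s=[\sigma_s]$ and $\leb(C_s\cap P_s)\le 2^{-p(s)}q(\tau_s)$. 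Since any $X$ in $B(\tau)=\Phi^{-1}[\tau 0]\cap\Phi^{-1}[\tau 1]$ must lie in some $C_s\cap P_s$ with $\tau_s\succ\tau$ (hence $q(\tau_s)\le q(\tau)$), one gets $\leb(B(\tau))\le q(\tau)$ immediately, and $m(\tau)=\leb(\Phi^{-1}[\tau])+\sum_{\rho\succeq\tau}q(\rho)$ is the semi-measure.

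Your approach replaces the a-priori string budget $q(\tau)$ by a dynamic oracle-weight $2^{|P_s(X)|}$; the preprocessing to single-bit axioms is what makes your bookkeeping (``each accepted inconsistency at most doubles the weight'') clean, and your semi-measure uses the actual overlap $\textup{inc}(\tau)$ rather than an a-priori bound. This buys you a slightly tighter $m$, but at the cost of the weighted trimming step, which as you note is more delicate: at stage $s$ one must compute the weighted measure of $B_s$ and of clopen subsets, which works because at a fixed stage the density $2^{|P_s(\cdot)|}$ and the set $B_s$ are hyperarithmetic, not merely left-c.e. One small point to tidy: your $\Phi$ enumerates single pairs $(\rho',(i,b))$ rather than string-axioms, so it is not literally a functional in the paper's format; but because the chain preprocessing guarantees that whenever $(i,b)\in\Phi(X)$ every earlier position $j<i$ is already filled (either by acceptance or by the witness to $X\in B_{s_j}$), the string-closure $\Phi'=\{(\rho,\sigma):\sigma\subseteq\Phi(\rho)\}$ satisfies $\Phi'(X)=\Phi(X)$ for all $X$ and is a higher Turing functional in the required sense.
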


\begin{proof}
Fix a function $q \colon 2^{<\w} \to \QQ^+$ such that $\sum_{\tau \in 2^{<\omega}} q(\tau) \le 1$ and such that $\tau_1 \preceq \tau_2$ implies $q(\tau_1) \geq q(\tau_2)$ (for example let $q(\tau) = 2^{-3|\tau|}$). We enumerate a functional~$\Phi$. 

Suppose that we see the axiom $(\s_s,\tau_s)$ enumerated into~$\Psi_{s+1}$. We let~$P_s$ be the set of $X\in [\s_s]$ such that $\Phi_s(X)$ is inconsistent with~$\tau_s$. Let~$C_s$ be a clopen subset of~$[\s_s]$ close to the complement $[\s_s]\setminus P_s$; we mean that $P_s\cup C_s  = [\s_s]$ and $\leb(P_s\cap C_s)\le 2^{-p(s)}\cdot q(\tau_s)$, where as above~$p$ is the projection function. We then declare that $\Phi_{s+1}(X)\succeq \tau_s$ for all $X\in C_s$.

\smallskip

Inductively, for all~$s$ and~$X$, $\Phi_s(X)\preceq \Psi_s(X)$, and so if $X\in P_s$ then $\Psi(X)$ is inconsistent. This establishes~(1). 

For~(2) we let 
\[
	m(\tau) = \leb \left(\Phi^{-1}[\tau]\right) + \sum_{\rho \succeq \tau} q(\rho).
\]
For~$\tau\in 2^{<\w}$ let $B(\tau) = \Phi^{-1}[\tau\conc 0]\cap \Phi^{-1}[\tau\conc 1]$. So
 \[ \leb(\Phi^{-1}[\tau\conc 0])+ \leb(\Phi^{-1}[\tau\conc 1]) \le \leb(\Phi^{-1}[\tau])+ \leb(B(\tau)). \]
If $X\in B(\tau)$ then there is some stage~$s<\wock$ such that $X\in P_s\cap C_s$ and $\tau_s$ extends either $\tau\conc 0$ or $\tau\conc 1$. Since $q(\tau)\ge q(\tau_s)$, the argument of \cref{lem:oracle_open_set_trimming} shows that $\leb(B(\tau))\le q(\tau)$. A calculation now shows that~$m$ is a continuous semi-measure. 
\end{proof}

\Cref{lem:massage} allows us to show that strong randomness notions are downwards closed in the $\hT$-degrees of higher ML-random sets. In particular we get \cref{thm:downward_closure_Strong-ML-randomness}.

\begin{theorem}
Suppose that~$X$ and~$Y$ are higher ML-random and that $X\le_{\hT} Y$. If~$Y$ is higher weakly-2-random (higher difference random, higher $Z$-ML-random for some~$Z\in 2^\w$,\textellipsis{}) then so is~$X$. 
\end{theorem}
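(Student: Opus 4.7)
The plan is to carry out the higher version of the Miller--Yu pull-back sketched in the introduction, with \cref{lem:massage} playing the role that consistency of Turing functionals plays in the classical argument. First I would fix a higher Turing functional $\Psi$ witnessing $X \le_{\hT} Y$, so $\Psi(Y) = X$, and apply \cref{lem:massage} to $\Psi$ to obtain a higher Turing functional $\Phi$ with $\Phi(Y) = X$ (since $\Psi(Y)$ is consistent) together with a higher c.e.\ continuous semi-measure $m$ satisfying $\leb(\Phi^{-1}[\s]) \le m(\s)$ for every $\s \in 2^{<\w}$.

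Next I would derive a higher Levin--Schnorr-style bound: the sets $U_k = \bigcup\{[\s] : m(\s) > 2^{-|\s|+k}\}$ are uniformly higher effectively open, and the standard inequality $\sum_{\s \in A} m(\s) \le m(\emptystring)$ for prefix-free $A$ (valid for any continuous semi-measure) yields $\leb(U_k) = O(2^{-k})$. So $\seq{U_k}$ is, up to a fixed constant shift, a higher ML-test, and the higher ML-randomness of $X$ provides a constant $c < \w$ with $m(X\rest n) \le c \cdot 2^{-n}$ for all $n$; in particular $\leb(\Phi^{-1}[X\rest n]) \le c \cdot 2^{-n}$.

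The crucial step is the withholding. I would build a higher Turing functional $\Phi'$ by processing the axioms of $\Phi$ in stages, admitting an incoming axiom $(\rho,\s)$ at stage $t$ into $\Phi'$ only if doing so would preserve $\leb({\Phi'_t}^{-1}[\tau] \cup [\rho]) \le c \cdot 2^{-|\tau|}$ for every $\tau \preceq \s$. By construction the uniform bound $\leb({\Phi'}^{-1}[\s]) \le c \cdot 2^{-|\s|}$ holds for every $\s$. The key observation is that whenever $\s \preceq X$, every $\tau \preceq \s$ also satisfies $\tau \preceq X$, so ${\Phi'_t}^{-1}[\tau] \cup [\rho] \subseteq \Phi^{-1}[\tau]$ has measure at most $m(\tau) \le c \cdot 2^{-|\tau|}$, and the admission check passes automatically. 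Since every axiom of $\Phi$ used in producing $X = \Phi(Y)$ aims at an initial segment of $X$, all such axioms survive in $\Phi'$, giving $\Phi'(Y) = X$.

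Finally I would pull back tests. Viewing $A \mapsto \leb({\Phi'}^{-1}[A])$ as an outer measure on $2^\w$, the uniform bound yields $\leb({\Phi'}^{-1}[U]) \le c \cdot \leb(U)$ for every open $U$ (by summing over a prefix-free cover), and outer regularity of $\leb$ extends this to $\leb({\Phi'}^{-1}[A]) \le c \cdot \leb(A)$ for every Borel $A$. Given a test $\seq{U_n}$ capturing $X$ (higher generalised ML, higher difference, higher $Z$-ML, \textellipsis), set $V_n = {\Phi'}^{-1}[U_n]$. Then $Y \in V_n$ since $\Phi'(Y) = X \in U_n$; each $V_n$ is higher effectively open (and higher $Z$-effectively open when $U_n$ is); and $\leb(V_n) \le c \cdot \leb(U_n)$. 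For a higher difference test $U_n = \tilde U_n \cap P$, write $V_n = {\Phi'}^{-1}[\tilde U_n] \cap {\Phi'}^{-1}[P]$ and note that ${\Phi'}^{-1}[P]$ is higher effectively closed. After rescaling by $c$, $\seq{V_n}$ is a test of the same type capturing $Y$, contradicting the strong randomness of $Y$. The main obstacle is the withholding step: classically it works because $\s \mapsto \leb(\Phi^{-1}[\s])$ is itself a c.e.\ semi-measure, whereas in the higher setting \cref{lem:massage} is needed to furnish the global semi-measure bound that ensures withholding preserves $\Phi'(Y) = X$ while uniformly controlling $\Phi'$.
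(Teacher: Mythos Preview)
Your proof is correct and follows essentially the same route as the paper: apply \cref{lem:massage} to the given reduction, use higher ML-randomness of~$X$ against the resulting semi-measure~$m$ to obtain the constant~$c$, withhold axioms to get a functional~$\Phi'$ with $\leb({\Phi'}^{-1}[\s])\le c\cdot 2^{-|\s|}$ uniformly, and then pull back the strong test through~$\Phi'$. You spell out a couple of steps (the Levin--Schnorr bound for~$m$, the prefix-free-cover measure inequality) that the paper leaves implicit; the only slightly loose point is the difference-test decomposition, where the set you call ${\Phi'}^{-1}[P]$ cannot simultaneously be higher effectively closed and carry the outer-regularity bound---but the paper is equally brief there, and the case follows immediately from the characterisation $O\nle_{\hT}X$ anyway.
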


\begin{proof}
By \cref{lem:massage} we get a higher Turing functional $\Phi$ such that $\Phi(Y)=X$ and $\leb(\Phi^{-1}[\tau])\le m(\tau)$ for some higher c.e., continuous semi-measure. Since~$X$ is higher ML-random, $m(X\rest n)\le c\cdot 2^{-n}$ for some constant~$c$. 
We can then eumerate a functional $\Psi\subseteq \Phi$ such that $\Psi(Y)=X$ and $\leb(\Psi^{-1}[\tau])\le c\cdot 2^{-|\tau|}$ for all~$\tau$: we enumerate~$\Psi$. At stage~$s$ say an axiom $(\s,\tau)$ appears in~$\Phi_{s+1}$. If $\leb([\s]\cup \Psi^{-1}_s[\rho]) > c\cdot 2^{-|\rho|}$ for some $\rho\preceq \tau$ then we let $\Psi_{s+1}= \Psi_s$; otherwise we let $\Psi_{s+1}= \Psi_s\cup \{(\s,\tau)\}$. In the first case $\leb(\Phi^{-1}[\rho])> c\cdot 2^{-|\rho|}$ and so~$\rho$ is not an initial segment of~$X$; so~$\s$ is not an initial segment of~$Y$. 

If $\seq{U_n}$ is any strong test capturing~$X$ then $\seq{\Phi^{-1}[U_n]}$ is a strong test capturing~$Y$. The point is that $\leb(\Phi^{-1}[U_n]) \le c\cdot \leb(U_n)$. There may not be any higher c.e.\ (higher $Z$-c.e.) antichain generating~$U_n$; but for the measure calculation we do not need effectiveness: the inequality is obtained by considering the antichain of minimal strings (maximal intervals) in~$U_n$. 
\end{proof}

\section{ $K$-triviality}

Hjorth and Nies defined in \cite{HjorthNies2007} the notion of higher prefix-free Kolmogorov complexity, based on the concept of universal $\Pi^1_1$ prefix-free machine. We denote this complexity function by~$K$, as we will not be using the traditonal ``lower'' complexity. Armed with this concept Hjorth and Nies defined the class of higher $K$-trivial sets, those sets~$A\in 2^\w$ satisfying $K(A\rest{n})\le^+ K(n)$. 

Hjorth and Nies proved that there are higher $K$-trivial sets which are not hyperarithmetic (arguing that Solovay's proof applies in the higher setting) and also that every higher $K$-trivial is Turing reducible to Kleene's~$O$. As described in the introduction, since they use discontinuous relativisations, their notions of higher lowness for~$K$, higher bases for randomness and higher lowness for MLR coincide with being hyperarithmetic. Continuous relativisations yield analogues of familiar equivalences. 

\smallskip

	In addition to \cref{thm:higher_K-trivial_coincidence}, we also show that a set is higher $K$-trivial if and only if it is higher low for~$K$. As mentioned above, defining the notion is not completely sraightforward because there are oracles~$A$ for which there is no optimal prefix-free complexity; so~$K^A$ is not well-defined for all~$A$. Further complication is due to the potential failure of the equivalence between prefix-free complexity and discrete c.e.\ measures. Recall that a \emph{discrete measure} (often called a discrete semi-measure, but it is a measure) is simply a measure on~$\w$ (equivalently, on any computable set); such a measure is of course determined by the measures of its atoms. A discrete measure~$\mu$ is called (higher) c.e.\ if $\mu(n)$ is a (higher) left-c.e.\ real, uniformly in~$n$. Nies and Hjorth showed that the higher analogue of the Kraft-Chaitin theorem holds, from which follows the higher analogue of the coding theorem, which says that every higher c.e.\ discrete measure can be realised as the measure dervied from a higher prefix-free machine ($\mu_M = 2^{-K_M}$). Thus $2^{-K}$ is an optimal higher c.e.\ discrete measure. 

	We do not know whether the coding theorem can be continuously relativised to every oracle. Thus given an oracle~$A$ we can investigate both higher $A$-computable prefix-free machines (their graphs are higher $A$-c.e.) and their associated complexities; and higher $A$-c.e.\ discrete measures. This gives two definitions of lowness:
	\begin{itemize}
		\item an oracle~$A$ is \emph{low for higher~$K$} if for every higher $A$-computable prefix-free machine~$M$, $K\le^+ K_M$;
		\item an oracle~$A$ is \emph{low for higher c.e.\ discrete measures} if for every higher $A$-c.e.\ discrete measure~$\nu$, $\+\mu \ge^\times \nu$ where $\+\mu$ is the optimal higher c.e.\ discrete measure. 
	\end{itemize}
	A-priori the second notion is stronger. We will show that both of these concepts coincides with higher $K$-triviality. On the other hand, since the concept of $K$-triviality itself does not involve relativisation, it can be characterised using discrete measures: a set~$A$ is $K$-trivial if and only if $\+\mu(A\rest n) \ge^\times \+\mu(n)$.

\subsection{Approximations of $K$-trivial sets}

The following is implicit in \cite{HjorthNies2007}.

\begin{proposition} \label{prop:K_trivials_are_collapsing}
	Every nonhyperarithmetic higher $K$-trivial set has a collapsing approximation. 
\end{proposition}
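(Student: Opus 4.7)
The plan is to leverage the Hjorth--Nies theorem that every higher $K$-trivial is Turing reducible to $O$ (so $A$ admits a $\wock$-computable approximation by the higher limit lemma) and then to refine such an approximation using the enumeration of the universal higher prefix-free machine so that the first-match stage function $n\mapsto s(n)$ is forced to be cofinal in $\wock$.

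Fix a $K$-triviality constant $c$ for $A$, and consider the persistence stage
\[
s^{*}_n \;=\; \min\bigl\{\,s<\wock \,:\, K_t(A\rest n)\le K_t(n)+c \text{ for every } t\ge s\,\bigr\}.
\]
Since $K(A\rest n)\le K(n)+c$, the limit inequality holds and $s^{*}_n$ is well-defined. The key combinatorial claim is that $\sup_n s^{*}_n = \wock$. Indeed, if this sup were some $\alpha<\wock$, then at stage $\alpha$ every $A\rest n$ would lie in the hyperarithmetic tree
\[
T \;=\; \{\,\sigma\,:\, K_{\alpha}(\sigma)\le K_{\alpha}(|\sigma|)+c\,\};
\]
by the higher version of Chaitin's counting theorem, applied to the stage-$\alpha$ prefix-free machine through the inequality $K_\alpha(\sigma)\le K_\alpha(|\sigma|)+K_\alpha(\sigma\mid|\sigma|)+O(1)$, we would have $|T\cap 2^n|\le 2^{c+O(1)}$ uniformly in $n$. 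Thus $T$ would be a finitely branching hyperarithmetic tree with only finitely many infinite paths, each isolated and therefore hyperarithmetic, making $A$ hyperarithmetic --- a contradiction.

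Fix any $\wock$-computable approximation $\seq{B_s}$ of $A$, and build $\seq{A_s}$ from $\seq{B_s}$ by means of a persistence check: at stage $s$, declare $A_s\rest n = B_s\rest n$ only when the inequality $K_t(B_s\rest n)\le K_t(n)+c$ has held uniformly on a suitable window of preceding stages (for instance the window starting at the last stage at which $B_t\rest n$ changed to its current value), and otherwise set $A_s\rest n = 0^{n}$. A routine verification shows this yields a $\wock$-computable approximation that still converges pointwise to $A$, and the persistence condition built into the construction ensures that at any stage $s$ at which $A_s\rest n = A\rest n$ one must have $s\ge s^{*}_n$. Combining this with the cofinality $\sup_n s^{*}_n=\wock$ gives $\sup_n s(n)=\wock$, so the approximation is collapsing.

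The main delicate step is formulating the persistence window correctly so that both pointwise convergence of $\seq{A_s}$ to $A$ and the lower bound $s(n)\ge s^{*}_n$ hold simultaneously; the heart of the argument, however, is the counting computation in $T$, which is robust and does not depend on these technicalities.
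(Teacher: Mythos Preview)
Your overall strategy—pass to the tree of $K_s$-trivial strings and argue that if the match stages were bounded then $A$ would be a path of a hyperarithmetic tree with only finitely many (hence hyperarithmetic) paths—is exactly the shape of the paper's argument. But there is a real gap at the heart of your key combinatorial claim.

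You invoke Chaitin's counting theorem for the stage-$\alpha$ complexity $K_\alpha$, citing the inequality $K_\alpha(\sigma)\le K_\alpha(|\sigma|)+K_\alpha(\sigma\mid |\sigma|)+O(1)$. First, that inequality points the wrong way: the counting argument needs the \emph{lower} bound $K_\alpha(\sigma)\ge K_\alpha(n)+K_\alpha(\sigma\mid n^*)-O(1)$, which is the hard direction of symmetry of information. Second and more importantly, that hard direction genuinely requires optimality of the machine, and $\UU_\alpha$ is just some hyperarithmetic prefix-free machine with no optimality at stage $\alpha$. Indeed the stage-$\alpha$ tree $\{\sigma:K_\alpha(\sigma)\le K_\alpha(|\sigma|)+c\}$ can be the full binary tree (take $\alpha$ before anything has converged, so all complexities are $\infty$), so your asserted uniform bound $|T\cap 2^n|\le 2^{c+O(1)}$ is false in general, and nothing about your particular $\alpha=\sup_n s^*_n$ rules this out.

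The paper supplies exactly the missing ingredient: the Hjorth--Nies result that there is a $\wock$-computable club $C\subseteq\wock$ such that for $s\in C$ the tree $T_s$ has only finitely many paths. (Your argument can be repaired with this: pick $\beta\in C$ with $\beta\ge\alpha$; then $A$ is a path of the hyperarithmetic tree $T_\beta$, which has finitely many paths, all hyperarithmetic.) With the club in hand the paper also sidesteps your ``persistence window'' entirely: it sets $A_s$ to be the leftmost path of $T_s$ for $s\in C$, after restricting above a node isolating $A$. Your proposed window only inspects past stages while $s^*_n$ is defined by a condition on \emph{future} stages, so the check does not enforce $s(n)\ge s^*_n$; and $n\mapsto s^*_n$ is $\Pi_1$ over $L_{\wock}$, hence not available during the construction.
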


In fact if~$A$ is higher $K$-trivial then there is an increasing approximation $\seq{\+\mu_s}$ of~$\+\mu$ and a collapsing approximation $\seq{A_s}$ of~$A$ such that for some constant~$\delta>0$, $\+\mu_s(A\rest n) \ge \delta\cdot \+\mu_s(n)$ for all $n<\w$ and all $s<\wock$. 

\begin{proof}
	We start with an arbitrary enumeration $\seq{\UU_s}$ of the universal higher-c.e.\ prefix-free machine~$\UU$, and let $K_s = K_{\UU_s}$. As usual we assume that the enumeration of~$\UU$ is continuous, i.e. $\UU_s = \bigcup_{t<s} \UU_t$ for every limit ordinal $s\le \wock$. Hence $K_s = \lim_{t\to s} K_t$ for every limit ordinal~$s$. 

	There is a $\wock$-computable sequence of trees~$\seq{T_s}_{s\le\wock}$ such that:
	\begin{itemize}
		\item For all limit $s\le \wock$, $T_s = \lim_{t\to s} T_t$; and
		\item $A$ is the unique path of~$T_{\wock}$. 
	\end{itemize}
	For let~$b$ be a $K$-triviality constant for~$A$. There are only finitely many $K$-trivial sequences with constant~$b$. For $s\le \wock$ let $S_s$ be the tree of finite binary strings which are $K_s$-trivial with constant~$b$. Let~$\s$ be a string on $S_{\wock}$ which isolates~$A$ on $S_{\wock}$. We let $T_s$ be the restriction of~$S_s$ to strings comparable with~$\s$. 

	In~\cite{HjorthNies2007}, Hjorth and Nies show that there is a $\wock$-computable closed and unbounded set $C\subseteq \wock$ such that for all $s\in C$, the tree $T_s$ has only finitely many paths. A similar argument shows that after thinning to a possibly smaller set of stages we may assume that for all $s\in C$, $T_s$ has a path (for all $n$, if $T_t$ contains a string of length~$n$ for all~$t$ in some set~$B$ of stages, then by continuity $T_{\sup B}$ also contains a string of length~$n$.) We define the approximation $\seq{A_s}$ for $s\in C$ by letting~$A_s$ be the leftmost path in~$T_s$. Then $A = \lim_{s\in C} A_s$. This approximation is collapsing: if $A\rest n\in T_{s(n)}$ and $s(\w) = \sup_n s(n)$ then~$A$ is a path in $T_{s(\w)}$; if $s(\w)< \wock$ then $T_{s(\w)}$ is hyperarithmetic, and so each of its finitely many paths is hyperarithmetic.  

	Finally we renumber our approximations using the increasing $\wock$-computable enumeration of~$C$, and let $\+\mu_s = 2^{-K_s}$.
\end{proof}

The fact that a set~$A$ has a collapsing approximation allows us to relativise to~$A$ many familiar techniques, with arguments along the lines of that of \cref{prop:collapsing_and_fin-h}. In the language of \cite{BadOracles}, it is a ``good oracle''. For example:

\begin{lemma} \label{lem:collapsing_implies_Schnorr-Levin}
	Suppose that~$A$ has a collapsing approximation. Then there is an optimal higher~$A$-c.e.\ discrete measure $\+\mu^A$, and a sequence~$Z$ is higher $A$-ML-random if and only if $\+\mu^A(Z\rest n)\le^\times 2^{-n}$. Further, there is a universal higher $A$-c.e.\ prefix-free machine~$\UU^A$ and $\+\mu^A =^\times 2^{-K^A}$. 
\end{lemma}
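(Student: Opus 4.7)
The plan is to transport Hjorth and Nies's construction of the optimal higher c.e.\ discrete measure and the universal higher prefix-free machine from $\emptyset$ to $A$, using the collapsing approximation $\seq{A_s}$ of $A$ in place of the ``time trick'' that is available at $\emptyset$. We proceed in three steps: (i) build $\+\mu^A$ directly as an optimal higher $A$-c.e.\ discrete measure; (ii) deduce the Levin--Schnorr characterization of higher $A$-ML-randomness by a standard argument; and (iii) produce $\UU^A$ by relativizing the higher Kraft--Chaitin theorem to $A$, thereby yielding $\+\mu^A =^\times 2^{-K^A}$.

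For (i), enumerate all higher enumeration operators $\seq{\Psi_e}_{e<\w}$ as candidate discrete measures. For each $e$ we build a higher enumeration operator $\hat\Psi_e$ so that $\hat\Psi_e^A$ is a discrete measure of total mass at most $2^{-e-1}$, and so that $\hat\Psi_e^A$ multiplicatively dominates $\Psi_e^A$ whenever $\Psi_e^A$ is itself a discrete measure of total mass at most $2^{-e-1}$. Taking $\+\mu^A := \sum_{e<\w} \hat\Psi_e^A$ then yields a higher $A$-c.e.\ discrete measure of total mass at most $1$ which multiplicatively dominates every higher $A$-c.e.\ discrete measure. To construct $\hat\Psi_e$ we combine the oracle-trimming technique of \cref{lem:oracle_open_set_trimming} with the redirection-through-$A_s$ technique of \cref{prop:collapsing_and_fin-h}: at stage $s+1$, when a new axiom $(\rho,(n,q))$ enters $\Psi_e$ with $\rho \preceq A_s$, find the least $k \ge |\rho|$ for which $A_s \rest k$ has not yet been committed as the prefix of an $\hat\Psi_e$-axiom, and enumerate the axiom with prefix $A_s \rest k$, refusing only when doing so would push the total mass accumulated along that branch above $2^{-e-1}$ (with the small overflow charged against a projectum-weighted budget as in \cref{lem:oracle_open_set_trimming}). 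Because $\sup_n s(n) = \wock$ for a collapsing approximation, for every axiom that is ultimately witnessed along $A$ there are cofinally many stages at which $A_s$ agrees with $A$ on the relevant prefix and a fresh $k$ is available; hence $\hat\Psi_e^A$ faithfully copies $\Psi_e^A$ in the relevant sense.

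For (ii), the Levin--Schnorr characterization follows by a textbook argument once $\+\mu^A$ is in hand. If $Z$ fails a higher $A$-ML-test $\seq{U_n}$, the rule that assigns mass $2^n$ to each minimal $\sigma$ with $[\sigma] \subseteq U_{2n}$ defines a higher $A$-c.e.\ discrete measure under which $Z\rest n$ is heavy for infinitely many $n$; by the optimality of $\+\mu^A$ the bound $\+\mu^A(Z\rest n) \le^\times 2^{-n}$ must fail. Conversely, the sequence $U_n := \bigcup \{[\sigma] : 2^{|\sigma|}\+\mu^A(\sigma) \ge 2^n\}$ is a higher $A$-ML-test of the required measure, so if $Z$ is higher $A$-ML-random then $Z \notin U_n$ for some $n$, which gives the desired multiplicative bound.

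For (iii), Hjorth and Nies's proof of the higher Kraft--Chaitin theorem uses a time trick analogous to the one employed for the universal prefix-free machine, and the collapsing-approximation template from step~(i) carries their argument through relative to~$A$: given a higher $A$-c.e.\ discrete measure $\nu$ of total mass at most $1$, one assigns codewords along initial segments of $A_s$ to obtain a higher $A$-c.e.\ prefix-free machine $M$ with $2^{-K_M} \ge^\times \nu$. Applying this to $\+\mu^A$, and dovetailing over all such machines via the standard $0^e1$ prefix coding, yields the desired $\UU^A$; combined with the optimality of $\+\mu^A$ this gives $\+\mu^A =^\times 2^{-K^A}$. The main obstacle is step~(i): verifying that the combined trimming-and-redirection procedure never accidentally starves $A$ of the axioms it needs, while simultaneously respecting the mass bound for every oracle, is the technical heart of the argument, and the whole approach rests on the robustness promised in the remark preceding the lemma that the collapsing hypothesis makes $A$ behave like a ``good oracle''.
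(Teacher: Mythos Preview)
Your overall strategy—transport the Hjorth–Nies constructions to~$A$ via the collapsing approximation—matches the paper's, but there is a genuine gap in step~(ii). You claim the direction ``not random implies heavy'' is textbook, assigning mass according to each \emph{minimal}~$\sigma$ with $[\sigma]\subseteq U_{2n}$. But the set of such minimal strings is not higher $A$-c.e.: minimality of~$\sigma$ requires that no proper prefix of~$\sigma$ is covered by~$U_{2n}$, a co-(higher $A$-c.e.) condition. In the lower setting one sidesteps this with a time trick (at stage~$s$ one enumerates strings of length~$s$), and it is exactly this trick that fails in the higher setting. The paper flags this as ``the key step'' of the proof and uses the collapsing approximation to fix it: when a \emph{fresh} prefix $\tau = A_s\rest n$ appears, one enumerates the strings of length~$|\tau|$ newly covered by $U^A_s$. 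The fresh prefixes of~$A$ supply an $\omega$-sequence of natural-number checkpoints along which the classical time trick can be replayed.

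This also bears on your steps~(i) and~(iii). You reach for the machinery of \cref{lem:oracle_open_set_trimming} (projectum-weighted overflow budgets), but the paper's point is that the collapsing approximation makes things \emph{easier}, not harder: it provides a higher-$A$-computable cofinal $\omega$-sequence in~$\wock$, so constructions relative to~$A$ reduce to ordinary $\omega$-length recursions (see the footnote to the paper's proof). To trim a candidate measure one simply copies $\mu_s^\tau$ in full whenever $\tau\prec A_s$ is fresh and $\mu_s^\tau(\omega)\le 1$—no clopen approximations, no projectum bookkeeping. The same simplification applies to Kraft--Chaitin. Your more elaborate route might be made to work, but it obscures why the lemma holds and leaves the one genuinely delicate step (the prefix-free generating set for~$U^A$) unaddressed.
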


\begin{proof}
	To get a universal higher $A$-c.e.\ prefix-free machine we show that we can uniformly transform a given enumeration functional~$W$ to an enumeration functional~$V$ such that~$V^A$ is the graph of a function with prefix-free domain (indeed this is true for every oracle), and if $W^A$ is a graph of such a function then $V^A = W^A$. As in the proof of \cref{prop:collapsing_and_fin-h}, if we see that $\s = A_s\rest n$ is not an initial segment of~$A_t$ for any $t<s$, and $W_s^\tau$ is the graph of a function with prefix-free domain, then we let $V_s^\tau = W_s^\tau$. 

	In the same way we get $\+\mu^A$; if $\mu\subseteq 2^{<\w}\times \w\times \Rat^+$ then we let, for each $n<\w$ and $X\in 2^{\le \w}$, $\mu^X(n)= \sup \left\{ q\in \Rat^+ \,:\,  (\s,n,q)\in \mu\text{ for some }\s\preceq X \right\}$; and let $\mu^X(\w) = \sum_{n\in \w} \mu^X(n)$. We can transform each higher c.e.\ such~$\mu$ into some~$\nu$ such that $\nu^A(\w)\le 1$ and if $\mu^A(\w)\le 1$ then $\nu^A=\mu^A$: when we see a ``fresh'' $\tau\prec A_s$, we copy~$\mu_s^\tau$, provided that $\mu_s^\tau(\w)\le 1$. 

	\smallskip

	The key step in the standard (``lower'') proof of the Levin-Schnorr theorem (the equivalence of discrete measures and tests in capturing ML-randomness) is taking an effectively open set~$U$ and obtaining a c.e.\ prefix-free set generating~$U$. In the higher setting this is impossible; using the projectum funcion and approximations of closed sets from above by clopen sets, we can get a set of strings generating~$U$ whose \emph{weight} is bounded by $\leb(U)+\epsilon$ for any prescribed~$\epsilon>0$. However working relative to an oracle~$A$ with a collapsing approximation makes the situation \emph{easier}: in some sense the collapsing approximation brings us closer to~$\w$-computability. If~$A$ has a collapsing approximation and~$U^A$ is higher $A$-effectively open then there is a higher $A$-c.e.\ prefix-free set of strings~$W^A$ generating~$U$: if $\tau\prec A_s$ is fresh then we enumerate into~$W_{s+1}^\tau$ all strings~$\s$ \emph{of length~$|\tau|$} such that $[\s]\subseteq U^A_s$ but~$[\s]$ is disjoint from $[W_s^\tau]$. 

	In a similar way, relative to~$A$ we can follow the standard proof of the Kraft-Chaitin / coding theorem without having to resort to the necessary complications of the proof of the unrelativised theorem in the higher setting (see \cite{HjorthNies2007}).\footnote{Another way to understand the situation is to observe that a collapsing approximation of~$A$ gives us an $\w_1^{ck}$-$A$-computable $\w$-sequence $\seq{\alpha_n}$ cofinal in~$\wock$. From this we get a relation $E \le_{\hT} A$ such that $(\w,E)\cong (L_{\wock},\in)$ (and as is the situation with~$O$, we can make the map $n\mapsto n^{(\w,E)}$ computable). This means that the higher $A$-c.e.\ sets are precisely those which are~$\Sigma_1$-definable in the structure $(L_{\wock},\in,A)$. So when designing higher $A$-c.e.\ sets we don't have to consider other oracles, as is usually the case with desining oracle-c.e.\ sets; and we can enumerate such sets using a recursion of length~$\w$ along the sequence $\seq{\alpha_n}$. All familiar constructions can be performed this way. For example when enumerating a higher~$A$-effectively open set~$U$ we may assume that by stage~$\alpha_n$, only strings of length~$n$ have been enumerated into~$U$.}
\end{proof}

Suppose that~$A$ is low for higher~$K$. Then it is higher~$K$-trivial. With \cref{lem:collapsing_implies_Schnorr-Levin} we can then conclude that it is also low for higher c.e., discrete measures, and low for higher ML-randomness.

\subsection{Hungry sets} 
\label{sub:hungry_sets}

We next show that if $A$ is a base for higher randomness ($A\le_{\hT}Z$ for some higher $A$-ML-random set~$Z$) then~$A$ is higher~$K$-trivial. 
\begin{itemize}
	\item We could modify the argument to obtain lowness for higher~$K$. We will later show though that higher $K$-triviality implies lowness for higher~$K$. 
	\item The higher version of the Ku\v{c}era-G\'{a}cs theorem shows that if~$A$ is low for higher ML-randomness then it is a base for higher randomness. So we also conclude that lowness for higher ML-randoness implies higher $K$-triviality and therefore lowness for higher~$K$. A more direct argument is likely possible but for brevity we omit it. 
\end{itemize}

We need to carry out the ``hungry sets'' construction of \cite{HirschfeldtNiesStephan2007}. In~\cite{HjorthNies2007} the authors claim that the proof carries over with only notational changes; they ignore the typical topological problems. These problems are present even if one assumes that the reduction of~$A$ to~$Z$ is a fin-h reduction; the problems increase slightly when inconsistent functionals are admitted. Here we discuss these problems and show how to overcome them. 

\medskip

We recall the structure of the proof. Suppose that $\Phi(Z)=A$ where~$\Phi$ is a higher Turing functional and~$Z$ is higher $A$-ML-random. We fix $\epsilon>0$. We enumerate ``hungry sets'' $C^\alpha = C^\alpha(\epsilon)$ for every finite binary string~$\alpha$; we ensure that $C^\alpha\subseteq \Phi^{-1}[\alpha]$. An attempt to show that~$A$ is $K$-trivial is made by ensuring that $\alpha\mapsto \leb(C^\alpha)$ is a higher-c.e.\ discrete measure, and attempting to show that~$\leb(C^{A\rest n})\ge^\times \+\mu(n)$. So we aim to ensure three things:
\begin{enumerate}
	\item the measure of $\bigcup_{\alpha\prec A} C^\alpha$ is bounded by~$\epsilon$; 
	\item either for all $\alpha\prec A$, $\leb(C^\alpha)  =\epsilon \+\mu(|\alpha|)$, or  $Z\in \bigcup_{\alpha\prec A} C^\alpha$; and
	\item the sum $\sum_{\alpha\in 2^{<\w}} \leb(C^\alpha)$ is finite. 
\end{enumerate}

We ensure that for all~$s$ and~$\alpha$, $\leb(C^\alpha_s)\le \epsilon \+\mu_s(|\alpha|)$; this ensures~(1). In the standard proof, (3) is obtained by ensuring that the hungry sets are pairwise disjoint. The usual topological reasons preculde this from hapenning in the higher setting; at an infinite stage~$s$, $\Phi_s^{-1}[\alpha]\setminus C_s^\alpha$ may have positive measure but no interior. Further, if~$\Phi$ is inconsistent then we do not automatically get that~$C^\alpha$ and~$C^\beta$ are disjoint if~$\alpha$ and~$\beta$ are incomparable. As above, we remedy this by allowing overlap, but ensuring that it is small.

Fix positive rational numbers~$\delta_\alpha$ for all strings $\alpha\in 2^{<\w}$, so that $\sum_{\alpha\in 2^{<\w}} \delta_\alpha$ is finite. For notational simplicity at each stage of the construction we consider a single string~$\alpha$ (at stage~$t+n$, $t$ limit, consider the $n\tth$ finite binary string). Let $C_s = \bigcup_{\beta\in 2^{<\w}} C^\beta_s$. We find a clopen $B_s\subseteq C_s$ such that $\leb(C_s\setminus B_s)\le \delta_\alpha 2^{-p(s)}$. We now consider:
\begin{itemize}
	\item $G_s = \Phi^{-1}_s[\alpha]\setminus B_s$ --- this is potential fodder;
	\item $q_s = \epsilon \mu_s(|\alpha|) - \leb(C^\alpha_s)$ --- this is the amount we would like to add to~$C^\alpha$. 
\end{itemize}
If $\leb(G_s)\le q_s$ then we let $C^{\alpha}_{s+1} = C^\alpha_s\cup G_s$. If $\leb(G_s)> q_s$ we find some hyperarithmetic open set $U_s\subset G_s$ of measure exactly~$q_s$ and let $C^{\alpha}_{s+1} = C^\alpha_s\cup U_s$. It is easy to check that the bound $\leb(C^\alpha)\le \epsilon \+\mu(|\alpha|)$ is maintained at stage~$s+1$; 
 that if $\leb(G_s)> q_s$ then $\epsilon \+\mu_s(|\alpha|) - \leb(C^\alpha_{s+1}) \le \delta_\alpha 2^{-p(s)}$; and that that $\leb(E^\alpha_s)\le \delta_\alpha 2^{-p(s)}$, where $E^\alpha_s = (C^{\alpha}_{s+1}\setminus C^\alpha_s) \cap C_s$.

Suppose that $Z\notin \bigcup_{\beta\prec A}C^\alpha$; since~$\Phi$ is consistent on~$Z$ and $C^\beta\subseteq \Phi^{-1}[\beta]$, $Z\notin C = \bigcup_{\beta\in 2^{<\w}}C^\beta$. Let $\alpha\prec A$, and suppose for a contradiction that $\leb(C^\alpha)< \epsilon \+\mu(|\alpha|)$; let $ \leb(C^\alpha) < r< q< \epsilon \+\mu(|\alpha|)$ be rational numbers. For all but a bounded set of stages~$s$ we have $\epsilon\+\mu_s(|\alpha|)>q$, $\leb(C^\alpha_s) < r$, and $\delta_\alpha 2^{-p(s)} < q-r$. Suppose that~$s$ is a late stage at which~$\alpha$ is considered; so $Z\in \Phi^{-1}[\alpha]$. The fact that $Z\notin C$ implies that $\leb(G_s)>q_s$, but then enough measure is added to $C^\alpha_{s+1}$ to bring it to within $\delta_\alpha 2^{-p(s)}$ of $\epsilon\+\mu_s(|\alpha|)$; this is a contradition, which yields~(2). 

It remains to verify~(3). For each $\alpha$ let $E^\alpha = \bigcup_s E^\alpha_s$; so $\leb(E^\alpha)\le \delta_\alpha$. The sets $C^\alpha\setminus E^\alpha$ are pairwise disjoint: a real $X\in C^\alpha \setminus E^\alpha$ enters~$C^\alpha$ before it enters any other~$C^\beta$. Hence 
\[ \sum_{\alpha\in 2^{<\w}} \leb(C^\alpha) = \sum_{\alpha\in 2^{<\w}} \leb(C^\alpha\setminus E^\alpha) + \sum_{\alpha\in 2^{<\w}} \leb(E^\alpha) \le 1 + \sum_{\alpha\in 2^{<\w}} \delta_\alpha
\] which is finite.


\subsection{The main lemma} 
\label{sub:golden_run_the_main_lemma}

Unlike the hungry sets construction, there are no major topological complications associated with the golden run argument. The proof translated to the higher setting without many modifications. \Cref{prop:K_trivials_are_collapsing} gives a useful approximation with which to run the construction. In the standard construction we assume that the given enumeration is first sped-up so that at every stage~$s$, $A_s\rest{s}$ is $K_s$-trivial; here we can assume that~$A_s$ in its entirety is~$K_s$-trivial. When drip-feeding measure we are instructed to put some weight on a fresh number~$n$, and this usually means larger than any number chosen so far. This of course we cannot do. However we can choose a number as large as necessary (larger than the length of some initial segment of~$A$ which we are trying to certify) without needing to re-use followers; at stage~$s$ we choose from the $p(s)\tth$ column of~$\w$. 

This allows us to prove the higher version of the main lemma \cite[Lemma 5.5.1]{Nies2009}. Suppose that $\seq{A_s}_{s<\wock}$ is a $\wock$-computable approximation of a set~$A$. For $s<\wock$ let $A_{s}\wedge A_{s+1}$ be the longest common initial segment of~$A_s$ and~$A_{s+1}$. Let~$\mu^A$ be a higher $A$-c.e.\ discrete measure. If~$\seq{A_s}$ is a collapsing approximation then we may assume that we have an enumeration $\seq{\mu_s}$ of~$\mu$ such that for all $s<\wock$, $\mu_s^{A_s}$ is a higher c.e.\ discrete measure as well (in fact as discussed above we may assume that~$\mu^X$ is a discrete measure for all oracles~$X$). Recall that for a discrete measure~$\nu$ we let $\nu(\w) = \sum_n \nu(n)$. The quantity 
\[
	\mu_s^{A_s}(\w) - \mu_s^{A_s\wedge A_{s+1}}(\w)
\]
is the total mass assigned by~$\mu_s^{A_s}$ which was believed at stage~$s$ but thought to be incorrect at stage~$s+1$. 

\begin{proposition} \label{prop:main_lemma_of_K_triviality}
	Let~$A$ be higher $K$-trivial, and suppose that~$\mu^A$ is a higher $A$-c.e.\ discrete measure. Then there is an approximation $\seq{A_s}$ of~$A$ such that the sum
	\[
		\sum_{s<\wock}	\left(\mu_s^{A_s}(\w) - \mu_s^{A_s\wedge A_{s+1}}(\w)\right)
	\]
	is finite. 
\end{proposition}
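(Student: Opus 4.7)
The plan is to adapt the golden run argument of Nies (\cite[Section 5.5]{Nies2009}) to the higher setting, using the topological techniques developed in Section \ref{sub:hungry_sets} together with the collapsing approximations produced by \cref{prop:K_trivials_are_collapsing}. First I would fix a $K$-triviality constant $b$ for $A$ and invoke \cref{prop:K_trivials_are_collapsing} to obtain approximations $\seq{A_s}$ and $\seq{\+\mu_s}$ such that $\+\mu_s(A_s\rest n)\ge \delta\cdot\+\mu_s(n)$ for all $s$ and $n$. By passing to a $\wock$-computable cofinal subsequence of stages (closing the tree of $K_s$-trivial strings under the argument of \cite{HjorthNies2007} as in the proof of \cref{prop:K_trivials_are_collapsing}), we may further assume that for every stage $s$ the approximation $A_s$ is entirely $K_s$-trivial with constant $b$, so that the hungry set arguments can be launched uniformly.

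Next I would run a nested hungry sets construction, one level for each natural number $g$. Level $g$ runs a copy of the construction of Section \ref{sub:hungry_sets}, relative to a ``target'' cost $c_g$ derived from $g$ and $b$; it enumerates hungry sets $C^\alpha_{(g)}$ inside $\Phi^{-1}[\alpha]$ for a fixed higher Turing functional $\Phi$ arising from the higher $A$-c.e.\ measure $\mu^A$ (via the uniform transformation between higher $A$-c.e.\ measures and higher Turing functionals one obtains from a collapsing approximation of $A$, along the lines of \cref{lem:collapsing_implies_Schnorr-Levin}). If level $g$ fails to certify the required cost bound, the overflow mass is fed into the higher Kraft-Chaitin machine at level $g+1$ as a description of $A\rest n$ shorter than $K(n)+b$ would allow. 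Concretely, at stage $s$, when level $g$ asks for a ``fresh'' witness, one takes a number from the $p(s)\tth$ column of $\w$; and when level $g$ needs to declare hungry sets disjoint, one uses the small-overlap clopen trimming with tolerance $\delta_\alpha\cdot 2^{-p(s)}$ exactly as in Section \ref{sub:hungry_sets}, so that the total overlap across all stages summed over all levels remains finite.

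A standard counting argument then shows that there must be a level $g^*$ (the golden run) at which the certification succeeds: for if every level failed, the accumulated descriptions injected at each successive level would eventually produce, via the higher coding theorem of Hjorth-Nies, a higher c.e.\ discrete measure witnessing $K(A\rest n)\le^+ K(n) - b - 1$, contradicting the choice of $b$. The approximation witnessing success at level $g^*$ — obtained by taking $A_s$ to be the current guess of the golden-run procedure at stage $s$ — gives the bound
\[
\sum_{s<\wock}\left(\mu_s^{A_s}(\w)-\mu_s^{A_s\wedge A_{s+1}}(\w)\right) \;\le\; c_{g^*} + \sum_{\alpha\in 2^{<\w}}\delta_\alpha \;<\;\infty,
\]
as required, where the first summand comes from the successful golden-run cost bound and the second from the total overlap introduced by the clopen trimming.

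The main obstacle I expect is ensuring the stability of the golden-run bookkeeping across infinite stages. In the lower setting the nesting is justified by a finite-injury argument over $\w$, but here all ordinals $s\in[\w,\wock)$ are available and the passage to limits can destroy the clean structure of which level is currently ``golden''. The resolution is to combine the collapsing-approximation machinery (so that every higher $A$-c.e.\ enumeration can, like a true c.e.\ enumeration, be run along an $\w$-cofinal sequence of stages as noted in the footnote of \cref{lem:collapsing_implies_Schnorr-Levin}) with the projectum-weighted overlap bookkeeping, so that the total budget of wasted mass across all infinite stages is controlled by $\sum_s 2^{-p(s)}<\infty$ and the finite-injury intuition is restored.
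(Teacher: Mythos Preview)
Your high-level plan---run Nies's golden run using the collapsing approximation of \cref{prop:K_trivials_are_collapsing}, replace the speed-up ``$A_s\rest s$ is $K_s$-trivial'' by the stronger fact that all of~$A_s$ is $K_s$-trivial, and pick fresh followers from the $p(s)\tth$ column of~$\w$---matches the paper exactly. Those are precisely the two adaptations the paper singles out, and your description of the outer golden-run structure (nested levels, overflow funding the next level's Kraft--Chaitin requests, some level must succeed or the $K$-triviality constant is beaten) is correct.

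Where your proposal goes wrong is the description of what each level \emph{does}. A level of the golden run is not a hungry sets construction: there is no Turing functional~$\Phi$ manufactured from~$\mu^A$, no open sets $C^\alpha_{(g)}\subseteq\Phi^{-1}[\alpha]$ sitting inside Cantor space, and therefore nothing to clopen-trim. Each level~$g$ works with purely discrete data: it drips small portions of the mass appearing in~$\mu^A_s$ onto fresh natural numbers (the followers), issues Kraft--Chaitin requests for those numbers, and tallies how much of that mass is wasted when the approximation of~$A$ changes below the follower. Since no open subsets of~$2^\w$ are ever built, the topological difficulties that forced the overlap bookkeeping in Section~\ref{sub:hungry_sets} (positive-measure sets with empty interior in complements, failure of disjointness) simply do not arise. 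This is exactly the point the paper makes at the top of the subsection: ``Unlike the hungry sets construction, there are no major topological complications associated with the golden run argument.''

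Consequently your ``main obstacle'' paragraph is solving a non-problem. The state of the golden run at any stage~$s$ is finite (which levels are running, their current garbage quotas, their live followers), so passage through limit stages is handled by taking unions of the Kraft--Chaitin sets and suprema of the quotas; no $\delta_\alpha 2^{-p(s)}$ overlap budget is needed, and there is no extra $\sum_\alpha\delta_\alpha$ term in the final bound. Drop the hungry-sets layer and the clopen trimming; the rest of your outline is the paper's proof.
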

Further, we may assume that if~$\seq{A_s}$ is a given collapsing approximation of~$A$ and $\seq{\mu_s}$ is an enumeration of~$\mu$ such that for all~$s$, $\mu_s^{A_s}(\w)\le 1$, then there is a $\wock$-computable closed unbounded set $C\subseteq \wock$ such that 
	\[
		\sum_{s\in C}	\left(\mu_s^{A_s}(\w) - \mu_s^{A_s\wedge A_{s^+}}(\w)\right) < \infty,
	\]
	where $s^+ = \min (C \setminus (s+1))$.

\smallskip

We obtain familiar corollaries:
\begin{itemize}
	\item Every higher $K$-trivial set is low for higher~$K$; this completes the proof of \cref{thm:higher_K-trivial_coincidence}.
	\item Every higher $K$-trivial set is higher Turing reducible to a higher c.e., higher $K$-trivial set.
	\item Every higher $K$-trivial set is higher $\w$-c.a.
\end{itemize}


\section{Higher weak 2-randomness}

Recall that a higher weak 2-test (a generalised higher ML test) is a sequence $\seq{U_n}$ of uniformly $\Pi^1_1$ open sets (higher c.e.\ open sets) whose intersection is null. Note that we can suppose that the $U_n$ are nested, i.e., $U_{n+1} \subseteq U_n$ for all~$n$ (indeed, if they are not, one can consider $V_n = \bigcap_{k \leq n} U_k$ and observe that the $V_n$ are nested and that their intersection is the same as $\bigcap_n U_n$). 

A sequence is higher weak 2-random if it avoids all higher weak 2-tests. In this section we find alternative, Demuth-like characterisations of higher weak 2-randomness; we consider their Borel rank through an effective lens; and we investigate the interaction with classes of higher $\Delta^0_2$ sequences. These considerations will culminate in a separation of $\Pi^1_1$ randomness from higher weak 2-randomness.

\subsection{Compact approximations and higher weak 2-randomness} 

\Cref{def:compact_approximation} describes compact approximations. We recall the notational convention discussed in \cref{rmk:the_index_wock}: if $\seq{f_s}_{s<\wock}$ is a $\wock$-computable approximation of a function~$f$ then we write $f_{\wock}$ for~$f$. 

\begin{proposition} \label{prop:w2r_closed_approx}
No sequence $X \in 2^\omega$ with a higher closed approximation is higher weakly $2$-random.
\end{proposition}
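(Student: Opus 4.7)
The plan is to exhibit an explicit higher weak 2-test whose intersection captures~$X$, using only the closed approximation to build it. Let $\seq{X_s}_{s<\wock}$ be a higher closed approximation of~$X$, and set $K = \{X_s : s<\wock\} \cup \{X\}$, which by hypothesis is closed (hence compact) in~$2^\w$.

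First I would define the tree $T = \{\s \in 2^{<\w} : \s \prec X_s \text{ for some } s<\wock\}$. Since $\seq{X_s}$ is $\wock$-computable, $T$ is a higher c.e.\ set of strings, and it is visibly downward closed under $\preceq$. Setting $V_n = \bigcup_{\s \in T,\, |\s|=n}[\s]$ then yields a nested sequence of uniformly higher c.e.\ open sets, with indices computable from $n$.

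The crux is the identity $\bigcap_n V_n = [T] = K$. One direction is immediate: all initial segments of every $X_s$ lie in~$T$, and because $\seq{X_s}$ converges pointwise to~$X$ (by the definition of a $\wock$-approximation), each $X \rest n$ also lies in~$T$, so $K \subseteq [T]$. For the reverse, take $Y \in [T]$: each $Y \rest n$ agrees with some $X_{s_n} \rest n$, so $Y$ is a limit in $2^\w$ of the family $\{X_{s_n}\}$, and thus lies in the closed set~$K$ by hypothesis.

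Finally, since $\wock$ is a countable ordinal, $K$ is countable and hence $\leb(K) = 0$. Therefore $\seq{V_n}$ is a higher weak 2-test (uniformly higher c.e.\ open sets with null intersection) which captures~$X$, so $X$ is not higher weakly 2-random. There is no real obstacle here; once one notices that the closure hypothesis forces the naturally associated higher c.e.\ tree~$T$ to generate exactly~$K$, the nullity of~$K$ is just the observation that any $\wock$-indexed family is countable, and the test is read off from the levels of~$T$.
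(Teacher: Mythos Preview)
Your proof is correct and is essentially the same as the paper's. You phrase the test via the higher c.e.\ tree~$T$ while the paper writes $U_n = \bigcup_{s<\wock}[X_s\rest n]$ directly and argues via distances that $\bigcap_n U_n \subseteq K$; but your $V_n$ equals their $U_n$, and your limit-point argument that $[T]\subseteq K$ is the same content as their observation that any $Y\in\bigcap_n U_n$ has distance~$0$ from the closed set~$K$.
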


\begin{proof}
Let $\seq{X_s}_{s \le \wock}$ be a closed approximation of $X = X_{\wock}$. Let  $C = \{X_s\,:\, s\le \wock\}$. We let $U_n = \bigcup_{s<\wock}[X_s \rest n]$. The sequence $\seq{U_n}$ is uniformly higher effectively open. Certainly $X\in \bigcap_n U_n$. If $Y\in U_n$ then the distance of $Y$ from $C$ is at most $2^{-n}$. Hence if $Y\in \bigcap_n U_n$ then the distance of~$Y$ from~$C$ is~$0$. Since~$C$ is closed, this implies that $\bigcap_n U_n \subseteq C$. 

The set~$C$ is countable, and so null. This shows that $\bigcap_n U_n$ is null, and so is a higher weak 2-test. 
\end{proof}

Even if $\seq{X_s}$ is a higher left-c.e.\ approximation, we do not know how to directly show that the measure of the sets~$U_n$ tends to~$0$. 

\medskip

A generalisation of \cref{prop:w2r_closed_approx} gives a Demuth-style characterisation of higher weak 2-randomness, a weakening of the class higher $\MLR[O]$ (introduced later in \cref{section:mlr_plop_o}). In the lower setting of course weak 2-randomness is equivalent to $\MLR[\emptyset']$. Recall that we let $W_e$ denote the $e\tth$ higher c.e.\ open set. 

\begin{proposition} \label{prop:Demuth_characterisations_of_weak_2_randomness}
The following classes of tests precisely capture higher weak 2-tests.
\begin{enumerate}
	\item \label{item:finite-change} Nested tests of the form $\seq{W_{f(n)}}$ where $\leb(W_{f(n)})\le 2^{-n}$ and $f$ has a finite-change approximation. 
		\item \label{item:compact} Nested tests of the form $\seq{W_{f(n)}}$ where $\leb(W_{f(n)})\le 2^{-n}$ and $f$ has a compact approximation. 
\end{enumerate} %
\end{proposition}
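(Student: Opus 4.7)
The proof splits naturally into three parts. By Lemma~\ref{lem:finite-change_implies_compact}, class~(1) is contained in class~(2), so it suffices to establish that (a)~every higher weak 2-test is captured by a class~(1) test, and (b)~every class~(2) test is captured by a higher weak 2-test.

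For (a), given a nested higher weak 2-test $\seq{U_n}$, continuity of measure forces $\leb(U_n)\to 0$, so the plan is to thin to $\seq{U_{g(k)}}$ where $g(k)=\min\{n : \leb(U_n)\le 2^{-k}\}$. The key point is that $g$ admits a finite-change approximation: set $g_s(k)=\min\{n : \leb(U_n)[s]\le 2^{-k}\}$ using the stage-$s$ left-c.e.\ approximations of measure. Since $\leb(U_n)[s]$ is non-decreasing in $s$, the set $\{n : \leb(U_n)[s]\le 2^{-k}\}$ shrinks with $s$, so $g_s(k)$ is non-decreasing in $s$; being bounded above by $g(k)<\w$ it takes finitely many values. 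Composing with the computable index map for $\seq{U_n}$ gives a finite-change function $f$ whose associated class~(1) test $\seq{W_{f(k)}}$ captures the same reals.

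Part (b) is the main obstacle. Let $\seq{W_{f(n)}}$ be nested with $\leb(W_{f(n)})\le 2^{-n}$ and compact approximation $\seq{f_s}$. Set $K=\{f_s : s\le\wock\}$ and $T=\{\s : \exists s\le\wock,\, \s\prec f_s\}$. Then $T$ is finitely branching and $[T]=K$ (because $K$ is closed). The branching function $b$ of $T$ is total and hyperarithmetic: ``$b(\s)\le i$'' is $\Sigma^1_1$ (its negation is the higher $\Sigma_1$ statement ``some $f_s$ extends $\s\conc(i{+}1)$''), so by Kleene's theorem the graph of the total function $b$ is $\Delta^1_1$, and $T$ is hyperarithmetic. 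To control measure I would introduce the trimmed higher c.e.\ open sets
$$\tilde W_{k,n}\;=\;\bigcup\bigl\{W_k[s] : s<\wock,\ \leb(W_k)[s]\le 2^{-n+1}\bigr\},$$
each of measure at most $2^{-n+1}$, and equal to $W_k$ whenever $\leb(W_k)\le 2^{-n+1}$. Then define
$$V_n\;=\;\bigcup_{\s\in T_{n+1}}\ \bigcap_{i\le n}\tilde W_{\s(i),i}.$$
Uniform higher c.e.\ openness of $\seq{V_n}$ follows from finiteness of the hyperarithmetic $T_{n+1}$ and higher c.e.\ openness of each $\tilde W_{k,i}$. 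The inclusion $\bigcap W_{f(n)}\subseteq\bigcap V_n$ follows because $f\rest(n{+}1)\in T_{n+1}$ and the trimming is vacuous along $f$, i.e.\ $\tilde W_{f(i),i}=W_{f(i)}$ for all $i$.

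To finish, I would show $\bigcap V_n$ is null. Any $X\in\bigcap V_n$ chooses for each $n$ a witness $\s_n\in T_{n+1}$ with $X\in\bigcap_{i\le n}\tilde W_{\s_n(i),i}$; the set of prefixes of these witnesses is an infinite, finitely-branching subtree of $T$, so K\"onig's lemma produces a path $g\in[T]=K$ with $X\in\tilde W_{g(n),n}$ for all $n$. Hence $X\in\bigcap_n\tilde W_{g(n),n}$, a set of measure at most $\inf_n 2^{-n+1}=0$. Therefore $\bigcap V_n\subseteq\bigcup_{g\in K}\bigcap_n\tilde W_{g(n),n}$, a countable union (since $|K|\le|\wock|+1=\aleph_0$) of null sets, hence null. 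The delicate point is calibrating the trimming threshold so that it is higher $\Sigma_1$ (matching the left-c.e.\ approximation of measure), loose enough to preserve $W_{f(n)}$ inside $\tilde W_{f(n),n}$, and tight enough that each $\tilde W_{g(n),n}$ has small measure along every path through $T$; the choice $2^{-n+1}$ achieves all three simultaneously.
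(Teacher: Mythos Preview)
Your proof is correct. Part~(a) is identical to the paper's. For part~(b) you take a combinatorially different but ultimately parallel route: the paper first massages the approximation itself---replacing $W_{f_s(n)}$ by $\bigcap_{m\le n}W_{f_s(m)}$ (a continuous operation, preserving compactness) and trimming so that $\leb(W_{f_s(n)})\le 2^{-n}$---and then sets $U_n=\bigcup_{s}W_{f_s(n)}$, arguing via a limit-point in the compact set $\{f_s:s\le\wock\}$ that any $Y\in\bigcap_n U_n$ lies in some $A_t=\bigcap_n W_{f_t(n)}$. You instead leave the approximation alone, work with the tree~$T$ of its initial segments, build the nesting into the definition of~$V_n$, and replace the limit-point argument by K\"onig's lemma on the subtree of witnesses. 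The two arguments are really the same compactness fact viewed topologically versus combinatorially; the paper's version is marginally shorter because it avoids introducing~$T$ and the trimmed sets $\tilde W_{k,n}$ as separate objects.

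One remark: your claim that~$T$ is hyperarithmetic is both unnecessary and not quite established by what you wrote. You argue that ``$b(\s)\le i$'' is $\Sigma^1_1$, but this only gives the graph of~$b$ as a conjunction of a $\Sigma^1_1$ and a $\Pi^1_1$ condition, and the appeal to ``Kleene's theorem'' is vague. Fortunately you do not need it: what matters for the uniform higher effective openness of~$V_n$ is only that $T$ is $\Pi^1_1$ (i.e.\ higher c.e.), which is immediate from the definition, so that the union $\bigcup_{\s\in T_{n+1}}\bigcap_{i\le n}\tilde W_{\s(i),i}$ is a higher-c.e.\ union of uniformly higher-c.e.\ open sets. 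You should drop the hyperarithmeticity discussion and just note this.
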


\begin{proof}
Every function which has a finite-change approximation also has a compact approximation (\cref{lem:finite-change_implies_compact}). So we need to show that:
\begin{enumerate}
	\item[(a)] Every weak 2-test can be covered by a test with a finite-change index function (as in (\ref{item:finite-change})). 
	\item[(b)] Every test with a compact index function (as in (\ref{item:compact})) can be covered by a weak 2-test.
\end{enumerate}
	
For (a), let $\seq{U_n}$ be a higher weak 2-test; let $U_{n,s}$ be a uniform enumeration of $U_n$. For $s\le \wock$ let $f_s(k)$ be the least $n$ such that $\leb(U_{n,s})\le 2^{-k}$. Since the measures of $U_{n,s}$ are non-decreasing, the functions $f_s(k)$ are non-decreasing in~$s$, and converge to a limit since for all~$k$ there is an~$n$ such that $\lambda(U_n) < 2^{-k}$. So $\seq{f_s}$ is a finite-change approximation of $f=f_{\wock}$. Passing to canonical indices we get a test with a finite-change index function which covers the test $\seq{U_n}$. 

For (b), the argument is inspired by that of \cref{prop:w2r_closed_approx}. Let $\seq{f_s}_{s<\wock}$ be a compact approximation of a function~$f$ such that $\leb(W_{f(n)})\le 2^{-n}$ and $\seq{W_{f(n)}}$ is nested. 

A priori, the sets $W_{f_s(n)}$ (for a fixed~$s$) may not be nested. We replace $W_{f_s(n)}$ by $\bigcap_{m\le n} W_{f_s(m)}$. This changes the index function. However the first~$n$ values of the new index function~$g_s$ are determined by the first $n$ bits of $f_s$. In particular, the map $f_s\mapsto g_s$ is continuous, and hence the set $\{g_s\,:\, s\le \wock\}$ is compact (and of course $g = \lim_s g_s$). Thus, without loss of generality, we may assume that each test $\seq{W_{f_s(n)}}$ is nested. We may also assume that $\leb(W_{f_s(n)})\le 2^{-n}$ for all $s$ and $n$. 

Let $U_n = \bigcup_{s<\wock} W_{f_s(n)}$. Since $U_n \supseteq W_{f(n)}$, the test $\seq{U_n}$ covers the given test $\seq{W_{f(n)}}$; and the sets $U_n$ are uniformly $\Pi^1_1$ open. We show that $\bigcap_n U_n$ is null. 

For each $s\le \wock$, let $A_s = \bigcap_n W_{f_s(n)}$, and let $A = \bigcup_{s\le \wock} A_s$. Each~$A_s$ is null; since $\wock+1$ is countable, $A$ is null. We show that $\bigcap_n U_n\subseteq A$. For let $Y\in \bigcap_n U_n$. For each $n$ there is some $s(n)$ such that $Y\in W_{f_{s(n)}(n)}$. Since the set $\{f_t\,:\, t\le \wock\}$ is compact, the set $\{f_{s(n)}\,:\, n<\w\}$ has a limit point, and that limit point equals~$f_t$ for some $t\le \wock$. Then $Y\in A_t$: to see this, let $n<\w$. There is some $k> n$ such that $f_t\rest {n+1} = f_{s(k)}\rest{n+1}$. Then $Y\in W_{f_{s(k)}(k)}\subseteq W_{f_{s(k)}(n)} = W_{f_t(n)}$ as required. 
\end{proof}

\subsection{A short proof of a theorem of Chong and Yu's} 

Chong and Yu \cite{ChongYu} showed that every hyperdegree above that of Kleene's $O$ contains a higher ML-random set which is not higher weak 2-random. The above results give us a short proof of this fact. Let $Y\ge_h O$. There is some $X\equiv_h Y$ such that $X \ge_{\hT} O$, for example $X = Y\oplus O$. By the higher Ku\v{c}era-G\'acs theorem there is some $Z\equiv_{\hT} X$ which is higher ML-random. Since $Z\ge_{\hT} \Omega$ and higher~$\Omega$ is not higher weak 2-random, neither is $Z$ (\cref{thm:downward_closure_Strong-ML-randomness}). And $Z\equiv_h Y$.

\subsection{The effective Borel rank of higher weak 2-randomness} 

Every higher null weak 2-set is $G_\delta$, and so the set of higher weak 2-random sequences is $\boldface{\Pi^0_3}$. Yu showed that this is sharp. There is a natural higher lightface version of the Borel hierarchy. For example a set is higher $\Pi^0_2$ if it is the uniform intersection of $\Pi^1_1$ open sets (so the higher null weak 2-sets are precisely the null higher $\Pi^0_2$ sets). A set is higher $\Sigma^0_3$ if it is the uniform union of higher $\Pi^0_2$ sets, and so on. We investigate this hierarchy in detail in \cite{Pi11RandomnessPaper}. Here we show that the set of higher weakly 2-random sequences is not higher $\Pi^0_3$. Thus, picking out the null higher $\Pi^0_2$ sets requires an oracle. This follows from \cref{prop:w2r_closed_approx,lem:finite-change_implies_compact} and the following proposition. 

\begin{proposition} \label{prop:finite-change_in_conull_Pi3}
	Every higher $\Pi^0_3$ set of measure~$1$ contains a sequence which has a finite-change approximation.
\end{proposition}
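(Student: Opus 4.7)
The plan is to produce, for a given higher $\Pi^0_3$ set $A$ of measure $1$, an explicit sequence in $A$ arising as the lex-leftmost element of a judiciously chosen topologically closed subset. I would write $A = \bigcap_n U_n$ with $\seq{U_n}$ uniformly higher $\Sigma^0_2$, and decompose each $U_n = \bigcup_m F_{n,m}$ as an increasing uniform union of higher closed sets. Since $\leb(U_n) = 1$, there is a least $m^*(n)$ with $\leb(F_{n,m^*(n)}) > 1 - 2^{-n-2}$; setting $G_n = F_{n,m^*(n)}$, a Borel--Cantelli estimate gives $\leb(P) \ge 1/2$ for $P := \bigcap_n G_n$, which is topologically closed as a countable intersection of closed sets. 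Hence $P \subseteq A$ is a non-empty closed set of positive measure.

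Let $L$ be the lex-leftmost element of $P$; since $P$ is closed, $L \in P \subseteq A$. Writing $L_N$ for the lex-leftmost element of the (topologically and higher) closed set $\bigcap_{n \le N} G_n$, the sequence $L_N$ is lex-increasing in $N$ with lex-sup $L$. An elementary property of the lex order on $2^\w$ is that lex-monotone sequences converge bit-wise to their lex-sup; combined with each $G_n$ being closed and $L_N \in G_n$ for $N \ge n$, this re-confirms $L \in P$, and gives a scheme for a bit-wise approximation of $L$.

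The crucial effective ingredient is that $m^*$ admits the natural non-decreasing $\wock$-approximation $m_s(n) = $ least $m$ with $\leb(F_{n,m}[s]) > 1 - 2^{-n-2}$ (a finite-change approximation, with $F_{n,m}[s]$ the stage-$s$ clopen approximation of $F_{n,m}$). Once $m_t(n) = m^*(n)$ has stabilised for all $n \le N$, the clopen sets $\bigcap_{n \le N} F_{n,m^*(n)}[s]$ decrease to $\bigcap_{n \le N} G_n$, so their lex-leftmost elements lex-increase to $L_N$. Interleaving across $N$ and $s$ via a $\wock$-computable bijection $\wock \to \w \times \wock$ and passing to running lex-maxima yields a monotone $\wock$-approximation $\alpha_s \uparrow L$ as a real. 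A standard dyadic-interval analysis shows that bit $k$ of any monotone $\wock$-approximation of a real changes at most $O(2^k)$ times, giving a finite-change approximation.

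The main technical obstacle is the boundary case where $L$ is a dyadic rational as a real: there a lex-monotone approximation from below may converge bit-wise to the ``other'' binary representation of $L$, which may not even lie in $P$. This is handled by observing that in this case $L$ is $O$-computable and of finite description as a dyadic rational, so its canonical representation as the lex-leftmost element of $P$ admits a trivial eventually-constant finite-change approximation produced directly from that description; in the generic non-dyadic case, the lex-sup and bit-wise limit coincide and the plan above goes through as stated.
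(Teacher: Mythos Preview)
Your approach has a genuine gap: the running lex-max can overshoot~$L$ and then never recover. The problem is the interaction between the two approximations you are combining. Your $m_s(n)$ is the least~$m$ with $\leb(F_{n,m}[s]) > 1 - 2^{-n-2}$; since $F_{n,m}[s]\supseteq F_{n,m}$, this satisfies $m_s(n)\le m^*(n)$ for all~$s$. So at stages where $m_s(n)<m^*(n)$, you are intersecting with $F_{n,m_s(n)}[s]$, a set that need not contain~$L$ (you only know $L\in F_{n,m^*(n)}$, and $F_{n,m_s(n)}\subseteq F_{n,m^*(n)}$, not the reverse). Hence the lex-leftmost element $L_{N,s}$ of $\bigcap_{n\le N}F_{n,m_s(n)}[s]$ may lie strictly to the right of~$L$, and once that value enters the running lex-max, the monotone sequence $\alpha_s$ cannot converge to~$L$.

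Concretely: take $F_{0,0}$ a closed set of measure $\le 3/4$ not containing~$0^\omega$, $F_{0,1}=2^\omega$, and $F_{n,m}=2^\omega$ for all $n\ge 1$; then $G_n=2^\omega$ for all~$n$, so $P=2^\omega$ and $L=0^\omega$. Arrange the co-enumeration of~$F_{0,0}$ so that at some early stage~$s$, $F_{0,0}[s]=2^\omega\setminus[0^k]$ for some large~$k$ (this has measure $1-2^{-k}>3/4$). Then $m_s(0)=0$ and $L_{0,s}=0^{k-1}10^\omega>L$. Your $\alpha$ has overshot.

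The paper's construction avoids this by not separating the choice of closed sets from the choice of the point. It builds~$x$ and the selection function~$c$ (your~$m^*$) \emph{together}, one level at a time: given $x\rest e$ and $c\rest e$, it first picks $x(e)\in\{0,1\}$ to keep the conditional measure of $H^e=\bigcap_{d<e}F^{d,c(d)}$ large on $[x\rest{e+1}]$, and only then picks $c(e)$ as the least~$k$ making $\leb(H^e\cap F^{e,k}\mid x\rest{e+1})$ large. At stage~$s$ one repeats this with the stage-$s$ closed sets. The point is that, with earlier choices fixed, $x_s(e)$ takes values in $\{0,1\}$ and can only move $0\to 1$ (since $H^e_s$ shrinks in~$s$), and $c_s(e)$ can only increase; both are therefore non-decreasing and bounded. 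A continuity claim then shows the approximation is finite-change. This interleaving---choosing the bit \emph{before} committing to the next closed set, and making both choices ``least'' with respect to a monotone measure condition---is exactly what prevents the overshoot that breaks your global lex-leftmost strategy.
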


\begin{proof}
Let $F$ be a higher $\Pi^0_3$ set of measure~$1$. So $F = \bigcap_{e<\w} F^e$, where $F^e$ are uniformly higher $\Sigma^0_2$, and since $F\subseteq F^e$, each $F^e$ has measure~$1$. We write $F^e = \bigcup_k F^{e,k}$ where $\seq{F^{e,k}}_{k<\w}$ is an increasing sequence of uniformly higher effectively closed sets, namely, $\Sigma^1_1$ closed sets. 

We define a real $x\in F$ by recursion on $e<\w$. To ensure that $x\in F$ we will, for each~$e$, pick one of the closed sets $F^{e,k}$ and ensure that $x\in F^{e,k}$. We denote the index~$k$ chosen by $c(e)$. We define $x\rest{e}$ and $c\rest{e}$ by simultaneous recursion. At step $e<\w$, given $x\rest{e}$ and $c\rest{e}$, let $H^{e} =\bigcap_{d<e} F^{d,c(d)}$. For $e=0$ we have $H^e = 2^\w$. Inductively we ensure that $\leb(H^e\given x\rest e) \ge 2^{-e}$. We then choose:
\begin{itemize}
	\item  $x(e)\in \{0,1\}$ to be the least so that $\leb(H^e\given x\rest{e+1})\ge 2^{-e}$. 
	\item Since $F^{e}$ has measure~$1$, $\leb(H^e\cap F^{e}\given  x\rest{e+1})\ge 2^{-e}$, and so there is some $k<\w$ such that $\leb\left(H^e\cap F^{e,k} \given  x\rest{e+1} \right)\ge 2^{-(e+1)}$. We let $c(e)$ be the least such~$k$. 
\end{itemize}

For all $e<\w$ and $d\ge e$, $H^e\cap [x\rest d] \supseteq H^d\cap [x\rest{d}]$ are not null and so nonempty. Since $H^e$ is closed, $x\in H^e$. And $H^e\subseteq F^{d}$ for all $d < e$, and so $\bigcap H_e\subseteq F$. Thus $x\in F$. 

It remains to show that~$x$ has a finite-change approximation. To do so, we approximate the set~$F$ and the sets it is built up from. The sets $F^{e,k}$ have (uniform) co-enumerations $F^{e,k}_s$ for $s<\wock$; each $F^{e,k}_s$ is hyperarithmetic and if $s<t$ then $F^{e,k}_s\supseteq F^{e,k}_t$. We also assume that these co-enumerations are continuous: for limit $s<\wock$, $F^{e,k}_s = \bigcap_{t<s} F^{e,k}_t$. We let $F^e_s = \bigcup_k F^{e,k}_s$. We then repeat the construction above at each stage $s<\wock$: we define $x_s\in 2^\w$ and $c_s\in \w^\w$ coding choices of indices so that letting $H^e_s = \bigcap_{d<e} F^d_s$ we have:
\begin{enumerate}
	\item $\leb(H^e_s \given  x_s\rest{e}) \ge 2^{-e}$; 
	\item $x_s(e)$ is least such that $\leb(H^e_s \given  x_s\rest{e+1})\ge 2^{-e}$; and
	\item $c_s(e)$ is the least~$k$ such that $\leb(H^{e}_s\cap F^{e,k}_s\given  x_s\rest{e+1})\ge 2^{-(e+1)}$. 
\end{enumerate}

We will show that $\seq{x_s}$ is a finite-change approximation of~$x$. To begin, we note that if $e<\w$, $s<t\le \wock$ and $c_s\rest{e} = c_t\rest{e}$ then $H^e_s\supseteq H^e_t$. This implies the following:

\begin{itemize}
	\item[(*)] Suppose that $c_s\rest {e} = c_t\rest{e}$ and $x_s\rest{e} = x_t\rest{e}$. Then $x_s(e)\le x_t(e)$. 
	\item[(**)] Suppose that $c_s\rest {e} = c_t\rest{e}$ and $x_s\rest{e+1} = x_t\rest{e+1}$. Then $c_s(e)\le c_t(e)$. 
\end{itemize}

The following claim shows that we cannot cycle through infinitely many values of $c_r(e)$ while $c_r\rest e$ remains stable. We use the following notation. If $I\subseteq \wock$ is an interval of stages and $x_r\rest e$ is constant for all $r\in I$, then we denote this constant value by $x_I\rest e$; similarly for~$c$, or $x(e)$, etc.

\begin{claim} \label{subclaim:finite_change_in_conull_Pi3}
	Let $e<\w$. Let~$I\subseteq \wock$ be an interval of stages on which $c_r\rest{e}$ and $x_r\rest{e}$ are constant. Then $c_{\sup I}\rest{e} = c_I\rest{e}$ and $x_{\sup I}\rest{e} = x_I\rest{e}$.
\end{claim}

\begin{proof}
	By induction on~$e$. Assume we know this for $e$. Let $s = \sup I$. We assume that $c_r\rest{e+1}$ and $x_r\rest{e+1}$ are constant on~$I$; we need to show that $x_s(e) = x_I(e)$ and $c_s(e) = c_I(e)$. By induction and continuity of the co-enumeration of the closed sets $F^{e,k}$, $H^e_s = \bigcap_{r\in I} H^e_r$. For all $r\in I$, $c_r(e)$ is the least $i\in \{0,1\}$ such that $\leb( H^e_r \given x_I\rest{e}\conc i)\ge 2^{-e}$. By induction, $x_s\rest e = x_I\rest{e}$, and by continuity, $\leb(H^e_s \given x_I\rest{e}\conc i) = \inf_{r\in I} \leb( H^e_r\given x_I\rest e\conc i)$ and so is at least $2^{-e}$. On the other hand, if $i=1$, then $\leb(H^e_r \given x_I\rest{e}\conc 0)< 2^{-e}$ for all $r\in I$, and so $\leb(H^e_s \given x_s\rest{e}\conc 0)< 2^{-e}$. Overall we see that $x_s(e) = x_I(e)$. The argument for $c_s(e)$ is the same.
\end{proof}

We show that $\seq{x_s}$ changes only finitely often on each input. \Cref{subclaim:finite_change_in_conull_Pi3} would then imply that $x = \lim_{s\to \wock} x_s$. By induction on~$e$ we show that $\wock+1$ can be partitioned into finitely many closed intervals of stages on which both $x_s\rest e$ and $c_s\rest e$ are constant. Suppose that this has been shown for~$e$; let~$I$ be a closed interval of stages on which $x_s\rest e$ and $c_s\rest e$ are constant. For $i<2$ let $I_i$ be the set of stages $s\in I$ at which $x_s(e) = i$. By (*), both $I_0$ and $I_1$ are intervals, with $I_0<I_1$. \Cref{subclaim:finite_change_in_conull_Pi3} shows that they are closed. Now fix $i<2$; let $t = \max I_i$ and let $k = c_t(e)$. For $m\le k$ let $I_{i,m}$ be the set of stages at which $c_s(e)=m$. By (**), each $I_{i,m}$ is an interval with $I_{i,0}< I_{i,1}< \cdots < I_{i,k}$, and $\bigcup_{m\le k} = I_i$. \Cref{subclaim:finite_change_in_conull_Pi3} shows that each $I_{i,m}$ is closed. This concludes the proof of \cref{prop:finite-change_in_conull_Pi3}.
\end{proof}

\subsection{Separating $\Pi^1_1$ randomness from higher weak 2-randomness} 

In this section we construct a sequence $x\in 2^\w$ which is higher weak 2-random but not $\Pi^1_1$ random. This sequence will be $O$-computable. The construction is an elaboration on that of the previous section. Here too we need to build an element of a $\boldface{\Pi^0_3}$ set of measure~$1$ which is the intersection of higher $\Sigma^0_2$ sets, namely all of the ones of measure~$1$. To ensure that $x$ is not $\Pi^1_1$ random we need to show that it collapses $\wock$, as in the presence of higher weak 2-randomness (and in fact $\Delta^1_1$ randomness), being $\Pi^1_1$ random is equivalent to preserving $\wock$. So we will ensure that we can give~$x$ a collapsing approximation. On the other hand, \cref{prop:w2r_closed_approx} shows that we cannot give~$x$ a compact approximation, let alone a finite-change one. The difficulty of course compared to the previous construction is that we cannot effectively enumerate all of the higher $\Sigma^0_2$ sets of measure~$1$. In the indices of such sets, the property of having measure~$1$ is higher $\Pi^0_1$ but not decidable. 

Technically, it is the key \Cref{subclaim:finite_change_in_conull_Pi3} which may fail: if $F^e$ does not really have measure~$1$, then it is possible that at every stage $r$ in an interval~$I$, $F^e_r$ has measure~$1$, but for $s = \sup I$, $F^e_s$ does not have measure~$1$. (For example, let $I = \w$, and $F^{e,k}_r = 2^\w$ when $r < k$ and empty when $r\ge k$; then $F^{e}_r= 2^\w$ for all $r<\w$ but $F^e_{\w} = \emptyset$). It is then possible that $c_r(e)$ cycles through all of~$\w$ during the stages in~$I$. At stage~$s$ we know that we didn't need to ensure that $x\in F^e$. But by then it is too late, the approximation changed infinitely often. 

Thus, we devise a wider class of approximations which is compatible with being higher weakly 2-random, but still implies collapsing $\wock$. 

\begin{definition} \label{def:finite-change_along_true_path}
	A $\wock$-computable approximation~$\seq{f_s}$ of a function~$f$ is \emph{finite-change along true initial segments} if for no~$n$ is there an increasing infinite sequence~$\seq{t(k)}$ of stages such that $f_{t(k)}\rest n = f\rest n$ for all $k$ but $f_{t(k+1)}(n)\ne f_{t(k)}(n)$ for all~$k$. 
\end{definition}

To see that such an approximation is collapsing we isolate another notion. 

\begin{definition} \label{def:club_approximation}
	An $\wock$-computable approximation~$\seq{f_s}$ of a function~$f$ is a \emph{club approximation} if for all~$n$, the set of stages~$s$ such that $f_s\rest n = f\rest{n}$ is a closed set of stages. 
\end{definition}

\begin{lemma} \label{lem:club_approximations-are_collapsing_etc}
	Every function which has a finite-change-along-true-initial-segments approximation also has a club approximation. If $\seq{f_s}$ is a club approximation of $f\notin \Delta^1_1$ then $\seq{f_s}$ is a collapsing approximation.
\end{lemma}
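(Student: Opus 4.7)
The plan is to treat the two implications separately, starting with the easier second one. To show that a club approximation of $f\notin\Delta^1_1$ must be collapsing, I will argue by contradiction: assume $\seq{f_s}$ is a club approximation of $f$ with $s^{*}:=\sup_n s(n)<\wock$ and derive that $f_{s^{*}}=f$, which is absurd since $f_{s^{*}}\in L_{\wock}$ is hyperarithmetic while $f\notin\Delta^1_1$. Write $C_n=\{s<\wock : f_s\rest n=f\rest n\}$; this is closed by hypothesis and decreasing in $n$, since longer agreement entails shorter. Because $s(m)\in C_m\subseteq C_n$ for $m\ge n$, and because removing finitely many terms never alters a supremum, $\sup_{m\ge n}s(m)=s^{*}$ for every~$n$; thus $s^{*}$ is either in $C_n$ or a limit point of $C_n$, so closedness gives $s^{*}\in C_n$. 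As this holds for every~$n$, $f_{s^{*}}\rest n=f\rest n$ for all~$n$ and $f_{s^{*}}=f$.

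For the first implication, given an FCTIS approximation $\seq{f_s}$ of $f$, the plan is to construct a $\wock$-computable club approximation $\seq{g_s}$ of the same~$f$. I define $g_0=f_0$, $g_{r+1}=f_r$ for successor stages, and $g_{\wock}=f$; at a limit stage~$s$ I define $g_s$ by recursion on coordinates, letting $g_s(n)$ be the unique $v\in\w$, if one exists, such that $\{t<s : f_t\rest n=g_s\rest n \text{ and } f_t(n)=v\}$ is cofinal in~$s$, and $g_s(n)=0$ otherwise. The verification that $g_s\to f$ pointwise is a straightforward induction on~$n$: for $s$ past the stage at which $f_t\rest(n+1)=f\rest(n+1)$ stably holds, values $v\ne f(n)$ contribute only bounded subsets while $v=f(n)$ gives a cofinal one, so $g_s(n)=f(n)$.

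The remaining point, that $C_n^g:=\{s : g_s\rest n=f\rest n\}$ is closed for every~$n$, I prove by induction on~$n$; the case $n=0$ is trivial. Given the result for~$n$, let $s^\dagger<\wock$ be a limit point of $C_{n+1}^g\subseteq C_n^g$; by the induction hypothesis $g_{s^\dagger}\rest n=f\rest n$, so it suffices to establish $g_{s^\dagger}(n)=f(n)$. Unwinding the definition of~$g$ at the cofinal-in-$s^\dagger$ stages $t\in C_{n+1}^g$ shows that $A:=\{r<s^\dagger : f_r\rest (n+1)=f\rest(n+1)\}$ is cofinal in~$s^\dagger$. The main obstacle, and the only real use of the FCTIS hypothesis, is ruling out a competing $v'\ne f(n)$ with $B:=\{r<s^\dagger : f_r\rest n=f\rest n,\, f_r(n)=v'\}$ also cofinal in~$s^\dagger$: since $\cf(s^\dagger)=\w$, I can interleave $A$ and $B$ into an $\w$-sequence $r_1<r'_1<r_2<r'_2<\cdots$ with $r_k\in A$ and $r'_k\in B$, producing an infinite sequence of stages along which $f_\cdot\rest n=f\rest n$ but $f_\cdot(n)$ alternates between $f(n)$ and $v'$, directly contradicting FCTIS at level~$n$. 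Hence $f(n)$ is the unique cofinal value, $g_{s^\dagger}(n)=f(n)$, and $s^\dagger\in C_{n+1}^g$.
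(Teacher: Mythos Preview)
Your proof of the second implication (club approximation of a non-hyperarithmetic $f$ is collapsing) is correct and essentially matches the paper's: both arguments show that $f_{s^*}=f$ for $s^*=\sup_n s(n)$, using that $\{s(m):m\ge n\}\subseteq C_n$ together with closedness of $C_n$ forces $s^*\in C_n$ for every $n$.

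For the first implication your route differs from the paper's. The paper first establishes, by a short induction on~$n$ using the FCTIS property, that whenever $s$ is a limit stage with $f_t\rest n=f\rest n$ for cofinally many $t<s$, then in fact $\lim_{t\to s}f_t\rest n=f\rest n$; it then simply redefines the approximation at limit stages to be partially continuous (take the limit when it exists), which immediately yields a club approximation. You instead build $g_s$ at a limit $s$ directly from the \emph{original} values $\seq{f_t}_{t<s}$ via a ``unique cofinal value'' rule, and then verify closedness of each $C_n^g$ by a separate induction. Both approaches are correct; the paper's is shorter because the inductive claim about $\seq{f_s}$ does all the work at once, while yours trades the recursive modification for an explicit closedness check.

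One point in your argument needs a word of care. When you ``unwind the definition of~$g$'' at a limit stage $t\in C_{n+1}^g$ to produce elements of $A$ near~$t$, you are tacitly assuming that $g_t(n)=f(n)$ arose from the unique-cofinal-value clause rather than from the default value~$0$ (which would be a problem if $f(n)=0$). To justify this you need the auxiliary fact that for every limit~$t$ with $g_t\rest m=f\rest m$, the set $\{r<t:f_r\rest m=f\rest m\}$ is cofinal in~$t$. This follows by induction on~$m$: the inductive step is precisely the FCTIS interleaving argument you spell out later at~$s^\dagger$, applied instead at~$t$. With that lemma stated, your ``unwinding'' goes through and the proof is complete.
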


\begin{proof}
	Suppose that $\seq{f_s}$ is a $\wock$-computable approximation of~$f$ which is finite-change along true initial segments. By induction on~$n<\w$ we see that if~$s$ is a limit stage and for unboundedly many $t<s$, $f_t\rest n = f\rest n$, then $f\rest n = \lim_{t\to s} f_t\rest{n}$. Similarly to what we did with finite-change approximations, we can make the approximation partially continuous by requiring, for every limit stage $s<\wock$ and $n<\w$, that if $\lim_{t\to s} f_t(n)$ exists, then it equals $f_s(n)$. This makes it a club approximation. 

	If $\seq{f_s}$ is a club approximation of~$f$ and~$s$ is least such that~$f$ lies in the closure of $\{f_t\,:\, t<s\}$ then $f = f_s$. Hence if $s<\wock$ then~$f$ is hyperarithmetic. 	
\end{proof}

The separation of $\Pi^1_1$ randomness from higher weak 2-randomness then follows from the following proposition. 

\begin{proposition} \label{prop:separating_Pi11_randomness}
	There is a sequence~$x$ which is higher weak 2-random and has a $\wock$-computable approximation which changes finitely along true initial segments. 
\end{proposition}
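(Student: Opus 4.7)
The plan is to adapt the construction of \Cref{prop:finite-change_in_conull_Pi3} to handle all higher $\Sigma^0_2$ sets simultaneously, building $x$ lying in every one of measure $1$. This suffices for higher weak $2$-randomness: if $\seq{U_n}$ is a higher weak $2$-test then $F = \bigcup_n (2^\w \setminus U_n)$ is a higher $\Sigma^0_2$ set of measure $1$, and $x\in F$ gives $x\notin\bigcap_n U_n$. Enumerate the higher $\Sigma^0_2$ sets as $F^e = \bigcup_k F^{e,k}$ with $F^{e,k}$ uniformly higher effectively closed having clopen $\wock$-decreasing approximations $F^{e,k}_s$, and let $F^e_s = \bigcup_k F^{e,k}_s$. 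Since this sequence is decreasing in $s$, $\lambda(F^e) = 1$ iff $\lambda(F^e_s) = 1$ for every $s<\wock$. Call $e$ \emph{active at stage $s$} when $\lambda(F^e_s)=1$; this is monotone non-increasing, and the truly active set $A^\infty = \{e : \lambda(F^e)=1\}$ consists of the permanently active indices.

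At each stage $s<\wock$ I recursively define $x_s \in 2^\w$ and $c_s \in \w^\w$ on $e$, using the measure-based recipe of the previous proof but with $H^e_s = \bigcap_{e' < e,\, e' \text{ active at } s} F^{e', c_s(e')}_s$, intersecting only with currently active strategies (setting $c_s(e)=0$ when $e$ is inactive at $s$). At limit stages I enforce partial continuity, assigning to each coordinate its limit value whenever it exists. The construction is $\wock$-computable because active status and all measure thresholds are uniformly hyperarithmetic in the stage. The limit $x = x_{\wock}$ is obtained by applying the same recipe with the true closed sets $F^{e,k}$ and the true active set $A^\infty$; the usual measure induction ensures $x$ is well-defined. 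By construction $x \in H^e \subseteq F^{e'}$ for every $e' \in A^\infty$ with $e' < e$, so $x$ is higher weakly $2$-random.

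The main obstacle is verifying that $\seq{x_s}$ changes only finitely often along true initial segments of $x$. Let $s^*_n = \max\{s_{e'} : e' < n,\; e' \notin A^\infty\} < \wock$, where $s_{e'}$ is the stage at which a truly-inactive strategy $e'$ deactivates. For $s \geq s^*_n$ the active set of the approximation matches $A^\infty$ on $\{0,\ldots,n-1\}$, so the construction (restricted to $A^\infty \cap n$) agrees with that of \Cref{prop:finite-change_in_conull_Pi3}, and the stabilisation argument of \Cref{subclaim:finite_change_in_conull_Pi3} forces $c_s \rest n$ and $x_s \rest{n+1}$ to take only finitely many values on $[s^*_n, \wock]$. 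The critical additional claim is that for $s < s^*_n$ one always has $x_s \rest n \neq x \rest n$: when a truly-inactive $e' < n$ is still active at stage $s$, the presence of the extraneous factor $F^{e', c_s(e')}_s$ in $H^{e''}_s$ for $e' < e'' < n$ must detectably alter the least-$i$ choice in the definition of $x_s(e'')$. This requires a careful quantitative choice of the measure exponents so that removing a single strategy from the intersection changes a conditional measure by strictly more than the relevant threshold --- the hard part of the proof. Granted this, any increasing sequence $\seq{t(k)}$ with $x_{t(k)} \rest n = x \rest n$ lies entirely in $[s^*_n,\wock)$, where $x_{t(k)}(n)$ is already finite-change, contradicting the assumption of infinite change. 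Combining partial continuity with finite-change along true initial segments makes $\seq{x_s}$ a club approximation via \Cref{lem:club_approximations-are_collapsing_etc}; since $x$ is non-hyperarithmetic (being higher weakly $2$-random), this is a collapsing approximation, so $x$ collapses $\wock$ and is not $\Pi^1_1$-random.
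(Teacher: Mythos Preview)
Your proposal correctly identifies the overall strategy (adapt \Cref{prop:finite-change_in_conull_Pi3} to handle all higher $\Sigma^0_2$ sets, dropping those of measure $<1$ when detected) and correctly locates where the difficulty lies. However, the ``critical additional claim'' --- that for $s < s^*_n$ one has $x_s\rest n \neq x\rest n$ --- is not merely ``the hard part''; it is false, and no quantitative choice of measure thresholds can rescue it.

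Here is a concrete obstruction. Take $e'=0$ to be truly inactive, with $\leb(F^0_s)=1$ for all $s<\omega$ but $\leb(F^0_\omega)<1$, while all other $F^e$ are innocuous. Since $H^0_s = 2^\omega$ for every $s$, we have $x_s(0)=0=x(0)$ always, so $x_s\rest 1 = x\rest 1$ for every $s$ --- already contradicting your claim with $n=1$. Meanwhile one can arrange the sets $F^{0,k}_s$ so that $c_s(0)$ cycles through all of $\omega$ on $[0,\omega)$ while $x_s(1)$ alternates between $0$ and $1$ (by alternately concentrating $F^{0,c_s(0)}_s$ on $[00]$ and on $[01]$). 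This produces infinitely many changes of $x_s(1)$ along the true initial segment $x\rest 1$. The point is that during the cycling interval the extra factor $F^{0,c_s(0)}_s$ is entirely under the adversary's control: it can keep $x_s$ on the true path for arbitrarily many bits while still causing infinite oscillation at some coordinate. Your proposed mechanism --- that removing a factor must shift a conditional measure past a threshold --- cannot work, since the factor can have conditional measure arbitrarily close to $1$ at every relevant stage.

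The paper's approach is genuinely different. Rather than hoping the approximation stays off the true path during bad intervals, it \emph{intervenes}: when $c_r(e)$ is seen to cycle on an interval $I$ with supremum $s$ (which forces $\leb(F^e_{s})<1$), the construction \emph{banishes} the string $\rho^e := x_I\rest{\ell(e)}$, explicitly forbidding any future $x_t$ (and hence $x$ itself) from extending it. The infinite oscillation that occurred along $\rho^e$ is then not along a true initial segment. To make banishing survivable --- so that after discarding one candidate extension there is still another with large enough conditional measure --- the levels are spread out in the style of Ku\v{c}era: one works at lengths $\ell(0)<\ell(1)<\cdots$ with thresholds $\epsilon^e$ chosen so that every good string at level $\ell(e)$ has at least two good extensions at level $\ell(e+1)$. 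Since at most one string is banished per level (once $\leb(F^e)<1$ is detected, $e$ is permanently dropped and never causes banishment again), one good extension always remains. These two ingredients --- banishing and level-spreading --- are precisely what your proposal is missing.
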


The rest of this section is devoted to the proof of \cref{prop:separating_Pi11_randomness}.

\subsubsection{Discussion} 
The new idea is to ``banish'' strings which would contradict the property of the approximation being finite-change along true initial segments. That is, if $I$ is an interval of stages, $x_r\rest e$ is constant on~$I$, but we see $x_r(e)$ changes infinitely often on~$I$, then we require that $x_I\rest e$ is not an initial segment of~$x$. We simply do not allow any future $x_t$ to extend $x_I\rest n$. The construction is dynamic: rather than defining $x$ and $c$ a priori and then giving them approximations, we first define the approximation and then show it converges and has the desired properties.

We need to show that the construction can actually be carried out: at every stage there are non-banished strings that can be chosen to construct $x_s$, particularly non-banished strings relative to which we can make the sets $H^e_s$ not too small. 

This is done as follows:

\medskip

\noindent\textbf{1.} At each length, we will banish at most one string. The continuity properties of the approximations to our sets will ensure that if we do see $c_r(e)$ cycle through all possible values in~$\w$ on an interval~$I$ of stages, then this will witness that $F^{e}$ does not have measure~$1$. Once we see that, we no longer need to force~$x$ to enter $F^{e}$ (we can replace $F^{e}$ by $2^\w$). After this event there will be no need to banish another string of length~$e$. 

\medskip

\noindent\textbf{2.} Nonetheless, even if just one string is banished, it is possible that this was the string on which $H^e$ was large. I.e., it is possible that one string of length $e+1$ is banished and the other is useless. To counter this we rely on a measure-theoretic observation which is the basis of Ku\v{c}era's coding technique \cite{Kuvcera1985}. We spread out the levels of the construction, adding more than one bit between step~$e$ and $e+1$. If the levels are sufficiently spread out, then every good string at level~$e$ has at least two good strings at the next level. So if one of them is banished, the other can still be used. 

\medskip

These are the ideas needed for the construction. We can now give the formal details. 

\subsubsection{Construction}

We start with an effective enumeration $\seq{F^e}$ of all higher $\Sigma^0_2$ sets. So $F^e = \bigcup_k F^{e,k}$, an increasing sequence, with each $F^{e,k}$ a closed $\Sigma^1_1$ set. Each of these have co-enumerations $\seq{F^{e,k}_s}_{s<\wock}$. We let $F^e_s = \bigcup_{k<\w} F^{e,k}_s$. If~$s$ is a limit ordinal then $F^{e,k}_s = \bigcap_{t<s} F^{e,k}_t$. 

We require that $F^{0,k} =2^\w$ for all~$k$. 

Let $\seq{\ell(e)}_{e<\w}$ and $\seq{\epsilon^e}_{e<\w}$ be computable sequences such that:
\begin{itemize}
	\item $\seq{\ell(e)}$ is an increasing sequence of natural numbers with $\ell(0) = (0)$. 
	\item $\seq{\epsilon^e}$ is a decreasing sequence of positive rational numbers with $\epsilon^0 = 1$. 
	\item For any $e<\w$, for any measurable set~$A$, and for any string $\s$ of length $\ell(e)$, if $\leb(A\given\s)\ge \epsilon^e/2$ then there are at least two extensions $\tau$ of $\s$ of length $\ell({e+1})$ such that $\leb(A\given\tau)\ge \epsilon^{e+1}$. 
\end{itemize}

If $\leb(F^e)<1$ then we may define during the construction a string $\rho^e$ of length $\ell(e)$; this will be the ``banished'' string of length~$\ell(e)$. We will ensure that the real we build does not extend $\rho^e$. [We required $F^0$ to have measure~$1$ to ensure that $\rho^0$ is never defined, as we would not have been able to avoid it.]

\

At every stage~$s$ we will define:
\begin{itemize}
	\item A sequence $x_s\in 2^\w$; 
	\item A sequence of closed sets $\seq{H^e_s}$;
	\item A function $c_s\in (\w+1)^\w$ which codes our choices which define the closed sets $H^e_s$. [A choice $k = c_s(e)<\w$ indicates as before the choice of $F_s^{e,k}$; $c_s(e)=\w$ indicates that $\leb(F^e_s)<1$.]
\end{itemize}

At a limit stage~$s<\wock$ we first see if we need to banish some strings. Let $e<\w$ and suppose that  $\leb(F^e_s)<1$ but that there is some final segment $I = [s_0,s)$ of~$s$ such that 
\begin{itemize}
	\item $\leb(F^e_r)=1$ for all $r\in I$;
	\item $c_r\rest{e}$ is constant on the interval $I$; and
	\item The string $x_r\rest{\ell(e)}$ is constant on $I$.
\end{itemize}
Then we define $\rho^e = x_I\rest{\ell(e)}$. We do this for all~$e$ for which this is needed. Note that $\leb(F^e_t)$ is nonincreasing in~$t$, and so for all~$e$ there may be at most one stage at which we want to define~$\rho^e$.

\medskip

We then define $x_s$, our choice function~$c_s$ and the closed sets $H^e_s$. To start, we let $H^0_s = 2^\w$. At step~$e$ we already have $x_s\rest{\ell(e)}$, $c_s\rest{e}$ and $H^e_s$. By induction, $\leb(H^e\given x\rest{\ell(e)})\ge \epsilon^e \,[s]$. 

At step~$e$ of stage~$s$ we first define $c_s(e)$ and $H^{e+1}_s$:
\begin{itemize}
	\item If $\leb(F^e_s) = 1$ then we let $c_s(e)$ be the least $k<\w$ such that $\leb(H^e\cap F^{e,k}\given  x\rest{\ell(e)})\ge \epsilon^{e}/2\,[s]$. We then let $H^{e+1}_s = H^e_s\cap F^{e,c_s(e)}_s$.
	\item If $\leb(F^e_s)<1$ then we let $c_s(e)= \w$ and $H^{e+1}_s = H^e_s$. 
\end{itemize}

We then define $x_s\rest{\ell(e+1)}$:
\begin{itemize}
	\item If $\rho^{e+1}$ is undefined then we let $x_s\rest{\ell(e+1)}$ be the leftmost extension $\s$ of $x_s\rest{\ell(e)}$ of length $\ell({e+1})$ such that $\leb(H^{e+1}_s\given\s)\ge  \epsilon^{e+1}$. 
	\item If $\rho^{e+1}$ is defined then we let $x_s\rest{\ell(e+1)}$ be the leftmost extension $\s$ of $x_s\rest{\ell(e)}$ of length $\ell({e+1})$ \emph{other than $\rho^{e+1}$} such that $\leb(H^{e+1}_s\given\s)\ge  \epsilon^{e+1}$. 
\end{itemize}

This concludes the construction.

\

\subsubsection{Verification}

As above, if $e<\w$, $s<t\le \wock$ and $c_s\rest{e} = c_t\rest{e}$ then $H^e_t\subseteq H^e_s$. This implies:
\begin{itemize}
	\item[(*)] Suppose that $c_s\rest {e} = c_t\rest{e}$ and $x_s\rest{\ell(e)} = x_t\rest{\ell(e)}$. Then $c_s(e)\le c_t(e)$. 
	\item[(**)] Suppose that $c_s\rest {e+1} = c_t\rest{e+1}$ and $x_s\rest{\ell(e)} = x_t\rest{\ell(e)}$. Then $x_s\rest{\ell(e+1)}\le x_t\rest{\ell(e+1)}$ (lexicographically).
\end{itemize}
For (**) note that if $\rho^{e+1}$ is first defined between stages~$s$ and~$t$, this only pushes $x_t\rest{\ell(e+1)}$ further to the right. For (*) again note that if $\leb(F^e_t)=1$ then $\leb(F^e_s)=1$.

The following claim shows that banishing conforms to out original intention. Suppose that $c_r\rest{e}$ and $x_r\rest{\ell(e)}$ are constant on an interval~$I$ of stages. Suppose that $c_r(e)<\w$ for all $r\in I$. By (*), $\sup_{r\in I}c_r(e) = \w$ if and only if $c_r(e)$ changes infinitely often on~$I$ (there is an infinite increasing sequence $\seq{t(k)}$ of stages in~$I$ such that $c_{t(k+1)}(e)\ne c_{t(k)}(e)$). 

\begin{claim} \label{clm:banishing_works}
	Let $s<\wock$ be a limit stage. Let $e<\w$. Suppose that both $c_r\rest{e}$ and $x_r\rest{\ell(e)}$ are constant on a final segment~$I$ of~$s$. Suppose that $c_r(e)<\w$ for all $r\in I$ but that $\sup_{r\in I} c_r(e)=\w$. Then at stage~$s$ we define $\rho^e = x_I\rest{\ell(e)}$. 
\end{claim}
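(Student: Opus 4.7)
The plan is to verify the three defining conditions for banishing at stage~$s$ for the index~$e$: (i)~$\leb(F^e_s)<1$; (ii)~$\leb(F^e_r)=1$ for all $r\in I$; (iii)~$c_r\rest{e}$ constant on $I$; and (iv)~$x_r\rest{\ell(e)}$ constant on $I$. Conditions~(iii) and~(iv) are hypotheses of the claim, and~(ii) is immediate from $c_r(e)<\w$ on $I$ together with the construction's rule that $c_r(e)=\w$ precisely when $\leb(F^e_r)<1$. So the only real work is to prove~(i).

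Set $\sigma=x_I\rest{\ell(e)}$ and suppose, toward a contradiction, that $\leb(F^e_s)=1$. First collect the continuity properties at the limit stage~$s$. Because $c_r\rest{e}$ is constant on~$I$ with value $c_I\rest{e}$, each $H^e_r$ (for $r\in I$) is the intersection of the fixed finite family $\{F^{d,c_I(d)}_r:d<e\}$; continuity of the co-enumerations then gives $H^e_s=\bigcap_{r\in I}H^e_r$, and hence $\leb(H^e_s\given\sigma)=\lim_{r\in I}\leb(H^e_r\given\sigma)\ge\epsilon^e$, using the lower bound attached by the construction to the choice of $x_r\rest{\ell(e)}$ on~$I$ (trivial when $e=0$). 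Similarly, for each fixed $k<\w$, $F^{e,k}_s$ is the decreasing intersection of the sets $F^{e,k}_r$ over sufficiently late $r\in I$.

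Now invoke the hypothesis $\sup_{r\in I}c_r(e)=\w$: for each $k<\w$ choose $r_k\in I$ with $c_{r_k}(e)>k$. By the minimality clause in the definition of $c_{r_k}(e)$ and the equality $x_{r_k}\rest{\ell(e)}=\sigma$, we obtain $\leb(H^e_{r_k}\cap F^{e,k}_{r_k}\given\sigma)<\epsilon^e/2$. Monotonicity in $r$ along $I$ gives $\leb(H^e_s\cap F^{e,k}_s\given\sigma)<\epsilon^e/2$ for every $k$, and continuity of measure on the increasing union $F^e_s=\bigcup_k F^{e,k}_s$ yields $\leb(H^e_s\cap F^e_s\given\sigma)\le\epsilon^e/2$. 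Under our assumption $\leb(F^e_s)=1$, this intersection measure would equal $\leb(H^e_s\given\sigma)\ge\epsilon^e$, a contradiction. So $\leb(F^e_s)<1$, completing the verification. The delicate step is the use of continuity of the $\wock$-enumerations: this is exactly what transfers the divergence of $c_r(e)$ at stages approaching~$s$ into a genuine loss of measure of $F^e_s$ at the limit, and is the only place where a naive ``time-trick'' argument would fail.
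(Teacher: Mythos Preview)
Your proof is correct and follows essentially the same approach as the paper: both arguments show that $\leb\bigl(\bigcap_{r\in I}H^e_r\cap F^e_s\,\big|\,\sigma\bigr)\le\epsilon^e/2$ while $\leb\bigl(\bigcap_{r\in I}H^e_r\,\big|\,\sigma\bigr)\ge\epsilon^e$, forcing $\leb(F^e_s)<1$. One notational caution: you write $H^e_s=\bigcap_{r\in I}H^e_r$, but at the moment of banishing the construction has not yet defined $c_s\rest e$ (and once it does, there is no guarantee it agrees with $c_I\rest e$), so the construction's $H^e_s$ may differ from this intersection; the paper avoids this by calling the intersection $H^e_{<s}$ and working with it directly, which is all your argument actually uses.
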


\begin{proof}
	Let $\s = x_I\rest{\ell(e)}$.
If $r<t$ are in $I$ then $H^e_t\subseteq H^e_r$. Let $H^e_{<s} = \bigcap_{t\in I} H^e_t$. 
	
	If $t\in I$ and $k< c_t(e)$ then $\leb(H^e_t\cap F^{e,k}_t\given \s) < \epsilon^{e}/2$, and so $\leb(H^e_{<s}\cap F^{e,k}_s\given\s)<\epsilon^{e}/2$. It follows that $\leb(H^e_{<s}\cap F^{e}_s\given\s)\le \epsilon^e/2$. 

	On the other hand, for all $t\in I$, $\leb(H^e_t\given\s)\ge \epsilon^e$ and so $\leb(H^e_{<s}\given\s)\ge \epsilon^e$. This shows that $\leb(F^e_s)<1$. The conditions for defining $\rho_e = \s$ at stage~$s$ are fulfilled.
\end{proof}

Since each string of length $\ell(e)$ has only finitely many extensions of length $\ell(e+1)$, (**) and \cref{clm:banishing_works} together imply:

\begin{claim} \label{clm:banishing_works2}
	Let $s<\wock$ be a limit stage. Let $e<\w$. Suppose that both $c_r\rest{e}$ and $x_r\rest{\ell(e)}$ are constant on a final segment~$I$ of~$s$. Suppose that $x_r\rest{\ell(e+1)}$ changes infinitely often on~$I$ (but not on a proper initial segment of~$I$). Then at stage~$s$ we define $\rho^e = x_I\rest{\ell(e)}$. 
\end{claim}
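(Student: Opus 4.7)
The plan is to derive \cref{clm:banishing_works2} from \cref{clm:banishing_works} by showing that the present hypotheses force $c_r(e)$ to take finite values whose supremum on~$I$ equals $\w$; once that is established, \cref{clm:banishing_works} applies directly to conclude that $\rho^e = x_I\rest{\ell(e)}$ is defined at stage~$s$.

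First I would invoke the finiteness observation that each string of length $\ell(e)$ has at most $N = 2^{\ell(e+1)-\ell(e)}$ extensions of length $\ell(e+1)$. Suppose towards a contradiction that $c_r\rest{e+1}$ is constant on some final segment $I'\subseteq I$. Together with the constancy of $x_r\rest{\ell(e)}$ on $I'$, (**) at level~$e$ gives that $x_r\rest{\ell(e+1)}$ is lexicographically non-decreasing on~$I'$. Since a non-decreasing sequence of length-$\ell(e+1)$ strings extending the fixed $\ell(e)$-prefix can change at most $N - 1$ times, $x_r\rest{\ell(e+1)}$ stabilises on some final segment of~$I'$. But the parenthetical ``not on a proper initial segment of $I$'' asserts precisely that the changes of $x_r\rest{\ell(e+1)}$ are cofinal in~$I$, a contradiction.

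Next, having ruled out $c_r\rest{e+1}$ being eventually constant on~$I$, and since $c_r\rest{e}$ is already assumed constant on~$I$, the only coordinate that can vary cofinally is $c_r(e)$. I would then apply (*) at level~$e$ to conclude that the values $c_r(e)$ are non-decreasing in the linear order on $\w+1$. If $c_r(e) = \w$ held at some $r\in I$, then (*) would force $c_t(e) = \w$ for all $t\in [r,s)\cap I$, making $c_t\rest{e+1}$ constant on that final segment — just excluded. Hence $c_r(e) \in \w$ for every $r \in I$, and the monotone non-decreasing sequence $\seq{c_r(e)}_{r\in I}$ of natural numbers is non-constant on every final segment of~$I$, so $\sup_{r\in I} c_r(e) = \w$. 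These are exactly the hypotheses of \cref{clm:banishing_works} for the index~$e$, which then yields $\rho^e = x_I\rest{\ell(e)}$ at stage~$s$.

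The main obstacle I anticipate is bookkeeping rather than mathematical depth: one must keep very clear at which level (*) and (**) are being invoked, and distinguish the given hypothesis ``$c_r\rest e$ is constant'' from the stronger ``$c_r\rest{e+1}$ is constant''. The content of the argument is precisely that the weaker hypothesis on $c_r\rest e$, combined with cofinal changes in $x_r\rest{\ell(e+1)}$, pinpoints $c_r(e)$ as the coordinate cycling through~$\w$, which is the input \cref{clm:banishing_works} needs.
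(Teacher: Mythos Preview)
Your proof is correct and follows precisely the approach the paper has in mind: the paper states only that the claim follows from (**), the finiteness of extensions of length~$\ell(e+1)$, and \cref{clm:banishing_works}, and your argument unpacks exactly this, supplying the additional use of~(*) (which the paper records just before \cref{clm:banishing_works}) to pin down that $c_r(e)<\w$ throughout~$I$ with supremum~$\w$. The only minor point is phrasing: ``the only coordinate that can vary cofinally is $c_r(e)$'' would be clearer as the direct observation that if $c_r(e)$ stabilised on a final segment then $c_r\rest{e+1}$ would too.
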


By induction on~$e$ we can show that eventually each $x_s\rest{\ell(e)}$ and $c_s\rest{e}$ are constant. We can let $x = \lim_{s\to \wock} x_s$ and $c = \lim_{s\to \wock} c_s$. 

\begin{claim} \label{clm:weak2random}
	$x$ is higher weak 2-random.
\end{claim}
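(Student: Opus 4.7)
The plan is to show that $x$ is an element of every higher $\Sigma^0_2$ set of measure~$1$; this is equivalent to higher weak $2$-randomness, since for any nested higher weak $2$-test $\seq{U_n}$ the complement $F=\bigcup_n (2^\w\setminus U_n)$ is a higher $\Sigma^0_2$ set of full measure, and $x$ passes the test exactly when $x\in F$. As $\seq{F^e}$ enumerates all higher $\Sigma^0_2$ sets, it suffices to prove $x\in F^e$ whenever $\leb(F^e)=1$.

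For such an $e$, the first step is to establish $c(e)<\w$. Because $F^e\subseteq F^e_s$ at every stage, $\leb(F^e_s)=1$ throughout, so whenever $c_s(e)$ gets defined it receives a finite value. Let $I=[s_0,\wock)$ be a final segment on which $c_r\rest e$ and $x_r\rest\ell(e)$ are constant. Property~$(*)$ makes $r\mapsto c_r(e)$ non-decreasing on $I$. If $\sup_{r\in I} c_r(e)=\w$, then $t_k:=\min\{r\in I : c_r(e)\ge k\}$ is a $\wock$-computable strictly increasing $\w$-sequence; by the admissibility of $\wock$ its supremum $s^*$ lies strictly below $\wock$. The hypotheses of \cref{clm:banishing_works} are then met at the limit stage $s^*$ on the interval $[s_0,s^*)$, so $\rho^e=x\rest\ell(e)$ gets banished. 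But the construction then chooses $x_s\rest\ell(e)\ne\rho^e=x\rest\ell(e)$ at every stage $s>s^*$ in $I$, contradicting the stability of $x_r\rest\ell(e)$ on~$I$.

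The second step places $x$ in $F^{e,c(e)}$ via a nested-compactness argument. At each level~$e'$, pick a final segment $J_{e'}\subseteq\wock$ on which both $c_r\rest(e'+1)$ and $x_r\rest\ell(e'+1)$ are constant. Continuity of the co-enumerations yields $H^{(e'+1)}:=\bigcap_{d\le e',\,c(d)<\w} F^{d,c(d)} = \bigcap_{r\in J_{e'}} H^{e'+1}_r$, and the construction-given inequality $\leb(H^{e'+1}_r\given x\rest\ell(e'+1))\ge \epsilon^{e'+1}$ survives the monotone limit. Hence each $K_{e'}:=[x\rest\ell(e'+1)]\cap H^{(e'+1)}$ is a non-empty compact set, and the $K_{e'}$ are nested. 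Their intersection is therefore non-empty, and because its elements extend $x\rest\ell(e'+1)$ for every $e'$, it must equal $\{x\}$. Thus $x\in H^{(e+1)}\subseteq F^{e,c(e)}\subseteq F^e$, as required.

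The main obstacle is the feedback step in the first part: admissibility of $\wock$ only tells us where to apply \cref{clm:banishing_works}, but the substantive content is the self-defeating nature of $c_r(e)$ growing unboundedly on an already-stable initial segment---such growth forces the banishment mechanism to banish $x\rest\ell(e)$ itself, which then requires the initial segment to move and undoes the stability we relied on. Turning the hypothesis $\leb(F^e)=1$ into the finiteness of $c(e)$ by this self-consistency argument is the heart of the proof; once that is secured, the compactness step placing $x$ inside the witnessed closed set is essentially routine.
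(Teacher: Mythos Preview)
Your proof is correct and follows essentially the same approach as the paper. The paper's argument is terser: it takes the stabilisation of $c_s\rest{e+1}$ as already established (this was asserted, without details, just before the claim), so that $c(e)<\w$ is immediate from $\leb(F^e)=1$ and the fact that $c_s(e)<\w$ at every stage. Your first step supplies precisely the admissibility-plus-\cref{clm:banishing_works} argument that underlies this stabilisation in the case $\leb(F^e)=1$. Your compactness step is the same as the paper's, though you phrase it through the sets $H^{(e'+1)}$ where the paper works directly with the single closed set $F^{e,k}$ and the cylinders $[x\rest{\ell(d)}]$.
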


\begin{proof}
	Let $e<\w$, and suppose that $\leb(F^e)=1$. We show that $x\in F^e$. Let~$I$ be a final segment of~$\wock$ on which $c_t\rest{e+1}$ is constant. Since $\leb(F^e)=1$, $k = c(e) <\w$. For all $t\in I$ and all $d\ge e$, $H^{d}_t\subseteq F^{e,k}_t$; and $[x_t\rest{\ell(d)}]\cap H^d_t$ is not null, and so nonempty. It follows that $[x\rest{\ell(d)}]\cap F^{e,k}$ is nonempty. We then use the fact that~$F^{e,k}$ is closed. 
\end{proof}

The proof of \ref{prop:separating_Pi11_randomness} is concluded by showing that $\seq{x_s}$ is an approximation which changes finitely often along true initial segments. To see this, it suffices to show that for no $e<\w$ is there an increasing sequence $\seq{t(k)}$ of stages such that $x_{t(k)}\rest{\ell(e)} = x\rest{\ell(e)}$ for all~$k<\w$, but that $x_{t(k+1)}\rest{\ell(e+1)}\ne x_{t(k)}\rest{\ell(e+1)}$ for all $k<\w$ (to verify \cref{def:finite-change_along_true_path} for an arbitrary~$n$, consider the greatest~$e$ such that $\ell(e)\le n$). Suppose that such a sequence $\seq{t(k)}$ is given; let $s = \sup_k t(k)$. Let~$d$ be the greatest such that both $\lim_{r\to s} x_r\rest{\ell(d)}$ and $\lim_{r\to s} c_r\rest{\ell(d)}$ exist. So $d\le e$. Either the conditions of \cref{clm:banishing_works} or \cref{clm:banishing_works2} hold at stage~$s$ for $d$, so at stage~$s$ we define $\rho^d = x\rest{\ell(d)}$. However the construction ensures that for all $d\ge 1$, $x$ does not extend $\rho^d$ (and that $\rho^0$ is never defined).

\section{Classes of higher $\Delta^0_2$ functions}

Motivated by the their usage in investigating higher weak 2-randomness, we study the classes of higher $\Delta^0_2$ functions which we introduced above. We first consider the higher limit lemma.

\subsection{The higher limit lemma} \label{subsec:higher_limit_lemma}

The proof of the higher limit lemma (\cref{prop:higher_limit_lemma}) is not complicated. The equivalence of $f\le_{\Tur}O$ and $f\le_{\hT}O$ was established in \cref{prop:O_collapses_higher_computability}. If $f = \Phi(O)$ (where $\Phi$ is either c.e.\ or higher c.e.) then we can give $f$ a $\wock$-computable approximation by letting $f_s = \Phi_s(O_s)$, where $\seq{O_s}$ is a $\wock$-computable enumeration of~$O$. And if $\seq{f_s}$ is a $\wock$-computable approximation of~$f$ then the graph of~$f$ is $\Delta_2$ over $L_{\wock}$; since a set is $\Sigma_2$ over $L_{\wock}$ if and only if it is c.e.\ in~$O$, we see that~$f$ is $O$-computable. 

In fact, the higher limit lemma relativises to every oracle. Recall that a subset~$X$ of~$L_{\wock}$ is $A$-$\wock$-computable (where $A\in 2^\w$) if there is a $\wock$-c.e.\ $\Phi\subseteq 2^{<\w}\times L_{\wock}$ such that $X = \Phi(A)$. Also recall that we let $J^A$ be the higher jump of~$A$, the effective join of all subsets of $\w$ which are higher $A$-c.e.
 
\begin{proposition}
Let $A\in 2^\w$. The following are equivalent for $f\colon \w\to \w$. 
\begin{enumerate}
\item $f \le_{\hT} J^{A}$. 		
\item $f$ has an $A$-$\wock$-computable approximation $\seq{f_s}_{s<\wock}$.
\end{enumerate}
\end{proposition}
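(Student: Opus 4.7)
The plan is to mimic the proof of the unrelativized higher limit lemma (\cref{prop:higher_limit_lemma}), carrying $A$ along as a parameter and using $J^A$ in place of $O$. The governing fact is that a subset of $\w$ is higher $A$-c.e.\ precisely when it is $\Sigma_1$-definable over $L_{\wock}$ with parameter $A$; unpacking $J^A=\bigoplus_e W_e^A$ as the effective join of all higher $A$-c.e.\ subsets of $\w$, this upgrades to the statement that a subset of $\w$ is higher $J^A$-c.e.\ precisely when it is $\Sigma_2$-definable over $(L_{\wock},A)$. This equivalence is the main conceptual ingredient; once granted, both directions are essentially bookkeeping, with no topological obstructions of the kind that appear elsewhere in the paper.

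For $(1)\Then(2)$: let $\Phi$ be a higher Turing functional with $\Phi(J^A)=f$. Collating the uniform enumerations $\seq{W^A_{e,s}}_{s<\wock}$ of the columns of $J^A$ produces an $A$-$\wock$-computable sequence $\seq{J^A_s}_{s<\wock}$ whose union is $J^A$; the map $s\mapsto J^A_s$ is $\Delta_1$-definable over $(L_{\wock},A)$. Combining this with the $\wock$-computable enumeration $\seq{\Phi_s}$ of $\Phi$, set $f_s(n)=\Phi_s(J^A_s)(n)$ when defined and $f_s(n)=0$ otherwise. Then $s\mapsto f_s$ is $A$-$\wock$-computable, and for each $n$ the value $\Phi(J^A)(n)=f(n)$ stabilises by some stage $s<\wock$ (any single computation uses only a finite initial segment of $J^A$ and is witnessed by some $\Phi_s$), so $\seq{f_s}$ is the required $A$-$\wock$-computable approximation.

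For $(2)\Then(1)$: given an $A$-$\wock$-computable approximation $\seq{f_s}$ of $f$, both the graph of $f$ and its complement are witnessed by $\Sigma_2$-over-$(L_{\wock},A)$ formulas, namely
\[ f(n)=m \Iff (\exists s<\wock)(\forall t\in[s,\wock))\; f_t(n)=m, \]
and similarly with $=$ replaced by $\ne$ on the right-hand side. By the equivalence noted in the first paragraph, the graph of $f$ and its complement are both higher $J^A$-c.e., and \cref{prop:relative_c.e._and_relative_Turing} applied with oracle $J^A$ then gives $f\le_{\hT}J^A$. The only point worth verifying carefully is the $\Sigma_2$/higher-$J^A$-c.e.\ translation --- in one direction it is immediate from the definition of $J^A$, and in the other it uses that the predicate ``$e\in W_e^A$'' is uniformly higher $A$-c.e.\ and so its use inside an outer existential ordinal quantifier collapses to a single $\Sigma_1$ formula over $(L_{\wock},J^A)$.
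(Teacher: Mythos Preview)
Your ``governing fact'' --- that higher $A$-c.e.\ coincides with $\Sigma_1$-definability over $(L_{\wock},A)$ --- is false in general, and this undermines both directions. The point of higher relative enumerability (\cref{def:higher_ce}) is that it is \emph{continuous}: membership must be witnessed by a finite initial segment of the oracle. A $\Delta_0$ formula over $(L_{\wock},A)$, by contrast, may quantify over all of~$\omega$ and so refer to all of~$A$; for instance, $\{0 : A = \emptyset\}$ is $\Delta_0(L_{\wock},A)$ but not higher $A$-c.e. The equivalence you claim holds only for special oracles --- for example, those with a collapsing approximation (see the footnote in the proof of \cref{lem:collapsing_implies_Schnorr-Levin}).

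This bites concretely in your $(1)\Then(2)$: the sequence $\seq{J_s^A}$ is \emph{not} $A$-$\wock$-computable. At an infinite stage~$s$, the functional~$J_s$ may contain axioms $(\tau,m)$ with~$\tau$ of every length, so determining whether $m\in J_s^A$ --- and a fortiori determining the full set $J_s^A$ --- requires all of~$A$, not a finite initial segment. The paper explicitly remarks on this and instead uses the sequence $\seq{J_s^{A\rest{p(s)}}}$, where~$p$ is the projectum function: each term depends on only $p(s)$ many bits of~$A$, yet the sequence still converges to~$J^A$ because every fixed bit of~$J^A$ uses bounded information from~$A$ and~$p$ is eventually above any given bound. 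For $(2)\Then(1)$, your $\Sigma_2$ formula does define the graph of~$f$, but the translation to ``higher $J^A$-c.e.'' is not automatic. The paper instead works directly with continuous objects: it defines the higher $A$-c.e.\ set $W = \{(n,p(s)) : s < m(n)\}$ encoding the modulus (each element witnessed by a single change in~$f_t(n)$, hence finite use of~$A$), and computes~$f$ from $A\oplus W \le_{\hT} J^A$.
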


\begin{proof}
Recall that we use a $\wock$-computable projection function $p\colon \wock \to \w$.

\medskip

Assume~(2); Let $m\colon \w\to \wock$ be the modulus of the sequence $\seq{f_s}_{s < \wock}$: The value $m(n)$ is the least $s$ such that for all $t> s$ we have $f_t(n) = f_s(n)$. Let $W = \{ (n,p(s))\,:\, s< m(n)\}$; the set $W$ is higher $A$-c.e.: to enumerate $(n,p(s))$ into $W$, what we need from $A$ is the value $f_s(n)$ and a different value $f_t(n)$ for some $t>s$; both are given with finitely much use of $A$. So $W \le_{\hT} J^A$. Now, from one pair $(n,p(s))\notin W$ and finitely much of $A$ we output $f(n) = f_s(n)$. So $f\le_{\hT} A\oplus W \le_{\hT} J^A$. 

\medskip

Assume~(1). Recall that we regard~$J$ as a higher enumeration functional. The sequence $\seq{J_s^{A\rest{p(s)}}}$ is an $A$-$\wock$-computable approximation of $J^A$ (using the fact that for all~$n$ there is some $t<\wock$ such that $p(s)\ge n$ for all $s\ge t$). Note that the sequence $\seq{J_s^A}$ is \emph{not} $A$-$\wock$-computable. 

If $\Psi$ is a higher Turing functional then $\seq{\Psi_s\left(J_s^{A\rest{p(s)}}\right)}$ is an $A$-$\wock$-computable approximation of $\Psi(A)$. 

We remark that it is not the case that for all~$A$, $J^A$ has an $A$-$\wock$-computable \emph{enumeration} (a $\wock$-computable sequence $\seq{A_s}_{s<\wock}$ such that $A_s \subseteq A_t$ for $s \leq t$).
\end{proof}

\subsection{Equivalent characterisations of classes} 

A couple of classes we defined have equivalent characterisations, some related to the limit lemma.

\subsubsection{Higher $\omega$-computably approximable functions}

These were defined in \cref{def:omega_c.a.}: functions approximable by finite-change approximations which moreover have hyperarithmetic bounds on the number of changes. In complete analogy with the lower case, this notion can be characterised by using strong reducibilities. 
\begin{itemize}
	\item Let $X,Y\in 2^\w$. We say that $X$ is higher truth-table reducible to~$Y$ if there is a hyperarithmetic sequence $\seq{F_n}$ of finite subsets of $2^{<\w}$ such that $X(n)=1$ if and only if $Y$ extends some string in~$F_n$. Nerode's argument shows that $X$ is higher truth-table reducible to~$Y$ if and only if $X=\Phi(Y)$ for some higher turing functional~$Y$ which is total and consistent on all oracles. 
	\item Let $f,g\in \w^\w$. We say that $f$ is higher weak truth-table reducible to~$g$ if there is a higher Turing functional $\Phi$ such that $\Phi(g)=f$ and there is a hyperarithmetic function~$h$ such that for all axioms $(\tau,\s)\in \Phi$, $|\tau|\le h(|\s|)$. 
\end{itemize}

The lower-case arguments carry over to show that $X\in 2^\w$ is higher $\w$-c.a.\ if and only if it is higher truth-table reducible to~$O$; and that $f\in \w^\w$ is higher $\w$-c.a.\ if and only if it is higher weak truth-table reducible to~$O$.

\subsubsection{Finite-change approximations}

As discussed above, a finite-change approximation can be made continuous at limit stages. Hence, $f\in \w^\w$ has a finite-change approximation if and only if it has an approximation $\seq{f_s}$ such that for all limit $s<\wock$, $f_s = \lim_{t\to s} f_t$. 

We give a characterisation using a strong variant of the limit lemma.

\begin{proposition}
The following are equivalent for $f\in \w^\w$:
\begin{enumerate}
\item $f$ has a finite-change approximation.
\item $f$ is higher $O$-computable by a higher Turing functional $\Phi$ which is total (and consistent) on every subset of~$O$.
\end{enumerate}
\end{proposition}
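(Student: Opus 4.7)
The plan is to establish both directions using an approximation/axiom correspondence anchored in $\Pi^1_1$-completeness of $O$. Fix throughout a continuous $\wock$-computable enumeration $\seq{O_s}_{s\le\wock}$ of $O$, so that $O_s \subseteq O_t$ for $s\le t$, $O_\wock = O$, and $O_s = \bigcup_{t<s}O_t$ at limits.

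For (2) $\Then$ (1), I would define $f_s(n) = \Phi(O_s)(n)$, applying the full functional $\Phi$ to the oracle $O_s\in 2^\w$ (i.e.\ its characteristic function). Since $O_s\subseteq O$, the hypothesis gives that $\Phi(O_s)$ is total and consistent, so $f_s$ is a well-defined total function and $\seq{f_s}$ is a $\wock$-computable sequence (the graph of a total function with $\wock$-c.e.\ graph is $\wock$-computable). For convergence and for the finite-change property I exploit a single observation: whenever $\Phi(O_{s^\ast})(n) = v^\ast$ is witnessed by an axiom $(\tau^\ast, \s^\ast)\in\Phi$, then $\tau^\ast\preceq O_t$ for every $t$ with $O_t\rest{|\tau^\ast|} = O_{s^\ast}\rest{|\tau^\ast|}$, which holds for all $t$ sufficiently close to $s^\ast$ from below by pointwise monotonicity of the enumeration and continuity at limits. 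Taking $s^\ast = \wock$ yields $f_s(n)\to f(n)$; taking $s^\ast = \sup_i t_i$ for a purported infinite $\w$-sequence of changes $f_{t_i}(n)\ne f_{t_{i+1}}(n)$ forces $f_{t_i}(n) = v^\ast$ for all large $i$, contradicting the change assumption.

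For (1) $\Then$ (2), I assume $\seq{f_s}$ is continuous at limits, so all changes happen at successor stages. Let $k_n\in\w$ be the (finite) number of changes on input $n$ and let $u_n^0,\ldots,u_n^{k_n}=f(n)$ be the successive values; each $u_n^i$ is uniformly $\wock$-computable in $(n,i)$ for $i\le k_n$ by inspection of the approximation. The predicate ``there are at least $i+1$ changes on~$n$'' is $\Sigma_1$ over $L_{\wock}$, hence $\Pi^1_1$, so $\Pi^1_1$-completeness of $O$ yields a computable injection $d\colon\w\times\w\to\w$ with $d(n,i)\in O$ iff $i<k_n$. I then build $\Phi$ by enumerating, for each $(n,i_0)$ with $i_0\le k_n$ and each $\tau\in 2^{L}$ satisfying $\tau(d(n,j))=1$ for $j<i_0$ and $\tau(d(n,i_0))=0$ (where $L = \max_{j\le i_0}d(n,j)+1$, with the remaining bits of $\tau$ arbitrary), the axiom $(\tau, \{(n, u_n^{i_0})\})$. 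The resulting set of axioms is higher c.e.

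Given any $Y\subseteq O$, let $i(Y, n)$ be the least $i$ with $d(n,i)\notin Y$; this exists because $d(n, k_n)\notin O \supseteq Y$. An axiom at index $(n, i_0)$ applies to $Y$ iff $Y(d(n,j))=1$ for $j<i_0$ and $Y(d(n,i_0))=0$, which holds precisely when $i_0 = i(Y, n)$. Hence $\Phi(Y)$ is total and consistent with $\Phi(Y)(n) = u_n^{i(Y, n)}$; and $\Phi(O)(n) = u_n^{k_n} = f(n)$. The only notational subtlety is that axioms must be genuine initial segments rather than partial assignments on the scattered positions $d(n,0),\ldots,d(n,i_0)$, which I handle by enumerating one axiom per completion of the remaining bits of $\tau$ below $L$, keeping the family higher c.e. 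I expect the main conceptual hurdle to be the finite-change verification in $(2)\Then(1)$, which requires careful case analysis on whether $\sup_i t_i = \wock$ or $\sup_i t_i<\wock$ and essential use of pointwise continuity of the enumeration of $O$ at limits.
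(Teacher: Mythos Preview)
Your proposal is correct and follows essentially the same approach as the paper's proof. In both directions the key ideas match: for $(2)\Rightarrow(1)$ you set $f_s=\Phi(O_s)$ and use continuity of the enumeration $\seq{O_s}$ together with finite use of the functional to get $f_s=\lim_{t\to s}f_t$ at limits (the paper invokes this limit characterisation of finite-change approximations directly, while you unfold it as a contradiction argument); for $(1)\Rightarrow(2)$ you encode ``at least $i+1$ changes on $n$'' via $\Pi^1_1$-completeness of~$O$ and let the functional output the $i_0$-th value where $i_0$ is least with $d(n,i_0)\notin X$, exactly as the paper does.
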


\begin{proof}
(2)$\Then$(1): Let $\seq{O_s}$ be a $\wock$-computable enumeration of~$O$. For $s<\wock$ let $f_s = \Phi(O_s)$. For a limit $s<\wock$, $O_s = \bigcup_{t<s} O_t$. Since~$\Phi$ uses only finitely much of an oracle, $f_s = \lim_{t\to s} f_t$, so~$\seq{f_s}$ is a finite-change approximation of~$f$. 

\

(1)$\Then$(2): this is a modification of the argument that every function which is higher $\w$-c.a.\ is higher weak truth-table reducible to~$O$. Let $\seq{f_s}$ be a finite-change approximation of~$f$. For all~$n$ and $k$ we can compute some $d = d(n,k)$ such that $d\in O$ if and only if there are at least~$k$ changes in $\seq{f_s(n)}$. We then let $\Phi(X,n)=m$ if $m$ is the $k\tth$ value of $f_s(n)$ observed, where $k$ is the least such that $d(n,k)\notin X$. In other words, the procedure~$\Phi$ queries an oracle~$X$ as if it were~$O$, asking successively whether $\seq{f_s(n)}$ changes once, twice, thrice,... until it finds~$X$'s opinion on the number of changes; and outputs the corresponding value. If $X=O$ the answer is correct. If $X\subseteq O$ then the answer could be smaller than the actual number of changes but not larger, so the search for the $k\tth$ value will terminate.
\end{proof}

\subsubsection{Compact approximations}

\begin{lemma} \label{lem:compact_and_countable_closure}
	The following are equivalent for $x\in 2^\w$:
	\begin{enumerate}
		\item $x$ has a closed approximation.
		\item $x$ has a $\wock$-computable approximation $\seq{x_s}$ such that the closure of the set $\{x_s\,:\, s\le \wock\}$ is countable. 
	\end{enumerate}
\end{lemma}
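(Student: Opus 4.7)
The plan is asymmetric. Direction $(1)\Then(2)$ is immediate: a closed approximation's range equals its own closure, and this has cardinality at most $|\wock+1|\le \aleph_0$.

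For $(2)\Then(1)$, let $\seq{x_s}$ be a $\wock$-computable approximation of $x$ whose closure $C$ is countable. The plan is to produce a new $\wock$-computable approximation of $x$ whose range is exactly $C$ (which is closed by hypothesis). Observe that the tree $T = \{\sigma \in 2^{<\w} : \sigma \prec x_s \text{ for some } s \le \wock\}$ is $\wock$-c.e.\ (equivalently $\Pi^1_1$) and satisfies $[T]=C$. The crux will be obtaining a $\wock$-computable surjection $n \mapsto c_n$ from $\w$ onto $C$. This rests on two facts: since $C$ is a countable closed $\Pi^1_1$ subset of $2^\w$, every element of $C$ is hyperarithmetic (by an effective Cantor--Bendixson analysis of $T$ performed inside $L_{\wock}$, which terminates at some stage $\alpha_0 < \wock$ because $C$ is countable, and in which the isolated branches at each level are $\Delta^1_1$ in the tree at that level); and the predicate ``$y\in C$'' is $\Pi^1_1$ in $y$, since $y\in[T]$ iff every prefix of $y$ lies in the $\Pi^1_1$ set $T$. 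Fixing a standard $\wock$-partial computable enumeration $\Phi$ of the hyperarithmetic reals, the index set $E=\{e : \Phi(e)\in C\}$ is then $\wock$-c.e.\ as a subset of $\w$, and compressing its $\wock$-stage enumeration to order type $\w$ yields the desired surjection $n \mapsto c_n = \Phi(e_n)$.

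Finally define
\[
y_s = \begin{cases} c_s & \text{if } s < \w, \\ x_s & \text{if } \w \le s \le \wock. \end{cases}
\]
Then $\seq{y_s}$ is $\wock$-computable, agrees with the original approximation on the cofinal tail $s\ge\w$ (so still converges to $x$), and has range $\{c_n : n < \w\} \cup \{x_s : \w \le s \le \wock\} = C$ (the first set is $C$, and the second is contained in $C$). So $\{y_s : s \le \wock\}=C$ is closed, giving the required closed approximation of $x$. The main obstacle I expect is the $\wock$-effective enumeration of $C$ in order type $\w$; this is where the combination of countability and $\Pi^1_1$-definability of $C$ is essential, ruling out pathological cases where the enumeration would otherwise need to be cofinal in $\wock$ and hence destroy convergence to $x$.
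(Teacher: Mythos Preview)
Your $(1)\Then(2)$ is fine. In $(2)\Then(1)$ the key claim---that every element of~$C$ is hyperarithmetic---is false. Take $x=O$ with its $\wock$-enumeration $\seq{O_s}$: this is already a closed approximation, $C=\{O_s:s\le\wock\}$ is countable, and $O\in C$ is not hyperarithmetic. The Cantor--Bendixson analysis you invoke requires the tree~$T$ to lie in $L_{\wock}$, but~$T$ is only $\Pi^1_1$, not $\Delta^1_1$, so the recursion cannot be run inside $L_{\wock}$. (More generally, countable $\Pi^1_1$ sets---unlike countable $\Sigma^1_1$ sets---can contain non-hyperarithmetic reals; $\{O\}$ is a $\Pi^1_1$ singleton.) Consequently your $\{c_n:n<\w\}$ lists at best the hyperarithmetic elements of~$C$, and you have not argued that the missing elements already lie in the tail $\{x_s:\w\le s\le\wock\}$.

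Your strategy can in fact be repaired: one can show that~$x$ is the \emph{only} possibly non-hyperarithmetic element of~$C$, since any limit point $y\ne x$ disagrees with~$x$ on some initial segment and is therefore a limit of $x_{t(k)}$'s with $\sup_k t(k)<\wock$, hence lies in the countable hyperarithmetic closed set $\overline{\{x_t:t<\sup_k t(k)\}}$; and $x=x_{\wock}$ is already in the tail. But that extra step is precisely the content of the paper's argument, which organises it differently: rather than front-loading all of~$C$ into an initial $\w$-block, the paper inserts, just before each limit stage $s<\wock$, the (hyperarithmetically computable) limit points of the bounded set $\{x_t:t<s\}$ not previously inserted. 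This keeps the analysis local to hyperarithmetic data at every step and makes transparent that the fattened sequence still converges to~$x$.
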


\begin{proof}
	The idea is similar to that of the proof of \cref{lem:compact_implies_collapsing}. In the nontrivial direction, we first note that if $y$ is a limit point of $\{x_s\,:\, s\le \wock\}$ other than~$x$ then there is an increasing sequence $\seq{t(k)}$ of stages such that $y = \lim_{k\to \w} x_{t(k)}$. Further, for all limit $s<\wock$, since the closure of $\{x_t\,:\, t<s\}$ is countable, this closure can be effectively obtained (again using the Cantor-Bendixon analysis). We now fatten the approximation $\seq{x_s}$ by inserting, for each limit $s<\wock$, between $\seq{x_t}_{t<s}$ and $x_s$, all the limit points of $\{x_t\,:\, t<s\}$ which were not previously inserted. If $x_t\rest n$ has stabilised before~$s$, then all limit points extend this string, and so the fattened approximation still approximates~$x$. 
\end{proof}

\subsubsection{Club approximations}

The class of approximations given by \cref{def:locally_almost_finite_change} is mostly a tool which we use later, because it is easier to deal with than club approximations. To motivate that definition we first consider a ``pointwise version''. 

\begin{definition} \label{def:almost_finite_change}
	An approximation $\seq{f_s}$ is \emph{almost finite-change} if for all~$n<\w$, if $\seq{t(i)}$ is an increasing sequence of stages such that $f_{t(i+1)}(n)\ne f_{t(i)}(n)$ for all $i<\w$, then $f_t(n)$ is constant on $[\sup_i t(i),\wock)$. 
\end{definition}

Suppose that an approximation $\seq{x_s}$ consists of elements of Cantor space and that it is partially continuous: for all $n<\w$ and limit $s<\wock$, if $\lim_{t\to s}f_t(n)$ exists then it equals $f_s(n)$. Then the approximation is almost finite-change if and only if for all $n<\w$, for all $s<\wock$ and $i<2$, if the set $\{t<s\,:\, f_t(n) = i\}$ is not a closed subset of~$s$, then $f_s(n)\ne i$. 
 
\begin{definition} \label{def:locally_almost_finite_change}
	An approximation $\seq{f_s}$ is \emph{locally almost finite-change} if for all~$n<\w$ and all strings $\s\in \w^n$, if $\seq{t(i)}$ is an increasing sequence of stages such that $f_{t(i)}\rest n = \s$ and $f_{t(i+1)}(n)\ne f_{t(i)}(n)$ for all $i<\w$, then $f_t(n)$ is constant on the stages $t\ge \sup t(i)$ at which $\s\prec f_t$.
\end{definition}

Call an approximation $\seq{f_s}$ \emph{locally continuous} if for all $n<\w$ and all $\s\in \w^{n}$, the function $f_t(n)$ is continuous on the set of stages~$t$ at which $\s\prec f_t$ (using the subspace topology). Namely, letting $F_\s$ be that set of stages, if $s$ is a limit point of $F_\s$ which is also in $F_\s$, and $f_t(n)$ is constant on a final segment of $s\cap F_\s$, then $f_s(n)$ equals that constant value. 

\begin{lemma} \label{lem:almost_finite_change_and_closed_sets_of_stages}
	Let $\seq{x_s}$ be a locally continuous approximation consisting of elements of Cantor space. Then the approximation is locally almost finite-change if and only if for all strings $\s\in \w^{<\w}$ and all $s<\wock$, if the set $\{t<s\,:\, \s\prec f_t\}$ is not a closed subset of~$s$, then $\s\nprec f_s$.  \qedhere
\end{lemma}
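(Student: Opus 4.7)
The plan is to prove both directions by contradiction. The two conditions record the same phenomenon in different ways: a ``non-closed set of stages at which $\s$ is an initial segment'' encodes a single violation (a limit point $r<s$ of $F_\s := \{t\le\wock : \s\prec f_t\}$ with $r\notin F_\s$), whereas an alternating sequence as in \cref{def:locally_almost_finite_change} encodes cofinally many violations. Local continuity is the tool that lets us pass from one to the other.

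For ($\Rightarrow$), assume $\seq{f_t}$ is locally almost finite-change and suppose, toward a contradiction, that $\{t<s : \s\prec f_t\}$ is not closed in $s$ while $\s\prec f_s$. Fix a limit point $r<s$ of this set with $\s\nprec f_r$, set $n=|\s|$, let $k<n$ be least with $f_r(k)\ne\s(k)$, and put $\s' = \s\rest k$. Then $\s'\prec f_r$, and cofinally many $t<r$ lie in $F_\s\subseteq F_{\s'}$ with $f_t(k)=\s(k)$. Because $f_r(k)\ne\s(k)$ and $r\in F_{\s'}$, local continuity forbids $f_t(k)$ from being eventually $\s(k)$ on $F_{\s'}\cap r$, so cofinally many $t\in F_{\s'}$ below $r$ satisfy $f_t(k)\ne\s(k)$. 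Since the bits take only two values, I can interleave to obtain an increasing sequence $\seq{t(i)}$ with limit $r$, $f_{t(i)}\rest k = \s'$, and $f_{t(i+1)}(k)\ne f_{t(i)}(k)$. The local-almost-finite-change hypothesis then forces $f_t(k)$ to be constant on $\{t\ge r : \s'\prec f_t\}$, a set containing both $r$ and $s$ — contradicting $f_r(k)\ne\s(k)=f_s(k)$.

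For ($\Leftarrow$), assume the closed-set condition and let $\seq{t(i)}$, $\s$, $n$ be as in \cref{def:locally_almost_finite_change}, with $r = \sup_i t(i)$. Suppose, toward a contradiction, that stages $s_1,s_2\ge r$ both lie in $F_\s$ but $f_{s_1}(n)\ne f_{s_2}(n)$. Define $\rho_j = \s\frown f_{s_j}(n)$ for $j\in\{1,2\}$; then cofinally many of the $t(i)$ lie in each $F_{\rho_j}$, so $r$ is a limit point of both. Since $\rho_1\ne\rho_2$, $r$ belongs to at most one of $F_{\rho_1},F_{\rho_2}$, so choose $j$ with $r\notin F_{\rho_j}$. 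The case $r=s_j$ is ruled out since it would put $s_j\in F_{\rho_j}$, hence $r<s_j$, and consequently $\{t<s_j : \rho_j\prec f_t\}$ fails to be closed in $s_j$. The hypothesis then yields $\rho_j\nprec f_{s_j}$, contradicting $s_j\in F_{\rho_j}$.

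The only real subtlety is the use of local continuity in ($\Rightarrow$) to promote the single disagreement at $r$ into cofinally many disagreements below $r$; the rest is bookkeeping. The restriction to Cantor space is used only to turn two disjoint cofinal subsets of $F_{\s'}\cap r$ (respectively, of $\{t(i) : i<\w\}$) into an alternation in the strict sense of \cref{def:locally_almost_finite_change}.
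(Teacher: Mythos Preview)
Your proof is correct. The paper states this lemma without proof (the \texttt{\textbackslash qedhere} appears inside the lemma statement itself), so there is nothing to compare against; you have supplied the missing details, and each step checks out. In the forward direction you correctly use local continuity at~$r$ to upgrade the single disagreement $f_r(k)\ne\s(k)$ to cofinally many disagreements on $F_{\s'}\cap r$, and the Cantor-space restriction lets you interleave these with the cofinally many agreements into an alternating sequence; the locally-almost-finite-change hypothesis then pins $f_t(k)$ on $F_{\s'}\cap[r,\wock)$, contradicting $f_r(k)\ne f_s(k)$. In the backward direction, your choice of~$j$ with $r\notin F_{\rho_j}$ and the observation that $r<s_j$ is forced are exactly right, and the closed-set hypothesis applied at~$s_j$ with the string~$\rho_j$ yields the contradiction.
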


\begin{lemma} \label{lem:club_and_almost_finite_change}
	Every locally almost finite-change approximation is a club approximation. If $x\in 2^\w$ has a club approximation then it has a locally almost finite-change approximation. 
\end{lemma}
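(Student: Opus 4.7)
Plan. A locally almost finite-change approximation need not literally be a club approximation---for instance, the sequence equal to~$f$ everywhere except at one limit stage~$s_0$, where it takes some other value, is vacuously locally almost finite-change but violates the club condition at~$s_0$. So both directions of the lemma require modifying the given approximation to produce one with the desired property.

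For the forward direction, given $\seq{f_s}$ locally almost finite-change with limit~$f$, define $\seq{g_s}$ by $g_s = f_s$ at successors and at $s=0$, and at a limit stage~$s$ by recursion on~$n$: set $g_s(n) := v$ if there exist $t_0 < s$ and a value~$v$ such that $g_t(n) = v$ for all $t \in (t_0, s)$ satisfying $g_s\rest n \prec g_t$, and $g_s(n) := f_s(n)$ otherwise. A joint induction on~$n$ and on the stage shows $\seq{g_s}$ still approximates~$f$, since the cofinal tail of successor stages on which $f_t(n) = f(n)$ propagates through the recursion to later limits. To verify the club property I show by induction on~$n$ that $S_n := \{s \le \wock : g_s\rest n = f\rest n\}$ is closed: at a limit point~$s$ of $S_{n+1}$, the inductive hypothesis gives $g_s\rest n = f\rest n =: \sigma$, and when the recursive limit at $(s,n)$ exists the cofinal set $S_{n+1}\cap s$ forces it to equal~$f(n)$.

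The heart of the argument is the fall-back case, where no recursive limit exists at $(s,n)$ and one must show $f_s(n) = f(n)$. Here, cofinal variation of $g_t(n)$ reduces at the successor level to cofinal variation of $f_t(n)$, yielding an infinite change sequence $\seq{t(i)}$ of successor stages with $\sigma \prec f_{t(i)}$ and $f_{t(i+1)}(n) \ne f_{t(i)}(n)$. Applying locally almost finite-change not to $(\sigma,n)$ but to the empty-string instance $(\langle\rangle,n)$---for which the same sequence $\seq{t(i)}$ witnesses an infinite change---gives that $f_t(n)$ is constant on \emph{all} $t \ge \sup_i t(i)$. The supremum cannot be strictly less than~$s$, since this would contradict the assumed cofinal variation up to~$s$, so it equals~$s$; the constant value must then be~$f(n)$ because eventually $f_t = f$ at large successor stages. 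Thus $f_s(n) = f(n)$ as required. This ``empty-string trick'' bypasses the need to know whether $\sigma \prec f_s$, which can fail in general.

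For the backward direction I invoke \cref{lem:almost_finite_change_and_closed_sets_of_stages}: it suffices to construct a locally continuous approximation $\seq{y_s}$ of~$x$ satisfying, for every string~$\sigma$ and $s < \wock$, that if $\{t < s : \sigma \prec y_t\}$ is not closed in~$s$ then $\sigma \nprec y_s$. For $\sigma = x \rest{|\sigma|}$ the club hypothesis on $\seq{x_s}$ delivers the required closedness directly---this is precisely the point of starting from a club approximation. For all other~$\sigma$ one modifies $y_s$ at offending limit stages by flipping the first bit of $y_s$ beyond the longest common initial segment with~$x$; since each such flip occurs at a position where $y_s$ already disagreed with~$x$, the convergence $y_s \to x$ and the $\wock$-computability of the sequence are both preserved. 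The principal obstacle is the fall-back case in the forward direction, resolved by the empty-string trick described above.
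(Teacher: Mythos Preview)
Your forward direction is essentially correct, though it works harder than the paper does. The paper implicitly assumes local continuity throughout (this is the hypothesis of \cref{lem:almost_finite_change_and_closed_sets_of_stages}); under that assumption the same approximation is literally club, by the induction you sketch in Sub-case~B1 together with a direct appeal to the locally-almost-finite-change property at~$\sigma$. Your counterexample is valid without local continuity, and your modification~$\seq{g_s}$ is exactly the operation of making~$\seq{f_s}$ locally continuous. One step you assert without argument---that cofinal variation of~$g_t(n)$ on~$\{t<s:\sigma\prec g_t\}$ forces cofinal variation of~$f_t(n)$ below~$s$---is not immediate: it needs a least-witness argument showing that if~$f_r(n)$ is constant on a final segment~$[s_1,s)$, then so is~$g_r(n)$ on the restricted set (induct on the least~$r^*$ where~$g_{r^*}(n)$ deviates). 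Once that is in place, your empty-string trick is fine.

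Your backward direction has a genuine gap. The construction as described refers to ``the longest common initial segment with~$x$,'' but~$x$ is not available at stage~$s$; only~$x_s$ is. So~$\seq{y_s}$ is not $\wock$-computable. Replacing~$x$ by~$x_s$ does not salvage the argument: after you modify~$y_s$ away from~$x_s$, the claim that the club hypothesis on~$\seq{x_s}$ ``delivers the required closedness directly'' for~$\sigma\prec x$ fails, since you need $\{t:\sigma\prec y_t\}$ closed, not $\{t:\sigma\prec x_t\}$, and these differ at the modified stages. You also never specify how to handle several offending strings at once, or why flipping one bit does not create new offences.

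The paper's construction is more systematic. It defines~$y_s$ bit by bit: call~$\sigma$ \emph{forbidden} at~$s$ if $\{t<s:\sigma\prec y_t\}$ is not closed in~$s$. Having chosen~$y_s\rest n$ (permitted), if one immediate extension is forbidden take the other, else copy~$x_s(n)$. The key lemma you are missing is that a permitted string always has a permitted immediate extension: if both~$\sigma0$ and~$\sigma1$ were forbidden at~$s$, the two witnessing failure points~$r_0\ne r_1$ would show one extension was already forbidden at the later~$r_i$, contradicting the inductive hypothesis that~$y_{r_i}$ extended only permitted strings. One then checks, by induction on~$s$, that no initial segment of~$x$ is ever forbidden and that~$y_s$ agrees with~$x_s$ along their common initial segment with~$x$; this is where the club property of~$\seq{x_s}$ is actually used.
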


\begin{proof}
	Let $\seq{x_s}$ be a club approximation of~$x\in 2^\w$. We may assume it is locally continuous (making it so does not change it being a club approximation). We define a locally continuous sequence $\seq{y_s}$ by recursion. At stage~$s$ we have already defined $\seq{y_t}_{t<s}$. For any string~$\s$ let $F_\s$ be the set of stages~$t$ at which $\s\prec y_t$. 
	
	We call a string $\s$ \emph{forbidden} at stage~$s$ if the set $F_\s\cap s$ is not a closed subset of~$s$. Otherwise a string is \emph{permitted} at stage~$s$. By induction, for all $t<s$, every initial segment of $y_t$ is permitted at stage~$t$. 
		
	The empty string is always permitted. Every string which is permitted at stage~$s$ has an immediate extension which is also permitted. To see this, suppose that~$\s$ is permitted but suppose, for a contradiction, that both $\s\conc 0$ and $\s\conc 1$ are forbidden at stage~$s$. For $i<2$ let $r_i$ be the least stage $r<s$ which is a limit point of $F_{\s\conc i}$ but is not in $F_{\s\conc i}$. Since $y_{r_i}(|\s|)$ has just two possible values, $r_0\ne r_1$. Say $r_0<r_1$. But this means that $\s\conc 0$ is forbidden at stage~$r_1$, so by induction we cannot have $\s\conc 0\prec y_{r_1}$, a contradiction.

	We define $y_s$ by induction. Suppose that $\s = y_s\rest n$ is defined; by induction this string is permitted at stage~$s$. We then act as follows:
	\begin{enumerate}
		\item If one extension $\s\conc i$ is forbidden at stage~$s$ then we let $y_s(n) = 1-i$.
		\item Otherwise, we let $y_s(n) = x_s(n)$. 
	\end{enumerate}
	
	The fact that $\seq{x_t}$ is locally continuous at~$s$ implies that so is $\seq{y_t}$. Hence, by the construction and by \cref{lem:almost_finite_change_and_closed_sets_of_stages}, the sequence $\seq{y_s}$ is locally almost finite-change. 
	
	By induction on~$s<\wock$ we observe that: (a) no initial segment of~$x$ is forbidden at~$s$; and (b) if $\s$ is an initial segment of both $x$ and $x_s$, then $\s\prec y_s$. We conclude that $x = \lim y_s$. 
\end{proof}

\subsection{Enumerating approximations}  \label{subsec:enumerating_approximations}

In the next subsection we will prove non-implications between classes we defined above. When trying to diagonalise against a class of higher $\Delta^0_2$ functions we need to enumerate an effective list of approximations. We discuss here when this is possible. 

A \emph{partial approximation} is a sequence $\seq{f_t}_{t<s}$ for some $s\le \wock$. 

\begin{lemma} \label{lem:enumerating_partial_approximations}
	There is an effective $\w$-enumeration of all $\wock$-computable partial approximations. That is, there is a partial array $\seq{f^n_t}$ for $n<\w$ and $t<\wock$ such that the function $(n,t)\mapsto f^n_t$ is partial $\wock$-computable, and every $\wock$-computable partial approximation equals $\seq{f^n_t}_{t<s}$ for some $n<\w$. 
\end{lemma}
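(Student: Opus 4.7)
The plan is to start from a universal partial $\wock$-computable function and, for each potential index $e$, to build in a $\Sigma_1$-definable ``totality check'' so that whenever $f^e_t$ is declared defined, it is automatically a total function of~$k$, and moreover $f^e_{t'}$ is defined for every $t'\le t$.

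Fix a universal partial $\wock$-computable function $\Phi(e,k,t)$ with an enumeration $\seq{\Phi^\sigma}_{\sigma<\wock}$, where $\Phi^\sigma\in L_{\wock}$ records which convergences of~$\Phi$ have been witnessed by meta-stage~$\sigma$. Given $e<\w$, declare $f^e_t(k)=m$ iff there exists $\sigma<\wock$ such that (i)~$\Phi^\sigma(e,k,t)\converge$ with value~$m$, and (ii)~$\Phi^\sigma(e,k',t')\converge$ for every $k'<\w$ and every $t'\le t$. For fixed $\sigma,e,t$, condition~(ii) is a bounded quantification (over $\w\times(t+1)\in L_{\wock}$) of a $\Delta_0$ predicate with parameter $\Phi^\sigma$, hence itself $\Delta_0$; prefixing by $\exists\sigma<\wock$ gives a $\Sigma_1$ definition over $L_{\wock}$. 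So $(e,t,k)\mapsto f^e_t(k)$ is partial $\wock$-computable, and a single witness~$\sigma$ certifies simultaneously that $f^e_{t'}$ is total for every $t'\le t$, whence each defined slice $f^e_t$ lies in $\w^\w$ and the domain of $f^e$ in the $t$-coordinate is an initial segment of~$\wock$.

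To verify the covering property, let $\seq{g_t}_{t<s}$ be any $\wock$-computable partial approximation, and let $e$ be an index with $\Phi(e,k,t)=g_t(k)$ on $\w\times[0,s)$. Fix $t<s$; for each $(k',t')\in \w\times[0,t]$ let $\sigma_{k',t'}$ be the meta-stage at which $\Phi(e,k',t')$ first converges. This is a $\Sigma_1$-definable total function on $\w\times[0,t]\in L_{\wock}$, so by $\Sigma_1$-replacement (admissibility of~$\wock$) its range is bounded below~$\wock$ by some~$\sigma$, which then witnesses condition~(ii). Hence $f^e_t(k)=\Phi(e,k,t)=g_t(k)$ for every $k<\w$, and $\seq{f^e_t}_{t<s}=\seq{g_t}_{t<s}$.

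The only real obstacle is that ``$\Phi(e,\cdot,t)$ is total'' is a priori a $\Pi_2$ property over $L_{\wock}$, so one cannot directly impose totality as a $\Sigma_1$ side condition. The trick is to push the universal quantifier $\forall k'<\w$ \emph{inside} the existential $\exists\sigma$: once a particular meta-stage~$\sigma$ is fixed, the relevant information sits in the single set $\Phi^\sigma\in L_{\wock}$, and bounded quantification over~$\w$ is harmless. Admissibility of~$\wock$ then supplies such a~$\sigma$ whenever the approximation is genuinely total on~$[0,t]$.
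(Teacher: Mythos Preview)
Your proof is correct and takes essentially the same approach as the paper: both start from a universal partial $\wock$-computable function. The paper's two-sentence sketch first obtains a $\wock$-indexed array $\seq{f^\alpha_t}_{\alpha<\wock}$ and then renumbers via the projectum function~$p$, without spelling out any totality or initial-segment check; you instead begin with $\omega$-indices directly (which is legitimate and amounts to absorbing the projectum step into the choice of universal function) and make the $\Sigma_1$ totality check and the admissibility argument explicit, which is a more detailed execution of the same idea.
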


\begin{proof}
	There is a universal partial $\wock$-computable function. This allows us to devise an array $\seq{f^\alpha_t}$ for $\alpha,t<\wock$ such that every $\wock$-computable partial approximation is $\seq{f^\alpha_t}$ for some $\alpha<\wock$. Now renumber using the projection function~$p$. 
\end{proof}

Uniformly we can totalise approximations: transform a given $\wock$-computable partial approximation $\seq{g_s}$ into a $\wock$-computable approximation $\seq{f_s}_{s<\wock}$ such that if $\seq{g_s}$ is total and converges to some~$g$, then $\lim_s f_s = g$ as well. This is similar to how it is done in lower computability, with care taken at limit stages. Namely, we define a non-decreasing function~$t(s)$ which indicates the next expected~$g_t$. At a successor stage~$s$, if $g_{t(s-1)}$ is revealed by stage~$s$, we let $f_s = g_{t(s-1)}$ and let $t(s) = t(s-1)+1$; otherwise we let $t(s)=t(s-1)$ and $f_s = f_{s-1}$. At a limit stage~$s$ we let $t(s) = \sup_{r<s} t(r)$ and let $f_s(n) = \lim_{t\to s} f_t(n)$ when the limit exists, and 0 otherwise. 

Thus, we can give an $\w$-list of total sequences $\seq{f_s}$, not all of which converge but for which the convergent ones list all higher $\Delta^0_2$ functions. In some cases we can do better. For example, as in the lower case, we can enumerate all higher $\w$-c.a.\ functions:

\begin{lemma} \label{lem:enumerating_higher_omega_ca}
	There is a (total) $\wock$-computable array $\seq{f^n_t}_{n<\w,t<\wock}$ such that:
	\begin{itemize}
		\item For every~$n$, $\seq{f^n_t}_{t<\wock}$ is a higher $\w$-computable approximation of a function~$f^n$. 
		\item Every higher $\w$-c.a.\ function equals $f^n$ for some~$n$. 
	\end{itemize}
\end{lemma}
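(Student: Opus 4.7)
The plan is to modify the enumeration of partial $\wock$-computable approximations supplied by \cref{lem:enumerating_partial_approximations} by forcing each approximation to respect a hyperarithmetic bound on the number of changes. We enumerate pairs $(e,i)\in \w\times\w$, where $e$ indexes a partial $\wock$-computable approximation $\seq{g^e_t}$ and $i$ indexes a partial $\wock$-computable function $h^i\colon \w\to\w$, and from each pair we effectively produce a total $\wock$-computable approximation $\seq{f^{(e,i)}_t}_{t<\wock}$. The idea is to copy $g^e$ but restrict position~$m$ to change at most $h^i(m)$ times, and to only ``go live'' once $h^i$ has been certified total.

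The key enabler is a totality filter. Define
\[
  \text{Tot}_s \;=\; \{i\,:\, \forall n\in\w\ \exists t\le s\ (h^i_t(n)\downarrow)\},
\]
a hyperarithmetic set uniformly in~$s$, and set $\text{Tot}=\bigcup_{s<\wock}\text{Tot}_s$. Every $i$ for which $h^i$ is total (hence hyperarithmetic) lies in $\text{Tot}$: the function $n\mapsto\min\{t:h^i_t(n)\downarrow\}$ is $\Sigma_1$-definable over $L_{\wock}$, so by $\wock$-admissibility its range is bounded by some $s<\wock$, witnessing $i\in\text{Tot}_s$. Let $s_0(i)=\min\{s:i\in\text{Tot}_s\}$. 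The construction of $\seq{f^{(e,i)}_t}$ is then: for $s<s_0(i)$, set $f^{(e,i)}_s\equiv 0$ and $c_s\equiv 0$; at $s=s_0(i)$, initialize $f^{(e,i)}_{s_0}(m)=g^e_{s_0}(m)$ (or $0$ if undefined) with $c_{s_0}(m)=0$; at a successor $s>s_0(i)$, if $g^e_s(m)$ is defined, differs from $f^{(e,i)}_{s-1}(m)$, and $c_{s-1}(m)<h^i(m)$, copy $g^e_s(m)$ and increment the counter, otherwise carry the previous value forward; at a limit $s>s_0(i)$, observe that $c_t(m)\le h^i(m)$ is a bounded non-decreasing $\wock$-computable function of~$t$, hence stabilizes strictly below~$s$, so $f^{(e,i)}_t(m)$ does too, and we set $f^{(e,i)}_s(m)$ to the limit.

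Verification splits into two parts. (a)~$\seq{f^{(e,i)}_t}$ is always a higher $\w$-computable approximation of a function: if $i\notin\text{Tot}$ it is identically~$0$; if $i\in\text{Tot}$, then $h^i$ is total hence hyperarithmetic, and the number of changes of $\seq{f^{(e,i)}_t}$ at position~$m$ is bounded by $1+h^i(m)$ (at most one initial jump at $s_0(i)$, then at most $h^i(m)$ further transitions), giving a hyperarithmetic bound. (b)~For any higher $\w$-c.a.\ function~$g$, pick by \cref{def:omega_c.a.} a $\wock$-computable approximation $\seq{g^e_t}$ of~$g$ together with a hyperarithmetic bound $h^i$ on its changes. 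Then $i\in\text{Tot}$, so $s_0(i)<\wock$; after $s_0(i)$ the counter guard is never triggered (since over the whole construction $g^e$ can change at most $h^i(m)$ times at position~$m$), so $f^{(e,i)}_s=g^e_s$ for $s\ge s_0(i)$, and therefore $\lim_s f^{(e,i)}_s=\lim_s g^e_s=g$.

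The main subtlety is the claim that $\text{Tot}$ is $\wock$-c.e.: the naive reading of ``$\forall n\ \exists t\ h^i_t(n)\downarrow$'' looks $\Pi_2$-over-$L_{\wock}$, and this is exactly the point where $\wock$-admissibility is needed to collapse the statement to $\Sigma_1$-over-$L_{\wock}$. Without this collapse the ``wait for $i\in\text{Tot}$'' step of the construction would not be realizable $\wock$-computably, and one could not guarantee that the bound $h^i$ used in the resulting approximation is genuinely hyperarithmetic rather than merely $\Delta_2$ over $L_{\wock}$.
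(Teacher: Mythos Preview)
Your proof is correct and follows essentially the same strategy as the paper's: pair each candidate approximation with a candidate bound, wait until the bound is certified, and then enforce it. The paper phrases the bound-enumeration step slightly differently---it directly invokes a $\wock$-list of all hyperarithmetic functions (via the projection function~$p$) rather than filtering partial $\wock$-computable functions for totality via admissibility---and it handles violations by reverting the whole approximation to the zero function rather than freezing individual coordinates at their last value, but these are implementation choices rather than substantive differences.
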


The construction is as expected. There is a $\wock$-list of all hyperarithmetic functions. Using the projection function~$p$ we can give a partial $\wock$-computable function $n\mapsto h^n$ which enumerates all hyperarithmetic functions. In fact by coupling it with partial approximations we can get a partial $\wock$-computable array $\seq{h^n, g^n_t}$ which lists all pairs $(h,\seq{g_t})$ of hyperarithmetic functions and partial approximations. 

We totalise as above, so we assume that each $\seq{g^n_t}$ is total. We then produce a total approximation~$\seq{f^n_t}$. If $h^n$ is not yet defined at stage~$t$ then $f^n_t$ is the zero function. If $h^n$ is defined at stage~$t$ and for no~$k$ have we seen more than $h^n(k)$ many changes on $\seq{g^n_r(k)}_{r\le t}$ then we let $f^n_t = g^n_t$. Otherwise $f^n_t$ is again the zero function.

\subsubsection{Enumerating other approximations}

On the other hand, many of the classes we listed above cannot be enumerated with corresponding approximations. For example, if $\seq{f^n_t}$ is a list of finite-change approximations, then it does not contain all functions with a finite-change approximation, as direct diagonalisation would verify. Informally, when we try to enumerate only finite change approximations, we track a sequence $\seq{g_t}$ up to a limit stage~$s$ at which we first see infinitely many changes on some input. At each stage $t<s$ we have only seen finitely many changes so we copy~$g_t$. By stage~$s$ we have seen infinitely many changes but it is too late to go back and change the sequence. 

A different difficulty is met when we try to enumerate almost finite-change or locally almost finite-change approximations (\cref{def:almost_finite_change,def:locally_almost_finite_change}). Again diagonalisation shows we cannot list such approximations yielding all functions with these approximations. When we totalise approximations as above, starting with an almost-finite-change approximation we might inadvertently ruin this property. Take such an approximation $\seq{g_t}$ and suppose that the totalising process yields $\seq{f_t}$. Let~$s$ be a limit stage and suppose that at stage~$s$ we have seen $f_t(k)$ change infinitely often. We need to define~$f_s$ but since we are working uniformly, we cannot rely on the fact that $\seq{g_t}$ is total; we cannot wait to see what $g_s(k)$ is; the procedure above has us declare an arbitrary value for $f_s(k)$. When we later see that $g_s(k)$ is different it is too late. Either we change a later value of $f_t(k)$; this means that $\seq{f_t}$ is no longer an almost finite-change approximation. Or we can stick with the value $f_s(k)$; in this case $\seq{f_t}$ is an almost finite-change approximation, but $\lim_t f_t \ne \lim_t g_t$. 

Luckily, for our purposes, we do not need tight restrictions on the kind of approximations we list. We will use the following two listings. 

\begin{lemma} \label{lem:listing_fcatis}
	There is a total $\wock$-computable array $\seq{x^n_t}$ of elements of Cantor space such that:
	\begin{itemize}
		\item For all~$n$, $\seq{x^n_t}$ converges to a real~$x^n$; and
		\item If a real $x\in 2^\w$ has an approximation which changes finitely along true initial segments, then there is some~$n$ such that $x = x_n$ and $\seq{x_n}$ changes finitely along true initial segments.
	\end{itemize}
\end{lemma}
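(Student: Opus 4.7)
The plan is to apply \cref{lem:enumerating_partial_approximations} to obtain an effective listing $\seq{g^n_t}$ of all $\wock$-computable partial approximations, and to transform each one into a total $\seq{x^n_t}$ of elements of $2^\w$ that converges unconditionally and that, when $\seq{g^n_t}$ is itself an approximation of some $g \in 2^\w$ changing finitely along true initial segments, yields such an approximation of $g$.

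The transformation is modelled on the proof of \cref{lem:club_and_almost_finite_change}. At stage $s$ we build $x^n_s$ bit-by-bit, maintaining the set of strings \emph{forbidden at $s$}---those $\s$ for which $\{t<s : x^n_t \succ \s\}$ is not a closed subset of $s$. Given a permitted prefix $\s = x^n_s\rest{k}$ already constructed, we extend by $g^n_s(k)$ provided $g^n_s$ is defined, extends $\s$, and $\s\cat g^n_s(k)$ is still permitted; otherwise we pick the least $i\in\{0,1\}$ such that $\s\cat i$ is permitted. The argument of \cref{lem:club_and_almost_finite_change} shows that at least one extension of any permitted string is permitted, so $x^n_s\in 2^\w$ is always well-defined, and the resulting $\seq{x^n_t}$ is locally almost finite-change by construction.

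Unconditional convergence of $\seq{x^n_t}$ to some $x^n \in 2^\w$ will follow by induction on the bit position $k$: once $x^n_t\rest{k}$ stabilises to some $\s$, either $x^n_t(k)$ also stabilises, or both $\s\cat 0$ and $\s\cat 1$ are extended cofinally, in which case one of the two becomes forbidden at the first limit point of the corresponding switching set and only the other can ever be chosen thereafter. Moreover any convergent locally almost finite-change approximation automatically changes finitely along true initial segments, since an infinite increasing sequence $\seq{t(j)}$ with $x^n_{t(j)} \succ x^n\rest{k}$ and $x^n_{t(j+1)}(k) \neq x^n_{t(j)}(k)$ would prevent $x^n_t(k)$ from converging to $x^n(k)$.

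The hard part will be showing that when $\seq{g^n_t}$ is already an approximation of some $g\in 2^\w$ changing finitely along true initial segments, the real $g$ is realised as $\lim_t x^m_t$ for some index $m$ on our list. The obstacle is that early-stage deviations of $x^n_t$ from $g^n_t$ may cause some initial segment of $g$ to become forbidden, after which the construction is forced to drift away from $g$. To handle this I would enlarge the enumeration to pairs $(n,\s^*) \in \w \times 2^{<\w}$, running the same construction on $\seq{g^n_t}$ with the added constraint that $x^{(n,\s^*)}_t$ extends $\s^*$ at every stage $t$, and argue, using the finite-change-along-true-initial-segments property of $\seq{g^n_t}$ together with the pointwise convergence $g^n_t \to g$, that for $\s^*$ a sufficiently long initial segment of $g$ no initial segment of $g$ extending $\s^*$ can be forbidden during the construction; the induction then closes and gives $\lim_t x^{(n,\s^*)}_t = g$.
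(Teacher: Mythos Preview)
Your proposal has the right skeleton (forbidden strings as in \cref{lem:club_and_almost_finite_change}) but contains a genuine error. You claim that ``any convergent locally almost finite-change approximation automatically changes finitely along true initial segments''. This is false. Take $x = 0^\omega$, let $x_t = (t\bmod 2)\cat 0^\omega$ for $t<\omega$ and $x_t = 0^\omega$ for $t\ge\omega$. This approximation is locally almost finite-change (after stage~$\omega$, bit~$0$ is constant on the set of stages extending the empty string) and converges to $0^\omega$, yet bit~$0$ oscillates infinitely along the true initial segment~$\emptystring$. Your reasoning that such oscillation ``would prevent $x^n_t(k)$ from converging to $x^n(k)$'' overlooks that locally almost finite-change only forces constancy \emph{after} the supremum of the witnessing sequence, and that constant value may well be the correct limit bit. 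You therefore still owe a direct argument that the output is FCATIS, not merely that it is locally almost finite-change and convergent.

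The paper avoids your second difficulty (initial segments of~$g$ possibly becoming forbidden, and your proposed fix via pairs~$(n,\s^*)$) by separating the construction into two passes. First, totalise the partial approximation, making it locally continuous at limit stages; this step alone preserves FCATIS, and a locally continuous FCATIS approximation is in particular a club approximation. Second, apply the forbidden-string construction of \cref{lem:club_and_almost_finite_change} to this club approximation; the inductive argument (a)--(b) in that lemma's proof then shows that no initial segment of the true limit is ever forbidden, so the limit is preserved, and one checks directly that FCATIS passes through this second step as well. Your merged single-pass construction loses access to the club property of the input (the ``input'' is partial and may be undefined), which is exactly what makes your worry real; but the cure is to do the passes separately rather than to enlarge the index set --- and the correctness of your $(n,\s^*)$ device is in any case only asserted, not argued.
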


\begin{proof}
	Given a partial approximation, we totalise it to a sequence $\seq{y_s}_{s\le \wock}$ as above, except that at limit stages we make the approximation locally continuous (for limit~$s$ we inductively define $y_s(n)$ to be the limit of $y_t(n)$ over the stages $t<s$ at which $y_t\rest n = y_s\rest n$, 0 if the limit does not exist). If the original approximation changes finitely along true initial segments, so does $\seq{y_s}$. We can then transform the approximation  to be locally almost finite-change, in particular ensuring it has a limit. This follows the construction in the proof of \cref{lem:club_and_almost_finite_change}, tracking forbidden strings. Again, if $\seq{y_s}$ changes finitely along true initial segments, so does the new approximation.
\end{proof}

\begin{lemma} \label{lem:listing_clubs}
	There is a total $\wock$-computable array $\seq{x^n_t}$ of elements of Cantor space such that:
	\begin{itemize}
		\item For all~$n$, $\seq{x^n_t}$ converges to a real~$x^n$. 
		\item Every real $x\in 2^\w$ which has a club approximation equals~$x^n$ for some~$n$. 
	\end{itemize}
\end{lemma}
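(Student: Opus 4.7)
The plan is to reuse the machinery of \cref{lem:listing_fcatis} but pivot through locally almost finite-change approximations rather than finite-change along true initial segments. By \cref{lem:club_and_almost_finite_change}, every $x\in 2^\w$ with a club approximation also admits a locally almost finite-change approximation. Hence it suffices to produce a $\wock$-computable array $\seq{x^n_t}$ of converging Cantor-space sequences whose limits include every real which has a locally almost finite-change approximation.

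To do this, I would start from the effective enumeration $\seq{g^n_t}$ of all $\wock$-computable partial approximations given by \cref{lem:enumerating_partial_approximations}. I would then apply exactly the two steps of the construction in the proof of \cref{lem:listing_fcatis}. First, I would totalise each $\seq{g^n_t}$ to a total $\wock$-computable sequence $\seq{y^n_t}_{t\le \wock}$ of elements of $2^\w$, with the limit-stage modification that makes the approximation locally continuous: $y^n_s(k)$ is the limit of $y^n_t(k)$ taken over the stages $t<s$ at which $y^n_t\rest k = y^n_s\rest k$, and $0$ if this limit fails to exist. Second, I would run the forbidden-string construction from the proof of \cref{lem:club_and_almost_finite_change} on the locally continuous sequence $\seq{y^n_t}$, producing $\seq{x^n_t}$. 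Since the output of that construction is always a locally almost finite-change approximation, each $\seq{x^n_t}$ converges, yielding the first clause of the lemma.

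For the listing property, suppose $x\in 2^\w$ has a club approximation. By \cref{lem:club_and_almost_finite_change}, $x$ has a locally almost finite-change approximation $\seq{h_t}$, which is in particular a club approximation of $x$ (the sets $\{t<s: h_t\rest n = x\rest n\}$ are closed in $s$ because of the locally continuous massage one can always impose). The sequence $\seq{h_t}$ appears in the enumeration as some $\seq{g^{n_0}_t}$. The totalisation step preserves convergence to $x$ and maintains local continuity, and the second step is the content of the verification in \cref{lem:club_and_almost_finite_change}: claims (a) and (b) in that proof show that when the input is a club approximation of $x$, no initial segment of $x$ ever becomes forbidden and $x\rest n \prec x^{n_0}_t$ whenever $x\rest n \prec y^{n_0}_t$, giving $\lim_t x^{n_0}_t = x$.

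The main obstacle is making sure that running the totalisation on a partial approximation which happens to be $\seq{h_t}$ does not break the club property. A total input is not literally left untouched by the totalisation — limit stages may be rewritten via the local-limit rule — so one must check that the rewrite yields an approximation which is still a club approximation of $x$. This amounts to observing that for $\s\prec x$, the set of stages below any limit $s$ at which $\s\prec y^{n_0}_t$ is the same closed set as for $\seq{h_t}$, since the local-limit rule reproduces $\s$ at $s$ whenever cofinally many $t<s$ had $\s\prec h_t$. Once this is verified, claims (a) and (b) of \cref{lem:club_and_almost_finite_change} go through unchanged, and the construction delivers the required uniform listing.
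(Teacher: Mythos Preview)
The obstacle you flag in your last paragraph is real, and your proposed resolution of it is where the argument breaks. The local-limit rule does \emph{not} reproduce $\s$ at a limit stage~$s$ whenever cofinally many earlier stages had $\s$ as a prefix: it only does so when the limit along those stages actually exists. Concretely, take $\s = x\rest n$ with $x(n)=1$, and suppose $h_r(n)$ oscillates on $\{r<t(s): \s\prec h_r\}$. Since $\seq{h_t}$ is locally almost finite-change, $h_r(n)$ is constant and equal to $x(n)=1$ for $r\ge t(s)$ on that set; but the totalised sequence only sees the stages $r<t(s)$ before stage~$s$, so $y^{n_0}_t(n)$ oscillates on $\{t<s:\s\prec y^{n_0}_t\}$ and the local-limit rule sets $y^{n_0}_s(n)=0$. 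Thus $x\rest{n+1}\nprec y^{n_0}_s$ even though $x\rest{n+1}\prec y^{n_0}_t$ cofinally below~$s$: the totalisation has destroyed the club property for~$x$. Consequently claims~(a) and~(b) from \cref{lem:club_and_almost_finite_change} no longer apply, and the forbidden-string construction need not output~$x$.

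The paper's proof takes a different and shorter route precisely to avoid this problem. Rather than feeding the totalised sequence into the forbidden-string machinery, it modifies the totalised sequence directly: after seeing infinitely many changes in $y_t(n)$ along stages with $\s\prec y_t$, it sets $y_s(n)=0$, but then allows \emph{one last change} to~$1$ if the value~$1$ subsequently appears in the original approximation~$\seq{x_t}$. This single extra change preserves convergence (the resulting sequence is ``nearly'' locally almost finite-change), and when $\seq{x_t}$ is a genuine club approximation of~$x$ with $x(n)=1$, the closedness of $\{t:x_t\rest{n+1}=x\rest{n+1}\}$ guarantees the value~$1$ does reappear, so the output converges to~$x$. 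The key move is to consult the original partial approximation rather than relying solely on the totalised one.
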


\begin{proof}
	The idea is to transform partial approximations $\seq{x_t}$ into ``nearly'' locally almost finite-change total approximations. Totalise as above, making the approximation $\seq{y_s}$ locally continuous. Once we have seen, for some~$\s\in 2^{n}$, infinitely many changes in $y_t(n)$ on the set of stages at which $\s\prec y_t$, we set $y_s(n)=0$, but later allow one last change, if we see the value 1 show up in the approximation $\seq{x_t}$. 
\end{proof}

\subsection{Separations between classes} 

None of the classes we defined in the previous sections coincide. For a summary see \cref{fig:Delta_2_classes}. All implications were discussed above. In this section we show that no other implications hold. In fact, all separations are made in Cantor space.

\begin{figure}[h]
	\centering
		\begin{tikzpicture}[every text node part/.style={align=center}]

		\node[draw, ellipse] (WCA) at (0, 8) {$\omega$-c.a.};
		\node[draw, ellipse] (FC) at (4, 8) {finite-change};
		\node[draw, ellipse] (C) at (8, 8) {compact};

		\node[draw, ellipse] (FCTIS) at (4, 6) {finite-change along \\ true initial segments};
		\node[draw, ellipse] (CLUB) at (4, 4) {club};

		\node[draw, ellipse] (COLL) at (4, 2) {collapsing};

		\node[draw, ellipse] (D) at (4, 0) {$\Delta^0_2$ and $\w_1^f>\wock$};

		\draw[thick, -> ] (WCA) -- (FC);
		\draw[thick, -> ] (FC) -- (C);
		\draw[thick, -> ] (FC) -- (FCTIS);
		\draw[thick, -> ] (FCTIS) -- (CLUB);
		\draw[thick, -> ] (CLUB) -- (COLL);
		\draw[thick, -> ] (COLL) -- (D);

		\draw[thick, -> ] (C) to[bend left=60] (COLL);

		\end{tikzpicture}
	\caption{Classes of higher $\Delta^0_2$ reals}
	\label{fig:Delta_2_classes}
\end{figure}
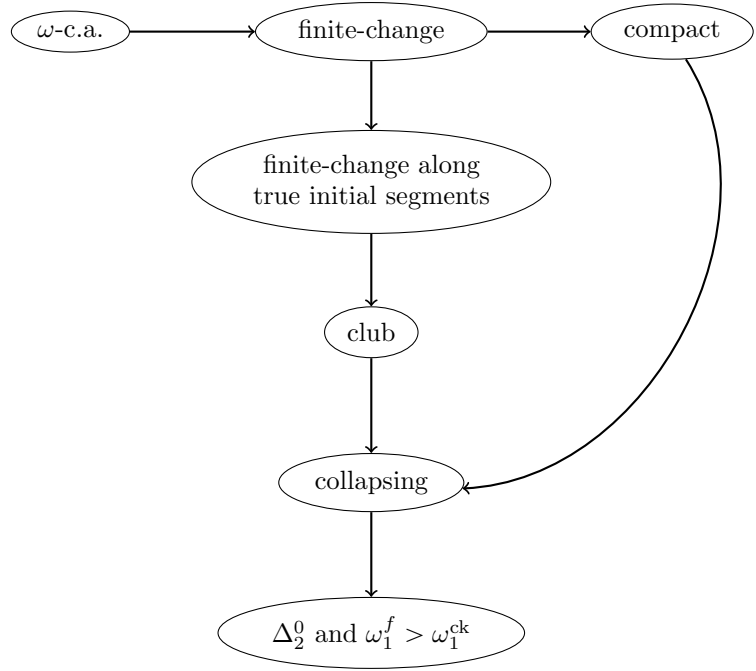

%

\subsubsection{A real with a finite-change approximation which is not $\w$-c.a.}

This is a simple diagonalisation argument, using \cref{lem:enumerating_higher_omega_ca}, but working in Cantor space. Let $\seq{x^n_t}$ be as given by the lemma (with $x^n_t\in 2^\w$). Define $y\in 2^\w$ by letting $y(n) = 1-x^n(n)$. Then $\seq{x^n_t(n)}$ is a finite-change approximation of~$y$.

\subsubsection{A real with a finite-change approximation along true initial segments, but no compact approximation}

An example for such a real is given by \cref{prop:separating_Pi11_randomness} (using \cref{prop:w2r_closed_approx}).

\subsubsection{A higher $\Delta^0_2$ real which collapses $\wock$ but has no collapsing approximation}

In \cite{BadOracles} we construct a higher $\Delta^0_2$ real~$y$ below which higher Turing and fin-h reducibility differ. By \cref{prop:higher_Turing_when_wock_is_preserved}, the real~$y$ collapses $\wock$. By \cref{prop:collapsing_and_fin-h},~$y$ does not have a collapsing approximation.

\subsubsection{A real with a club approximation but no approximation which is finite-change along true initial segments}

This is a slightly finer diagonalisation argument. Let $\seq{x^n_t}$ be the array given by \cref{lem:listing_fcatis}. We build an approximation $\seq{y_t}_{t<\wock}$ and diagonalise against each~$x^n$ by showing that $y\rest{n+1}\ne x^{n}\rest{n+1}$, provided $\seq{x^n_t}$ changes finitely along true initial segments. 

To ensure that~$y$ has a club approximation we follow the construction of the proof of \cref{lem:club_and_almost_finite_change}. As in that construction, define the sets~$F_\s$, and the notion of a string being permitted or forbidden at stage~$s$. We again ensure that all initial segments of each~$x_t$ are permitted at stage~$t$ and that the approximation is locally continuous. 

At stage~$s$, given $\seq{y_t}_{t<s}$, define $y_s$ by recursion. We are given $\s = y_s\rest n$, which by induction is permitted at stage~$s$. Then:
\begin{enumerate}
	\item If an immediate extension $\s\conc i$ of~$\s$ is forbidden at stage~$s$, then we let $y_s(n) = 1-i$.
	\item If $s$ is a limit stage, $F_\s$ is cofinal in~$s$ and $x^n_t$ is a constant~$i$ on a final segment of $F_\s\cap s$, then we let $y_s(n)=i$ (note that the assumption implies that $\s\conc i$ is permitted at every stage $t<s$, and so also at~$s$).
	\item Otherwise, we consider the set $A_{\s} = \{ t<\wock\,:\, \s\prec x^n_t\}$. If $x^n_s(n)$ changes infinitely along the stages in $A_{\s}\cap s$ (there is an increasing sequence $\seq{t(i)}$ of stages $t(i)\in A_{\s}\cap s$ such that $x^n_{t(i+1)}(n)\ne x^n_{t(i)}(n)$ for all $i<\w$) then we let $y_s(n) = 0$. Otherwise, $y_s(n)$ is a constant~$i$ on a final segment of $A_{\s}\cap s$;\footnote{This includes the case that $A_{\s}\cap s$ has a greatest element $t$; then $i = x^n_t(n)$.} we let $y_s(n)=1-i$. 
\end{enumerate}

By construction, the sequence~$\seq{y_s}$ is locally almost finite-change, and so $y = \lim_s y_s$ has a club approximation. Let~$n<\w$ such that $\seq{x^n_t}$ changes finitely along true initial segments. Let $\s = y\rest n$. If $\s\ne x^n\rest n$ we are done, so we assume that $\s\prec x^n$ as well. The value $x^n_t(n)$ changes finitely often on~$A_\s$. By induction on $s<\wock$ we see that the value $y_t(n)$ changes only finitely often on~$F_\s$ and that both $\s\conc 0$ and $\s\conc 1$ are permitted at~$s$. We then succeed in ensuring that $y(n)\ne x^n(n)$.

\subsubsection{A real with a compact approximation but no club approximation}

To show that there are no more implications in \cref{fig:Delta_2_classes}, it remains to show that there is a real $x\in 2^\w$ which has a closed approximation but not a club approximation. Note that this also shows that there is a real which has a collapsing approximation but not a club approximation. 

To construct a real with a closed approximation we use the following.

\begin{lemma} \label{lem:sufficient_condition_for_compactness}
	Let $\seq{x_s}$ be a $\wock$-computable approximation of $x\in 2^\w$. Suppose that for all limit $s<\wock$ there are at most finitely many~$n$ such that $\lim_{t\to s} x_t(n)$ does not exist. Then~$x$ has a closed approximation.
\end{lemma}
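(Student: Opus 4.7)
The plan is to identify the closure $C$ of $\{x_s : s \le \wock\}$ in $2^\w$, argue $C$ is countable, and then effectively enumerate it as $\seq{y_\alpha}$.

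First I would show $C$ is countable. For each limit $s < \wock$ set $E_s = \{n : \lim_{t \to s} x_t(n) \text{ diverges}\}$, finite by hypothesis, with $\ell_s : \w \setminus E_s \to \{0,1\}$ the partial limit. The accumulation points of $\{x_t : t < s\}$ in $2^\w$ form the finite set $L_s = \{z : z \rest{(\w \setminus E_s)} = \ell_s\}$ of cardinality $2^{|E_s|}$. An accumulation point $z$ of $\{x_s : s \le \wock\}$ is a limit of distinct points $w_n \in \{x_s\} \cup \bigcup_{s'} L_{s'}$ with associated stages $\sigma_n$. If $\sigma := \sup_n \sigma_n < \wock$, then $z$ lies in the closure of $\{x_t : t < \sigma\}$, so $z \in L_\sigma$. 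If $\sigma = \wock$, the convergence $x_s \to x$ forces $z = x$, including in the subcase where cofinitely many $w_n$ lie in $L_{\sigma_n}$: for each fixed $m$, the stabilization stage of $x_t(m)$ is eventually below $\sigma_n$, so $m \notin E_{\sigma_n}$ for large $n$, giving $w_n(m) = \ell_{\sigma_n}(m) = x(m)$. Hence $C = \{x_s : s \le \wock\} \cup \bigcup_{s \text{ limit}} L_s$ is countable.

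Next I would show $L_s$ is $\wock$-computable uniformly in $s$. The predicate ``$n \in E_s$'' is $\Delta_1$ over $L_\wock$, because the defining formula ``$\forall t_0 < s\ \exists t \in (t_0, s)\ x_t(n) \ne x_{t_0}(n)$'' has both quantifiers bounded by $s$; likewise $\ell_s(n)$ for $n \notin E_s$ is obtained by bounded search. Hence the function $(s, i, n) \mapsto z_{s,i}(n)$, taking the value $\ell_s(n)$ for $n \notin E_s$ and the $n\tth$ bit of $i$ for $n \in E_s$, is $\wock$-computable; as $i$ ranges over $\w$ these realize every pattern on $E_s$, so $\{z_{s,i} : i < \w\} = L_s$.

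Finally I would interleave these outputs to form $\seq{y_\alpha}$. Decompose each $\alpha < \wock$ uniquely as $\alpha = \alpha_0 + k$ with $\alpha_0$ a limit ordinal (or $0$) and $k < \w$; set $y_{\alpha_0 + 2m} := x_{\alpha_0 + m}$ for $m \ge 0$, and (when $\alpha_0$ is a positive limit) $y_{\alpha_0 + 2m + 1} := z_{\alpha_0, m}$. Setting $y_\wock = x$, every element of $C$ appears as some $y_\alpha$, so $\{y_\alpha : \alpha \le \wock\} = C$ is closed. Convergence $y_\alpha \to x$ holds because $\alpha_0$ tends to $\wock$ with $\alpha$: once $\alpha_0$ is large enough, $x_{\alpha_0 + m} \rest n = x \rest n$ and $E_{\alpha_0} \cap [0, n) = \emptyset$, whence $z_{\alpha_0, i} \rest n = \ell_{\alpha_0} \rest n = x \rest n$ as well. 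The main subtlety I expect is the case $\sigma = \wock$ of step one, which is what rules out a perfect kernel in $C$; the rest is effective bookkeeping.
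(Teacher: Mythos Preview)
Your proposal is essentially correct and follows the same two-step outline as the paper: show the closure is countable, then fatten the approximation to pick up the extra limit points. The paper, however, factors the second step through \cref{lem:compact_and_countable_closure}, which handles the fattening once and for all (using a Cantor--Bendixson analysis to effectively list the closure of $\{x_t : t<s\}$ at each limit~$s$). Your direct insertion of the explicit finite sets~$L_s$ avoids Cantor--Bendixson and gives a cleaner effectivity argument, at the cost of having to verify closure of the fattened set by hand.

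One small inaccuracy: the set $L_s$ need not \emph{equal} the accumulation points of $\{x_t : t<s\}$. The oscillations on the coordinates in~$E_s$ may be correlated (e.g.\ $x_t(0)=x_t(1)$ for all $t<s$ with $E_s=\{0,1\}$), so not every pattern on~$E_s$ is realised; $L_s$ is only an upper bound. Consequently $\{y_\alpha : \alpha\le\wock\}$ may strictly contain~$C$, and you cannot conclude it is closed just from $C$ being closed. This is easily repaired: your case analysis on $\sigma=\sup_n\sigma_n$, applied verbatim to an accumulation point of $\{y_\alpha\}$ rather than of $\{x_s\}$, shows directly that $\{y_\alpha\}$ is closed. (In the case $\sigma<\wock$, note that for $m\notin E_\sigma$ there is $t_0<\sigma$ with $x_t(m)=\ell_\sigma(m)$ for all $t\in[t_0,\sigma)$; once $\sigma_{n_k}>t_0$ this forces $m\notin E_{\sigma_{n_k}}$ and $\ell_{\sigma_{n_k}}(m)=\ell_\sigma(m)$, so $w_{n_k}(m)=\ell_\sigma(m)$ whether $w_{n_k}$ is an $x_t$ or lies in $L_{\sigma_{n_k}}$.)
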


\begin{proof}
	We use \cref{lem:compact_and_countable_closure}. Since $\wock$ is countable, it suffices to show that for all limit $s<\wock$ there are at most countably many $y$ which are the limit $\lim_{i\to \w} x_{t(i)}$ where $\seq{t(i)}$ is increasing and $s=\sup_k t(i)$. But the condition implies that for a fixed~$s$, all such~$y$ differ on only finitely many bits. 
\end{proof}

We in fact show the following. 

\begin{proposition} \label{prop:compact_and_uniformly_Delta2}
	No uniform listing of higher $\Delta^0_2$ elements of Cantor space contains all reals with closed approximations. That is, if $\seq{x^n_t}$ is a $\wock$-computable array such that for all~$n$, $\seq{x^n_t}$ converges to a real~$x^n$, then there is some $y\in 2^\w$ with a closed approximation which equals none of the $x^n$. 
\end{proposition}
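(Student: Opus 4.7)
The plan is to construct $y \in 2^\w$ and a $\wock$-computable approximation $\seq{y_s}_{s<\wock}$ with $y \ne x^n$ for every $n < \w$ and with the approximation satisfying the sufficient condition of \cref{lem:sufficient_condition_for_compactness}. The design principle is that each coordinate of $y$ is modified at most once during the construction: for each coordinate $c$ there is a commitment stage $u_c \in \wock \cup \{\wock\}$ such that $y_s(c) = 0$ for $s < u_c$ and $y_s(c)$ equals a fixed committed value for $s \in [u_c,\wock)$. Under this design the closed approximation property is automatic: for every limit $s < \wock$ and every coordinate $c$, $\lim_{t \to s} y_t(c)$ exists (equal to $0$ if $u_c \geq s$, and to the committed value otherwise), so the hypothesis of the lemma is satisfied in the strongest possible sense.

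For the diagonalisation, we allocate infinitely many candidate coordinates to each requirement $R_n$ (``$y \ne x^n$'') via a computable pairing $\pi\colon \w \times \w \to \w$, writing $c_{n,i} = \pi(n,i)$. At a limit stage $\lambda < \wock$ we scan uncommitted coordinates; for each $c_{n,i}$ whose approximation $\seq{x^n_t(c_{n,i})}_{t<\lambda}$ has been observed to be eventually constant below $\lambda$ (witnessed by some $r_0 < \lambda$ with $x^n_r(c_{n,i}) = x^n_\lambda(c_{n,i})$ for every $r \in [r_0,\lambda]$), we commit $y_\lambda(c_{n,i}) := 1 - x^n_\lambda(c_{n,i})$. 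The observation is $\Sigma_1$-definable over $L_\lambda$ from the array, so the construction is $\wock$-computable. Such a commitment is \emph{correct} (delivers $y(c_{n,i}) \ne x^n(c_{n,i})$) precisely when $x^n_\lambda(c_{n,i}) = x^n(c_{n,i})$, which automatically holds whenever $\lambda$ exceeds the modulus of convergence $\beta_n(c_{n,i}) < \wock$ of $\seq{x^n_t(c_{n,i})}$.

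The hard part is ensuring that for every $n$, at least one of the infinitely many commitments attached to $R_n$ is correct. The modulus $\beta_n(c)$ is $\Pi_1$-definable over $L_{\wock}$, and its supremum need not be dominated by any $\wock$-computable ordinal; one can construct adversarial arrays where $\sup_{n,c}\beta_n(c) = \wock$, and in such arrays an ``apparent stability'' of $\seq{x^n_t(c_{n,i})}$ at some $\lambda < \beta_n(c_{n,i})$ followed by a later change causes the naive first-stability commitment to lock in the wrong value. Overcoming this requires a refinement of the commitment rule---for instance, making the committed value depend on the index $i$ in a way that cannot be uniformly foiled across all $i$, and exploiting admissibility of $\wock$ ($\Sigma_1$-replacement applied to a $\Sigma_1$-approximation of the moduli) to argue that along the sequence $\seq{c_{n,i}}_{i<\w}$ at least one commitment unavoidably lands at a stage past the modulus. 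This is the delicate technical heart of the proof; the framework above ensures the closed approximation property for free, leaving only the diagonalisation bookkeeping.
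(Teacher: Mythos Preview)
Your framework does secure the closed-approximation property for free, but the ``delicate technical heart'' that you explicitly leave unfinished is not bookkeeping---it is the whole argument, and your commit-once discipline cannot be completed. Consider the array in which every $x^n_t(c)$ equals~$0$ for $t\le \w$ and equals~$1$ for $t>\w$. At the limit stage $\lambda=\w$ every coordinate $c_{n,i}$ passes your stability test (the values on $[0,\w]$ are constantly~$0$), so you commit $y(c_{n,i})=1$ for all~$n,i$ simultaneously; but the true value $x^n(c_{n,i})$ is also~$1$, so every commitment is wrong and $y$ is the constant~$1$ sequence, equal to every~$x^n$. No refinement of the committed value as a function of $i$ and the observed data helps: whatever rule $y(c_{n,i})=f(i,v)$ you fix, the adversary arranges the observed value~$v$ and then changes after~$\w$ to $f(i,v)$. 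Your appeal to admissibility also fails: any $\wock$-computable schedule of commitment stages $\lambda_i$ has $\sup_i\lambda_i<\wock$, so the adversary can place every modulus above that supremum.

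The paper's proof abandons the commit-once idea. It assigns to each requirement~$R_n$ a single \emph{moving} witness $k^n_t\in\w^{[n]}$ and keeps the diagonalisation live at all times by setting $y_t(k^n_t)=1-x^n_t(k^n_t)$ (and $y_t(k)=0$ off the current witnesses). Whenever $y_t$ changes on $k^n_t$, the witnesses $k^m_t$ for $m>n$ are reset to fresh values (indexed by the projectum $p(s)$). A standard finite-injury induction shows that each $k^n_t$ eventually stabilises to some~$k^n$ with $y(k^n)\ne x^n(k^n)$. The price is that $\seq{y_t}$ may genuinely oscillate at limit stages, but the injury structure guarantees that at each limit~$s$ at most one coordinate fails to have a limit below~$s$; \cref{lem:sufficient_condition_for_compactness} then still applies. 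The point is that the diagonalisation must remain dynamic---fixing values irrevocably hands the adversary exactly the foreknowledge needed to defeat you.
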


We then use \cref{lem:listing_clubs} to obtain the desired separation. 

\medskip

To prove \cref{prop:compact_and_uniformly_Delta2} we will in fact build an approximation $\seq{y_t}$ such that for all limit $s<\wock$ there is at most one $k<\w$ such that $\lim_{t\to s} y_t(k)$ does not exist. 

The na\"ive approach, letting $y_t(n) = 1-x^n_t(n)$, will of course not work, since it is likely that for some limit $s<\wock$, $x^n_t(n)$ change infinitely often up to~$s$ for more than one~$n$. However we can choose other witnesses~$k$ to diagonalise~$y$ against~$x^n$. Adding bounded injury to the argument makes it work. 

In detail, along with $\seq{y_t}$ we also define a sequence of witnesses $k^n_t$ for all $n<\w$ and $t<\wock$. Witnesses for different~$n$ are distinct; this is achieved by requiring that $k^n_t \in \w^{[n]}$ (the $n\tth$ column of $\w$) for all~$n$. Once the witnesses $k^n_t$ are defined, $y_t$ is determined by letting:
\begin{itemize}
	\item $y_t(k^n_t) = 1 - x^n_t(k^n_t)$ for all $n<\w$; and
	\item $y_t(k) = 0$ if $k\ne k^n_t$ for all~$n$. 
\end{itemize}
The idea is that if we see $y$ change on $k^n_t$ then we discard $k^m_t$ for $m>n$. In detail: at stage~$s$, we need to define a new witness $k^n_s$ in case either
\begin{enumerate}
	\item $s$ is a limit stage and $k_t^n$ is not stable below~$s$ ($k_t^n$ is not constant on a final segment of~$s$); or
	\item for some $m<n$, it is not the case that $k=k_{<s}^m$ and $i=x_{<s}^m$ are well-defined and $x^m_s(k)=i$. In other words, either
	\begin{itemize}
		\item $s$ is a successor stage and for $k = k_{s-1}^m$ we have $x^m_{s-1}(k) \ne x^m_s(k)$; or
		\item $s$ is a limit stage, and either $k_t^m$ is not stable below~$s$; or it is, with value~$k$, but $x^m_t(k)$ is not stable below~$s$; or it is, with value~$i$, but $x^m_s(k)\ne i$.\footnote{We could omit the very last case by requiring that $\seq{x^n_t}$ is partially continuous.}
		\end{itemize}
	\end{enumerate}
In all cases, we let $k_s^n$ be the $p(s)\tth$ element of the column $\w^{[n]}$. If none of these cases hold, then we let $k_s^n= k_{<s}^n$, where as usual this means $k_{s-1}^n$ if~$s$ is a successor stage, or the stable value $k_I^n$ for some final segment~$I$ of~$s$ if~$s$ is a limit stage.

This concludes the construction. By induction on~$n$ we see that each $k^n_t$ reaches a limit~$k^n$ and that $y(k^n)\ne x^n(k^n)$. It remains to show the condition which implies compactness. Let~$s<\wock$ be a limit stage. Suppose that there is $n<\w$ such that $k^n_t$ is stable on a final segment~$I$ of~$s$ (with value~$k^n_s$), but that $\lim_{t\to s} x^n_t(k^n_s)$ does not exist. For all $m<n$, both $k^m_t$ and $x^m_t(k^m_t)$ are stable on~$I$. If $k\ne k^m_s$ for all $m\le n$ then on a final segment of~$s$, $k\ne k^m_t$ for all $m<\w$ (if it is ever chosen, it is discarded before stage~$s$), and so $y_t(k)=0$ on a final segment of~$s$. This shows that $\lim_{t\to s} y_t(k)$ exists for all $k\ne k^n_s$.

If no such~$n$ exists, then by induction on~$n$ we see that both $k^n_t$ and $x^n_t(k^n_t)$ are stable below~$s$ (though likely there is no single final segment~$I$ of~$s$ on which they are all stable). Thus if $k = k^n_s$ for some~$n$, then $\lim_{t\to s} y_t(k)$ exists. Suppose that $k\ne k^n_s$ for all~$n$. Say $k\in \w^{[n]}$. Then $k\ne k^m_t$ for all $m\ne n$ and all~$t$; and $k\ne k^n_t$ on a final segment of~$s$, so again $y_t(k)=0$ on a final segment of~$s$.

\subsection{A remark on club approximations}

We can weaken \cref{def:club_approximation} as follows.

\begin{definition} \label{def:club-quasi-approximation}
	A $\wock$-computable sequence $\seq{f_s}_{s<\wock}$ is a \emph{club quasi-approximation} of a function~$f$ if for all~$n<\w$, the set of stages~$s$ at which $f\rest n = f_s\rest n$ is a closed and unbounded subset of~$\wock$. 
\end{definition}
The point is that we do not require that $f = \lim_s f_s$. If~$\seq{f_s}$ is a club approximation of any function, then this function is determined uniquely: for each string~$\s$, $\{s\,:\, \s\prec f_s\}$ is a $\wock$-computable set, and the intersection of finitely many $\wock$-computable club subsets of~$\wock$ is a club subset of~$\wock$. 

For elements of Cantor space we get nothing new: if~$x\in 2^\w$ has a club quasi-approximation then it has a club approximation, in particular it is higher~$\Delta^0_2$. However there are elements of Baire spaces which have club quasi-approximations but are not higher $\Delta^0_2$. 

To see this, following the discussion in \cref{subsec:enumerating_approximations}, fix a total $\wock$-computable array $\seq{f^n_t}$ of functions which contains all $\wock$-computable approximations. We define a sequence $\seq{g_t}_{t<\wock}$ which is a club quasi-approximation of $g\in \w^\w$, ensuring that if $\seq{f^n_t}$ converges to some $f^n\in \w^\w$ then $g(n)\ne f^n(n)$. In fact we will ensure a stronger property than required: for all~$n$, the set of stages~$t<\wock$ such that $g_t(n)= g(n)$ is closed and unbounded. The definition is simple: at a limit stage~$s$ we let $g_s(n) = \lim_{t\to s} g_t(n)$ if the limit exists, and~$0$ otherwise. At a successor stage~$s$ we compare $g_{s-1}(n)$ and $f^n_s(n)$. If they are distinct we let $g_s(n) = g_{s-1}(n)$. If they are equal to a nonzero value, we let $g_s(n) = 0$. If they are both equal to 0 then we let $g_s(n) = p(s)$, where as usual $p\colon \wock\to \w$ is $\wock$-computable and injective. Now the point is that for all $k\ne 0$, the set of stages $\{ t<\wock\,:\, g_t(n)=k\}$ is an interval of stages and so closed; and that the set of stages $\{ t<\wock\,:\, g_t(n) = 0 \}$ is closed. By admissibility of $\wock$, one of these sets must be unbounded.

\medskip

Finally we remark that the proof of the second part of \cref{lem:club_approximations-are_collapsing_etc} (that every club approximation of $f\notin\Delta^1_1$) shows that every club quasi-approximation of $f\notin \Delta^1_1$ is ``quasi collapsing'' in that the sequence of stages~$s(n)$ at which we first observe $f\rest n$ is unbounded in~$\wock$. Hence if $f\notin \Delta^1_1$ has a club quasi-approximation  then $\w^f>\wock$, even if~$f$ is not higher $\Delta^0_2$.

\section{The class $\MLR[O]$} \label{section:mlr_plop_o} 


It is not very hard to prove that one can characterize weak 2 randomness using a restricted relativisation of ML-randomness to~$\emptyset'$. Define an $\MLR[A]$-test to be a nested test $\seq{U_n}$ satisfying $\leb(V_n)\le 2^{-n}$, where each~$U_n$ is \emph{effectively} open (not $A$-effectively open), but an index for each~$U_n$ is given by~$A$. That is, $U_n = W_{f(n)}$ where $\seq{W_e}$ enumerates effectively open sets and~$f\le_\Tur A$. We then have weak 2 randomness is equivalent to $\MLR[\emptyset']$-randomness. 

 One direction is straightfoward; given a weak 2 test $\seq{V_n}$, $\emptyset'$ can find the least~$m$ such that $\leb(V_m)\le 2^{-n}$. The other direction requires a time-trick: if that $\seq{W_{f(n)}}$ is a test as described then we cover it with the null~$\Pi^0_2$ set $\bigcap_{n,t} \bigcup_{s>t} W_{f_s(n)}$. Trying to lift the argument to the higher setting fails since the intersection would be over $\w\times \wock$-many higher open sets, and we have no way to effectively covert this to an $\w$-list. 

\smallskip

We shall indeed prove that the notion of higher Martin-L\"of randomness, where Kleene's~$O$ can be used for the index of each component is much stronger than higher weakly 2-randomness, and even stronger than~$\Pi^1_1$-randomness. We now let $\seq{W_e}$ enumerate the \emph{higher} effectively open sets. 

\begin{definition}
Let $A\in 2^\w$. A \emph{higher $\MLR[A]$-test} is a nested sequence $\seq{W_{f(n)}}$ where $f\le_{\hT}A$ and $\leb(W_{f(n)})\le 2^{-n}$. The null set determined by such a test is $\bigcap_n W_{f(n)}$. A sequence is in $\MLR[A]$ if it is not captrued by any $\MLR[A]$-test.
\end{definition}

Of course for Kleene's~$O$ the index-function~$f$ can be taken to be $O$-computable (\ref{prop:O_collapses_higher_computability}); however the building blocks are still higher effectively open sets.

\smallskip

We start by giving an alternate characterisation of $\MLR[O]$. A \emph{long (higher) ML-test} is a sequence $\seq{U_\alpha}_{\alpha<\wock}$ of uniformly higher effectively open sets such that $\bigcap_{\alpha}U_\alpha$ is null. No assumption is made about nesting. 

\begin{lemma} \label{lem_mlrplop_equ}
Higher $\MLR[O]$ tests and long ML-tests capture the same null sets. 
\end{lemma}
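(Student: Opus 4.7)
The plan is to prove the two inclusions separately.

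For $\MLR[O]$-null $\subseteq$ long-ML-null, I take a higher $\MLR[O]$-test $\seq{W_{f(n)}}$ and invoke the higher limit lemma (\cref{prop:higher_limit_lemma}) to fix a $\wock$-computable approximation $\seq{f_s}_{s<\wock}$ of $f$. For each $(n,s)\in\w\times\wock$ I set
\[ U_{n,s} \;=\; \bigcup_{t\in [s,\wock)} W_{f_t(n)}, \]
which is a $\wock$-union of uniformly higher c.e.\ open sets and is therefore itself higher c.e.\ open, uniformly in $(n,s)$. Reindexing $\w\times\wock$ as $\wock$ by any $\wock$-computable bijection produces the long sequence. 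Because $f_t(n)=f(n)$ for all sufficiently large $t$, a direct check gives $\bigcap_{s<\wock} U_{n,s} = W_{f(n)}$, and hence $\bigcap_{(n,s)} U_{n,s} = \bigcap_n W_{f(n)}$, which is null.

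For long-ML-null $\subseteq$ $\MLR[O]$-null, let $\seq{U_\alpha}_{\alpha<\wock}$ be a long ML-test with null intersection $N$. By \cref{prop:O_collapses_higher_computability}, $O$ computes a bijection $h\colon\w\to\wock$. I form the nested finite intersections $V_k = \bigcap_{i\le k} U_{h(i)}$, each of which is higher c.e.\ open with an index $O$-computable from $k$. Continuity of measure on the decreasing sequence $\seq{V_k}$ gives $\leb(V_k)\to\leb(N) = 0$, so the function $\psi(n):=\min\{k\,:\,\leb(V_k)\le 2^{-n}\}$ is well-defined. To show $\psi\le_\Tur O$ via the higher limit lemma, I use a $\wock$-approximation $\seq{h_s}$ of $h$ and the stage approximations $V_k[s] := \bigcap_{i\le k} U_{h_s(i)}[s]$; once $h_s$ has stabilized on $\{0,\dots,k\}$, this reduces to $\bigcap_{i\le k} U_{h(i)}[s]$, which increases in $s$ to $V_k$ by admissibility applied to a finite intersection. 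Consequently $\leb(V_k[s])\to \leb(V_k)$, and defining $\psi_s(n)$ as the least $k$ with $\leb(V_k[s])\le 2^{-n}$ yields a $\wock$-computable approximation converging pointwise to $\psi$. Taking $g(n)$ to be an index of $V_{\psi(n)}$, the sequence $\seq{W_{g(n)}}$ is a nested higher $\MLR[O]$-test with $\leb(W_{g(n)})\le 2^{-n}$ and $\bigcap_n W_{g(n)} = \bigcap_k V_k \supseteq N$.

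The main obstacle is the second direction: converting $O$-computable data (the bijection $h$ and the selector $\psi$) into $\wock$-computable approximations usable by the higher limit lemma. The argument goes through because \emph{finite} intersections of higher c.e.\ open sets have stage approximations whose measures increase to the true measure, a fact that ultimately rests on admissibility of~$\wock$. The failure of this technique for $\wock$-size intersections is exactly what prevents $\MLR[O]$-randomness from collapsing to higher weak 2-randomness.
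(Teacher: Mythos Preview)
Your proof is correct and follows the same strategy as the paper's. The first direction is identical: approximate~$f$ and form the tail-unions $U_{n,s}=\bigcup_{t\ge s}W_{f_t(n)}$, then reindex by a bijection $\w\times\wock\to\wock$.

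For the second direction the paper is more terse: it simply observes that the measure of a higher effectively open set is $O$-computable uniformly, so using~$O$ one can directly search for a finite $F\subset\wock$ with $\leb\bigl(\bigcap_{\alpha\in F}U_\alpha\bigr)\le 2^{-n}$. Your argument via the bijection~$h$, the approximations $h_s$ and $\psi_s$, and the higher limit lemma is an unpacking of this same fact. A few small points could be tightened---the limit lemma is stated for functions $\w\to\w$ rather than $\w\to\wock$, $\psi_s(n)$ need not be defined for all~$s$ (set a default value), and the final equality $\bigcap_n W_{g(n)}=\bigcap_k V_k$ should in general be $\supseteq$---but none of these is a genuine gap, and the containment suffices for the lemma.
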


\begin{proof}
One direction follows the failed time trick: if $\bigcap_n W_{f(n)}$ is an $MLR[O]$ test then for $n<\w$ and $s<\wock$ we let $V_{n,t} = \bigcup_{s>t} W_{f_s(n)}$. We can reorder the array $\seq{V_{n,s}}$ effectively in ordertype $\wock$ using an effective bijection between $\w\times \wock$ and~$\wock$. If~$t$ is sufficiently late then $V_{n,t} = W_{f(n)}$. 

In the other direction let $\seq{U_s}_{s<\wock}$ be a long ML-test. Using~$O$, for each~$n$ we can find a finite set $F\subset \wock$ such that $\leb(\bigcap_{s\in F}U_s)\le 2^{-n}$ (the measure of a higher effectively open set is $O$-computable, uniformly).
\end{proof}

Hirschfeldt and Miller (see~\cite{DowneyH2010}) showed that a ML-random sequence is weak 2 random if and only if it forms a minimal pair with~$\emptyset'$; the witness for failure of this property can be taken to be c.e. The situation is more complicated in the higher setting. Higher weak 2 randomness does not seem to align with such a property. In~\cite{Pi11RandomnessPaper} the authors show that $\Pi^1_1$-randomness partly corresponds to this property: a higher ML-random sequence~$X$ is $\Pi^1_1$-random if and only if there is no higher-c.e., non hyperarithmetic set higher Turing reducible to~$X$. However, not every $\Pi^1_1$-random sequnece forms a minimal pair with Kleene's~$O$ in the higher Turing degrees; by the Gandy basis theorem, there is a $\Pi^1_1$-random sequence computable from~$O$. 

Higher $\MLR[O]$ gives a certain analogue of the Hirschfeldt-Miller property. Recall that we extended the notion of higher Turing reducibility to subsets of~$\wock$ in the obvious way. 

\begin{proposition}
The following are equivalent for a higher ML-random sequence~$X$:
\begin{enumerate}
\item $X\notin \MLR[O]$.
\item $X$ higher Turing computes a $\wock$-c.e.\ subset of~$\wock$ which is not $\wock$-computable. 
\item $X$ higher Turing compute a $\Delta_2$ subset of~$\wock$ which is not $\wock$-computable. 
\item \label{item:ce-fying} There is some higher~$\Delta^0_2$ subset of~$\w$ which is not higher c.e., but is higher c.e.\ in~$X$. 
\end{enumerate}
\end{proposition}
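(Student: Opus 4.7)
The plan is to establish the cycle $(1) \Rightarrow (4) \Rightarrow (3) \Rightarrow (2) \Rightarrow (1)$ supplemented by the trivial $(2) \Rightarrow (3)$, giving the four equivalences.

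The routine implications go quickly. For $(2) \Rightarrow (3)$, every $\wock$-c.e.\ set is $\Sigma_1$-definable and hence $\Delta_2$. For $(4) \Rightarrow (3)$, a higher $\Delta^0_2$ subset $B \subseteq \w$ that is not higher c.e., viewed as a subset of $\wock$ via $\w \hookrightarrow \wock$, is $\Delta_2$-definable over $L_{\wock}$ but not $\Sigma_1$-definable, hence not $\wock$-computable, with $B \le_{\hT} X$. For $(3) \Rightarrow (2)$, given $A \le_{\hT} X$ which is $\Delta_2$-definable but not $\wock$-computable, the higher limit lemma (\cref{prop:higher_limit_lemma}) supplies a $\wock$-computable approximation $\seq{A_s}_{s<\wock}$; the set $C = \{(\alpha, s) \in \wock^2 : \exists t > s, A_t(\alpha) \ne A_s(\alpha)\}$ is $\Sigma_1$-definable hence $\wock$-c.e., and $\le_{\hT} A \le_{\hT} X$. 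If $C$ were $\wock$-computable the resulting modulus would be $\wock$-computable, making $A$ $\wock$-computable and contradicting the hypothesis.

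For $(2) \Rightarrow (1)$, fix $A \subseteq \wock$ witnessing (2), together with a higher Turing functional $\Phi$ satisfying $\Phi^X = \chi_A$, and let $\seq{A_s}$ be a $\wock$-computable monotone enumeration of $A$. For each pair $(\alpha, \delta) \in \wock^2$ define $V_{\alpha, \delta} = \bigcup_{\delta' \ge \delta} \{Y : \Phi^Y(\alpha) = \chi_{A_{\delta'}}(\alpha)\}$; as a $\wock$-computable union of higher effectively open sets, each $V_{\alpha,\delta}$ is higher effectively open, uniformly in $(\alpha,\delta)$. Since $\chi_{A_{\delta'}}(\alpha)$ stabilizes to $\chi_A(\alpha) = \Phi^X(\alpha)$ past the modulus of the approximation at $\alpha$, we have $X \in V_{\alpha,\delta}$ for every $(\alpha,\delta)$. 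Reindexing $(\alpha,\delta)$ by a $\wock$-bijection yields a long ML-test whose intersection is exactly $\{Y : \Phi^Y = \chi_A\}$. This intersection must be null: otherwise, Lebesgue density at some $\sigma$ gives, for each $\alpha$, conditional measure exceeding $3/4 \, \leb[\sigma]$ for $\{Y \succ \sigma : \Phi^Y(\alpha) = \chi_A(\alpha)\}$ and at most $1/4 \, \leb[\sigma]$ for $\{Y \succ \sigma : \Phi^Y(\alpha) \ne \chi_A(\alpha)\}$, so that comparing these uniformly higher left-c.e.\ measures via $\Sigma_1$-tests yields both $A$ and $\wock \setminus A$ as $\Sigma_1$ sets, making $A$ $\wock$-computable and contradicting the hypothesis. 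By \cref{lem_mlrplop_equ}, $X \notin \MLR[O]$.

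The principal obstacle is $(1) \Rightarrow (4)$. Given a nested $\MLR[O]$-test $\seq{W_{f(n)}}$ capturing $X$ with $f \le_{\hT} O$ and $\leb(W_{f(n)}) \le 2^{-n}$, first observe that the graph of $f$ cannot be higher c.e.\ --- otherwise $\seq{W_{f(n)}}$ would be a plain higher ML-test capturing the higher ML-random $X$. Using a $\wock$-computable approximation $\seq{f_s}$ of $f$ from \cref{prop:higher_limit_lemma}, the plan is to build $B \subseteq \w$ satisfying three requirements: (i) $B \le_{\hT} O$ (higher $\Delta^0_2$); (ii) $B$ is higher c.e.\ in $X$ via a $\Pi^1_1$ enumeration functional referencing only the stage-by-stage data $f_s$, since $f$ itself is not $\Sigma_1$-definable; and (iii) $B$ is not higher c.e. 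A natural candidate records $(n,k)$ for which $X \rest k$ enters $W_{f_s(n)}[s]$ at some stage $s$, with an $O$-computable post-filter retaining only entries corresponding to the true $f(n)$. The delicate point is (iii): the non-higher-c.e.-ness of $B$ must inherit from the non-$\Sigma_1$ nature of $f$, so that a higher c.e.\ enumeration of $B$ could be decoded (using $O$) into a higher c.e.\ enumeration of the graph of $f$, which would make $\seq{W_{f(n)}}$ a plain higher ML-test capturing the higher ML-random $X$ --- a contradiction.
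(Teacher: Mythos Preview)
Your cycle $(1)\Rightarrow(4)\Rightarrow(3)\Rightarrow(2)\Rightarrow(1)$ has genuine gaps at three of the four links.

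\textbf{$(4)\Rightarrow(3)$.} You assert $B\le_{\hT}X$, but condition~(4) only gives that~$B$ is higher $X$-c.e. Higher Turing reducibility requires both~$B$ and its complement to be higher $X$-c.e.; the complement of~$B$ is higher~$\Delta^0_2$ but need not be higher $X$-c.e. This implication does not follow.

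\textbf{$(3)\Rightarrow(2)$.} You assert $C\le_{\hT}A$ for the change set $C=\{(\alpha,s):\exists t>s,\,A_t(\alpha)\ne A_s(\alpha)\}$. Given~$A$, you can decide $(\alpha,s)\in C$ when $A_s(\alpha)\ne A(\alpha)$, but when $A_s(\alpha)=A(\alpha)$ the approximation might still change after~$s$ and change back; deciding this from~$A$ alone is asking for the modulus of the approximation, which is not in general computable from the limit. So $C\le_{\hT}A$ fails.

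\textbf{$(1)\Rightarrow(4)$.} Your candidate~$B$ records pairs~$(n,k)$ for which $X\rest k$ enters $W_{f_s(n)}[s]$. Any such set depends on~$X$, and since~$X$ is not assumed $\le_{\Tur}O$, this~$B$ is not higher~$\Delta^0_2$; condition~(i) fails. The ``post-filter using~$O$'' does not remove the dependence on~$X$. Your argument for~(iii) is also only a hope, not a proof: you have not shown how a higher-c.e.\ enumeration of~$B$ would decode the graph of~$f$.

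The paper runs the cycle the other way: $(1)\to(2)\to(3)\to(4)\to(1)$. The nontrivial step is $(1)\to(2)$, done by a Hirschfeldt--Miller style priority construction: starting from a long ML-test~$\seq{V_\alpha}$ capturing~$X$, one enumerates a $\wock$-c.e.\ set $A\subseteq\wock$ diagonalizing against all $\wock$-c.e.\ sets~$W_\beta$, choosing witnesses~$\alpha$ with $\leb(V_\alpha)$ small so that the ``error'' sets $G_\alpha=V_{\alpha,s}$ form a higher Solovay test; since~$X$ passes this test, $A\le_{\hT}X$. Then $(3)\to(4)$ is via the projection $p[B]$ (after possibly replacing~$B$ by its complement to ensure~$B$ is not~$\Sigma_1$), and $(4)\to(1)$ is the majority-vote argument---which is essentially your $(2)\Rightarrow(1)$, transposed to the setting of~(4). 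So your $(2)\Rightarrow(1)$ is correct and close to the paper's $(4)\to(1)$; what you are missing is the priority construction for $(1)\to(2)$, which carries the real content.
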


We note that the lower setting analogue of property~(\ref{item:ce-fying}) does characterise weak 2 randomness, a fact which has not been observed so far.

\begin{proof}
(1)$\then$(2): the lowercase argument can be copied to the higher setting. Let $\seq{V_\alpha}$ be a long ML-test capturing~$X$. Using an indexing of all finite subsets of~$\wock$ (and taking finite intersections) we may assume that for all $\epsilon>0$, there are unboundedly many~$\alpha$ such that $\leb(V_\alpha)< \epsilon$. We enumerate a $\wock$-c.e.\ subset~$A\subseteq \wock$, attempting to meet the requirements $P_\beta$: the complement of~$A$ is not $W_\beta$, where $\seq{W_\beta}$ is a $\wock$-effective sequence of all $\wock$-c.e.\ subsets of~$\wock$. Suppose that a requirement~$P_\beta$ has not been initialised since stage~$t<\wock$, is not yet met at stage~$s>t$, and that at stage $s>t$ we see that some $\alpha\in W_{\beta,s}$ and $\leb(V_{\alpha,s})\le 2^{-p(\beta)}$ for some $\alpha\in {\wock}^{[\beta]}$. Then we enumerate~$\alpha$ into~$A_{s+1}$ and initialise every requirement $P_\gamma$ where $p(\gamma)>p(\beta)$. We also let $G_\alpha = V_{\alpha,s}$. If $\alpha\notin A$ then we let $G_\alpha = \emptyset$. Then $\seq{G_\alpha}$ is a higher Solovay test, and if~$X$ is not captured by this test then $A\le_{\hT}X$. 

(3)$\then$(4): Say $B\le_{\hT} O$ is not $\wock$-computable and that $B\le_{\hT}X$. Then $p[B]$ is higher $X$-c.e.\ but is not hyperarithmetic. 

(4)$\then$(1): Let $C\subset \w$ be $O$-computable, not higher c.e., but higher $X$-c.e. The usual majority-vote argument shows that the set of oracles~$Y$ such that $C$ is higher $Y$-c.e.\ is null. Let $\seq{C_s}_{s<\wock}$ be a $\wock$-computable approximation of~$C$, and let~$\Gamma$ be a higher enumeration functional. For $n,k<\w$ and $t<\wock$ let $V_{n,k,t}$ be the set of $Y\in 2^\w$ such that for some $s>t$, either:
\begin{itemize}
	\item $n\in C_s$ and $n\in \Gamma_s^X$; or
	\item $n\notin C_s$ and $n\notin \Gamma_s^{X\rest{k}}$. 
\end{itemize}
Then $\seq{V_{n,k,t}}$ is a long ML-test which captures the oracles~$Y$ such that $\Gamma^Y =\nobreak C$. 
\end{proof}

Finally we show that higher $\MLR[O]$-randomness is strictly stronger than $\Pi^1_1$-randomness.

\begin{proposition}
Higher $\MLR[O]$-randomness is strictly stronger than $\Pi^1_1$-randomness.
\end{proposition}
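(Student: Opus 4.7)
The plan is to prove the two directions of $\MLR[O]\subsetneq\Pi^1_1$-random separately, with the work concentrated on showing that $\MLR[O]$-randomness forces preservation of~$\wock$.

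For strictness, I would apply the Gandy basis theorem to the nonempty $\Sigma^1_1$ set of $\Pi^1_1$-randoms, obtaining a $\Pi^1_1$-random $X$ with $X\le_{\Tur}O$. By \cref{prop:O_collapses_higher_computability}, $O$ computes $X$ in the ordinary Turing sense, so there is an $O$-computable~$f$ with $W_{f(n)}=[X\rest n]$; the nested sequence $\seq{W_{f(n)}}$ of measure~$2^{-n}$ is a higher $\MLR[O]$-test capturing~$X$, witnessing that $X$ is not higher $\MLR[O]$-random.

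For the inclusion, I would apply the Stern / Chong--Nies--Yu theorem (a $\Delta^1_1$-random is $\Pi^1_1$-random iff it preserves~$\wock$) and reduce to showing every higher $\MLR[O]$-random is both $\Delta^1_1$-random and preserves~$\wock$. The $\Delta^1_1$-randomness is routine: \cref{prop:effective_Lebesgue_measurability} at the hyperarithmetic level of a given $\Delta^1_1$ null set produces uniformly higher effectively open supersets of measure $\le 2^{-n}$ with $\wock$-computable indexing, giving a higher ML-test (and hence, with a computable index function, a higher $\MLR[O]$-test).

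The main obstacle is constructing a higher $\MLR[O]$-test that captures the $\Pi^1_1$ null set $N=\{Y:\w_1^Y>\wock\}$. Admissibility failure of $L_{\wock}[Y]$ means that $Y\in N$ iff some higher $Y$-c.e.\ operator~$\tilde\Phi_e$, pruned to be single-valued, produces a total function $\w\to\wock$ with cofinal range. Let $W_{e,k}=\{Y:\tilde\Phi_e^Y\text{ is defined on }\{0,\ldots,k\}\}$ and $V_{e,\alpha}=\{Y:\text{some value of }\tilde\Phi_e^Y\text{ exceeds }\alpha\}$, both uniformly higher effectively open. Admissibility of $\wock$ forces $(\bigcap_k W_{e,k})\cap(\bigcap_\alpha V_{e,\alpha})\subseteq N$ for each~$e$, so this intersection is null. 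Using $O$ to compute measures of higher effectively open sets, pick $k_{e,n}$ and $\alpha_{e,n}$ so that $\leb(W_{e,k_{e,n}}\cap V_{e,\alpha_{e,n}})\le 2^{-(e+n+1)}$; then $V_n=\bigcup_e(W_{e,k_{e,n}}\cap V_{e,\alpha_{e,n}})$ has measure $\le 2^{-n}$ and contains every $Y\in N$ via its witnessing~$e$. Nesting via finite intersections $\tilde V_n=\bigcap_{m\le n}V_m$ yields a higher $\MLR[O]$-test with $O$-computable index capturing~$N$. The delicate point is combining totality and cofinality to ensure the per-$e$ intersections are null; cofinality alone would include $\wock$-preserving oracles whose partial operators happen to have unbounded range, and the needed measure shrinkage would fail. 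This is precisely the feature that $O$-effective indexing supplies over mere computable indexing, matching the strict separation between $\MLR[O]$-randomness and $\Pi^1_1$-randomness.
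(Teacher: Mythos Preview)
Your strictness argument matches the paper's. For the implication you take a genuinely different route: rather than invoking the characterisation from \cite{Monin2014} and the equivalence with long ML-tests (\cref{lem_mlrplop_equ}) as the paper does, you try to cover the whole set $N=\{Y:\w_1^Y>\wock\}$ by a single higher $\MLR[O]$-test built from admissibility failure.

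There is a real gap at the phrase ``pruned to be single-valued.'' For your argument you need a uniform family $\{\tilde\Phi_e\}_{e<\w}$ of $\Pi^1_1$ operators into $\w\times\wock$ such that simultaneously: (a) $\tilde\Phi_e^Y$ is single-valued for \emph{every} oracle~$Y$ (so that totality plus cofinality of $\tilde\Phi_e^Y$ genuinely witnesses non-admissibility of $L_{\wock}[Y]$ and hence $Y\in N$); (b) the sets $W_{e,k}$ and $V_{e,\alpha}$ remain higher effectively open; and (c) for every $Y\in N$ there is still some~$e$ with $\tilde\Phi_e^Y$ total and cofinal. Achieving all three by pruning is exactly the kind of consistency-enforcing operation that the paper warns fails in the higher setting (the discussion surrounding \cref{def:higher_Turing} and \cref{prop:relative_c.e._and_relative_Turing}). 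If you prune by discarding an axiom $(\tau,(n,\beta))$ whenever some comparable $(\tau',(n,\beta'))$ with $\beta'\ne\beta$ appeared earlier, an axiom with $\tau\prec Y$ can be blocked by an earlier axiom with $\tau'\succ\tau$ and $\tau'\not\prec Y$, destroying totality at the witnessing~$e$---and at an infinite stage such blocking axioms can sit arbitrarily close to~$Y$, so the usual time-trick repair of passing to a longer use is unavailable. If instead you prune ``per oracle'' by taking the first value enumerated into $\Phi_e^Y$, the clause ``no earlier axiom with use $\prec Y$'' is a closed condition on~$Y$, and your sets $V_{e,\alpha}$ are no longer higher effectively open. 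Without single-valuedness the inclusion $(\bigcap_k W_{e,k})\cap(\bigcap_\alpha V_{e,\alpha})\subseteq N$ fails outright: a multi-valued $\Sigma_1(L_{\wock}[Y])$ relation can have total domain and unbounded second coordinates without contradicting admissibility.

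The paper's proof sidesteps this entirely. Given a higher ML-random~$X$ which is not $\Pi^1_1$-random, it invokes \cite{Monin2014} to obtain a higher $\Pi^0_2$ set $\bigcap_n U_n\ni X$ none of whose higher effectively closed subsets contain~$X$; the sequence of~$U_n$'s together with the complements of all higher effectively closed $P_\alpha\subseteq\bigcap_n U_n$ is then a long ML-test capturing~$X$, and \cref{lem_mlrplop_equ} converts this to an $\MLR[O]$-test. No pruning or single-valuedness is needed.
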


\begin{proof}
As mentioned before, there is an $O$-computable $\Pi^1_1$-random sequence; no higher $\MLR[O]$-random sequence can be $O$-computable. 

Suppose that~$X$ is not $\Pi^1_1$-random; we show it is not higher $\MLR[O]$-random. We assume that~$X$ is higher ML-random. By \cite{Monin2014}, there exists a uniformly higher effectively open sequence $\seq{U_n}$ such that $X \in \bigcap_n \U_n$ but $X$ is not an element of any higher effectively closed set~$F\subseteq \bigcap_n U_n$. The set of canonical indices of higher effectively closed subsets of $\bigcap_n U_n$ is higher c.e.; this gives us a sequence $\seq{P_{\alpha}}_{\alpha<\wock}$ which enumerates the higher effectively closed subsets of~$\bigcap_n U_n$. Then the sequence $\seq{U_n}$ together with the sequence of the complements of the $P_{\alpha}$'s gives a long ML-test which captures~$X$. 
\end{proof}

\section{Higher Oberwolfach randomness (with Dan Turetsky)} \label{sec:OW}

Oberwolfach randomness \cite{OWpaper} is the notion of randomness which captures computing all $K$-trivials: a ML-random sequence computes all $K$-trivial sets if and only if it is not Oberwolfach random. The higher analogue holds.

\begin{definition}
A \emph{higher Oberwolfach test} is a pair $(\seq{G_\s},\alpha)$ where:
\begin{itemize}
	\item For $\s\in 2^{<\w}$, $G_\s$ is (uniformly) higher effectively open, and $\leb(G_\s)\le 2^{-|\s|}$;
	\item The array is nested, in the sense that if $\s\preceq \tau$ then $G_\tau\subseteq G_\s$; and
	\item $\alpha\in 2^\w$ is a higher left-c.e. sequence.
\end{itemize}
The null set determined by the test is $\bigcap_{n<\w}G_{\alpha\rest n}$. A sequence is higher Oberwolfach random if it is not captured by any higher Oberwolfach test.
\end{definition}

Proposition~\ref{prop:Demuth_characterisations_of_weak_2_randomness} shows that every higher weak 2 random sequence is higher Oberwolfach random; higher difference randomness can be characterised using ``version-disjoint'' higher Oberwolfach tests and so higher Oberwolfach randomness implies higher difference randomness (this follows from the proof of one of the implications in~\cref{prop:higher_limit_lemma}, and is identical to the lower setting). In fact both implications are strict. It is not difficult to build a higher Oberwolfach random sequence with a compact approximation, and then appeal to~\cref{prop:w2r_closed_approx} to separate between higher weak 2 randomness and higher Oberwolfach randomness. To separate between higher Oberwolfach randomness and higher difference randomness we need to appeal to the forcing used by Day and Miller~\cite{DayMiller2013} to construct a difference random set which is not a density one point in effectively closed sets; the argument can be performed in the higher setting without change, both constructing such a random and showing that such a random cannot be higher Oberwolfach random. 

\smallskip

The characterisation of higher Oberwolfach randomness in terms of computing $K$-trivial sets consists of two steps:

\begin{theorem} \label{thm:not_OW_computes_K_trivials}
If $X$ is higher ML-random but not higher Oberwolfach random, then it higher Turing computes every higher $K$-trivial set.
\end{theorem}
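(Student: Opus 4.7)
The plan is to adapt the lower-setting Oberwolfach argument of Bienvenu--Greenberg--Ku\v{c}era--Nies--Turetsky to the higher setting, using the approximation machinery already developed in the paper to circumvent the usual failure of time tricks. Fix a higher $K$-trivial set $A$; we construct a higher Turing functional $\Phi$ with $\Phi(X)=A$. The starting point is \cref{prop:main_lemma_of_K_triviality}, applied with $\mu^A$ taken to be the optimal higher c.e.\ discrete measure $\+\mu$, yielding an approximation $\seq{A_s}_{s<\wock}$ of $A$ whose total cost $\sum_{s<\wock}\bigl(\+\mu_s(\w)-\+\mu_s^{A_s\wedge A_{s+1}}(\w)\bigr)$ is finite. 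This finite sum is our measure budget for axioms that will later need to be retracted when the approximation shifts.

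Second, use the given higher Oberwolfach test $(\seq{G_\s},\alpha)$ capturing $X$ to time the construction of $\Phi$. Fix a $\wock$-computable nondecreasing approximation $\seq{\alpha_s}$ of the higher left-c.e.\ real $\alpha$, and note that for every $n$ the dyadic truncation $\alpha_s\rest n$ is monotone and eventually stabilizes to $\alpha\rest n$, with $X\in G_{\alpha_s\rest n}$ for large $s$. At stage $s$, whenever $A_{s+1}\rest n\ne A_s\rest n$ for some length $n$, enumerate a fresh axiom $(\tau,A_{s+1}\rest n)$ into $\Phi$, where $\tau\prec X_s$ is chosen long enough that $[\tau]$ lies inside $G_{\alpha_s\rest k}$ for the currently-stable prefix length $k$. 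Before committing, trim $[\tau]$ as in \cref{lem:oracle_open_set_trimming} and \cref{lem:massage} by a clopen piece of measure at most $2^{-p(s)}\cdot\bigl(\+\mu_s(\w)-\+\mu_s^{A_s\wedge A_{s+1}}(\w)\bigr)$, so as to avoid oracles on which a previously enumerated axiom would contradict the new one.

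Third, argue correctness. The trimmings, accumulated across all $s$, contribute a higher c.e.\ open set $V$ whose total measure is finite (bounded by the cost from the main lemma, with the projectum weight absorbing the usual factor). By construction, $V$ contains every oracle $Y$ for which the functional $\Phi(Y)$ is inconsistent or differs from $A$ on some stable initial segment. Splitting $V$ into levels via dyadic thresholds produces a higher Solovay test, and since $X$ is higher ML-random it lies in only finitely many of these levels; after deleting the finitely many compromised axioms we obtain a genuine higher Turing reduction with $\Phi(X)=A$, giving $A\le_{\hT} X$.

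The main obstacle is the recurrent topological one of the paper: at an infinite limit stage $s<\wock$ we cannot cleanly separate the clopen neighborhood of $X$ from the ``inconsistency boundary'' produced by earlier axioms, so each trimming leaks a small amount of measure that must be summed geometrically via the projectum function $p$, exactly as in the hungry-sets construction of \S\ref{sub:hungry_sets}. A secondary subtlety is that fresh axioms must be committed in time with the growth of $\alpha_s$ rather than with absolute stage progression, so that the measure budget provided by the Oberwolfach structure (packaged into the $G_\s$'s via $\alpha$) and the cost budget provided by the main lemma line up coherently; ensuring this coherence at limit stages, where both $\alpha_s$ and the approximation $A_s$ may move in non-continuous ways, is the most delicate bookkeeping step.
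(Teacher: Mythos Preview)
Your proposal has a genuine gap: the connection between the main lemma's cost budget and the Oberwolfach test's measure bounds is never established, and without it the Solovay-test argument cannot close.

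Specifically, applying \cref{prop:main_lemma_of_K_triviality} with $\mu^A=\+\mu$ (the unrelativised optimal measure) is vacuous: if $\mu^\tau$ does not depend on~$\tau$ then $\mu_s^{A_s}(\w)-\mu_s^{A_s\wedge A_{s+1}}(\w)=0$ for every~$s$, so the ``finite cost'' you obtain is zero and carries no information about how often $A_s$ changes. What the argument actually needs is a cost function whose values are tied to the measures $\leb(G_{\alpha_s\rest n})$ of the test components, so that whenever $A_s\rest n$ changes the \emph{measure of the test component enumerated into the Solovay test} is charged against the cost. The paper establishes this link in two ways: the first proof finds an auxiliary higher c.e.\ $K$-trivial~$B\le_{\hT}X$ (via Hirschfeldt--Miller and the difference-randomness of~$X$) whose collapsing approximation supplies a cofinal sequence $\seq{g(n)}$ in~$\wock$, and defines $\mu^\tau(n)=\alpha_{g(n+1)}-\alpha_{g(n)}$ so that the main lemma's cost literally bounds the measure of $\bigcup_{s\ge g(n)}G_{\alpha_s\rest n}$; the second proof (\cref{prop:covering_by_Auckland}) shows directly that higher Oberwolfach tests and $\cost_{\+\mu}$-bounded weak-2 tests capture the same sets, after which the standard argument applies. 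Your sketch does neither, so the trimmings you describe have no reason to sum to something finite.

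A smaller but telling error: you write ``$\tau\prec X_s$'' when choosing axioms, but there is no $\wock$-computable approximation~$X_s$ of~$X$ available; the functional must be built uniformly over all oracles, with axioms enumerated for \emph{all} strings~$\tau$ lying in the relevant open sets, not just initial segments of~$X$. The trimming/projectum machinery you invoke is also beside the point here: the obstacle is not inconsistency of~$\Phi$ but matching cost to test measure, which is a different (and in the higher setting, harder) problem than the topological one handled in \S\ref{sub:hungry_sets}.
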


\begin{theorem} \label{thm:smart_K_trivial}
There is a higher $K$-trivial set which is not higher computable from any higher Oberwolfach random sequence. 
\end{theorem}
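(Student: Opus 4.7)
The plan is to carry out the higher analogue of the smart $K$-trivial construction of Bienvenu--Greenberg--Ku\v{c}era--Nies--Turetsky. We build a higher $K$-trivial set $A$ together with, for each higher Turing functional $\Gamma_e$ and each constant $c$, a higher Oberwolfach test $T_{e,c}$ such that any higher ML-random $X$ with $\Gamma_e(X)=A$ and $K$-triviality constant at most $c$ is captured by $T_{e,c}$. Since every higher $K$-trivial set has some such constant, and the construction produces a non-hyperarithmetic $A$, the theorem follows.

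First I would fix a higher-effective enumeration $\seq{\Gamma_e}$ of all higher Turing functionals and, using \cref{lem:massage}, replace each by a functional whose pre-image function $\tau\mapsto \leb(\Gamma_e^{-1}[\tau])$ is dominated by a higher c.e.\ continuous semi-measure $m_e$. This domination is the essential ingredient that compensates for the failure of time tricks at limit stages. At each stage $s<\wock$ define $k_s^{e,c}$ to be the largest $n$ with $K_s(A_s\rest n) \le K_s(n)+c$, and set $\mu_s^{e,c}=\leb(\Gamma_{e,s}^{-1}[A_s\rest{k^{e,c}_s}])$. Form the higher c.e.\ cost function
\[ \cost(n,s) = \sum_{e,c} 2^{-(e+c)}\,\mu^{e,c}_s \cdot \mathbf{1}[n < k^{e,c}_s]. \]
Using the higher Kraft--Chaitin theorem of Hjorth--Nies and the higher cost-function machinery encoded by \cref{prop:main_lemma_of_K_triviality} (run in the synthesis direction via drip-feeding, with followers chosen through the projectum function), I would build a non-hyperarithmetic $A$ with a $\wock$-computable approximation $\seq{A_s}$ satisfying $\sum_s \cost(A_s\wedge A_{s+1},s)<\infty$, ensuring $A$ is higher $K$-trivial.

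Simultaneously I would define the Oberwolfach test $T_{e,c}=(\seq{G^{e,c}_\s},\alpha^{e,c})$ as follows. Let $\alpha^{e,c}_s$ be the longest binary string recording, for $n=0,1,2,\dots$, whether $\mu^{e,c}_t$ has dropped below $2^{-n}$ by some stage $t\le s$, padded so as to be higher left-c.e. For a string $\s\preceq \alpha^{e,c}_s$, enumerate into $G^{e,c}_\s$ the currently-hyperarithmetic clopen set of $X$ such that $\Gamma_{e,t}(X)\succeq A_t\rest{k^{e,c}_t}$, where $t$ is the least stage at which $\s\preceq \alpha^{e,c}_t$. The measure bound $\leb(G^{e,c}_\s)\le 2^{-|\s|}$ follows from the way $\alpha^{e,c}$ threshholds $\mu^{e,c}$, and nesting is automatic. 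If $X$ is higher ML-random with $\Gamma_e(X)=A$ and $K$-triviality constant $c$, then $k_s^{e,c}\to \infty$, so $\mu^{e,c}_s\to 0$; thus $\alpha^{e,c}$ converges to a genuine element of $2^\w$, and at every $\s\prec \alpha^{e,c}$ the real $X$ lies in $G^{e,c}_\s$, so $X$ is captured by $T_{e,c}$.

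The main obstacle will be the verification that $\alpha^{e,c}$ is well-defined and that $G^{e,c}_\s$ truly has measure at most $2^{-|\s|}$ uniformly in $s$. This requires careful control of $\mu^{e,c}_s$ through limit stages: a priori, the measure $\leb(\Gamma_{e,s}^{-1}[\tau])$ can jump upward at limit stages $s$ because new axioms of $\Gamma_e$ appear arbitrarily close to oracles computing $\tau$. This is precisely the pathology addressed by \cref{lem:massage}, which ensures the pre-image mass of $\Gamma_e$ is bounded by a higher c.e.\ continuous semi-measure; this bound transfers through limit stages and allows $\mu^{e,c}$ to be treated as a ``nice'' approximated quantity. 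A secondary subtlety is synchronising the cost-function construction of $A$ (which must react to changes in $\mu^{e,c}$) with the enumeration of the $G^{e,c}_\s$; this is handled by the standard device of splitting the enumeration of $A$ and of $T_{e,c}$ across the columns supplied by the projectum function, so that all bookkeeping can be carried out in a single $\wock$-effective construction.
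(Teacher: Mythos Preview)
Your proposal has the right high-level shape---build a higher $K$-trivial $A$ via a cost-function construction while simultaneously trapping all oracles computing~$A$---but the details do not cohere, and the central mechanism is missing.

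The most serious problem is in your Oberwolfach tests $T_{e,c}$. The quantity $\mu^{e,c}_s = \leb(\Gamma_{e,s}^{-1}[A_s\rest{k_s^{e,c}}])$ is not monotone in~$s$: when you change~$A_s$ at some bit below $k_s^{e,c}$, the set of oracles currently computing the new $A_{s+1}\rest{k_{s+1}^{e,c}}$ can be entirely disjoint from the old one, and its measure can jump arbitrarily. So ``$\mu^{e,c}_t$ has dropped below $2^{-n}$ for some $t\le s$'' does not give a meaningful left-c.e.\ real~$\alpha^{e,c}$, and the sets $G^{e,c}_\s$---each defined at a single stage~$t$---need not contain an~$X$ with $\Gamma_e(X)=A$, since at stage~$t$ the computation $\Gamma_{e,t}(X)$ approximates~$A$, not~$A_t$. \Cref{lem:massage} does not help here: it controls the total pre-image mass of a fixed string, not the drift as the target string changes. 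Relatedly, your cost function $\cost(n,s)$ depends on $A_s$ and $k_s^{e,c}$, which are themselves being defined by the construction; you never connect obeying this cost to obeying~$\cost_{\+\mu}$, which is what actually certifies higher $K$-triviality.

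The paper avoids all of this by passing through \cref{prop:covering_by_Auckland}, which identifies higher Oberwolfach randomness with passing all $\cost_{\+\mu}$-bounded tests. It then builds a single $\cost_{\+\mu}$-bounded test $\seq{U_n}$ together with~$A$ using one universal functional~$\Gamma$. The key device is the error set $\EE_s$ of oracles whose $\Gamma$-image has already moved left of~$A_s$: when the measure of correct computers $G_{n,s}\setminus\EE_s$ exceeds $\cost(n,s)$, a follower $x_{n,s}$ is enumerated into~$A$, which dumps those oracles into~$\EE_{s+1}$. This simultaneously bounds the total cost of the enumeration by $\leb(\EE)$ (giving $K$-triviality) and bounds $\leb(U_n)$ by $\cost(n)$ plus a second left-c.e.\ measure absorbing~$\EE$. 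The linkage you are missing---that each change of~$A$ converts current ``computing measure'' into permanent error measure, and that the cost paid exactly tracks this conversion---is what makes both halves of the argument go through at once.
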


A set~$A$ as given by \cref{thm:smart_K_trivial} is called a ``smart'' $K$-trivial set: any higher ML-random sequence which higher computes~$A$, must higher compute all higher $K$-trivial sets. 

\

The usual proof of the lower-setting analogue of \cref{thm:not_OW_computes_K_trivials} passes through a characterisation of Oberwolfach randomness in terms of weak 2 tests which are bounded by \emph{additive cost functions}. These are weak 2 tests $\seq{U_n}$ whose measure is bounded by $\alpha-\alpha_n$, where $\seq{\alpha_n}$ is an increasing approximation of a left-c.e.\ real~$\alpha$. By their very definition these use a time-trick. We can emulate the time trick by working over a $K$-trivial oracle. 

\begin{proof}[First proof of \cref{thm:not_OW_computes_K_trivials}]
 Let~$X$ be higher ML-random but not higher Oberwolfach random. Since~$X$ is not higher weak 2 random, the Hirschfeldt-Miller argument shows that there is some non-hyperarithmetic, higher c.e.\ set~$B$ which is higher Turing reducible to~$X$ (in fact this is true for any higher ML-random which is not $\Pi^1_1$-random). We may assume that~$X$ is higher difference random, and so the Hirschfeldt-Nies-Stephan argument shows that~$B$ is higher $K$-trivial. The idea is to work relative to~$B$ and emulate the proof in~\cite{OWpaper}.

	Since~$B$ has a collapsing approximation, working relative to~$B$ we can revert to computability of length~$\w$ (see Lemma~\ref{lem:collapsing_implies_Schnorr-Levin} and its footnote). Let~$\seq{g(n)}$ be an increasing, cofinal sequence in $\wock$ which is $\wock$-computable from~$B$; let $(\seq{G_\s},\alpha)$ be a higher Oberwolfach test capturing~$X$. We let $U_n = \bigcup_{s\ge g(n)} G_{\alpha_s\rest k}$. Then $\seq{U_n}$ is nested and uniformly higher $B$-c.e.; and $\leb(U_n)\le 2^{-n} + (\alpha-\alpha_{g(n)})$. By delaying the approximation of $U_n$ we can also suppose $\leb(U_{n, g(m)})\le 2^{-n} + (\alpha_{g(m)}-\alpha_{g(n)})$ for each $n$ and $m$.

	Let $\cost(k,s) = \alpha_s - \alpha_{g(k)}$. The aim is to find a higher $B$-computable approximation $\seq{A_n}_{n<\w}$ of~$A$ such that letting $k(n) = |A_{n-1}\wedge A_n|$ (the least~$k$ such that $A_{n}(k)\ne A_{n-1}(k)$), we have $\sum_{n<\w} (\alpha_{g(n)}-\alpha_{g(k(n))})$ is finite (we may assume that $k(n)\le n$; otherwise we replace $\alpha_{g(n)}-\alpha_{g(k(n))}$ by 0). Once we have such an approximation we can define a higher $B$-Solovay test $\seq{G_k}$ by letting $G_k = U_{k,g(n)}$ if~$n$ is the greatest such that $k = k(n)$ (and $G_k = \emptyset$ if there is no such~$n$). Since~$B$ is higher $K$-trivial, $X$ cannot be captured by this test, and then the usual argument builds a higher $B$-c.e.\ functional $\Phi$ such that $\Phi(X)=A$. Since~$B\le_{\hT}X$ we get $A\le_{\hT} X\oplus B \le_{\hT} X$ as required. 

	\smallskip

	To obtain the required approximation $\seq{A_n}$ we can operate in two ways. We define the higher $B$-c.e.\ oracle discrete measure $\mu^\tau(n) = \alpha_{g(n+1)}-\alpha_{g(n)}$ (for all strings~$\tau$ of length~$n$). One way is to use the fact that~$A$ is higher $K$-trivial relative to~$B$; we repeat the proof of the main lemma in $(L_{\wock};\in, B)$ and use it for the measure $\mu^A$. Another way is to directly use the unrelativised main lemma (\cref{prop:main_lemma_of_K_triviality}). 
	Recall that we can let $g(n)$ be the least~$s$ such that $B_s\rest n = B\rest n$ for some fixed higher enumeration $\seq{B_s}_{s<\wock}$ of~$B$. For $t<\wock$ we let $g_t(n)$ be the least $t\le s$ such that $B_s\rest n = B_t\rest n$. Note that $\sup_n g_s(n) = s$ and that $g_s(n)\le g(n)$. For all~$\tau$ of length~$n$ we let $\mu_s^{B_s\rest n \oplus \tau}(n) = \alpha_{g_s(n+1)}- \alpha_{g_s(n)}$. Let $\seq{A_s}$ be a collapsing approximation of~$A$. The main lemma gives us a $\wock$-computable closed and unbounded set~$C\subseteq \wock$, such that the sum $\sum_{s\in C} (\alpha_{s} - \alpha_{g_s(k(s))}) $ is finite; here $k(s) = |A_s(k)\wedge A_{s^+}(k)|$, where $s^+$ is the next element of~$C$ beyond~$s$. We define the required $B$-computable approximation of~$A$ by letting $\hat A_n = A_{s(n)}$ for some $s(n)\in C$, $s(n)\ge g(n)$ (for example $s(n) = \min (C\setminus g(n))$). Let~$k = |\hat A_{n-1}\wedge \hat A_n|$. Then there is some $s\in [s(n), s(n+1))$ such that $k\ge |A_s\wedge A_{s^+}|$. Since $s\ge g(n)$, $\alpha_{g(n)} - \alpha_{g(k)} \le \alpha_{s} - \alpha_{g_{s}(k)}$.
\end{proof}

We can however eliminate the time trick, with an argument which also works in the lower setting. Rather than use additive cost functions, we use cost functions which in the lower setting are ``subadditive''. If~$\mu$ is a discrete measure then we let $\cost_\mu(n) = \sum_{m\ge n} \mu(m)$. If $\seq{\mu_s}$ is an increasing enumeration of a left-c.e.\ discrete measure~$\mu$ then we let $\cost_\mu(n,s) = \sum_{m\ge n} \mu_s(m)$. We say that an approximation $\seq{A_s}_{s<\wock}$ of a set~$A$ witnesses that~$A$ \emph{obeys}~$\cost_\mu$ if the sum $\sum_{s<\wock} \cost(|A_s\wedge A_{s+1}|,s)$ is finite. If~$\+\mu$ is the optimal left-c.e.\ discrete measure then any set obeying~$\cost_{\+\mu}$ must be higher $K$-trivial. If~$A$ is higher $K$-trivial then the main lemma (\cref{prop:main_lemma_of_K_triviality})) shows that $A$ obeys~$\cost_\mu$ for any left-c.e.\ discrete measure~$\mu$. 

A \emph{$\cost_\mu$-bounded test} is a higher weak 2 test $\seq{U_n}$ such that $\leb(U_n) \le^\times \cost_\mu(n)$; if $\seq{U_n}$ is such a test then we may assume that $\leb(U_{n,s})\le^\times \cost_\mu(n,s)$ (where of course the multiplicative constant is the same for all~$s$). The usual argument shows that if $X$ is a higher ML-random sequence which is captured by some $\cost_\mu$-bounded test and~$A$ obeys $\cost_\mu$ then $A\le_{\hT}X$. So \cref{thm:not_OW_computes_K_trivials} follows from:

\begin{proposition} \label{prop:covering_by_Auckland}
	A sequence is higher Oberwolfach random if and only if it is not captured by any $\cost_{\+\mu}$-bounded test. 
\end{proposition}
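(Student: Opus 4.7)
The plan is to prove both directions of the equivalence separately, exploiting the identity $\cost_{\+\mu}(n) = \alpha - \alpha_n$, where $\alpha := \+\mu(\w)$ is a higher left-c.e. real and $\alpha_n := \sum_{m<n}\+\mu(m)$ is its canonical monotone approximation by partial sums. This presents $\cost_{\+\mu}$-bounded tests as ``additive-cost'' tests with respect to this distinguished left-c.e. real, which is precisely the kind of data appearing in Oberwolfach tests.

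For the direction $\cost_{\+\mu}$-bounded $\Rightarrow$ Oberwolfach, given a $\cost_{\+\mu}$-bounded test $\seq{U_n}$ capturing $X$, I would build an Oberwolfach test $(\seq{G_\s},\alpha)$ with $\alpha$ as above: for a string $\s$ of length $n$, define $G_\s$ by copying the enumeration of $U_{n+c}[t]$ over stages $t$ at which $\alpha_t\rest n$ matches $\s$, where $c$ is a constant absorbing the multiplicative slack in $\leb(U_n) \le^\times \cost_{\+\mu}(n)$, so that $\leb(G_\s) \le 2^{-|\s|}$. Nesting and capture follow from the corresponding properties of $\seq{U_n}$ and the monotonicity of the approximation to $\alpha$.

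For the direction Oberwolfach $\Rightarrow$ $\cost_{\+\mu}$-bounded, given an Oberwolfach test $(\seq{G_\s},\beta)$ capturing $X$, I would package the $\wock$-many increments of~$\beta$ into an $\w$-indexed higher c.e. discrete measure~$\mu$ via the projectum $p\colon\wock\to\w$, ensuring $\mu(\w)\le\beta\le 1$. Simultaneously, enumerate a higher weak $2$ test $\seq{V_n}$: at each stage~$s$ and each $n\le p(s)$, enumerate the current content of $G_{\beta_s\rest n}[s]$ into $V_n$, charging each increment at a natural number $\ge n$ indexed by $p$ and the current value of $\beta_s\rest n$. A bookkeeping argument then shows $\leb(V_n)\le^\times \cost_\mu(n)$; by optimality of $\+\mu$ we have $\mu\le^\times \+\mu$ and hence $\cost_\mu\le^\times \cost_{\+\mu}$, so $\seq{V_n}$ is a $\cost_{\+\mu}$-bounded test capturing the same null set.

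The principal obstacle is in this second direction. The na\"ive choice $V_n := G_{\beta\rest n}$ fails to be higher c.e. open, since ``$\beta\rest n = \s$'' is a higher $\Delta^0_2$ but not higher $\Sigma^1_1$ condition; the alternative $V_n := \bigcup_s G_{\beta_s\rest n}[s]$ is higher c.e. open but may have measure as large as $\beta + 2^{-n}$, failing to tend to~$0$. The resolution is to absorb the slippage caused by each ``switch'' of $\beta_s\rest n$ into the auxiliary measure~$\mu$; the crucial finiteness $\mu(\w)<\infty$ rests on the bounded total variation of~$\beta$. This is a setting where the usual time-trick is unavailable, and one must rely on the projectum-based enumeration techniques developed earlier in the paper.
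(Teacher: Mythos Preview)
The direction from $\cost_{\+\mu}$-bounded tests to Oberwolfach tests has a genuine gap. Your construction sets $G_\s$ to be the content of $U_{|\s|+c}$ enumerated while $\alpha_t\rest{|\s|}=\s$; for the final value $\s=\alpha\rest n$ this forces $\leb(G_{\alpha\rest n})$ to be essentially $\leb(U_{n+c})\le d\cdot\cost_{\+\mu}(n+c)$, and you need this to be at most $2^{-n}$. But nothing forces $\cost_{\+\mu}$ to decay geometrically: if, say, $\+\mu(m)\asymp m^{-2}$ then $\cost_{\+\mu}(n)\asymp n^{-1}$, and no fixed shift $c$ makes $d/(n+c)\le 2^{-n}$ hold for all $n$. (Relatedly, your identity $\cost_{\+\mu}(n)=\alpha-\alpha_n$ with partial sums $\alpha_n=\sum_{m<n}\+\mu(m)$ is correct but not what drives the Oberwolfach machinery, which uses the stagewise binary approximation $\alpha_s\rest n$.) The paper's remedy is to choose the index into $\seq{U_k}$ \emph{dynamically}: with $\alpha_s=\mu_s(\w)$, each time $\alpha_s\rest n$ changes one picks a fresh $k=k_s(n)$ with $\cost_\mu(k,s)\le 2^{-n}$ and lets $G_{\alpha_s\rest n}$ track $U_k$ from that stage on. The crucial estimate is that while $\alpha_t\rest n$ stays fixed we have $\alpha_t-\alpha_s<2^{-n}$, hence $\cost_\mu(k,t)\le\cost_\mu(k,s)+(\alpha_t-\alpha_s)\le 2^{-(n-1)}$, which supplies the required measure bound on $G_\s$.

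For the other direction your outline is closer in spirit, and $V_n:=\bigcup_s G_{\beta_s\rest n}$ is indeed the right object; but your projectum bookkeeping does not obviously deliver $\leb(V_n)\le^\times\cost_\mu(n)$. Placing the single increment $\beta_{s+1}-\beta_s$ at position $p(s)$ gives no control over how many times $\beta_s\rest n$ switches, whereas $\leb(V_n)$ is bounded by $2^{-n}$ times the number $k_n$ of such switches. The paper builds $\nu$ directly so that $\cost_\nu(n,s)=2^{-n}k_{n,s}+(\alpha_s-2^{-n}\lfloor 2^n\alpha_s\rfloor)$: the second summand is a ``fractional reserve'' at level $n$ which, whenever $\alpha_s\rest n$ changes, overflows and is converted into a full $2^{-n}$ credit to $k_{n,s}$. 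This requires distributing each increment of $\alpha$ across \emph{all} levels simultaneously rather than at a single projectum value, which is exactly what your sketch does not arrange.
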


\begin{proof}
In one direction, let~$(\seq{G_\s},\alpha)$ be a higher Oberwolfach test. For all $n<\w$ and $s<\wock$, let:
\begin{itemize}
	\item $k_{n,s} = \# \left\{ \alpha_t\rest n \,:\,  t\le s \right\}$ and
	\item $m_{n,s}$ be the integer~$m$ such that $m2^{-n} \le \alpha_s < (m+1)2^{-n}$. 
\end{itemize}
We define a higher left-c.e.\ discrete measure~$\nu$ with the aim that $\cost_\nu(n,s) = 2^{-n}k_{n,s} + (\alpha_s - 2^{-n}m_{n,s})$. We would then let $U_n = \bigcup_{s<\wock} G_{\alpha_s\rest n}$; $\leb(U_n) = 2^{-n} k_{n,\wock} \le \cost_{\nu}(n)$. The measure~$\nu$ is not difficult to define. We may assume that for limit~$s$, $\alpha_s = \lim_{t\to s} \alpha_t$ and so we can let $\nu_s = \sup_{t<s}\nu_t$. Let $\s = \alpha_s\wedge \alpha_{s+1}$. We may assume that $\alpha_{s+1} = \s 1  0^\w$. We then let $\nu_{s+1}(n) = \nu_s + 2^{-n}$ if $n> |\s|+1$ and $\alpha_s(n)=0$; otherwise we let $\nu_{s+1}(n) = \nu_s(n)$. 

\smallskip

In the other direction let $\seq{U_n}$ be a $\cost_{\+\mu}$-bounded test; say $\leb(U_n)\le d\cdot \cost_{\+\mu}(n)$. Let~$\mu = d\cdot \+\mu$ (so $\leb(U_n)\le \cost_{\mu}(n)$). By taking a tail of the measure~$\mu$ (and of the test) and renumbering, we may assume that $\mu(\w)<1$. We let $\alpha_s = \cost_{\mu}(0,s) = \mu_s(\w)$. We define indices $k_s(n)$ for $n<\w$ and $s<\wock$; we let $G_{\alpha_s\rest n,s} = U_{k_s(n),s}$. To keep the sets~$G_\s$ nested we ensure that $k_s(n)$ is increasing in~$n$. We redefine $k_s(n)$ if $\alpha_{s}\rest n \ne \alpha_{s-1}\rest n$. To redefine it we pick a new value~$k$ such that $\cost_{\mu}(k,s)\le 2^{-n}$. Let $t<\wock$ and let $\s = \alpha_t\rest n$; let~$s$ be the least stage such that $\s\prec \alpha_s$; let $k = k_s(n)= k_t(n)$. We claim that $\leb(G_{\s,t})\le 2^{-(n-1)}$. For $G_{\s,t} = U_{k,t}$ and $\leb(U_{k,t})\le \cost_{\mu}(k,s)$; if this is greater than $2\cdot 2^{-n}$ then as $\cost_{\mu}(k,s)\le 2^{-n}$ we have $\alpha_t - \alpha_s \ge \cost_{\mu}(k,t) - \cost_{\mu}(k,s) > 2^{-n}$; this implies that $\alpha_t\rest n \ne \alpha_s \rest n$. 
\end{proof}

The proof in \cite{OWpaper} constructing a smart $K$-trivial set works with subadditive, rather than only with additive cost functions. This proof can be adapted to the higher setting using the usual techniques for overcoming topological problems. However to prove \cref{thm:smart_K_trivial} we use a streamlined argument by Turetsky.  

\begin{proof}[Proof of \cref{thm:smart_K_trivial}]
	Let~$\Gamma$ be a ``universal'' higher Turing functional; $\Gamma(0^e1X) = \Phi_e(X)$. Since higher Oberwolfach randomness is invariant under the shift, it suffices to enumerate a higher $K$-trivial c.e.\ set~$A$ and a $\cost_{\+\mu}$-bounded test~$\seq{U_n}$ which captures every sequence~$X$ such that $\Gamma(X)=A$. In this proof let $\cost = \cost_{\+\mu}$.

	We may assume that for all~$n$, $\cost(n,0)> 0$. We enumerate~$A$ and $\seq{U_n}$ as follows. At each stage we have a ``follower'' $x_{n,s}$; the sequence $\seq{x_{n,s}}$ increases with~$n$. We also enumerate a global error set~$\EE_s$; $\EE_s$ is the set of oracles~$X$ such that $\Gamma_s(X)$ lies to the left of~$A_s$. Let 
	\[ G_{n,s} = \left\{ X \,:\,  \Gamma_s(X) \succeq A_s\rest{x_{n,s}+1} \right\}.
	\]
	We will have $U_{n,s}\subseteq \EE_s\cup G_{n,s}$. We will change $x_{n,s}$ only finitely many times (for each~$n$), and so at limit stages we can take limits of all objects. We ensure that $\leb(U_{n,s}\setminus \EE_s)\le \cost(n,s)$. Let~$s$ be a stage and let $n<\w$. If $\leb(G_{m,s}\setminus \EE_{s})\le \cost(m,s)$ for all $m\le n$ then we let $U_{n,s+1} = U_{n,s}\cup G_{n,s}$. If~$n$ is least such that $\leb(G_{n,s}\setminus \EE_{s})> \cost(n,s)$ then we enumerate $x_{n,s}$ into~$A_{s+1}$; we cancel $x_{m,s}$ for all $m>n$; for all $m\ge n$, we choose unused $x_{m,s+1}>m$ for $m\ge n$, 
	and let $U_{m,s+1} = U_{m,s}$. Note that the enumeration of~$x_{n,s}$ into~$A_{s+1}$ means that $U_{m,s+1}\subseteq \EE_{s+1}$ for all $m\ge n$.

	\smallskip

	The fact that $G_{n+1,s}\subseteq G_{n,s}$ (as $x_{n+1,s}> x_{n,s}$) ensures that $U_{n+1,s}\subseteq U_{n,s}$ for all~$n$ (and all~$s$). If $x_{n,s}$ is enumerated into~$A_{s+1}$ then $\leb(\EE_{s+1}\setminus \EE_{s})> \cost(n,s)\ge \cost(n,0)$. This shows that $x_{n,s}$ is enumerated into~$A_{s+1}$ at only finitely many stages~$s$. In turn this shows that~$x_{n,s+1}\ne x_{n,s}$ for only finitely many stages~$s$. 

	The enumeration $\seq{A_s}$ witnesses that~$A$ obeys~$\cost$, and so is higher~$K$-trivial. To see this, suppose that~$x=x_{n,s}$ is enumerated into~$A_{s+1}$. Then $\cost(x,s)\le \cost(n,s)$; 
	this shows that the total cost paid along this enumeration is bounded by $\leb(\EE)$. 

	Finally we need to show that $\leb(U_n)\le^\times \cost(n)$. We enumerate a left-c.e.\ measure~$\nu$, with the aim of having $\leb(\EE_s\cap U_{n,s})\le \cost_{\nu}(n,s)$ for all~$n$ and~$s$. We would then have $\leb(U_{n}) \le \cost_{\nu}(n) + \cost(n) \le^\times \cost(n)$ as required. At stage~$s$ we need to have
	 \[ \cost_\nu(n,s+1)-\cost_\nu(n,s)\ge \min \{\cost(n,s), \leb(\EE_{s+1}\setminus \EE_{s})\};\] this suffices since $U_{n,s+1}\cap(\EE_{s+1}\setminus \EE_{s}) \subseteq U_{n,s}\setminus \EE_{s}$. Since $\cost(n,s)\to 0$ as $n\to \w$ we can distribute a total of $\leb(\EE_{s+1}\setminus \EE_{s})$ among the natural numbers (so that $\nu_{s+1}(\w) \le \nu_{s}(\w) + \leb(\EE_{s+1}\setminus \EE_{s})$) to achieve the desired increase in $\cost_{\nu}(n,s+1)$. Of course $\nu$ is indeed a discrete measure since $\nu(\w)= \leb(\EE)$. 
\end{proof}

 \bibliographystyle{alpha}
 \bibliography{continuous_higher_randomness}

\end{document}